\documentclass{compositio}
\usepackage[latin1]{inputenc}

\usepackage{amsmath}
\usepackage{amscd}
\usepackage{mathrsfs}
\usepackage[all]{xy}
\usepackage{graphicx}

\usepackage[dvips,colorlinks=false,linkcolor=red]{hyperref}

\makeindex

\newtheorem{theorem}{\textsc{Theorem}}[section]
\newtheorem{proposition}[theorem]{\textsc{Proposition}}
\newtheorem{lemma}[theorem]{\textsc{Lemma}}
\newtheorem{corollary}[theorem]{\textsc{Corollary}}
\theoremstyle{definition}
\newtheorem{definition}[theorem]{\textsc{Definition}}

\theoremstyle{remark}
\newtheorem{remark}[theorem]{\textsc{Remark}}
\newtheorem{hypothesis}[theorem]{\textsc{Hypothesis}}

\newtheorem{notation}[theorem]{\textsc{Notation}}
\numberwithin{equation}{subsection}

\newcommand{\dem}{\textsc{Proof. }}
\newcommand{\CVD}{$\Box$}

\newcommand{\simto}{\xrightarrow[]{\sim}}

\renewcommand{\a}{\mathcal{A}}

\newcommand{\bs}[1]{\boldsymbol{#1}}
\newcommand{\B}{\mathrm{B}}
\newcommand{\CW}{\bs{\mathrm{CW}}}
\newcommand{\C}{\mathrm{C}}

\newcommand{\D}{\mathrm{D}}
\newcommand{\E}{\mathrm{E}}
\newcommand{\Ed}{\mathcal{E}^{\dag}}

\newcommand{\e}{\mathbf{e}}
\newcommand{\et}{\mathrm{e}}
\newcommand{\F}{\mathrm{F}}
\newcommand{\Fb}{\bar{\mathrm{F}}}
\newcommand{\G}{\mathrm{G}}
\newcommand{\Gal}{\mathrm{Gal}}

\newcommand{\Hom}{\mathrm{Hom}}

\newcommand{\J}{\mathrm{J}_p}
\newcommand{\Ker}{\mathrm{Ker}}
\newcommand{\lb}{\bs{\lambda}}

\newcommand{\Mod}{\mathrm{Mod}}

\newcommand{\M}{\mathrm{M}}

\renewcommand{\O}{\mathcal{O}}
\newcommand{\ph}[1]{\langle #1\rangle}

\newcommand{\R}{\mathcal{R}}

\newcommand{\W}{\mathbf{W}}
\newcommand{\V}{\mathrm{V}}

\newcommand{\sw}{\mathrm{sw}}

\begin{document}
\allowdisplaybreaks

\title[Arithmetic and differential Swan conductors]{Arithmetic and differential Swan conductors
of rank one representations with finite local monodromy.}

\author{Bruno Chiarellotto}
\email{chiarbru@math.unipd.it}
\address{Dipartimento di Matematica Pura e Appl., Universita' degli Studi, Via Trieste 63, 35121 Padova, Italy.}

\author{Andrea Pulita}
\email{pulita@math.jussieu.fr}
\address{Fakultät für Mathematik,
Universität Bielefeld, P.O.Box 100 131, D-33501 Bielefeld,
Germany}

\subjclass{Primary 12h25; Secondary
11S15; 11S20; 14F30}

\keywords{$p$-adic differential equations, Swan conductor, Artin
Hasse exponential, Kummer Artin Schreier Witt extensions, class
field theory}

\begin{abstract}
We consider a complete discrete valuation field of characteristic
$p$, with possibly non perfect residue field. Let $\V$ be  a rank one 
continuous representation of its
absolute Galois group  with finite local monodromy. We will prove that the \emph{arithmetic
Swan conductor} of $\V$ (defined after   K.Kato in \cite{Kato-Swan}  which fits in the more general theory of      \cite{Ab-Sa-Ramification-groups}  and  \cite{Ab-Sa}) coincides with the
\emph{differential Swan conductor} of the associated differential module
$\D^{\dag}(\V)$ defined by K.Kedlaya in \cite{Ked-Swan}. This
construction is   a generalization   to the non perfect residue case of the Fontaine's
formalism as presented in \cite{Ts}.  Our method of proof will allow us to give a new interpretation of the refined Swan conductor.
\end{abstract}

\maketitle

\setcounter{tocdepth}{1} \tableofcontents

\section*{Introduction}
Let $\E$ be a complete discrete valuation field of characteristic
$p$, and let  $\G_{\E}$ be its absolute Galois group. It has been
known now for some time (cf. \cite{Fo}, \cite{Ka-Antewerp}) that, if the residue
field $k$ is perfect, then one has an equivalence between the
category of continuous $p$-adic representation of $\G_{\E}$ with
coefficients in a finite extension $K$ of $\mathbb{Q}_p$, and the
category of unit-root $\varphi$-modules over a Cohen ring $\mathcal{E}_L$ of
$\E$. If $k$ were perfect, then the introduction of a finiteness
hypothesis (finite local monodromy)  on the representation  has
allowed R.Crew, S.Matsuda and N.Tsuzuki  to associate a
$(\varphi,\nabla)$-module (with one derivative) not only on the
``arithmetic ring'' $\mathcal{E}_L$, but in some more geometric
ring $\mathcal{E}^{\dag}_L$, which we could indicated as
``\emph{the bounded Robba ring}'' (cf. \cite{Ts}, \cite{Tsu-swan},
\cite{Ma}, \cite{Matsuda-Swan-Kato}, \cite{Crew-fin} \ldots). Here ``geometric''
means that the elements of this ring can be understood as analytic
functions converging in some annulus. S.Matsuda and N.Tsuzuki
actually provide a more general framework: they allow the field of
constants $L$ to be a certain totally ramified extension of the
field of fractions of $\W(k)$. In this context we would find
ourselves with two theories of ramification between two parallel
worlds. In the arithmetic setting we have the classical ramification
theory as presented in \cite{Se} and \cite{Katz}. In particular,
we have the notion of \emph{arithmetic Swan conductor} of a
representation $\V$ with finite local monodromy. In the
differential framework, G.Christol, Z.Mebkhout, and P.Robba
obtained a theory of \emph{$p$-adic slopes} of a solvable $\nabla$-module
$\M$ (cf. \cite{RoIV}, \cite{Ch-Ro}, \cite{Ch-Me-3},
\cite{Ch-Me-4}, for example). In particular we have the notion of
\emph{$p$-adic Irregularity} of the $\nabla$-module underling the
$(\varphi,\nabla)$-module $\D^{\dag}(\V)$ associated to $\V$. The
works of R.Crew, S.Matsuda and N.Tsuzuki have shown that the
\emph{arithmetic Swan conductor} of $\V$ coincides with the
\emph{$p$-adic irregularity} of $\D^{\dag}(\V)$ (cf.
\cite{Crew-Can-ext}, \cite{Tsu-swan}, \cite{Ma}).

If, now, we allow  $k$  be non-perfect, then K.Kedlaya has
recently shown that  the category of $p$-adic representations
of $\G_{\E}$ with finite local monodromy is again  equivalent to a
category of   unit-root $(\varphi,\nabla)$-modules over $\Ed_L$ in a slightly
generalized sense with respect to the work of S.Matsuda and
N.Tsuzuki (cf. \cite{Ked-Swan}). In this new context we can no longer 
expect  to have a single derivative, indeed since the
residual field $k$ is not necessarily perfect, the number of indipendent
derivatives will depend on the cardinality of a $p$-basis of the residue field. In
fact K.Kedlaya is able to associate to a representation $\V$, with
finite local monodromy, a $(\varphi,\nabla)$-module, denoted again
by $\D^{\dag}(\V)$, where now $\nabla:\D^{\dag}(\V)\to
\D^{\dag}(\V)\otimes \widehat{\Omega}^1_{\mathcal{E}^{\dag}_L/K}$
is an integrable connection. If we fix a uniformizer parameter $t$
of $\E$ and an isomorphism $\E\simto k(\!(t)\!)$, then
$\widehat{\Omega}^1_{\mathcal{E}^{\dag}_L/K}$ is a finite free
module over $\mathcal{E}^{\dag}_L$ generated by $d/dT$, $d/du_1$,
\ldots, $d/du_r$, where $T$ is a lifting of $t$, and
$u_1,\ldots,u_r$ are lifting of a $p$-basis of the residual field
$k$. If $k$ is perfect, then
$\widehat{\Omega}^1_{\mathcal{E}^{\dag}_L/K}=\widehat{\Omega}^1_{\mathcal{E}^{\dag}_L/L}$
and we obtain exactly the theory as presented by S.Matsuda and
N.Tsuzuki. From this K.Kedlaya obtains a generalized definition of
irregularity involving all the derivations. We will call it
\emph{differential Swan conductor} of $\D^{\dag}(\V)$. This new
notion of irregularity should be seen as a re-interpretation of
the (now) classical definition of irregularity by means of slopes
of the radius of solutions given by P.Robba, G.Christol, and
Z.Mebkhout. Unfortunately K.Kedlaya does not have any interpretation
of this new irregularity in term of index. We recall that on the
arithmetic side we have now a good theory of ramification and
relative Swan conductors for non perfect residue field obtained by
A.Abbes and T.Saito (cf. \cite{Ab-Sa-Ramification-groups}). For
rank one representations it has been shown that the new definition  coincides with
the older one given by K.Kato (cf. \cite{Ab-Sa},
\cite{Kato-Swan}).

In this paper we prove that for a rank one $p$-adic representation
$\V$ of $\G_{\E}$ with finite local monodromy, the
\emph{arithmetic Swan conductor} of $\V$, defined by A.Abbes,
T.Saito and K.Kato, coincides with the \emph{differential Swan
conductor} of $\D^{\dag}(\V)$, defined by K.Kedlaya. This will be
done by understanding  Kato's definition via the techniques
introduced in \cite{Rk1}, where one has  a description of the
character group $\mathrm{H}^1(\E,\mathbb{Q}_p/\mathbb{Z}_p)$, and
one computes the functor $\D^{\dag}$ in rank one case. Moreover
within this  framework,  we will interpretate  the notion of refined Swan conductor (as in \cite{Kato-Swan}, see also \cite{Matsuda-Swan-Kato} and \cite{Ab-Sa}).

We expect a generalization of the previous results to representations of any
rank by comparison between irregularity and  
Abbes-Saito  Swan conductor. Recently  Liang Xiao    announced the  proof of  such a generalization: his methods are completely different from ours.

\subsection*{Plan of the article:}
After introducing some notation and basic definitions in  section
\ref{Generalities and Notation}, in section \ref{Radius of convergence and irregularities: the perfect residue field case} we will recall  some known facts about  (local) irregularity of differential operators  in one variable  over a non archimedean complete  valued field and its various interpretations.    In section
\ref{Kedlaya's differential Swan conductor} we introduce,  Kedlaya's definition
of the $p$-adic differential Swan conductor for $(\varphi,\nabla)$-modules.

In section \ref{Arithmetic Swan conductor for rank one
representations with finite local monodromy}, we recall  Kato's
definition of Swan conductor for characters
$\alpha:\G_{\E}\to\mathbb{Q}/\mathbb{Z}$ \emph{with finite image}.
We then interpret  an explicit description of
$\mathrm{H}^1(\G_{\E},\mathbb{Q}_p/\mathbb{Z}_p)$ obtained in
\cite[Section 4.1]{Rk1} in term of Kato's filtration of
$\mathrm{H}^1(\G_{\E},\mathbb{Q}/\mathbb{Z})$ (cf. Theorem
\ref{description of H^1}). As a corollary we prove that
\begin{equation}
\mathrm{Gr}_d(\mathrm{H}^1(\G_{\E},\mathbb{Q}_p/\mathbb{Z}_p))\;\;
\cong \;\;\left\{
\begin{array}{lcl}
\W_{v_p(d)}(k)/p \W_{v_p(d)}(k) & \textrm{ if } & d>0\;,\\
&&\\
\mathrm{H}^1(k,\mathbb{Q}_p/\mathbb{Z}_p) & \textrm{ if } & d=0\;,
\end{array}
\right.
\end{equation}
where $\mathrm{Gr}_d$ means the $d$-th graded piece with respect
to the Kato's filtration, and $v_p(d)$ is the $p$-adic valuation
of $d$. This simple expression of
$\mathrm{Gr}_d(\;\mathrm{H}^1(\G_{\E},\mathbb{Q}_p/\mathbb{Z}_p)\;)$
is obtained by considering any such a character as the image of a
Witt co-vector in the quotient $\CW(\E)/(\Fb-1)\CW(\E)$, which is
isomorphic to $\mathrm{H}^1(\G_{\E},\mathbb{Q}_p/\mathbb{Z}_p)$
via the Artin-Schreier-Witt theory (cf. Sequence
\eqref{artin-schreier-diagram-covectors}).

In the same section we recall (and precise) a decomposition of
$\G_{\E}^{\mathrm{ab}}$ obtained in \cite[Equation (4.32)]{Rk1}.
This allow us to generalize the Kato's definition of the Swan
conductor to rank one representations \emph{with finite local
monodromy} (not necessarily with finite image) (cf. Definition
\ref{Definition of Swan for finite char}).  In section \ref{Kato's refined Swan conductor},
by means of the isomorphism $(0.0.1)$, we will give a new construction of 
Kato's refined Swan conductor according to \cite{Kato-Swan} and  \cite{Ab-Sa}.

Finally, in the last sections \ref{computation of the functor-yt-}
and \ref{Comparison between arithmetic and differential Swan
conductors}, we prove our theorem: this will be done by computing
$\D^{\dag}$ in the rank one case along the lines of \cite{Rk1}.
Firstly we reduce the problem to the case of a representation
defined by a ``\emph{pure co-monomial}'' (cf. Def. \ref{minimal
lifting}), then we prove that in this case the differential Swan
conductor (defined as the slope of $T(\M,\rho)$ at $1^{-}$ (cf.
Section \ref{definition of diff Swan conductor})) coincides with the
slope of $T(\M,\rho)$ at $0^+$ which can be computed explicitly in
term of the coefficients of the equation (Lemma \ref{radius at zero -yhnct}).

\begin{acknowledgements}
Imprimis we would like to thank A. Abbes for his inspiring suggestions and advice throughout  the preparation of the article.
We acknowledge the  help given by  J.Oesterle for suggesting us  the proof of Lemma \ref{tretretyi--} in this
short form, using Cartier operator. We thank also I. Fesenko, F. Sullivan  and the anonymous referee. The work of the first author has been done in the Framework of the EC  contract CT2003-504917 ``Arithmetic Algebraic Geometry"   and Ministero Universit\`a PRIN ``Varieta' Algebriche, Teoria dei Motivi e Geometria Aritmetica".  While the work of the second author has been done at the University of
Bielefeld in the SFB 701 project B5 ``Crystalline cohomology and
Abelian manifolds''.
\end{acknowledgements}

\section{Generalities and Notation}
\label{Generalities and Notation}

\subsection{Rings}
\label{rings} Let $(L,|\cdot|)$ be an ultrametric complete valued
field, and let $I\subseteq\mathbb{R}_{\geq 0}$ be an interval. We
set
\begin{eqnarray}
\quad\a_L(I)&:=&\{\;\sum_{i\in\mathbb{Z}}a_iT^i \;|\; a_i\in L\;,\; |a_i|\rho^i\to 0\;,\;\textrm{ for }i\to\pm\infty\;,\;\forall \rho\in I\;  \}\;,\\
\mathcal{E}_L&:=&\{\;f=\sum_{i\in\mathbb{Z}}a_iT^i\;|\;a_i\in L\;,\;\sup_{i\in\mathbb{Z}}|a_i|<+\infty\;,\;\lim_{i\to-\infty}a_i=0\;\}\;,\\
\R_L&:=&\bigcup_{\varepsilon>0}\;\a_L(]\varepsilon,1[)\;,\\
\Ed_L&:=&\mathcal{R}_L\cap\mathcal{E}_L\;.
\end{eqnarray}
The ring $\a_L(I)$ is complete with respect to the topology given
by the family of norms $\{|\cdot|_\rho\}_{\rho\in I}$, where
\begin{equation}
|\sum_{i}a_iT^i|_\rho:=\sup_i|a_i|\rho^i\;.
\end{equation}
The Robba ring $\R_L$ is complete with respect to the
limit-Frechet topology induced by the family of topologies of
$\a_L(]\varepsilon,1[)$. The ring $\mathcal{E}_L$ is complete for
the topology given by the Gauss norm $|\cdot|_1 =
|\cdot|_{\mathcal{E}_L} = |\cdot|_{\mathrm{Gauss}}$: $ |\sum
a_iT^i|_1:=\sup_i|a_i|$. The ring $\Ed_L$ has two topologies
arising from $\R_L$ and $\mathcal{E}_L$ respectively, moreover
$\Ed_L$ is dense in $\mathcal{E}_L$ and in $\R_L$ for the
respective topologies. If the valuation on $L$ is discrete, then $\Ed_L$ is
an henselian field.

Let $[\rho]:=\{\rho\}$, $\rho>0$, be the (closed) interval which consists of only 
 the point $\rho$. The completion of the fraction field of
$\a_L([\rho])$ will be denoted by
\begin{equation}\label{F_rho}
(\mathcal{F}_{L,\rho},|\cdot|_\rho)\;:=\;(\;\mathrm{Frac}(\a_L([\rho]))\;,\;|\cdot|_\rho\;)^{\widehat{\quad}}\;.
\end{equation}
Since rational fractions, without poles of norm $\rho$, are dense
in $\a_L([\rho])$ with respect to the norm $|\cdot|_{\rho}$, one has that 
 $\mathcal{F}_{L,\rho}$ is the completion of the field of
rational fraction $L(T)$ with respect to the norm $|\cdot|_\rho$.
If $\rho\in I$, one has the inclusions
\begin{equation}
\a_L(I)\;\subset\;\a_{L}([\rho])\;\subset\;
\mathcal{F}_{L,\rho}\;.
\end{equation}

\subsection{Witt vectors and Witt covectors}
For all rings $R$ (not necessarily with unit element), we denote
by $\W_m(R)$ the ring of Witt vectors with coefficients in $R$.
All notation comes from \cite{Bou}, with the exception that in our
setting $\W_m(R):=\W(R)/\V^{m+1}(\W(R))$, where
$\V(\lambda_0,\lambda_1,\ldots)=(0,\lambda_0,\lambda_1,\ldots)$ is
the Verschiebung. We denote by
$\phi_j:=\phi_j(\lambda_0,\lambda_1,\ldots):=\lambda_0^{p^j}+p\lambda_1^{p^{j-1}}+\cdots
+p^j\lambda_j$ the $j$-th phantom component of
$(\lambda_0,\lambda_1,\ldots)$. We distinguish phantom vectors
from Witt vectors by using the notation
$\ph{\phi_0,\phi_1,\ldots}\in R^{\mathbb{N}}$ instead of
$(\phi_0,\phi_1,\ldots)$. We denote by $\F$ the Frobenius of
$\W(R)$ (i.e. the one that induce the map
$\ph{\phi_0,\phi_1,\ldots}\mapsto\ph{\phi_1,\phi_2,\ldots}$ on the
phantom components). If $R$ is a $\mathbb{F}_p$-algebra, we define
a Frobenius $\Fb:\W_m(R)\to\W_m(R)$ by
$\Fb(\lambda_0,\lambda_1,\ldots):=(\lambda_0^p,\lambda_1^p,\ldots)$.

\subsubsection{}\label{CW(I)} We recall that if
$I\subseteq R$ is a subgroup closed by multiplication, then
$\W(I):=\{(\lambda_0,\lambda_1,\ldots)$ $\in\W(R)\;|\; \lambda_i\in
I,\textrm{ for all } i\geq 0 \}$ is a subgroup closed by
multiplication of $\W(R)$. In the sequel we will apply this to the
ring $R[[t]][t^{-1}]$, and $I=tR[[t]]$, or $I=t^{-1}R[t^{-1}]$.

\subsubsection{The $R$-modules $\CW(R)$ and $\widetilde{\CW}(R)$.}
We set
$\bs{\mathrm{CW}}(R):=\varinjlim(\W_m(R)\xrightarrow[]{\V}\W_{m+1}(R)\xrightarrow[]{\V}\cdots)$
and, if $R$ is an $\mathbb{F}_p$-algebra, we set
$\widetilde{\CW}(R):=\varinjlim(\W_m(R)\xrightarrow[]{p}\W_{m+1}(R)\xrightarrow[]{p}\cdots)$,
where $p : \W_m(R) \to \W_{m+1}(R)$ denotes the morphism $\V\Fb :
(\lambda_0,\ldots,\lambda_m)\mapsto
(0,\lambda_0^p,\ldots,\lambda_m^p)$.

\begin{remark}
If $R$ is an $\mathbb{F}_p$-algebra, and if the Frobenius
$x\mapsto x^p:R\to R$ is a bijection, then $\widetilde{\CW}(R)$
and $\bs{\mathrm{CW}}(R)$ are isomorphic (cf. \cite[Section
1.3.4]{Rk1}). In general they are not isomorphic.
\end{remark}

\subsubsection{Canonical Filtration of $\CW(R)$ and $\widetilde{\CW}(R)$.} 
\label{CAN}
Both $\CW(R)$ and $\widetilde{\CW}(R)$ have a
natural \emph{filtration} given by
$\mathrm{Fil}_m(\CW(R)):=\W_m(R)\subset\CW(R)$ and
$\mathrm{Fil}_m(\widetilde{\CW}(R)):=\W_m(R)\subset
\widetilde{\CW}(R)$, respectively. Defining $\mathrm{Gr}_m$ as
$\mathrm{Fil}_m/\mathrm{Fil}_{m-1}$, one has :
\begin{eqnarray}\label{Natural filtration of CW and CW tilde}
\qquad\mathrm{Gr}_m(\CW(R))&=&\W_m(R)/\V(\W_{m-1}(R))\;\;\cong\;\;
R\;,\\
\qquad\mathrm{Gr}_m(\widetilde{\CW}(R))&=&\W_m(R)/p\cdot\W_{m}(R)\;.
\end{eqnarray}
Indeed $p\cdot\W_{m}(R) = \V\Fb(\W_{m-1}(R))$. We recall that if
$R$ is an $\mathbb{F}_p$-algebra then
\begin{equation}\label{explicit def of pW(R)}
p\cdot\W_{m}(R)=\{\;\lb\in\W_m(R)\;|\;\lb=(0,\lambda_1^p,\ldots,\lambda_m^p)\;,\;\lambda_1,\ldots,\lambda_m\in
R \;\}\;.
\end{equation}

\subsection{Notation on Artin-Schreier-Witt theory}
\label{Notation in IAS theory} Let $\kappa$ be a field of
characteristic $p>0$ and let $\kappa^{\mathrm{sep}}/\kappa$ be a
fixed separable closure of $\kappa$. We set
\begin{equation}
\G_\kappa:=\mathrm{Gal}(\kappa^{\mathrm{sep}}/\kappa)\;.
\end{equation}
If $\kappa$ is a complete discrete valuation field, we denote by
$\mathcal{I}_{\G_\kappa}$ the inertia group and by
$\mathcal{P}_{\G_\kappa}$ the pro-$p$-Sylow subgroup of
$\mathcal{I}_{\G_\kappa}$. We have
$\textrm{H}^1(\G_\kappa,\mathbb{Z}/p^{m}\mathbb{Z})\stackrel{\sim}{\to}
\Hom(\G_\kappa,\mathbb{Z}/p^{m}\mathbb{Z})$ (cf. \cite[Ch.$X$, $\S
3$]{Se}). The situation is then expressed by the following
commutative diagram:
\begin{equation}\label{artin-screier-diagram}
\xymatrix{ 0\ar[r]&\mathbb{Z}/p^{m+1}\mathbb{Z}\ar[r]
\ar@{}[dr]|{\odot}\ar[d]^{\imath} &
\W_m(\kappa)\ar[r]^{\Fb-1}\ar[d]^{\V}\ar@{}[dr]|{\odot}&
\W_m(\kappa)\ar[r]^-{\delta}\ar@{}[dr]|{\odot}\ar[d]^{\V}&
\Hom(\G_\kappa,\mathbb{Z}/p^{m+1}\mathbb{Z})
\ar[d]^{\jmath}\ar[r]&0\\
0\ar[r]&\mathbb{Z}/p^{m+2}\mathbb{Z}\ar[r]&
\W_{m+1}(\kappa)\ar[r]^{\Fb-1}&\W_{m+1}(\kappa)\ar[r]^-{\delta}&
\Hom(\G_\kappa,\mathbb{Z}/p^{m+2}\mathbb{Z})\ar[r]&0}
\end{equation}
where $\imath:1\mapsto p$ is the usual inclusion, and $\jmath$ is
the composition with $\imath$. For $\lb\in \W_m(\kappa)$, the
character $\alpha=\delta(\lb)\in
\Hom(\G_\kappa,\mathbb{Z}/p^{m+1}\mathbb{Z})$ satisfies
$\alpha(\gamma)=\gamma(\bs{\nu})-\bs{\nu}\;\in\;
\mathbb{Z}/p^{m+1}\mathbb{Z}$, for all $\gamma\in
\mathrm{G}_\kappa$, where $\bs{\nu}\in
\W_m(\kappa^{\mathrm{sep}})$ is a solution of the equation
$\Fb(\bs{\nu})-\bs{\nu}=\lb$. Passing to the inductive limit, we
obtain the exact sequence:
\begin{equation}\label{artin-schreier-diagram-covectors}
0\to\mathbb{Q}_p/\mathbb{Z}_p\xrightarrow[]{\;\;\;\;\;}
\bs{\mathrm{CW}}(\kappa)\xrightarrow[]{\Fb-1}\bs{\mathrm{CW}}(\kappa)\xrightarrow[]{\;\;\delta\;\;}
\Hom^{\textrm{cont}}(\G_\kappa,\mathbb{Q}_p/\mathbb{Z}_p)\to 0\;,
\end{equation}
where $\mathbb{Q}_p/\mathbb{Z}_p$ is considered with the discrete
topology, in order that the word ``$\textrm{cont}$'' means that
all characters $\G_\kappa\to\mathbb{Q}_p/\mathbb{Z}_p$ has finite
image. Indeed
$\varinjlim_m\Hom(\G_\kappa,\mathbb{Z}/p^m\mathbb{Z})$ can be viewed
as the subset of $\Hom(\G_\kappa,\mathbb{Q}_p/\mathbb{Z}_p)$
formed by the elements killed by a power of $p$ (cf. Remark
\ref{meaning of continuous characters}).
\begin{remark}\label{VF is the same of F}
If the vertical arrows $\V$ are replaced by $\V\Fb$ in the diagram
\eqref{artin-screier-diagram}, then the morphisms $\imath$ and
$\jmath$ remain the same. Indeed $\delta(\lb)=\delta(\Fb(\lb))$,
because $\Fb(\lb) = \lb + (\Fb-1)(\lb)$, for all
$\lb\in\W_m(\kappa)$. Hence we have also a sequence as follows:
\begin{equation}
0\to\mathbb{Q}_p/\mathbb{Z}_p
\to\widetilde{\CW}(\kappa)\xrightarrow[]{\Fb-1}\widetilde{\CW}(\kappa)\xrightarrow[]{\;\;\widetilde{\delta}\;\;}
\Hom^{\textrm{cont}}(\G_\kappa,\mathbb{Q}_p/\mathbb{Z}_p)\to 0\;.
\end{equation}
\end{remark}
\subsubsection{Artin-Schreier-Witt
extensions.}\label{R(nu_0,...,nu_m)} Let
$\lb=(\lambda_0,\ldots,\lambda_m)\in\W_m(\kappa)$. Let
$\alpha:=\delta(\lb)$, then
\begin{equation}\label{(R^sep)^Ker(alpha).;.}
(\kappa^{\mathrm{sep}})^{\mathrm{Ker}(\alpha)}=\kappa(\{\nu_0,\ldots,\nu_m\})
\end{equation}
(i.e. the smallest sub-field of $\kappa^{\mathrm{sep}}$ containing
$k$ and the set $\{\nu_0,\ldots,\nu_m\}$), where
$\bs{\nu}=(\nu_0,\ldots,\nu_m)\in \W_m(\kappa^{\mathrm{sep}})$ is
a solution of $\Fb(\bs{\nu})-\bs{\nu}=\lb$. All cyclic (separable)
extensions of $\kappa$, whose degree is a power of $p$, are of
this form for a suitable $m\geq 0$, and $\lb\in\W_m(\kappa)$.

\subsection{Witt (co-)vectors of filtered and graded rings}
\label{Witt co-vectors of a filtered ring...li} Let $A$ be a ring
(not necessarily with unit element). We denote
$\mathbb{N}:=\mathbb{Z}_{\geq 0}$. A \emph{filtration} on $A$
indexed by $\mathbb{N}$ is a family of subgroups of $(A,+)$
\begin{equation}
0=\mathrm{Fil}_{-1}(A)\;\subseteq\;
\mathrm{Fil}_{0}(A)\;\subseteq\; \mathrm{Fil}_{1}(A)\; \subseteq\;
\mathrm{Fil}_{2}(A)\;\subseteq\;\cdots
\end{equation}
such that
\begin{equation}
\mathrm{Fil}_{d_1}(A) \cdot \mathrm{Fil}_{d_2}(A) \;\subseteq\;
\mathrm{Fil}_{d_1+d_2}(A)\;,
\end{equation}
for all $d_1,d_2 \geq 0$. The pair $(A,\{\mathrm{Fil}_{d}(A)\}_{d\geq
0})$ is called \emph{filtered ring}. We say that a ring $A$ is
\emph{graded} if   $A=\oplus_{d\geq 0}\mathrm{Gr}_d(A)$ (as
additive groups), where  $\mathrm{Gr}_d(A)$, $d\geq 0$ are  subgroups of $A$,  with the property that $\mathrm{Gr}_{d_1}(A)\cdot
\mathrm{Gr}_{d_2}(A)\subseteq \mathrm{Gr}_{d_1+d_2}(A)$. To a graded ring $A=\oplus_{d\geq 0}\mathrm{Gr}_d(A)$ it is associated a natural filtration given by $\mathrm{Fil}_{n}(A)= \oplus_{0 \leq d \leq n}\mathrm{Gr}_d(A)$, which makes $A$ a filtered ring.

\subsubsection{Filtration associated to a valuation.}
\label{filtration of a valuation} A \emph{valuation} on $A$ with values in $\mathbb{Z}$
is a map $v:A\longrightarrow\mathbb{Z}\cup\{\infty\}$ such that
$v(a)=\infty$ if and only if $a=0$, and for all $a_1,a_2\in A$ one
has $v(a_1+a_2)\geq \min(v(a_1),v(a_2))$, and $v(a_1\cdot a_2) =
v(a_1)+v(a_2)$. If $A$ is a valued ring, then $A$ is a domain. By
setting
\begin{equation}\label{Filtration induced by the valuation}
\mathrm{Fil}_{d}(A) \; := \; \{ a\in A \;|\; v(a) \geq -d
\}\;,\quad\textrm{for all }d\geq 0,
\end{equation}
one obtains a  filtration on $A$. In particular, this
applies if $A$ is equal to $R[T]$, $TR[T]$, $R[T,T^{-1}]$,
$R[[T]][T^{-1}]$, $T^{-1}R[T^{-1}]$ (where  $R$ is a ring), or if $A$ is a valued field of any characteristic.

\subsubsection{Witt (co-)vectors of a filtered ring.}

\begin{definition}
For all integers $d,i \geq 0$ we set
\begin{equation}
\{d:p^i\}\;:=\;\left\{
\begin{array}{rcl}
d/p^i&\textrm{ if }&p^i | d\\
-1&\textrm{ if }&i> v_p(d)
\end{array}
\right.
\end{equation}
\end{definition}

We then have

\begin{definition}
Let $(A,\{\mathrm{Fil}_d(A)\}_{d\geq 0})$ be a filtered ring. We
define for all $m,d\geq 0$
\begin{equation}\label{Filtration of W_m(A)}
\mathrm{Fil}_d(\W_m(A))\;:=\;\{(\lambda_0,\ldots,\lambda_m)\in\W_m(A)\;|\;
\lambda_{m-i} \in \mathrm{Fil}_{\{d:p^i\}}(A)\}\;.
\end{equation}

\end{definition}

\begin{lemma}
The ring $\W_m(A)$ together with
$\{\mathrm{Fil}_d(\W_m(A))\}_{d\geq 0}$ is a filtered ring.
\end{lemma}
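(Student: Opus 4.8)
The plan is to verify the two defining conditions of a filtered ring directly from the definition \eqref{Filtration of W_m(A)}, namely that the $\mathrm{Fil}_d(\W_m(A))$ form an increasing chain of subgroups of $(\W_m(A),+)$, and that $\mathrm{Fil}_{d_1}(\W_m(A))\cdot\mathrm{Fil}_{d_2}(\W_m(A))\subseteq\mathrm{Fil}_{d_1+d_2}(\W_m(A))$. The key auxiliary fact is the elementary arithmetic inequality $\{d_1:p^i\}+\{d_2:p^i\}\leq\{d_1+d_2:p^i\}$ for all $i\geq 0$, with the convention that $\mathrm{Fil}_{-1}(A)=0$ and $\mathrm{Fil}_e(A)$ is understood for the values $e\in\{-1,0,1,2,\dots\}$ produced by $\{\,\cdot\,:p^i\}$; I would isolate this as the first step.

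First I would record the arithmetic lemma: if $p^i\mid d_1$ and $p^i\mid d_2$ then $p^i\mid d_1+d_2$ and $\{d_1:p^i\}+\{d_2:p^i\}=(d_1+d_2)/p^i=\{d_1+d_2:p^i\}$; if at least one of $d_1,d_2$ is not divisible by $p^i$, then the left-hand side is at most $\{d_1:p^i\}+(\text{something}\geq -1)$ but in any case is $\leq -1\cdot? $ — more precisely one checks that $\{d_1:p^i\}+\{d_2:p^i\}\leq \{d_1+d_2:p^i\}$ by splitting into the cases according to whether $i>v_p(d_1)$, $i>v_p(d_2)$, using that $\mathrm{Fil}_{-1}(A)=0\subseteq\mathrm{Fil}_e(A)$ and that $\mathrm{Fil}_{e}(A)\subseteq\mathrm{Fil}_{e'}(A)$ for $e\leq e'$. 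Since we also have $\mathrm{Fil}_e(A)=0$ for $e=-1$, any component forced into $\mathrm{Fil}_{-1}(A)$ is $0$, so the multiplicativity statement is automatically compatible. I would also note monotonicity: $\{d:p^i\}$ is non-decreasing in $d$ among those $d$ with a common divisibility, which together with $\mathrm{Fil}_e(A)\subseteq\mathrm{Fil}_{e+1}(A)$ gives $\mathrm{Fil}_d(\W_m(A))\subseteq\mathrm{Fil}_{d+1}(\W_m(A))$.

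Next, that each $\mathrm{Fil}_d(\W_m(A))$ is a subgroup: addition in $\W_m(A)$ is given by universal polynomials $S_j(\lambda_0,\dots,\lambda_j;\mu_0,\dots,\mu_j)$ that are isobaric of weight $p^j$ when $\lambda_\ell,\mu_\ell$ are assigned weight $p^\ell$; hence each monomial in $S_{m-i}$ has total "filtration degree" bounded, when $\lambda_{m-\ell},\mu_{m-\ell}\in\mathrm{Fil}_{\{d:p^\ell\}}(A)$, by $\sum_\ell (\text{exponent of }\lambda_{m-\ell}\text{ or }\mu_{m-\ell})\cdot\{d:p^\ell\}$, and the isobaric condition forces this to be $\leq\{d:p^i\}$ after using the inequality from the first step iterated. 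For the product, one argues the same way with the multiplication polynomials $P_j$, which are likewise isobaric of weight $p^j$; here the bound becomes a sum of terms of the shape $\{d_1:p^{\ell_1}\}+\{d_2:p^{\ell_2}\}$ with a weight constraint, and one reduces to $\{d_1:p^i\}+\{d_2:p^i\}\leq\{d_1+d_2:p^i\}$ by the monotonicity of $\mathrm{Fil}$ and the fact that $p^{\ell}\{d:p^\ell\}=d$ whenever $\{d:p^\ell\}\neq -1$.

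The main obstacle I anticipate is bookkeeping the isobaric/weighted-homogeneity estimates for the Witt addition and multiplication polynomials cleanly, in particular handling the degenerate slots where $\{d:p^i\}=-1$ (so the corresponding $\mathrm{Fil}_{-1}(A)=0$) without the inequalities breaking down; the cleanest route is probably to pass to the phantom components, where $\phi_j(\lambda)=\sum_{\ell\le j}p^\ell\lambda_\ell^{p^{j-\ell}}$ shows the weight $p^j$ assignment is forced, transfer the filtration statement to a statement about the phantom vector living in a suitable filtered overring, and then descend — but since $A$ need not contain $\mathbb{Z}[1/p]$ one must do this over $\W_m(A)$ itself, so I would instead verify the polynomial estimates directly by induction on $j$ using the recursion $p^j\lambda_j=\phi_j-\sum_{\ell<j}p^\ell\lambda_\ell^{p^{j-\ell}}$ only as a guide, and check the addition/multiplication polynomial bounds combinatorially.
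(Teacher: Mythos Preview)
Your overall plan---use the isobaric homogeneity of the Witt addition and multiplication polynomials---is exactly what the paper does. The gap is in your ``first step'': the inequality $\{d_1:p^i\}+\{d_2:p^i\}\leq\{d_1+d_2:p^i\}$ that you want to isolate is \emph{false}. Take $d_1=p$, $d_2=1$, $i=1$: then $\{p:p\}=1$ and $\{1:p\}=-1$, so the left side is $0$, while $\{p+1:p\}=-1$. So you cannot reduce the multiplicativity of the filtration to this inequality, and your paragraph attempting to verify it is where the argument would break.

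The paper sidesteps this entirely by never isolating such an inequality. It writes the $n$-th component $\eta_n$ of $\lb\cdot\bs{\mu}$ (with $n=m-i$) as a sum of monomials $\lambda_0^{i_0}\cdots\lambda_n^{i_n}\mu_0^{j_0}\cdots\mu_n^{j_n}$ subject to $\sum_k i_kp^k=p^n=\sum_k j_kp^k$, and then observes: in any \emph{nonvanishing} monomial, every $k$ with $i_k>0$ has $\lambda_k\neq 0$, hence $\{d_1:p^{m-k}\}\neq -1$, hence $p^{m-k}\mid d_1$; since some such $k\leq n=m-i$ occurs, one gets $p^{i}\mid d_1$, and likewise $p^i\mid d_2$. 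Thus $p^i\mid d_1+d_2$, so $\{(d_1+d_2):p^i\}=(d_1+d_2)/p^i$, and the isobaric constraint gives the filtration degree of the monomial as exactly $\sum_k i_k\,d_1/p^{m-k}+\sum_k j_k\,d_2/p^{m-k}=(d_1+d_2)/p^i$. In short, the ``degenerate slot'' problem you worry about is resolved not by an inequality on $\{\cdot:p^i\}$ but by the fact that degeneracy kills the monomial outright, and absent degeneracy the divisibility needed on $d_1+d_2$ is forced. Rewrite your argument along these lines and it goes through; the addition case is the same with the single constraint $\sum_k(i_k+j_k)p^k=p^n$.
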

\begin{proof}
Let $\lb,\bs{\mu}\in \mathrm{Fil}_d(\W_m(A))$, and let
$\bs{\nu}:=\lb+\bs{\mu}=(\nu_0,\ldots,\nu_m)$. Then, for all
$0\leq n\leq m$, $\nu_n$ is an isobaric polynomial in
$\lambda_0,\ldots,\lambda_n,\mu_0,\ldots,\mu_n$, more precisely
\begin{equation}\label{sum of witt vectors}
\nu_n=\sum_{i_0,\ldots,i_n,j_0,\ldots,j_n\geq
0}k_{i_0,\ldots,i_n,j_0,\ldots,j_n} \lambda_0^{i_0}\cdots
\lambda_{n}^{i_n}\mu_0^{j_0}\cdots \mu_{n}^{j_n}\;,
\end{equation}
where, $k_{0,\ldots,0}=0$, $k_{i_0,\ldots,i_n,j_0,\ldots,j_n} \in
\mathbb{N}$, and, for all $i_0,\ldots,i_n,j_0,\ldots,j_n\geq 0$,
one has
\begin{equation}
(i_0+i_1p+ \cdots +i_np^n)+(j_0+j_1p^1+\cdots+j_np^n) = p^n\;.
\end{equation}
Hence the condition $\lambda_{m-i},\mu_{m-i}\in
\mathrm{Fil}_{\{d:p^i\}}(A)$, implies $\nu_{m-i}\in
\mathrm{Fil}_{\{d:p^i\}}(A)$.

On the other hand, if $\lb \in \mathrm{Fil}_{d_1}(\W_m(A))$,
${\bs \mu} \in \mathrm{Fil}_{d_2}(\W_m(A))$, let
$\bs{\eta}:=\lb\cdot\bs{\mu}=(\eta_0,\ldots,\eta_m)$. Then for all
$0\leq n\leq m$, one has
\begin{equation}
\eta_{n}=\sum_{i_0,\ldots,i_n,j_0,\ldots,j_n\geq
0}k'_{i_0,\ldots,i_n,j_0,\ldots,j_n} \lambda_0^{i_0}\cdots
\lambda_{n}^{i_n}\mu_0^{j_0}\cdots \mu_{n}^{j_n}
\end{equation}
where, $k'_{0,\ldots,0}=0$, $k'_{i_0,\ldots,i_n,j_0,\ldots,j_n}
\in \mathbb{N}$, and, for all $i_0,\ldots,i_n,j_0,\ldots,j_n\geq
0$, one has
\begin{equation}
(i_0+i_1p+ \cdots +i_np^n)\;=\;p^n\;=\;(j_0+j_1p+\cdots+j_np^n)\;.
\end{equation}
Then the conditions $\lambda_{m-i}\in
\mathrm{Fil}_{\{d_1: p^i\}}(A)$, $\mu_{m-i}\in
\mathrm{Fil}_{\{d_2: p^i\}}(A)$ imply $\eta_{m-i}\in
\mathrm{Fil}_{\{(d_1+d_2) : p^i\} }(A)$.
\end{proof}

\subsubsection{Witt (co-)vectors of a graded ring.}\label{Witt (co-)vectors of a graded ring}
Let now $A=\oplus_{d\geq 0}\mathrm{Gr}_d(A)$ be a \emph{graded}
ring. We denote by
\begin{equation}
\W_m^{(d)}(A)\;,\qquad(\textrm{resp. } \CW^{(d)}(A))
\end{equation}
the subset of $\W_m(A)$ (resp. $\CW(A)$) formed by vectors
$(\lambda_0,\ldots,\lambda_m) \in \W_m(A)$ (resp. co-vectors
$(\cdots,0,0,\lambda_0,\ldots,\lambda_m) \in \CW(A)$) satisfying,
for all $i=0,\ldots,m$:

\begin{equation}
\lambda_{m-i}\in\mathrm{Gr}_{\{d:p^i\}}(A)\;,
\end{equation}

where by definition we set $\mathrm{Gr}_{-1}(A)=0$. It follows from  the equation \eqref{sum of witt vectors}, and from the
fact that $\mathrm{Gr}_{d_1}(A)\cdot \mathrm{Gr}_{d_2}(A)\subseteq
\mathrm{Gr}_{d_1+d_2}(A)$, that $\W_m^{(d)}(A)$ (resp.
$\CW^{(d)}(A)$) is closed under the sum in $\W_m(A)$ (resp.
$\CW(A)$). Moreover
\begin{equation}
\V(\W_{m}^{(d)}(A)) \; \; \subseteq \; \; \W_{m+1}^{(d)}(A)\;,
\end{equation}
and for all $d_1,d_2\geq 0$ one has
\begin{equation}
\W_m^{(d_1)}(A)\cdot \W_m^{(d_2)}(A) \; \subseteq \;
\W_m^{(d_1+d_2)}(A) \;.
\end{equation}
We notice that $\V(\W_{m}^{(d)}(A)) = \W_{m+1}^{(d)}(A)$ if
$d=np^v$, with $(n,p)=1$, and $v\leq m$. This proves that
\begin{equation}\label{CW^(d)(A)= varinjlim_m W_m^(d)(A)}
\CW^{(d)}(A)\;=\;\varinjlim_m\W_m^{(d)}(A)\;.
\end{equation}

\begin{lemma}\label{witt vector of a graduete lemma}
If $A=\oplus_{d\geq 0}\mathrm{Gr}_d(A)$ is graded, then the
filtration \eqref{Filtration of W_m(A)} on $\W_m(A)$ is a
grading and
\begin{equation}
\mathrm{Gr}_d(\W_m(A))\;=\; \W_m^{(d)}(A).
\end{equation}
The decomposition $\W_m(A)=\oplus_{d\geq
0}\W_m^{(d)}(A)=\oplus_{d\geq 0}\mathrm{Gr}_d(\W_m(A))$ pass to
the limit and defines a decomposition
\begin{equation}
\CW(A)\;=\;\oplus_{d\geq 0}\CW^{(d)}(A)\;=\;\oplus_{d\geq
0}\mathrm{Gr}_d(\CW(A))\;.
\end{equation}
\end{lemma}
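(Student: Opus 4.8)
The plan is to verify the two claims in turn, deriving everything from the already-established fact that $\{\mathrm{Fil}_d(\W_m(A))\}_{d\geq 0}$ is a ring filtration (the preceding Lemma) together with the combinatorial identity $\{d:p^i\}$ and the behaviour of $\V$. First I would show that the filtration is in fact a grading, i.e.\ that $\W_m(A)=\bigoplus_{d\geq 0}\W_m^{(d)}(A)$ as additive groups, and that $\mathrm{Gr}_d(\W_m(A))=\mathrm{Fil}_d/\mathrm{Fil}_{d-1}$ is canonically identified with $\W_m^{(d)}(A)$. For the direct-sum decomposition, fix $\bs\lambda=(\lambda_0,\ldots,\lambda_m)\in\W_m(A)$ and decompose each coordinate using the grading of $A$: write $\lambda_{m-i}=\sum_{e}\lambda_{m-i}^{(e)}$ with $\lambda_{m-i}^{(e)}\in\mathrm{Gr}_e(A)$. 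A homogeneous component of $\bs\lambda$ of total degree $d$ is the Witt vector whose $(m-i)$-th coordinate is $\lambda_{m-i}^{(\{d:p^i\})}$ (with the convention $\mathrm{Gr}_{-1}(A)=0$, so the $(m-i)$-th coordinate is forced to vanish whenever $p^i\nmid d$). Because every coordinate of $A$ is the finite sum of its homogeneous pieces, $\bs\lambda$ is the finite sum of these components, one in each $\W_m^{(d)}(A)$; uniqueness is immediate from uniqueness of homogeneous decompositions in $A$, coordinate by coordinate. This gives $\W_m(A)=\bigoplus_{d\geq 0}\W_m^{(d)}(A)$, and the excerpt has already noted that each $\W_m^{(d)}(A)$ is closed under addition and that $\W_m^{(d_1)}(A)\cdot\W_m^{(d_2)}(A)\subseteq\W_m^{(d_1+d_2)}(A)$, so this is a grading of the ring $\W_m(A)$.

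Next I would match this grading with the filtration of the previous Lemma. The associated filtration of the grading $\{\W_m^{(d)}(A)\}$ is, by definition, $\bigoplus_{0\leq e\leq d}\W_m^{(e)}(A)$; I claim this equals $\mathrm{Fil}_d(\W_m(A))$ from \eqref{Filtration of W_m(A)}. The inclusion $\subseteq$ is clear since each $\W_m^{(e)}(A)$ with $e\leq d$ satisfies the defining inequalities $\lambda_{m-i}\in\mathrm{Fil}_{\{e:p^i\}}(A)\subseteq\mathrm{Fil}_{\{d:p^i\}}(A)$ — here one uses the obvious monotonicity $\{e:p^i\}\leq\{d:p^i\}$ when $e\leq d$ (if $p^i\mid e$ and $p^i\mid d$ then $e/p^i\leq d/p^i$; and $\mathrm{Fil}_{-1}(A)=0\subseteq\mathrm{Fil}_{\{d:p^i\}}(A)$ handles the remaining cases). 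Conversely, if $\bs\lambda\in\mathrm{Fil}_d(\W_m(A))$, decompose it into homogeneous pieces as above: each piece of degree $e$ has $(m-i)$-th coordinate landing in $\mathrm{Gr}_{\{e:p^i\}}(A)$, but since $\lambda_{m-i}\in\mathrm{Fil}_{\{d:p^i\}}(A)=\bigoplus_{0\leq f\leq\{d:p^i\}}\mathrm{Gr}_f(A)$, only components with $\{e:p^i\}\leq\{d:p^i\}$ for every $i$ can be nonzero; the largest such $e$ is $d$ itself (taking $i=0$ gives $e\leq d$), so $\bs\lambda\in\bigoplus_{0\leq e\leq d}\W_m^{(e)}(A)$. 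Hence $\mathrm{Fil}_d(\W_m(A))=\bigoplus_{0\leq e\leq d}\W_m^{(e)}(A)$, and taking the quotient by $\mathrm{Fil}_{d-1}$ yields $\mathrm{Gr}_d(\W_m(A))\cong\W_m^{(d)}(A)$, as asserted.

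Finally, to pass to the colimit, I would invoke the already-recorded facts $\V(\W_m^{(d)}(A))\subseteq\W_{m+1}^{(d)}(A)$ and the stabilization $\V(\W_m^{(d)}(A))=\W_{m+1}^{(d)}(A)$ once $m\geq v_p(d)$, which is precisely \eqref{CW^(d)(A)= varinjlim_m W_m^(d)(A)}: $\CW^{(d)}(A)=\varinjlim_m\W_m^{(d)}(A)$. Since $\CW(A)=\varinjlim_m\W_m(A)$ and the transition maps $\V$ respect the decompositions $\W_m(A)=\bigoplus_d\W_m^{(d)}(A)$ coordinatewise in $d$, the colimit commutes with the (degreewise finite, hence arbitrary) direct sum, giving $\CW(A)=\bigoplus_{d\geq 0}\CW^{(d)}(A)$; and because $\mathrm{Gr}_d$ commutes with filtered colimits of filtered groups, $\mathrm{Gr}_d(\CW(A))=\varinjlim_m\mathrm{Gr}_d(\W_m(A))=\varinjlim_m\W_m^{(d)}(A)=\CW^{(d)}(A)$. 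The only point demanding genuine care — the main (mild) obstacle — is bookkeeping the convention $\mathrm{Gr}_{-1}(A)=0$ together with the two-case definition of $\{d:p^i\}$, making sure that ``$p^i\nmid d$'' forces the corresponding Witt coordinate to be zero consistently in both the grading and the filtration descriptions; once that is handled uniformly, everything else is a routine unwinding of definitions and of the combinatorics already set up in \eqref{sum of witt vectors}.
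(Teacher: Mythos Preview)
Your argument has a genuine gap: you treat Witt vector addition as if it were coordinate-wise. When you define the degree-$d$ component of $\bs\lambda=(\lambda_0,\ldots,\lambda_m)$ to be the vector whose $(m-i)$-th entry is $\lambda_{m-i}^{(\{d:p^i\})}$, and then assert that ``$\bs\lambda$ is the finite sum of these components'', you are implicitly summing coordinate by coordinate. But the group law on $\W_m(A)$ is given by the universal polynomials \eqref{sum of witt vectors}, not by componentwise addition, so this step fails. Concretely, take $A=\mathbb{F}_p[x]$ with the degree grading, $m=1$, and $\bs\lambda=(1+x,0)$. Your recipe gives the two pieces $(1,0)\in\W_1^{(0)}(A)$ and $(x,0)\in\W_1^{(p)}(A)$, but their Witt sum is $(1+x,\,c(x))$ with $c(x)=-\sum_{k=1}^{p-1}\tfrac{1}{p}\binom{p}{k}x^k\neq 0$, not $(1+x,0)$. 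The actual homogeneous decomposition of $(1+x,0)$ has nonzero components in every degree $1\leq d\leq p-1$ as well, coming from $(0,-c(x))$. The same confusion recurs in your ``Conversely'' step for matching the grading with the filtration, and in your uniqueness claim (``coordinate by coordinate'').

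The paper's proof avoids this trap by arguing inductively through the Verschiebung rather than by proposing an explicit formula for the components. One writes $(\lambda_0,\ldots,\lambda_m)=\sum_d(\lambda_0^{(d)},0,\ldots,0)+(0,\mu_1,\ldots,\mu_m)$ in $\W_m(A)$ for suitable $\mu_j$ (which absorb exactly the correction terms you omitted), notes that each Teichm\"uller-type summand $(\lambda_0^{(d)},0,\ldots,0)$ lies in a single $\W_m^{(dp^m)}(A)$, and iterates on the $\V$-image $(0,\mu_1,\ldots,\mu_m)$. Directness is then proved by looking only at the $0$-th coordinate (the unique place where Witt addition \emph{is} ordinary addition), forcing all $0$-th coordinates to vanish, and descending via $\V$ to $\W_{m-1}$. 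Your passage to the colimit at the end is fine once the finite-level statement is established; the issue is entirely in the first paragraph.
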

\begin{proof}
We prove first that $\W_m(A)\cong\oplus_{d\geq 0}\W_m^{(d)}(A)$.
If $m=0$, then $\W_0(A)=A$, the lemma holds. Consider now a vector
$(\lambda_0,\ldots,\lambda_m)\in\W_m(A)$. If
$\lambda_0=\sum_{d\geq 0}\lambda_{0}^{(d)}\in A$, with
$\lambda_{0}^{(d)}\in\mathrm{Gr}_d(A)$, then
\begin{equation}
(\lambda_0,\lambda_1,\ldots,\lambda_m)=(\sum_{d\geq
0}\lambda_{0}^{(d)},0,\ldots,0)+(0,\lambda_1,\ldots,\lambda_m)=\sum_{d\geq
0}(\lambda_{0}^{(d)},0,\ldots,0)+(0,\mu_1,\ldots,\mu_m) ,
\end{equation}
for some $\mu_1,\ldots,\mu_m \in A$. The process can be iterated for 
$(0,\mu_1,\ldots,\mu_m)$.
This proves that $\W_m(A)$ is generated by the elements of the
form $(0,\ldots,0,x,0,\ldots,0)$, with $x\in\mathrm{Gr}_d(A)$,
placed in the $i$-th position, where $i$ (resp. $d$) runs on the
set $\{0,\ldots,m\}$ (resp. $\mathbb{Z}_{\geq 0}$). Hence
$\W_m(A)=\sum_{d\geq 0}\W_m^{(d)}(A)$.

We prove now that the sum is direct by induction on $m\geq 0$. For
$m=0$ this reduces to $A\cong\oplus_{d\geq 0}\mathrm{Gr}_d(A)$.
Let now $m\geq 1$. If $\sum_{d\geq 0}\lb^{(d)}=0$, with
$\lb^{(d)}\in \W_m^{(d)}(A)$, for all $d\geq 0$, then
$0=\sum_{d\geq 0}\lambda_0^{(d)}$, hence $\lambda_0^{(d)}=0$ for
all $d\geq 0$, because $A\cong\oplus_{d\geq 0}\mathrm{Gr}_d(A)$.
Then every $\lb^{(d)}$ belongs to $\V(\W_{m-1}^{(d)}(A))$. By
induction we have $\lb^{(d)}=0$ for all $d\geq 0$. Finally the
decomposition pass to the limit by \eqref{CW^(d)(A)= varinjlim_m
W_m^(d)(A)} and because every Witt co-vector belongs to
$\W_m(A)$, for some $m\geq 0$.
\end{proof}

\subsection{Settings}\label{setting}

\begin{hypothesis}\label{hypothesis of finite p-basis}
Let $p>0$, and let $q=p^h$, $h>0$. Let $k$ be a field of
characteristic $p$ containing $\mathbb{F}_q$. We assume that $k$
admits a finite $p$-basis $\{\bar{u}_1,\ldots,\bar{u}_r\}$.
\end{hypothesis}

\begin{notation}
\emph{For all field $\kappa$ of characteristic $p>0$, we denote by
$\C_{\kappa}$ a Cohen ring of $\kappa$ (i.e. a complete discrete
valued ring of characteristic $0$ with maximal ideal
$p\C_{\kappa}$ and with residue field $\kappa$.).}
\end{notation}
Let $\mathrm{E}$ be a complete discrete valuation  field of characteristic $p$,
with residue field $k$. Let $t$ be a uniformizer of $\O_{\E}$. If
a section $k\subset \O_{\E}$ of the projection $\O_{\E}\to k$ is
chosen, then one has an isomorphism (cf. \cite[\protect{Ch.9,$\S
3$,$\textsc{n}^{\mathrm{o}}3$,Th.1}]{Bou})
\begin{equation}\label{trivialization}
\mathrm{E}\cong k(\!(t)\!)\;,\quad \O_{\E}\cong k[\![t]\!]\;.
\end{equation}

\begin{lemma}\label{lifting of frobenius ggg} Let
$\kappa_1,\kappa_2$ be two fields, and let
$\bar{f}:\kappa_1\to\kappa_2$ be a morphism of fields. Let
$\{\bar{x}_i\}_{i\in I}$ be a $p$-basis of $\kappa_1$, and let
$\bar{y}_i:=\bar{f}(\bar{x}_i)$. Let $\C_{\kappa_1},\C_{\kappa_2}$
be two Cohen rings  with residue field
$\kappa_1,\kappa_2$ respectively, and let $\{x_i\}_{i\in
I}\subset\C_{\kappa_1}$, $\{y_i\}_{i\in I}\subset\C_{\kappa_2}$ be
arbitrary liftings of $\{\bar{x}_i\}_{i\in I}$ and
$\{\bar{y}_i\}_{i\in I}$. Then there exists a unique ring morphism
$f:\C_{\kappa_1}\to \C_{\kappa_2}$ of $\bar{f}$ sending $x_i$ into
$y_i$.
\end{lemma}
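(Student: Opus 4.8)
The plan is to reduce the statement to the universal property of Cohen rings relative to a choice of $p$-basis. First I would recall the key structural fact (the version of Cohen's theorem one uses here, essentially \cite[Ch.9]{Bou} together with the standard lifting theory for $p$-bases): if $\kappa$ is a field of characteristic $p$ with $p$-basis $\{\bar{x}_i\}_{i\in I}$, and $\C_\kappa$ is a Cohen ring of $\kappa$, then for any lifting $\{x_i\}_{i\in I}\subset\C_\kappa$ of the $p$-basis the ring $\C_\kappa$ is, as a topological ring, the $p$-adic completion of the colimit of the subrings generated by the $p^n$-th roots data; more usefully, $\C_\kappa$ is \emph{formally \'etale} over $\mathbb{Z}_p$ after adjoining the $x_i$, in the sense that any continuous ring map out of the subring $\mathbb{Z}_p[x_i : i\in I]^{\wedge}$ extends uniquely once one knows where the $x_i$ go and that the target is $p$-adically complete and $p$-torsion-free. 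Concretely, the lifting $\{x_i\}$ realizes $\kappa$ as $\C_\kappa/p\C_\kappa$ with $\C_\kappa$ $p$-adically complete, $p$-torsion free, and such that $\{x_i\}$ maps to a $p$-basis of the residue field.

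The main step is then the following existence-and-uniqueness assertion, which I would prove by successive approximation modulo powers of $p$. Set $B_n:=\C_{\kappa_2}/p^{n+1}$. I construct compatible ring maps $f_n:\C_{\kappa_1}\to B_n$ lifting $\bar f$ and sending $x_i\mapsto y_i \bmod p^{n+1}$. For $n=0$ we take $f_0=\bar f$ (using $\C_{\kappa_1}/p = \kappa_1$, $B_0=\kappa_2$). Given $f_{n-1}$, the obstruction to lifting it to $f_n$ lies in a cohomology/derivation group built from the K\"ahler differentials of $\kappa_1$ over $\mathbb{F}_p$; the crucial point is that a $p$-basis trivializes exactly this obstruction: $\Omega^1_{\kappa_1/\mathbb{F}_p}$ is free on $\{d\bar x_i\}_{i\in I}$, so a lift exists, and prescribing the images of the $x_i$ (namely $y_i$) pins down the torsor of lifts, giving a \emph{unique} $f_n$ with $f_n(x_i)\equiv y_i$. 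Passing to the inverse limit over $n$ — using that $\C_{\kappa_1}$ is $p$-adically complete so that $\C_{\kappa_1}=\varprojlim \C_{\kappa_1}/p^{n+1}$, and that $\C_{\kappa_2}=\varprojlim B_n$ — yields the desired continuous ring morphism $f:\C_{\kappa_1}\to\C_{\kappa_2}$ lifting $\bar f$ with $f(x_i)=y_i$. Uniqueness of $f$ follows from the uniqueness of each $f_n$ together with $p$-adic separatedness of $\C_{\kappa_2}$.

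I expect the main obstacle to be the inductive lifting step, i.e.\ verifying cleanly that the obstruction to extending $f_{n-1}$ over $B_n$ is governed by $\mathrm{Hom}_{\kappa_2}(\kappa_2\otimes_{\kappa_1}\Omega^1_{\kappa_1/\mathbb{F}_p},\, p^n B_n/p^{n+1}B_n)$ and that the freeness of $\Omega^1_{\kappa_1/\mathbb{F}_p}$ on the $p$-basis simultaneously kills the obstruction and rigidifies the choice. The point to be careful about is that $I$ may be infinite: one works with the subring generated by $\mathbb{Z}_p$ and the $x_i$, extends the map there by the standard "one variable at a time / universal property of a polynomial-type generator after completion" argument, and then uses that this subring is $p$-adically dense in $\C_{\kappa_1}$ and that $\C_{\kappa_1}$ is formally smooth (indeed formally \'etale after adjoining the $x_i$) over $\mathbb{Z}_p$ to get a unique continuous extension to all of $\C_{\kappa_1}$. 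Everything else — compatibility with $\bar f$, that $f$ is a ring homomorphism, continuity — is then automatic from the construction.
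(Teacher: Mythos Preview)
Your approach is correct and is essentially the standard deformation-theoretic argument: formal smoothness of $\C_{\kappa_1}$ over $\mathbb{Z}_p$ gives the existence of a lift at each stage, and the torsor of lifts under $\mathrm{Hom}_{\kappa_1}(\Omega^1_{\kappa_1/\mathbb{F}_p},\kappa_2)$ is rigidified by the images of the $x_i$ because $\{d\bar{x}_i\}$ is a basis of $\Omega^1_{\kappa_1/\mathbb{F}_p}$. The only minor wording issue is the phrase ``formally \'etale after adjoining the $x_i$'': what you actually use is formal smoothness of $\C_{\kappa_1}/\mathbb{Z}_p$ together with the fact that the continuous differentials are free on the $dx_i$; the argument you then give is exactly this.

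By contrast, the paper gives no self-contained proof at all: it simply refers to Bourbaki (Alg.\ Comm., Ch.~IX, \S1 n$^{\mathrm{o}}$1, Prop.~2 and \S2 Ex.~4) and to Whitney. Those references contain precisely the lifting-by-successive-approximation argument you outline, so your proposal is not a different route but rather an unpacking of the cited proof. If anything, your sketch is more informative than the paper's one-line citation; the only place to tighten it is the handling of a possibly infinite $p$-basis, where instead of passing through a ``polynomial-type'' subring it is cleaner to stay with the derivation-torsor description, which works verbatim for arbitrary $I$.
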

\begin{proof}
See \cite[$\S1$, $\textsc{n}^{\mathrm{o}}1$, Prop.2; Ex.4 of
$\S2$, p.70]{Bou}. See also \cite{Whitney}.
\end{proof}

Let $\C_k$ be a Cohen ring of $k$, and let $L_0$ be its field of
fractions. Since $\O_{\mathcal{E}_{L_0}}$ is a complete discrete
valued ring with maximal ideal generated by $p$, and with residue
field isomorphic to $\E$, then
\begin{equation}\label{C_E=O_E_L_0}
\C_k\cong\O_{L_0}\;,\qquad\C_{\E}\cong \O_{\mathcal{E}_{L_0}}\;.
\end{equation}

Let $K$ be a finite extension of $\mathbb{Q}_p$ with residue field
$\mathbb{F}_q$.  We set $ L := K \otimes_{\W(\mathbb{F}_q)}
\O_{L_{0}} $ and hence
\begin{equation}
 \O_L := \O_K \otimes_{\W(\mathbb{F}_q)} \O_{L_{0}} \;.
\end{equation}
One sees then that
$\O_{\mathcal{E}_L}=\O_{\mathcal{E}_{L_0}}\otimes_{\W(\mathbb{F}_q)}\O_K$.

\begin{definition}\label{Definition of Frobenius} Let $\{u_1,\ldots,u_r\}\subset\O_{L_0}$ be an arbitrary
lifting of the $p$-basis $\{\bar{u}_1,\ldots,\bar{u}_r\}$ of $k$
(cf. Hypothesis \ref{hypothesis of finite p-basis}). By Lemma
\ref{lifting of frobenius ggg}, we fix a  Frobenius $\varphi$ of
$\O_{L_0}$ sending $u_1,\ldots,u_r$ into $u_1^q,\ldots,u_r^q$. We
denote again by $\varphi$ the Frobenius
$\mathrm{Id}_{\O_K}\otimes\varphi$ on $\O_L$. We extend to
$\O_{\mathcal{E}_{L}}$, $\O_{\Ed_L}$, and $\R_L$ this Frobenius by
sending $T,u_1,\ldots,u_r$ into $T^q,u_1^q,\ldots,u_r^q$.
\end{definition}

The situation is exposed in the following diagram:
\begin{equation}
\xymatrix{ &\O_K\ar[r]&\O_L\ar[r]&\O_{\mathcal{E}_L}\\
\mathbb{Z}_p\ar[r]\ar[d]&\W(\mathbb{F}_q)\ar[r]\ar[u]\ar[d]&\O_{L_0}\ar[r]\ar[u]\ar[d]&\O_{\mathcal{E}_{L_0}}\ar[u]\ar[d]\\
\mathbb{F}_p\ar[r]&\mathbb{F}_q\ar[r]&k\ar[r]&\E}.
\end{equation}
We denote by $K$, $L_0$, $L$, $\mathcal{E}_L$,
$\mathcal{E}_{L_0}$, the fractions fields of $\O_K$, $\O_{L_0}$,
$\O_L$, $\O_{\mathcal{E}_L}$, and $\O_{\mathcal{E}_{L_0}}$
respectively.

\subsection{The Cohen rings of
\protect{$k_0(\bar{u}_1,\ldots,\bar{u}_r)$} and
\protect{$k_0(\!(\bar{u}_1)\!) \ldots  (\!( \bar{u}_r)\!)$}} \label{explicit
Cohen if k is polynomial} \label{Cohen ring of k_0((u_1,...u_r))}
Let $k_0$ be a perfect field, and let
$\{\bar{u}_1,\ldots,\bar{u}_r\}$ be an algebraically independent
family over $k_0$. In this subsection we give a description of the
Cohen ring $\C_k=\O_{L_0}$ in the case in which
$k=k_0(\bar{u}_1,\ldots,\bar{u}_r)$ or
$k=k_0(\!(\bar{u}_1)\!) \ldots  (\!( \bar{u}_r)\!)$.

Let $\{u_1,\ldots,u_r\}\subset \O_{L_0}$ be an algebraically
independent family over $F_0:=\mathrm{Frac}(\W(k_0))$, lifting
$\{\bar{u}_1,\ldots,\bar{u}_r\}$. We will define the Cohen ring of 
$k=k_0(\!(\bar{u}_1)\!) \ldots  (\!( \bar{u}_r)\!)$,  $\mathcal{E}_{F_0,(u_1, \dots , u_r)}$ inductively on $r$.  For $r=1$
we define  
$\mathcal{E}_{F_0,(u_1)}$ as the $F_0$-vector space
whose elements are series
$f(u_1)=\sum_{i\in\mathbb{Z}}a_{i}
u_1^{i}$, with $a_{i}\in F_0$,
satisfying
\begin{eqnarray}\label{def of p-ring of k_0((u))-1}
\sup_{i}|a_{i}|_{_{F_0}}& \;\; < \;\; &+\infty\;,\\
\lim_{i\to -\infty}|a_{i}|_{_{F_0}}& \;\; = \;\; &0\;.\label{def
of p-ring of k_0((u))-2}
\end{eqnarray}

This  is a complete field under the absolute value 
$$
|f(u_1)|_{\mathcal{E}_{F_0,(u_1)}}=\max_i |a_{i}|_{_{F_0}},
$$
 it is also unramified  over $\mathbb{Q}_p$ and its residue field is $ k_0(\!(\bar{u}_1)\!)$. Hence its  valuation ring is a Cohen ring of   $ k_0(\!(\bar{u}_1)\!)$. For $r=2$ using the case $r=1$  we can define $\mathcal{E}_{{\mathcal{E}_{F_0,(u_1)}},(u_2)}$. This is a $\mathcal{E}_{F_0,(u_1)}$ vector space whose elements are Laurent series in $u_2$ with coefficients in 
$\mathcal{E}_{F_0,(u_1)}$  satisfying the previous conditions  \eqref{def of p-ring of k_0((u))-1} and 
\eqref{def of p-ring of k_0((u))-2}.  It is a complete field  under the extension of the previous norm. In fact,  
if  $f(u_2)=\sum_{i\in\mathbb{Z}}a_{i}
u_2^{i}$, with $a_{i}\in \mathcal{E}_{F_0,(u_1)}$ we define 
$$
|f(u_2)|_{\mathcal{E}_{{\mathcal{E}_{F_0,(u_1)}},(u_2)}}=\max_i |a_{i}|_{\mathcal{E}_{F_0,(u_1)}},
$$
it is  again non ramified and its residue field is $k_0(\!(\bar{u}_1)\!)   (\!( \bar{u}_2)\!)$. This norm  is an extension of the Gauss norm on rational functions.  We may then continue for a general $r$ by defining 
$\mathcal{E}_{F_0,(u_1, \dots , u_r)}= \mathcal{E}_{\mathcal{E}_{F_0,(u_1, \dots , u_{r-1})},( u_r)}$ a Cohen ring for  $k_0(\!(\bar{u}_1)\!) \ldots  (\!( \bar{u}_r)\!)$.

\begin{remark}
If $k=k_0(\bar{u}_1,\ldots,\bar{u}_r)$, then the Cohen ring
$\C_{k}=\O_{L_0}$ is the ring of integers of the completion of
$F_0(u_1,\ldots,u_r)$, with respect to the Gauss norm.
\end{remark}

\subsection{Differentials}

We fix an arbitrary lifting $\{u_1,\ldots,u_r\}\subset
\C_k=\O_{L_0}$ of the \emph{finite} $p$-basis
$\{\bar{u}_1,\ldots,\bar{u}_r\}\subset k$ of $k$.
\begin{lemma}\label{Omega^1 is free}
Let $\widehat{\Omega}^1$ denote the module of continuous
differentials. The following assertions hold:
\begin{enumerate}
\item $\widehat{\Omega}_{\O_L/\O_K}^{1}$ is a free
$\O_L$-module of rank $r$, generated by $du_1,\ldots,du_r$, \item
$\mathrm{dim}_L\widehat{\Omega}_{L/K}^{1}=r$, and
$\widehat{\Omega}_{L/K}^{1}$
is generated by $du_1,\ldots,du_r$.
\end{enumerate}
\end{lemma}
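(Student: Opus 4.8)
The plan is to prove both assertions by reducing to the standard structure theory of continuous differentials for complete discrete valuation rings, using the finiteness of the $p$-basis in an essential way. First I would treat (1). Since $\O_L = \O_K \otimes_{\W(\mathbb{F}_q)} \O_{L_0}$ and $\O_K / \W(\mathbb{F}_q)$ is finite étale (indeed $\O_K$ is unramified over $\W(\mathbb{F}_q)$... actually $K/\mathrm{Frac}(\W(\mathbb{F}_q))$ may be ramified, but $\widehat{\Omega}^1_{\O_K/\W(\mathbb{F}_q)}$ is still torsion, so it contributes nothing to the free part), base change for Kähler differentials gives $\widehat{\Omega}^1_{\O_L/\O_K} \cong \widehat{\Omega}^1_{\O_{L_0}/\W(\mathbb{F}_q)} \otimes_{\O_{L_0}} \O_L$ up to torsion; more precisely it suffices to show $\widehat{\Omega}^1_{\O_{L_0}/\mathbb{Z}_p}$ is free of rank $r$ on $du_1,\dots,du_r$ and that the torsion part of $\widehat{\Omega}^1_{\O_K/\mathbb{Z}_p}$ does not survive. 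So the heart of (1) is the claim that for the Cohen ring $\O_{L_0} = \C_k$ of a field $k$ with finite $p$-basis $\{\bar u_1,\dots,\bar u_r\}$, the module $\widehat{\Omega}^1_{\O_{L_0}/\mathbb{Z}_p}$ is free of rank $r$ with basis $du_1,\dots,du_r$ for any lifting $u_i$ of $\bar u_i$.

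For that claim I would argue as follows. The reduction mod $p$ of $\widehat{\Omega}^1_{\O_{L_0}/\mathbb{Z}_p}$ is $\Omega^1_{k/\mathbb{F}_p}$ (continuous differentials commute with this quotient since $p$ is a non-zero-divisor generating the maximal ideal and the relevant modules are $p$-adically complete and finitely generated), and for a field of characteristic $p$ with finite $p$-basis $\{\bar u_1,\dots,\bar u_r\}$ the module $\Omega^1_{k/\mathbb{F}_p}$ is free of rank $r$ on $d\bar u_1,\dots,d\bar u_r$ — this is the defining property of a $p$-basis. Now $\widehat{\Omega}^1_{\O_{L_0}/\mathbb{Z}_p}$ is a finitely generated module over the complete local ring $\O_{L_0}$, so by Nakayama the $du_i$ generate it. To see there are no relations, I would use the existence of a Frobenius lift: by Lemma \ref{lifting of frobenius ggg} there is a ring endomorphism $\varphi$ of $\O_{L_0}$ with $\varphi(u_i) = u_i^q$ reducing to the $q$-power map on $k$; then $\varphi$ induces a map on $\widehat{\Omega}^1$, and iterating shows that $\widehat{\Omega}^1_{\O_{L_0}/\mathbb{Z}_p}$ has no $p$-torsion and no $\O_{L_0}$-torsion, whence freeness of rank $r$ follows from the rank computation mod $p$. (Alternatively one invokes directly the cited Bourbaki results, \cite[\S1,$\textsc{n}^{\mathrm{o}}1$, Prop.2]{Bou}, which describe $\O_{L_0}$ as formally smooth of the correct "dimension" over $\mathbb{Z}_p$ via the $p$-basis lift, yielding freeness of $\widehat{\Omega}^1$ on the $du_i$ immediately.)

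Next I would deduce (2) from (1) by inverting $p$. We have $\widehat{\Omega}^1_{L/K} = \widehat{\Omega}^1_{\O_L/\O_K} \otimes_{\O_L} L$, because $L = \O_L[1/p]$ and continuous differentials are compatible with this localization (the continuous differentials of $L/K$ are computed from the integral model since $L,K$ are discretely valued and $\O_L$ is topologically of finite type over $\O_K$ via the $u_i$). Hence from (1), $\widehat{\Omega}^1_{L/K}$ is an $L$-vector space of dimension $r$ with basis $du_1,\dots,du_r$, which is exactly assertion (2).

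The main obstacle I anticipate is the bookkeeping around "continuous" differentials: one must be careful that $\widehat{\Omega}^1$ (the $p$-adically completed, or $\mathfrak{m}$-adically completed, module of differentials) is what behaves well under reduction mod $p$ and under the base changes $\O_{L_0}\rightsquigarrow\O_L$ and $\O_L\rightsquigarrow L$ — the naive (non-completed) $\Omega^1$ would be huge and not finitely generated. The clean way to handle this uniformly, and the step I would spend the most care on, is to establish the base-change / reduction compatibilities for $\widehat{\Omega}^1$ in the precise setting at hand (complete discrete valuation rings of mixed characteristic with finite $p$-basis on the residue field), either directly or by citing \cite{Bou} and the formal smoothness of Cohen rings equipped with a $p$-basis lift; once those compatibilities are in place, both statements follow from the characteristic-$p$ fact that a finite $p$-basis gives a free $\Omega^1$ of the expected rank.
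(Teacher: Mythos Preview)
Your proposal is correct and follows essentially the same route as the paper, which simply cites the EGA references (EGA~IV$_0$, 19.8.2, 20.4.9, 20.4.10; EGA~I$_0$, 7.2.9; EGA~III$_0$, 10.1.3) encoding the formal smoothness of the Cohen ring over $\mathbb{Z}_p$ via a lifted $p$-basis together with the compatibility of continuous differentials with completion and base change; your sketch is a faithful unpacking of exactly that. The only soft spot is the Frobenius-iteration argument for torsion-freeness, which is not convincing as written, but your alternative via formal smoothness is precisely what those references deliver and is the argument you should keep (note also that the aside about ramification of $K/\mathrm{Frac}(\W(\mathbb{F}_q))$ is unnecessary: you are computing $\widehat{\Omega}^1_{\O_L/\O_K}$, so straight base change along $\W(\mathbb{F}_q)\to\O_K$ of $\widehat{\Omega}^1_{\O_{L_0}/\W(\mathbb{F}_q)}$ suffices).
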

\begin{proof}
The proof can be done by using \cite[$\bs{0}_{IV}$, Prop.19.8.2,
Th.20.4.9, Cor.20.4.10]{EGAIV-1}, \cite[$\bs{0}_{I}$,
Prop.7.2.9]{EGAI-1}, and \cite[$\bs{0}_{III}$, 10.1.3]{EGAIII-1},
\end{proof}

\label{u_1,....,u_r alge independent} The elements
$\bar{u}_1,\ldots,\bar{u}_r\in k$ are algebraically independent
over $\mathbb{F}_q$ (cf. \cite[Ch.V, $\S 8$ Ex.9]{Bou-Alg-4-5}).
So the elements $u_1,\ldots,u_r$ are algebraically independent
over $\mathbb{Q}_p$ and over $K$, since a polynomial relation
$P(u_1,\ldots,u_r)=0$ will imply, by reduction, a relation on
$\{\bar{u}_1,\ldots,\bar{u}_r\}$. Since $L$ has the $p$-adic
topology, hence the topology induced by $L$ on $K(u_1,\ldots,u_r)$
is the $p$-adic one, which coincides with that defined by the
Gauss norm $|\sum a_{i_1,\ldots,i_r}u_1^{i_1}\cdots
u_r^{i_r}|_{\mathrm{Gauss}}:=\sup |a_{i_1,\ldots,i_r}|_K$. In
particular $L$ contains the field
$\C_{\mathbb{F}_q(\bar{u}_1,\ldots,\bar{u}_r)}
\otimes_{\W(\mathbb{F}_q)}K= (K(u_1,\ldots,u_r),
|\cdot|_{\mathrm{Gauss}})\widehat{\;\;}$, where
$(\cdot)^{\widehat{\;}}$ means the \emph{completion with respect
to the Gauss norm}.

Let $I\subseteq\mathbb{R}_{\geq 0}$. Let
$\widehat{\Omega}^1_{\a_{L}(I)/K}$ denote \emph{continuous}
differentials, with respect to the topology of $\a_L(I)$ (cf.
section \ref{rings}), then
\begin{equation}
\widehat{\Omega}^1_{\a_{L}(I)/K} \;\; \cong \;\; \a_L(I)\cdot
dT\oplus\Bigl(\bigoplus_{i=1}^r\a_L(I)\cdot du_i\Bigr)\;.
\end{equation}
Consequently, also $\widehat{\Omega}^1_{\mathcal{E}_L/K}$,
$\widehat{\Omega}^1_{\Ed_L/K}$, $\widehat{\Omega}^1_{\R_L/K}$ are
freely generated by $dT,du_1,\ldots,du_r$.

\subsection{Spectral norms}

\label{spectral norm ???} For all complete valued ring
$(H,|.|_H)/(\mathbb{Z}_p,|.|)$, and all $\mathbb{Z}_p$-derivation
$\partial:H\to H$, we set
\begin{equation}
|\partial^n|_{H} := \sup_{h\in H, h \not=0}
|\partial^n(h)|_H/|h|_{H}\;,\qquad |\partial|_{H,\mathrm{Sp}} :=
\lim_{n\to\infty}|\partial^n|^{1/n}_{H} \;,
\end{equation}

This section is devote to proving  the following

\begin{lemma}[(\protect{\cite[Remark 2.1.5]{Ked-Swan}})]
\label{False lemma} Let $L$ be as in section \ref{setting}. Let
\begin{equation}
\omega\;:=\;|p|^{\frac{1}{p-1}}\;.
\end{equation}
Then one has
\begin{equation}\begin{array}{lclclcl}
|d/dT|_{\mathcal{F}_{L,\rho}}&=&\rho^{-1}\;,&&|d/du_i|_{\mathcal{F}_{L,\rho}}&=&1\;,\\
|d/dT|_{\mathcal{F}_{L,\rho},\mathrm{Sp}}&=&\omega\cdot\rho^{-1}\;,&&|d/du_i|_{\mathcal{F}_{L,\rho},\mathrm{Sp}}&=&\omega\;.\\
\end{array}
\end{equation}
\end{lemma}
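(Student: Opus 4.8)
The plan is to treat the two derivations $d/dT$ and $d/du_i$ in parallel, reducing everything to a computation on the dense subfield $L(T) \subset \mathcal{F}_{L,\rho}$, since the operator norm $|\cdot|_{\mathcal{F}_{L,\rho}}$ of a continuous derivation is determined by its values on a dense subset. First I would establish the \emph{operator} norms $|d/dT|_{\mathcal{F}_{L,\rho}} = \rho^{-1}$ and $|d/du_i|_{\mathcal{F}_{L,\rho}} = 1$. For the lower bounds one evaluates on the obvious test elements: $(d/dT)(T) = 1$ gives $|d/dT| \geq |1|_\rho/|T|_\rho = \rho^{-1}$, and $(d/du_i)(u_i) = 1$ together with $|u_i|_\rho = |u_i|_L \leq 1$ (one may even arrange $|u_i|_L = 1$, or simply note $|u_i|_\rho$ is independent of $\rho$) gives $|d/du_i| \geq 1$. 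For the upper bounds, write a general element of $L(T)$ in partial-fraction form over $\overline{L}$ (or argue via the Gauss-norm multiplicativity of $|\cdot|_\rho$ on $\a_L([\rho])$): on a polynomial $\sum a_i T^i$ one has $|(d/dT)(\sum a_i T^i)|_\rho = |\sum i a_i T^{i-1}|_\rho \leq \rho^{-1}\sup_i |a_i|\rho^i$ because $|i| \leq 1$; extending to rational functions via the quotient rule and the multiplicativity of $|\cdot|_\rho$ gives $|d/dT|_{\mathcal{F}_{L,\rho}} \leq \rho^{-1}$, hence equality. The same Leibniz-rule bookkeeping, now using that $u_i$ acts like a ``constant of norm $\leq 1$'' with respect to $T$ and that $d/du_i$ kills $T$ and the other $u_j$, yields $|d/du_i|_{\mathcal{F}_{L,\rho}} \leq 1$.

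Next I would pass to the spectral (i.e. asymptotic) norms. The key input is the standard $p$-adic fact that for a $\mathbb{Z}_p$-derivation $\partial$ on a $\mathbb{Z}_p$-algebra, the iterates satisfy $\partial^n/n!$ still has controlled norm, so that $|\partial^n|_H \leq |n!|^{-1} \cdot (\text{something})^n$; more precisely, by the theory of the radius of convergence of solutions (recalled in the section on irregularities) the spectral norm of a derivation whose operator norm is $c$ is at most $\omega \cdot c$, where $\omega = |p|^{1/(p-1)}$, and this bound is attained precisely when $\partial$ has a ``generic'' behaviour — equivalently when the associated differential module of rank one defined by $\partial(y) = y$ has the smallest possible radius. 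Concretely, for $d/dT$ one computes $(d/dT)^n(T^n)/n! = 1$, so $|(d/dT)^n|_{\mathcal{F}_{L,\rho}} \geq |n!|^{-1}\rho^{-n}$; taking $n$-th roots and using $\lim_n |n!|^{-1/n} = \omega^{-1} \cdot \ldots$ — here I must be careful: $\lim_n |n!|^{1/n} = \omega$ would give the wrong direction, so the right statement is $|n!|^{-1/n} \to \omega^{-1}$ is false; rather $|n!| = |p|^{(n - s_p(n))/(p-1)}$ so $|n!|^{1/n} \to |p|^{1/(p-1)} = \omega$, giving $|(d/dT)^n|^{1/n} \geq \omega^{-1}\cdot\omega\cdot\rho^{-1}$... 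I would organize this as: the lower bound $|(d/dT)^n|_\rho \geq \rho^{-n}$ from $(d/dT)^n(T^n) = n!$ only gives $|n!|\rho^{-n}$, whose $n$-th root tends to $\omega\rho^{-1}$, and the upper bound $|(d/dT)^n|_\rho \leq \sup_{0\le j\le n}|n!/j!|\,\rho^{-n} \cdot(\ldots)$ — done carefully on monomials $T^m$ with $m \geq n$, where $(d/dT)^n(T^m) = m(m-1)\cdots(m-n+1)T^{m-n}$ and $|m(m-1)\cdots(m-n+1)| = |n!|\cdot|\binom{m}{n}| \leq |n!|$ — also gives $|n!|\rho^{-n}$. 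Hence $|d/dT|_{\mathcal{F}_{L,\rho},\mathrm{Sp}} = \lim_n |n!|^{1/n}\rho^{-1} = \omega\rho^{-1}$, and identically $|d/du_i|_{\mathcal{F}_{L,\rho},\mathrm{Sp}} = \omega$ using $(d/du_i)^n(u_i^n) = n!$ and the same estimate on $u_i^m$.

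The main obstacle I anticipate is the \emph{upper} bound for the spectral norm: showing $|(d/dT)^n|_{\mathcal{F}_{L,\rho}} \leq |n!|\,\rho^{-n}$ (and the analogue for $d/du_i$) uniformly on the completed field rather than just on polynomials. On $\a_L([\rho])$ this follows from the monomial estimate above plus continuity and multiplicativity of $|\cdot|_\rho$, but extending the iterate bound across the fraction field requires either a partial-fraction decomposition over a large enough field with the Gauss norm (so that poles $T - \beta$ with $|\beta|_\rho$ arbitrary are controlled — note $(d/dT)^n (T-\beta)^{-1} = (-1)^n n! (T-\beta)^{-n-1}$, whose norm is $|n!| \max(\rho,|\beta|)^{-n-1}$, which is $\leq |n!|\rho^{-n}\cdot|(T-\beta)^{-1}|_\rho$ exactly when $|\beta| \leq \rho$, and the case $|\beta| > \rho$ needs the expansion of $(T-\beta)^{-1}$ as a power series in $T/\beta$), or invoking the general principle from the ``radius of convergence'' section that $|\partial|_{\mathrm{Sp}} = \omega|\partial|$ for the derivations of $\mathcal{F}_{L,\rho}$ relative to $\mathbb{Z}_p$ once the operator norm is pinned down — which is exactly the content reproduced from \cite[Remark 2.1.5]{Ked-Swan}. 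I would present the argument in the cleanest form: reduce to test elements for the lower bounds, reduce to $\a_L([\rho])$ and its monomials for the upper bounds by density, and cite the standard limit $\lim_n|n!|^{1/n} = \omega$ to conclude.
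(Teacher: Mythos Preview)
Your treatment of $d/dT$ is fine and matches what the paper does (it simply cites \cite{Ch}). The gap is in the upper bound for the spectral norm of $d/du_i$.

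You write that the argument is ``identical'' to the $d/dT$ case, using $(d/du_i)^n(u_i^m)$ and the binomial-coefficient estimate. But the reduction to monomials works for $d/dT$ only because $L[T,T^{-1}]$ is dense in $\mathcal{F}_{L,\rho}$. There is no analogous density statement for $u_i$: the field $L$ is the fraction field of a Cohen ring for an \emph{arbitrary} field $k$ with finite $p$-basis $\{\bar u_1,\ldots,\bar u_r\}$, and $k$ need not be $k_0(\bar u_1,\ldots,\bar u_r)$ or $k_0(\!(\bar u_1)\!)\cdots(\!(\bar u_r)\!)$. Elements of $\O_L$ are not, in general, polynomials or power series in the $u_i$ over any smaller complete subring, so bounding $|(d/du_i)^n(a)|$ for all $a\in L$ cannot be reduced to the monomial estimate.

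The paper supplies exactly the missing structural input. It first reduces from $\mathcal{F}_{L,\rho}$ to $L$ (your density argument for $T$, applied to coefficients). Then, to control $|(d/du_i)^n|_{L}$, it writes $\C_k$ as generated by $\C_A$ with $A=\mathbb{F}_q(\bar u_1,\ldots,\bar u_r)$ (where your explicit monomial computation \emph{does} apply, via Section~\ref{Cohen ring of k_0((u_1,...u_r))}) together with the closed subring $\B=\C_k^p+p\C_k$. The key observation is $d/du_i(\B)\subseteq p\C_k\subseteq\B$, whence $|(d/du_i)^n|_{\mathrm{Frac}(\B)}\leq |p|^n\leq |n!|$. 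The Leibniz rule then gives $|(d/du_i)^n|_{\mathrm{Frac}(\C_k)}=|n!|$, and one passes to $L=K\otimes_{\W(\mathbb{F}_q)}\C_k$ trivially. Without this decomposition (or an equivalent device that sees the Frobenius structure of $\C_k$), your upper bound for $|d/du_i|_{\mathcal{F}_{L,\rho},\mathrm{Sp}}$ is unjustified.
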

\begin{proof}
The assertions on $d/dT$ are well known (cf. \cite{Ch}). We study
now only $d/du_i$. We split the proof in three lemmas:

\begin{lemma} For all
$i=1,\ldots,r$, one has
$|d/du_i|_{\mathcal{F}_{L,\rho}}=|d/du_i|_{L}$, and
$|d/du_i|_{\mathcal{F}_{L,\rho},\mathrm{Sp}}=|d/du_i|_{L,\mathrm{Sp}}$.
\end{lemma}
\begin{proof} It is enough to show that, for all $n\geq 0$, one has
$|(d/du_i)^n|_{\mathcal{F}_{L,\rho}} = |(d/du_i)^n|_{L}$. Since
$L\subset \mathcal{F}_{L,\rho}$, then
$|(d/du_i)^n|_{\mathcal{F}_{L,\rho}} \geq |(d/du_i)^n|_{L}$.
Conversely, for all $n\geq 0$, one has
$|(d/du_i)^n|_{\mathcal{F}_{L,\rho}} = \sup_{f\in
\mathcal{F}_{L,\rho}\;,\; |f|_\rho=1}|(d/du_i)^n(f)|_\rho$. We
observe that since $L[T,T^{-1}]$ is dense in
$\mathcal{F}_{L,\rho}$, we can consider the sup on the set
$\{f=\sum a_j T^j\in L[T,T^{-1}]\;,\;|f|_\rho=1\}$. For all such
$f=\sum a_j T^j$, one has $|(d/du_i)^n(\sum a_j T^j)|_\rho =
|\sum(d/du_i)^n(a_j) T^j|_\rho =\sup_j |(d/du_i)^n(a_j)| \rho^j
\leq\sup_j |(d/du_i)^n|_{L}|a_j| \rho^j=|(d/du_i)^n|_{L}|f|_\rho$,
hence $|(d/du_i)^n|_{\mathcal{F}_{L,\rho}} \leq |(d/du_i)^n|_{L}$.
\end{proof}

\begin{lemma}
One has $|d/du_i|_{L}=1$.
\end{lemma}
\begin{proof} 
 Since $d/du_i(\O_L)\subset\O_L$, one has $|d/du_i(a)|_L\leq
1=|a|$, for all $a\in\O_L$, $|a|=1$. Hence $|d/du_i|_{L}=
\sup_{|a|=1,a\in L} |d/du_i(a)|_L\leq 1$. Conversely, we have that 
$K(u_1,\ldots,u_r) \subset L$, and the valuation induced by $L$ is
the Gauss norm. If $\B$ denotes the completion of
$K(u_1,\ldots,u_r)$, then it is easy to prove explicitly, using
the description given in Section \ref{Cohen ring of
k_0((u_1,...u_r))}, that $|d/du_i|_{\B}=1$, and
$|d/du_i|_{\B,\mathrm{Sp}}=\omega$. The computations are analogous to the 
classical ones for $d/dT$ (cf.
\cite{Ch}).  Since $\B\subseteq L$, we have
the easy inequality
\begin{equation}\label{fffff_-1}
1=|d/du_i|_{\B}:=\sup_{ b\in \B\;,\; |b|=1} |d/du_i(b)|_{\B}\leq
\sup_{ b\in L\;,\; |b|=1 } |d/du_i(b)|_{L}=|d/du_i|_{L}\leq
1\;.\qquad
\end{equation}
\end{proof}

Analogously one proves that $|n!|=|(d/du_i)^n|_{\B}\leq
|(d/du_i)^n|_{L}$, $\omega=|d/du_i|_{\B,\mathrm{Sp}}\leq
|d/du_i|_{L,\mathrm{Sp}}$. It is now sufficient to prove that
$|d/du_i|_{L,\mathrm{Sp}}\leq \omega$. We will prove this
inequality first over $\mathrm{Frac}(\C_k)$, and then over $L$.

\begin{lemma}\label{ d/du_i = n!}
One has $|(d/d{u_i})^n|_{\mathrm{Frac}(\C_k)} = |n!|$.
\end{lemma}
\begin{proof}
Let $A:=\mathbb{F}_q(\bar{u}_1,\ldots,\bar{u}_r)\subseteq k$. Let
$\C_k$ and $\C_A$  be the Cohen rings attached to $k$ and $A$
respectively. Fix an inclusion $\C_{A}\subset\C_k$. Let
$\B:=\C_k^p+p\C_k$, then $\B$ is a closed sub-ring of $\C_k$, and
hence complete. Since $k=A\otimes_{A^p}k^p$, and since $(\B /
p\C_k) = k^p$, then $\C_k$ is generated by $\C_A$ and $\B$. By the
description given in Section \ref{Cohen ring of
k_0((u_1,...u_r))}, one sees that $|(d/du_i)^n|_{A} = |n!|$. On
the other hand, $d/du_i(\B)\subseteq p\C_k\subseteq \B$, hence
$|d/du_i|_{F_{\B}}\leq |p|$, where $F_{\B}$ is the completion of
$\mathrm{Frac}(\B)$. Hence $|(d/du_i)^n|_{F_{\B}}\leq |p|^n\leq
|n!|$. Since every element $x\in\C_k$ can be written as
$x=\sum_{i=1}^n a_i b_i$, with $a_i\in\C_A$, and $b_i\in \B$, then
$|(d/du_i)^n|_{\mathrm{Frac}(\C_k)} = |n!|$.
\end{proof}

{\emph{End of the proof of Lemma \ref{False lemma} :}} One has
$L=K\otimes_{\W(\mathbb{F}_q)}\C_k$. Every element $x\in L$ can be
written as $x=\sum_{i=1}^n a_ib_i$, $a_i\in K$, $b_i\in \C_k$.
Since $|(d/du_i)|_K=0$, one sees that $|(d/du_i)^n|_{L}=|n!|$.
Hence $|d/du_i|_{L,\mathrm{Sp}}=\omega$.
\end{proof}

\section{Radius of convergence and irregularities: the perfect residue field case}
\label{Radius of convergence and irregularities: the perfect residue field case}
This section is introductory. The main goal is to connect the
fundamental work of G.Christol and Z.Mebkhout with other results,
definitions, and notations introduced by B.Dwork, L.Garnier,
B.Malgrange, K.Kedlaya, P.Robba, N.Tsuzuki, for example. In all this section  we assume that $L$ has a perfect residual field $k$.

\subsection{Formal Irregularity}

The notion of irregularity of a differential equation finds its
genesis in \cite{Malgrange}, in which the definition has been
introduced for the first time for a differential operator with
coefficients in $\C(\!(T)\!)$, here  $\C$ is a field  of characteristic
$0$. We recall briefly the setting. If
$P(T,\frac{d}{dT}):=\sum_{k=0}^{n}g_k(T)(\frac{d}{dT})^k$, with
$g_k(T)\in \C(\!(T)\!)$, the \emph{Formal Irregularity}, and the
\emph{Formal Slope} of $P$ are defined as
\begin{eqnarray}
\mathrm{Irr}_{\mathrm{Formal}}(P)&:=&\max_{0\leq k\leq
n}\{k-v_T(g_k)\}-(n-v_T(g_n))\;,\\
\mathrm{Slope}_{\mathrm{Formal}}(P) &=& \max\Bigl(\;0\;,\;
\max_{k=0,\ldots,n}\Bigl(\;\frac{v_T(g_n)-v_T(g_k)}{n-k}-1\;\Bigr)
\;\Bigr)\;,
\end{eqnarray}
where $v_T$ is the $T$-adic valuation. One defines the
\emph{Formal Newton polygon} of $P$, denoted by $NP(P)$, as 
the convex hull in $\mathbb{R}^2$ of the set
$\{(\;k\;,\;(v_T(g_k)-k)-(v_T(g_n)-n)\;)\}_{k=0,\ldots,n}$
together with the extra points $ \{(-\infty,0)\}$ and
$\{(0,+\infty)\}$. The formal slope is then the largest slope of
the formal Newton polygon of $P$, and the \emph{irregularity} of
$P$ is the height of the Newton polygon:
\begin{equation}
\begin{picture}(200,100) %
\put(40,0){\vector(0,1){90}} %
\put(0,70){\vector(1,0){200}} %

\put(0,10){\line(1,0){60}} %
\put(60,10){\line(3,1){30}} %
\put(90,20){\line(1,1){20}}
\put(110,40){\line(1,3){10}} %
\put(120,70){\line(0,1){30}} %

\put(117,60){\circle{10}} %
\put(122,60){\line(2,-1){30}} %
\put(155,40){$\mathrm{Slope}_{\mathrm{Formal}}(P)$}

\put(118,68){\begin{tiny}$\bullet$\end{tiny}}
\put(122,72){\begin{tiny}$(n,0)$\end{tiny}}

\qbezier[30](60,10)(60,40)(60,70)   
\put(58,8){\begin{tiny}$\bullet$\end{tiny}}
\qbezier[25](90,20)(90,45)(90,70)    
\put(88,18){\begin{tiny}$\bullet$\end{tiny}}
\qbezier[15](110,40)(110,55)(110,70) 
\put(108,38){\begin{tiny}$\bullet$\end{tiny}}

\qbezier[10](100,50)(100,60)(100,70) 
\put(98,48){\begin{tiny}$\bullet$\end{tiny}}

\qbezier[5](80,70)(80,75)(80,80) 
\put(78,78){\begin{tiny}$\bullet$\end{tiny}}

\qbezier[20](70,70)(70,50)(70,30) 
\put(68,28){\begin{tiny}$\bullet$\end{tiny}}

\qbezier[10](50,70)(50,70)(50,70) 
\put(48,68){\begin{tiny}$\bullet$\end{tiny}}

\put(38,38){\begin{tiny}$\bullet$\end{tiny}}

\qbezier[10](100,50)(100,60)(100,70) 
\put(98,48){\begin{tiny}$\bullet$\end{tiny}}

\put(-80,37){$\mathrm{Irr}_{\mathrm{Formal}}(P)$}
\put(-20,37){$\left\{
\begin{smallmatrix}
\\\\\\\\\\\\\\\\\\\\\\
\end{smallmatrix}\right.$} 

\put(43,3){\begin{tiny}$s_0\!\!=\!\!0$\end{tiny}}
\put(75,10){\begin{tiny}$s_1$\end{tiny}}
\put(100,25){\begin{tiny}$s_2$\end{tiny}}
\end{picture}.
\end{equation}
Let $\mathcal{D}:=\C(\!(T)\!)[\frac{d}{dT}]$. These definitions
are actually attached to the differential module
$\M=\mathcal{D}/\mathcal{D}\cdot P$ over $\C(\!(T)\!)$ defined by
$P$, and are independent on the particular \emph{cyclic} basis of
$\M$. We hence use the notation $NP(\M)$,
$\mathrm{Irr}_{\mathrm{Formal}}(\M)$,
$\mathrm{Slope}_{\mathrm{Formal}}(\M)$.  The differential module
$\M$ admits a so called \emph{break decomposition}
\begin{equation}
\M=\oplus_{s\geq 0}\M(s)\;,
\end{equation}
into $\mathcal{D}$-submodules, in which $\M(s)$ is characterized
by the fact that it is the unique sub-module of $\M$ whose Newton
polygon consists in a single slope $s$ (counted with multiplicity)
of the Newton polygon of $\M$. Hence the Formal irregularity can
be written as
\begin{equation}\label{formal break decomposition}
\mathrm{Irr}_{\mathrm{Formal}}(\M)=\sum_{s\geq
0}s\cdot\mathrm{dim}_{\C(\!(T)\!)}\M(s)\;.
\end{equation}

\subsubsection{Formal indices.}
We preserve the previous notations. It can be shown (cf.
\cite[1.3.1]{Malgrange}), that, if $L_P:\C[\![T]\!] \to
\C[\![T]\!]$ denotes the $\C$-linear map $f(T)\mapsto P(f(T))$,
then
\begin{equation}
\mathrm{Irr}_{\mathrm{Formal}}(\M)\;=\;\chi(\;L_P\;;\;\C[\![T]\!]\;)-(n-v_T(g_n))\;,
\end{equation}
where $\chi(L_P;\C[\![T]\!])=\dim_{\C}\Ker(L_P) -
\dim_{\C}\mathrm{Coker}(L_P)$.

Assume moreover that $\C=\mathbb{C}$ is the field of complex
numbers, and that $\M$ is a differential module over
$\mathbb{C}(\{T\}):=\mathrm{Frac}(\mathbb{C}\{T\})$, the fraction field of convergent power series. B.Malgrange
proved (cf. \cite[3.3]{Malgrange}) that if $G(T)\in
M_n(\mathbb{C}\{T\})$ is the matrix of $T\frac{d}{dT}$ acting on
$\M$, and if $T\frac{d}{dT}+\;\!^tG(T):\mathbb{C}(\{T\})^n\to
\mathbb{C}(\{T\})^n$ is the differential operator attached to $\M$
in this basis, then one has also
\begin{equation}
\mathrm{Irr}_{\mathrm{Formal}}(\M)\;=\;-\chi\Bigl(\;\;T\frac{d}{dT}+\;\!^tG\;\;;\;\;\mathbb{C}(\{T\})^n\;\;\Bigr)\;,
\end{equation}
where
$\chi(T\frac{d}{dT}+\;\!^tG;\mathbb{C}(\{T\})^n):=\dim_{\C}\Ker(T\frac{d}{dT}+\;\!^tG)
- \dim_{\C}\mathrm{Coker}(T\frac{d}{dT}+\;\!^tG)$. Moreover the
quantity $(n-v_T(g_n))$  can be related to  the
characteristic variety (at $0$) of the $\mathcal{D}$-module $\M$.

\subsection{$p$-adic framework, the irregularity of Robba and Christol-Mebkhout}
Assume now that $\C$ is a complete ultrametric field. Denote by
$\mathrm{C}(\{T\})$ the subfield of $\C(\!(T)\!)$ of  convergent  ($p$-adically) power series
around $0$ with meromorphic singularities in $0$.
It has been showed by F.Baldassarri (cf. \cite{Ba}), that if $\M$
is a $\C(\{T\})$-differential module, the above Break
decomposition of $\M\otimes_{\C(\{T\})}\C(\!(T)\!)$
\emph{descends} to a break decomposition of $\M$, over
$\C(\{T\})$. In this sense, from the point of view of the
Irregularity, the ``\emph{convergent theory}''  in the ultrametric setting over a germ of
punctured disk, with meromorphic singularities,   offers  nothing more than
the ``\emph{formal theory}''.

\subsubsection{The Robba ring as the completion of the generic point of a curve.}
A more interesting class of rings are those arising from Rigid
Geometry. Let $L$ be the field of Section \ref{setting}. Let $X$
be a projective, connected, non singular curve over $\O_L$, with
special fiber $X_{k}$ and generic fiber $X_L$. %
Let $X_L^{\mathrm{an}}$ be the rigid analytic curve over $L$ defined by
the generic fiber of $X$  (because $X$ was projective  this rigid analytic space also coincides with that obtained by completion along the special fiber). For every closed point $x_0\in X_k$
we denote by $]x_0[ \;\subset X_L^{\mathrm{an}}$ the tube of $x_0$
in $X_L^{\mathrm{an}}$. Let $x_0,\ldots,x_n\in X_k$ be a family of
closed points, and let $U_k:=X_k \setminus \{x_0,\ldots,x_n\}$ and denote $j$  the open immersion of $U_k$ in $ X_k$ . Let $]U_k[
:= X_L^{\mathrm{an}} \setminus  \cup_{i=0}^n]x_i[$ be its inverse image in
$X_L^{\mathrm{an}}$. We denote by $j^{\dag}\O_{]U_k[}$  the sheaf on $X_L^{\mathrm{an}}$  of functions 
overconvergent  along $X_k \setminus U_k$. Let $\mathcal{M}$ be a (locally
free)  $j^{\dag}\O_{]U_k[}$-sheaf of differential modules  admitting a Frobenius structure. By Frobenius structure we
mean the existence of an isomorphism
$(\varphi^*)^h(\mathcal{M})\xrightarrow[]{\;\;\sim\;\;}
\mathcal{M}$, for some $h\geq 0$, where $\varphi^*$ is associated with 
 the absolute Frobenius and
$(\varphi^*)^{h}:=\varphi^*\circ\cdots\circ\varphi^*$, $h$-times.
For all $i=0,\ldots,n$ we fix an isomorphism $\alpha_i:]x_i[\to\mathrm{D}^-(0,1)$. We define the Robba ring $\mathcal{R}_{x_i}$ as the pull back of $\mathcal{R}_L$ via the morphism $\alpha_i$.  Let
$V$ be a strict neighborhood of $]U_k[$. Every (overconvergent)
function of $\Gamma(V,\O_{X_L^{\mathrm{an}}})$ defines, via the isomorphism
$\alpha$, a germ of analytic function in an annulus
$1-\varepsilon<|T|<1$ of $\mathrm{D}^-(0,1)$.  For all
$i=0,\ldots,n$, and for all such $V$, we have then a natural
restriction morphism $\Gamma(V,\O_{X_L^{\mathrm{an}}}) \to \R_{x_i}$
compatible with the derivation, and with restrictions maps of the
sheaf $j^{\dag}\O_{]U_k[}$. Then it makes sense to consider the
restriction $\mathcal{M}_{x_i}$ of $\mathcal{M}$ to $]x_i[$ as a
differential module over $\R_{x_i}$.

The Robba ring $\R_L$ is actually isomorphic  to $\varinjlim_{V}
\Gamma(V\cap ]x_i[,\O_{X_L^{\mathrm{an}}})$ where $V$ runs on the set of
strict neighborhoods of $]U_k[$ in $X_L^{\mathrm{an}}$ (cf.
\cite{Cre}, see also \cite[After Cor.3.3]{Matsuda-unipotent}). In this sense
$\R_{x_i}$ is the analogous of the ring $\C(\!(T-x_i)\!)$ of the
previous section viewed as the completion at $x_i$ of the field of
functions of a non singular projective curve over $\C$.

\subsubsection{$p$-adic irregularity of differential modules over the Robba ring.}

With the notation of the previous section, let $d/dT+G$, $G\in
M_n(\R_L)$, be an operator attached to $\mathcal{M}_{x_i}$ in some
basis. Following P.Robba, G.Christol and Z.Mebkhout \cite[
8.3-8]{Ch-Me-3} we set:
\begin{equation}
\mathrm{Irr}_{x_i}(\mathcal{M}_{x_i}):=\widetilde{\chi}(\mathcal{M}_{x_i},\a_L(0,1))
\end{equation}
where $\widetilde{\chi}$ is the \emph{generalized index} of
$d/dT+G$ on $\a_L(0,1)^{n}$ (cf. \cite[8.2-1]{Ch-Me-3}). Notice
that $d/dT+G$ does not act on $\a_L(0,1)^{n}$ since $G$ has
coefficients in $\R_L$, this is the reason for which one introduces
the \emph{generalized} index. If $\mathcal{M}$ is an holonomic
differential module on  $X_L$ such that it is associated to an isocrystal overconvergent  along its singular points and endowed with a Frobenius structure, then $\mathcal{M}_{x_i}$ is a
$\a_L(0,1)$-differential module endowed with a Frobenius Structure, and one has (cf. \cite[8.3-9]{Ch-Me-3} and \cite[6.2.3]{Ch-Me-1})
\begin{equation}
\mathrm{Irr}_{x_i}(\mathcal{M}_{x_i}) =
\chi(\mathcal{M}_{x_i},\a_L(0,1)) -
(n-\mathrm{ord}^{-}_{x_i}(\mathcal{M}))
\end{equation}
where now $\chi$ denotes the index of $d/dT+G$ acting on
$\a_L(0,1)$, and $\mathrm{ord}^{-}_{x_i}(\mathcal{M})$ is the sum
of all the multiplicities of the vertical components of the
characteristic variety of $\mathcal{M}$ at every singular point
contained in $]x_i[ \; \cong  \mathrm{D}^-(0,1)$. We notice that
in this case one has moreover
\begin{equation}
\chi(\mathcal{M}_{x_i},\a_L(0,1))=
\mathrm{dim}_{L}\mathrm{Hom}_{\a_L[d/dT]}(\mathcal{M}_{x_i},\a_L(0,1))-
\mathrm{dim}_{L}\mathrm{Ext}^1_{\a_L[d/dT]}(\mathcal{M}_{x_i},\a_L(0,1))\;.
\end{equation}

\subsubsection{Geometric interpretation: formal and $p$-adic irregularities as slopes of the generic radius of convergence.}

We concentrate now our attention to differential modules over the
Robba ring. Robba first and then Christol-Mebkhout have indicated
that the behaviour of the \emph{generic Radius of convergence} of
the solutions of a finite free $\R_L$-differential module is
strictly connected to its irregularity. Let $\M$ be a finite free
$\R_L$-differential module. Let $\e$ be a fixed basis of $\M$, and
let $G(T)\in M_n(\R_L)$ be the matrix of the connection
$\nabla_T:\M\to\M$ in this basis. Let $\varepsilon>0$ be such that
$G(T)\in M_n(\a_L(1-\varepsilon,1))$. Consider the generic Taylor
solution
\begin{equation}
Y_G(x,y)\;:=\;\sum_{n\geq 0}G_n(y)\frac{(x-y)^n}{n!}
\end{equation}
where the matrices $\{G_n(T)\}_{n\geq 0}$ are defined inductively
by the rule $G_0=\mathrm{Id}$, $G_{1}=G$,
$G_{n+1}=d/dT(G_n)+G_nG_1$. The matrix $Y_G$ verifies
$(d/dx)^n(Y_G(x,y))=G_n(x)Y_G(x,y)$, for all $n\geq 0$. The
generic radius of convergence of $Y$ at $\rho$ is defined as
$Ray(Y_G,\rho):=\liminf_{n}(|G_n|_\rho/|n!|)^{-1/n}$. Since
$|G_n|_\rho=\max_{|y|=\rho}|G_n(y)|$, $Ray(Y_G,\rho)$ is the
minimum among all possible radii (with respect to $T$) of
$Y_G(T,y)$ with $|y|=\rho$. The quantity $Ray(Y_G,\rho)$ is not
invariant under base changes. Indeed in the basis $\B\e$, $\B\in
GL_n(\a_L(1-\varepsilon,1))$, the new solution is $\B\cdot Y_G$
which is possibly not convergent outside the disk
$\mathrm{D}^-(y,\rho)$ (where $|y|=\rho$). We set then
\begin{equation}
Ray(\M,\rho):=\min(Ray(Y,\rho),\rho)\;, 
\end{equation}
which is independent of the choice of  basis. The function $\rho\mapsto
Ray(\M,\rho):]1-\varepsilon,1[\to\mathbb{R}_+$ is continuous,
piecewise of the type $a\rho^b$, for convenable
$a,b\in\mathbb{R}$, and log-concave (cf. Section
\ref{log-definition}, \cite{Ch-Dw}). We refer to the fact that  $M$ has   $\lim_{\rho\to 1^-}Ray(\M,\rho)=1$ (cf.
\cite{Ch-Dw}),as  the  \emph{solvability} of
$\M$. This is the case, for example, if $M$ admits a Frobenius structure.

The Formal slope has a meaning in term of radius of convergence:

\begin{proposition}\label{Formal slope equals the slope of the radius at 0+}
Let $(\M,\nabla_T^{\M})$ be a solvable  differential module over
$L(\!(T)\!)\cap\R_L\;\subset\;\a_K(]0,1[)$. Then the Formal Slope
of $\M$ as differential module over $L(\!(T)\!)$ coincides with
the log-slope of the function $\rho\mapsto Ray(\M,\rho)/\rho$, for
$\rho$ sufficiently close to $0$.
\end{proposition}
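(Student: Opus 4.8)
The plan is to reduce the statement to a comparison of two combinatorial invariants attached to the differential module $\M$ near $T=0$: on the one hand the Formal Newton polygon, whose largest slope is $\mathrm{Slope}_{\mathrm{Formal}}(\M)$, and on the other hand the asymptotic behaviour of the generic radius of convergence function $\rho\mapsto Ray(\M,\rho)$ as $\rho\to 0^+$. Since both quantities are unchanged by a finite extension of $L$ and by twisting by a rank-one object, and since the Formal Newton polygon admits a break decomposition $\M=\oplus_{s\ge 0}\M(s)$ into $\mathcal{D}$-submodules (with $\mathcal{D}=L(\!(T)\!)[d/dT]$), it suffices to treat the case where $\M$ is pure of a single formal slope $s$, i.e. $NP(\M)$ has the single slope $s$. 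The function $Ray(\M,\rho)/\rho$ is log-concave and piecewise of the form $a\rho^b$, so it too has a well-defined log-slope at $0^+$, and the claim becomes: this log-slope equals $s$.

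First I would pick a cyclic vector, so that $\M\cong \mathcal{D}/\mathcal{D}P$ with $P=\sum_{k=0}^n g_k(T)(d/dT)^k$, $g_k\in L(\!(T)\!)$; this is legitimate over the differential field $L(\!(T)\!)$, and the Formal Newton polygon is basis-independent. Next, rescaling $T$ and clearing denominators, I would reduce to the case where all $g_k\in L[[T]]$ and the Formal Newton polygon is read off from the points $(k,(v_T(g_k)-k)-(v_T(g_n)-n))$. The standard device is to pass to the operator in the $\theta:=Td/dT$ presentation, where the Newton polygon slope $s$ translates into the statement that the companion matrix $G$ of $\theta$ on $\M$, after the substitution $T\mapsto t$, has entries whose $t$-adic valuations force the solution matrix $Y_G(x,y)$ to converge exactly in a disk of radius comparable to $\rho^{1+s}$ (up to the factor $\rho$ one already divides out). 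Concretely, I would estimate $|G_n|_\rho$ for the matrices $G_{n+1}=d/dT(G_n)+G_nG_1$ via the recursion, showing $\liminf_n(|G_n|_\rho/|n!|)^{-1/n}$ behaves like $\rho^{1+s}$ for $\rho$ small; the lower bound on the radius comes from a Gronwall-type majorant-series argument applied to the rescaled operator, and the upper bound comes from exhibiting a single solution (or a Frobenius antecedent) whose radius is no larger, using solvability at $1^-$ together with the fact that a pure-slope-$s$ formal module, after the rescaling $\rho\mapsto 0$, looks like the exponential-type module $e^{T^{-s}}$ whose radius of convergence is classically $\rho^{1+s}$ up to $\omega$-factors which do not affect the log-slope.

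The main obstacle I expect is the upper bound, i.e. showing the log-slope of $Ray(\M,\rho)/\rho$ at $0^+$ is not strictly larger than $s$. For the lower bound one can afford crude majorant estimates, but the upper bound requires pinning down that the rescaled module genuinely has a solution that diverges at radius $\rho^{1+s}$, not better; the clean way is to invoke Baldassarri's descent of the break decomposition (cited in the excerpt) to reduce to an irreducible pure module, and then either to use an explicit model (a twist of an Artin--Hasse / $e^{T^{-s}}$-type equation) or to quote the Christol--Mebkhout transfer theorem relating the radius at a point to the formal slope after completing. An alternative, perhaps cleaner, route that avoids the delicate asymptotics is to use the index interpretation: over $L(\!(T)\!)\cap\R_L$ one has $\mathrm{Irr}_{\mathrm{Formal}}(\M)=\sum_s s\cdot\dim\M(s)$ on the formal side, while on the convergent side the jumps of $Ray(\M,\rho)$ as $\rho$ ranges over $]0,1[$ are controlled by the same Newton polygon (Christol--Dwork), and matching the contribution localized near $0$ gives the slope identification termwise in the break decomposition. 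Either way, once reduced to the pure-slope case, the computation is the classical one for $d/dT$ recalled in the introduction, and the factor $\omega=|p|^{1/(p-1)}$ enters only multiplicatively and hence disappears from the log-slope.
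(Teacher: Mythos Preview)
Your plan is workable in spirit but much heavier than needed, and the ``main obstacle'' you flag (the upper bound on $Ray(\M,\rho)$) is a symptom of having missed the clean tool that makes the proposition a two-line computation. You do not need the break decomposition, Baldassarri's descent, Frobenius antecedents, or the index interpretation.

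The paper's argument is direct. Fix a cyclic basis, so $\M$ is given by $P=\sum_{i=0}^n g_i(T)(d/dT)^i$ with $g_n=1$. The key observation is a dichotomy forced by solvability and log-concavity: since $\rho\mapsto Ray(\M,\rho)/\rho$ is log-concave with limit $1$ at $\rho\to 1^-$, either $Ray(\M,\rho)=\rho$ identically on $]0,1[$, or $Ray(\M,\rho)/\rho\to 0$ as $\rho\to 0^+$, so in particular $Ray(\M,\rho)<\omega\rho$ on some interval $]0,\delta[$. In the second case one is in the \emph{small radius} regime, and the Christol--Mebkhout formula \cite[6.2]{Ch-Me} gives exactly
\[
Ray(\M,\rho)=\omega\rho\cdot\min_{0\le i<n}|g_i(T)|_\rho^{-1/(n-i)}.
\]
This is an equality, not a one-sided estimate, so there is no upper/lower bound split to manage. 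Now for $\rho$ close to $0$ each $|g_i(T)|_\rho$ equals $a_i\rho^{v_T(g_i)}$ for some constant $a_i>0$, and plugging this in gives that the log-slope of $Ray(\M,\rho)/\rho$ at $0^+$ is $\max_i\bigl(\frac{v_T(g_n)-v_T(g_i)}{n-i}-1\bigr)$, which is exactly the formal slope. The first branch of the dichotomy ($Ray(\M,\rho)=\rho$) corresponds, again by \cite[6.2]{Ch-Me}, to $v_T(g_i)\ge i-n$ for all $i$, i.e.\ formal slope $0$.

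So the piece you were missing is the small-radius formula: once $Ray(\M,\rho)<\omega\rho$, the radius is \emph{computed explicitly} from the norms $|g_i|_\rho$ of the coefficients of the cyclic operator, with no asymptotic analysis of the $G_n$ recursion required. Your solvability hypothesis is used not to produce solutions but simply, via log-concavity, to guarantee that near $0$ you land in this explicit regime.
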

\begin{proof}
We fix a cyclic basis for which $\M$ is represented by an operator
$\sum_{i=0}^ng_i(T)(d/dT)^{i}$, $g_n=1$. For all $f(T)\in
L(\!(T)\!)\cap\R_L$, if $\rho$ is close to zero, then the
log-slope of $\rho\mapsto |f(T)|_\rho$ is equal to $v_T(f(T))$.
Since $\rho\mapsto Ray(\M,\rho)$ is $\log$-concave, and since
$\lim_{\rho\to 1^{-}}Ray(\M,\rho)= 1$, then we have two
possibilities $Ray(\M,\rho)=\rho$ for all $\rho\in]0,1[$, or there
exists an interval $I=]0,\delta[$, such that, for all $\rho\in I$
one has $Ray(\M,\rho)< |\omega|\rho$, $\omega=|p|^{1/(p-1)}$. We can then apply
\cite[6.2]{Ch-Me}. $Ray(\M,\rho)=\rho$ if and only if
$v_T(g_i)\geq i-n$ for all $i<n$ i.e. if and only if the formal
slope is $0$, the second case arises if and only if $v_T(g_i)<
i-n$, for some $i$, and in this case one has the formula
$Ray(\M,\rho)=\rho\cdot|p|^{1/(p-1)}\cdot \min_{0\leq
i\leq}|g_i(T)|_\rho^{-1/(n-i)}$. This prove the proposition since
for $\rho$ close to zero $|g_i(T)|_\rho=a_i\rho^{v_T(g_i)}$, for
some $a_i\in\mathbb{R}_+$.
\end{proof}

If $(\M,\nabla_T^{\M})$ is solvable, one proves that there exists
$\beta\geq 0$, and $\varepsilon'>0$, such that
$Ray(\M,\rho)=\rho^{\beta}$, for all $1-\varepsilon'<\rho<1$. One
defines then the \emph{$p$-adic slope} of $\M$ as $\beta-1$.
\begin{equation*}
\begin{picture}(210,105)
\put(0,5){\begin{picture}(210,110) %

\put(100,0){\vector(0,1){100}} %
\put(0,80){\vector(1,0){200}} %

\put(20,-10){\line(2,5){20}} %
\put(40,40){\line(1,1){30}} %
\put(70,70){\line(3,1){30}} %

\put(30,15){\circle{10}} %
\put(85,75){\circle{10}} %
\put(85,70){\line(2,-1){30}} %
\put(35,15){\line(1,-1){15}} %
\put(50,-10){$\textrm{Formal slope of}(\M)$} %
\put(115,50){$p\textrm{-adic slope of}(\M)$}

\put(180,85){$\log(\rho)$} 
\put(105,95){\begin{scriptsize}$\log(Ray(\M,\rho)/\rho)$\end{scriptsize}}
\end{picture}}
\end{picture}
\end{equation*}

Notice that the Formal slope is directly defined by the $T$-adic
valuations of the $g_i$'s whereas the $p$-adic slope is implicitly
defined by the coefficients. No explicit formulas expressing the
$p$-adic slope as function of the valuations of the $g_i$'s is
known. For this reason Christol and Mebkhout provide then a break
decomposition theorem for these $p$-adic slopes, reflecting the
analogous decomposition in the formal framework, and then define
the Newton polygon by means of the $p$-adic analogue of the
formula \eqref{formal break decomposition} given by the following:
\begin{theorem}[(\cite{Ch-Me-3})]
Let $(\M,\nabla_T^{\M})$ be $\R_L$-differential module endowed with a Frobenius structure (hence   solvable).
Then $\M$ admits a break decomposition $\M=\oplus_{\beta\geq
0}\M(\beta)$, where $\M(\beta)$ is characterized by the following
properties: there exists $\varepsilon
>0$ such that
\begin{enumerate}

\item For all $\rho\in]1-\varepsilon,1[$, $\M(\beta)$ is the
biggest submodule of $\M$  trivialized by every ring
$\a_L(y,\rho^{\beta+1})$, for all $\Omega/L$, and all $y\in\Omega$, with $|y|=\rho$;%

\item For all $\rho\in]1-\varepsilon,1[$, for all $|y|=\rho$,
$y\in\Omega$, for all $\Omega/L$, and for all $\beta' < \beta$,
$\M(\beta)$ has no solutions in $\a_L(y,\rho^{\beta'+1})$.
\end{enumerate}
The number $\mathrm{Irr}(\M):= \sum_{\beta\geq
0}\beta\cdot\textrm{rank}_{\R_K}(\M(\beta))$ is called
\emph{$p$-adic irregularity} of $\M$, and  lies in $\mathbb{N}$.
\end{theorem}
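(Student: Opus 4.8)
The idea is to read off the breaks from the behaviour of the radius of convergence near $1^{-}$, to decompose $\M$ over a single residue field of the annulus by the spectral norm of the connection, and then to show that this fibrewise decomposition is already defined over a sub-annulus. I would first record the needed facts about radii: besides $Ray(\M,\rho)$ one has the subsidiary radii $R_1(\M,\rho)\ge\cdots\ge R_n(\M,\rho)=Ray(\M,\rho)$, $n=\mathrm{rank}\,\M$, recovered from the $Ray(\wedge^{j}\M,\rho)$; like $Ray$ they are continuous, log-concave and piecewise of the form $a\rho^{b}$. Solvability, which the Frobenius structure provides, forces $R_i(\M,\rho)\to1$ as $\rho\to1^{-}$; with log-concavity this makes each $R_i$ log-affine on some $]1-\varepsilon,1[$, and the limit $1$ pins the constant, so $R_i(\M,\rho)=\rho^{\beta_i+1}$ there with $0\le\beta_1\le\cdots\le\beta_n$. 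The distinct $\beta_i$ are the candidate breaks and $\sum_i\beta_i$ the candidate irregularity, and everything reduces to producing, over a sub-annulus, a decomposition of $\M$ matching the partition of $\{1,\dots,n\}$ into the constancy blocks of $i\mapsto\beta_i$.

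Next, fix $\rho_0\in]1-\varepsilon,1[$ and put $\M_{\rho_0}:=\M\otimes_{\R_L}\mathcal{F}_{L,\rho_0}$, a finite differential module over the complete field $\mathcal{F}_{L,\rho_0}$ for $d/dT$. Here the generic radius is the reciprocal of the spectral norm of the connection: $Ray(\M,\rho_0)=\omega/|\nabla_T^{\M}|_{\M_{\rho_0},\mathrm{Sp}}$ whenever the right side is $<\rho_0$, and likewise for subobjects. I would then invoke the purely spectral decomposition of a differential module over a complete ultrametric differential field by the spectral radius of its connection: whenever two distinct spectral radii occur, the gap between them yields an idempotent of $\mathrm{End}(\M_{\rho_0})$ commuting with $\nabla_T^{\M}$, built as a convergent series in $\nabla_T^{\M}$ by a pseudo-resolvent, successive-approximation argument of Dwork--Robba type. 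Iterating gives $\M_{\rho_0}=\bigoplus_{\beta}N_\beta$, where $N_\beta$ has a single spectral radius, the one corresponding to $\rho_0^{\beta+1}$ (with $N_0$ the part where $|\nabla_T^{\M}|_{\mathrm{Sp}}$ is minimal, i.e. $Ray=\rho_0$).

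The crux is to spread this decomposition over a sub-annulus: to produce $0<\varepsilon'\le\varepsilon$ and $\M|_{]1-\varepsilon',1[}=\bigoplus_\beta\M(\beta)$ inducing $N_\beta$ at $\rho_0$, and the analogous decomposition at every $\rho\in]1-\varepsilon',1[$. The ranks of the hypothetical pieces are $\rho$-independent by the log-affineness of the $R_i$ (no crossings), so the fibrewise data are numerically coherent; what must be shown is that the idempotents, a priori defined only over $\mathcal{F}_{L,\rho_0}$, can be taken with coefficients in $\a_L(]1-\varepsilon',1[)$. I would argue in two moves. When all breaks lie in a bounded range --- so that, for $\rho$ close to $1$, every $R_i(\M,\rho)$ exceeds $\omega\rho$ --- a transfer theorem of Dwork--Robba type realises the solutions as genuine analytic objects on the disks of radius $R_i(\M,\rho)$, uniformly in $\rho$, which exhibits the projectors as overconvergent and gives the decomposition over the sub-annulus. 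To reduce to this situation one uses the \emph{Frobenius antecedent}: a piece $\M(\beta)$ with a single large break admits $\M'$ with $\varphi^{*}\M'\cong\M(\beta)$ whose break is strictly smaller (tending to $1$ under iteration of the antecedent construction), and since $(\varphi^{*})^{h}\M\cong\M$ one remains inside the category while iterating; a decomposition of the antecedent then pushes forward along $\varphi^{*}$. I expect this passage from the residue field $\mathcal{F}_{L,\rho_0}$ to the annulus --- the overconvergence of the projectors and the control of radii through the Frobenius-antecedent recursion --- to be the main obstacle.

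Finally one identifies the pieces and concludes. By construction $Ray(\M(\beta),\rho)=\rho^{\beta+1}$, so the transfer theorem endows $\M(\beta)$ with a full set of horizontal sections over $\a_L(y,\rho^{\beta+1})$ for every $|y|=\rho$ in every $\Omega/L$, which is (1), while maximality of the spectral radius on $\M(\beta)$ rules out solutions over any $\a_L(y,\rho^{\beta'+1})$ with $\beta'<\beta$, which is (2); conversely these two properties pin down $\M(\beta)$, whence uniqueness of the decomposition. Then $\mathrm{Irr}(\M)=\sum_\beta\beta\cdot\mathrm{rank}\,\M(\beta)=\sum_i\beta_i$; each $\beta_i$ is a priori a nonnegative rational with $p$-power denominator (visible from the Frobenius-antecedent recursion), and a Hasse--Arf type argument using the Frobenius structure --- clearing denominators by twisting with rank-one Frobenius modules and invoking the decomposition of the Newton polygon --- upgrades $\sum_i\beta_i$ to an element of $\mathbb{N}$.
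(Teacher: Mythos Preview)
The paper does not prove this theorem: it is stated in Section~2 as a result of Christol and Mebkhout, cited from \cite{Ch-Me-3}, with no proof or sketch given. Section~2 is explicitly introductory (``\emph{This section is introductory. The main goal is to connect the fundamental work of G.Christol and Z.Mebkhout with other results\ldots}''), and this statement is one of several background results recalled for later use. So there is no ``paper's own proof'' to compare your proposal against.

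That said, your sketch is a reasonable outline of the Christol--Mebkhout strategy and of the way this circle of results is typically proved (subsidiary radii, spectral decomposition over $\mathcal{F}_{L,\rho}$, Frobenius antecedents to bring the radii into the transfer range, and a Hasse--Arf argument for integrality). The parts you flag as delicate --- overconvergence of the projectors and control through the antecedent recursion --- are indeed where the real work lies in \cite{Ch-Me-3} and \cite{Ch-Me-4}. If you want to pursue this, you should consult those references directly rather than this paper, which only quotes the conclusion.
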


\begin{theorem} {\cite[8.3.7]{Ch-Me-3}}
Let $(\M,\nabla_T^{\M})$ be $\R_L$-differential module endowed with a Frobenius structure (hence   solvable). Then 
$\mathrm{Irr}(\M):= \sum_{\beta\geq
0}\beta\cdot\textrm{rank}_{\R_K}(\M(\beta))$  coincides with $\widetilde{\chi}({\M},\a_L(0,1))$ defined in subsection 2.2.2
\end{theorem}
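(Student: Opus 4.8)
The plan is to prove the equality $\mathrm{Irr}(\M)=\widetilde{\chi}(\M,\a_L(0,1))$ by a d\'evissage to differential modules with a single break, followed by an explicit computation in the rank one case. The first step is to check that both invariants are \emph{additive} on short exact sequences $0\to\M'\to\M\to\M''\to 0$ of $\R_L$-differential modules with Frobenius structure. For $\mathrm{Irr}$ this is immediate from the break decomposition of the previous theorem: the assignment $\M\mapsto(\M(\beta))_{\beta\geq 0}$ is exact, so each break space fits into $0\to\M'(\beta)\to\M(\beta)\to\M''(\beta)\to 0$ and $\mathrm{Irr}$ adds up. For $\widetilde{\chi}$ the additivity is part of Christol--Mebkhout's construction of the generalized index (\cite[8.2]{Ch-Me-3}): the generalized cohomology complexes attached to $d/dT+G'$, $d/dT+G$, $d/dT+G''$ sit in a distinguished triangle, and the snake lemma yields $\widetilde{\chi}(\M,\a_L(0,1))=\widetilde{\chi}(\M',\a_L(0,1))+\widetilde{\chi}(\M'',\a_L(0,1))$. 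Hence it suffices to treat a module $\M=\M(\beta)$ with a single break $\beta$. Moreover both sides are multiplied by $[L':L]$ under a finite constant extension $L'/L$ and by $e$ under the pull-back along $T\mapsto T^e$ (which also scales $\beta$ by $e$), so we may freely enlarge $L$ and pass to such covers in order to split $\M(\beta)$ further.

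Next I would dispose of the break $\beta=0$ case, i.e.\ $Ray(\M,\rho)=\rho$ for $\rho$ close to $1$. Here solvability together with the Frobenius structure forces $\M$ to be quasi-unipotent: after a finite separable base change it is a successive extension of rank one modules of exponent type --- those on which $T\cdot d/dT$ acts by a scalar $c\in\mathbb{Z}_p$ --- this being the $p$-adic local monodromy theorem, or equivalently Christol--Mebkhout's theory of exponents, which excludes Liouville exponents precisely because Frobenius forces $c\in\mathbb{Z}_p$. For such a rank one module the operator $d/dT+c/T$ genuinely acts on $\a_L(0,1)$, so $\widetilde{\chi}$ coincides with the honest index, and a direct inspection of Taylor coefficients gives $\dim_L\Ker=\dim_L\mathrm{Coker}$, hence index $0$. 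By additivity $\widetilde{\chi}(\M(0),\a_L(0,1))=0=\mathrm{Irr}(\M(0))$.

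For $\beta>0$ I would invoke the $p$-adic analogue of the Levelt--Turrittin decomposition, available because $\M$ carries a Frobenius structure: after enlarging $L$, a ramified base change $T\mapsto T^{1/e}$ (tracked as above) and, if needed, finitely many Frobenius descents, $\M(\beta)$ becomes a direct sum of rank one modules, each of the form $\N\otimes\mathcal{L}$ with $\N$ of break $0$ and $\mathcal{L}$ a Dwork-type exponential having an irregular singularity of the expected order at $T=0$. For the Dwork exponential one has Robba's explicit index computation: $\widetilde{\chi}(\mathcal{L},\a_L(0,1))$ equals the pole order, i.e.\ the break of $\mathcal{L}$. Tensoring with a break $0$ module leaves this index unchanged (directly, or via a further reduction to the previous paragraph together with additivity). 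Summing over the rank one constituents gives $\widetilde{\chi}(\M(\beta),\a_L(0,1))=\beta\cdot\mathrm{rank}_{\R_L}(\M(\beta))=\mathrm{Irr}(\M(\beta))$, and adding over all breaks completes the proof.

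I expect the main obstacle to be twofold, and both difficulties lie inside the quoted references rather than in the d\'evissage itself. First, one needs the well-posedness of Christol--Mebkhout's \emph{generalized} index: finiteness of $\widetilde{\chi}(\M,\a_L(0,1))$, its additivity, and its compatibility with finite base change, with Frobenius pull-back, and with the ordinary index whenever $d/dT+G$ actually preserves $\a_L(0,1)^n$; this is the technical core of \cite[8.2]{Ch-Me-3}. Second, the reduction to rank one rests on the $p$-adic Levelt--Turrittin decomposition for modules with Frobenius structure, whose deepest ingredient is the absence of Liouville exponents, that is, the $p$-adic local monodromy theorem. By contrast the surviving computation --- the index of a single Dwork exponential on $\a_L(0,1)$ --- is elementary: it is Robba's calculation with the generic Taylor solution $Y_G$, the $p$-adic counterpart of Malgrange's formal formula $\mathrm{Irr}_{\mathrm{Formal}}=\chi(L_P;\C[\![T]\!])-(n-v_T(g_n))$ recalled above.
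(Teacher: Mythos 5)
First, a point of comparison: the paper does not prove this statement at all. It is quoted as background from Christol--Mebkhout \cite[8.3.7]{Ch-Me-3} in the introductory Section 2, so there is no internal proof to measure yours against; I can only compare your sketch with the argument actually carried out in the cited reference.

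Your d\'evissage (additivity of both invariants, reduction to a single break, explicit computation for rank one) has a reasonable overall shape, and the additivity claims and the break-zero case are defensible modulo the quoted inputs (for break zero what is really needed is Christol--Mebkhout's exponent theory with non-Liouville differences, which is not the same thing as, nor equivalent to, the $p$-adic local monodromy theorem -- and indeed \cite{Ch-Me-3} predates that theorem). The genuine gap is the step for $\beta>0$. The ``$p$-adic Levelt--Turrittin decomposition'' you invoke --- that after enlarging $L$, pulling back along a tame Kummer cover $T\mapsto T^{1/e}$ and taking finitely many Frobenius antecedents, $\M(\beta)$ splits into rank one modules $\N\otimes\mathcal{L}$ with $\mathcal{L}$ a Dwork exponential --- is not available in this generality. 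Those three operations merely rescale the break (trivially, by $e$ prime to $p$, and by $p^{-1}$ respectively); they cannot split, for instance, the irreducible rank $p$ module obtained by inducing a wildly ramified rank one character along an Artin--Schreier extension, which has a single fractional break. To split such a module one must pass to a wildly ramified finite \'etale cover of the annulus (this is the content of the local monodromy theorem), and one must then control how $\widetilde{\chi}(\cdot,\a_L(0,1))$ transforms under pushforward along that cover; that transformation rule involves precisely the irregularity (equivalently the Swan conductor) of the cover, so the argument is circular unless that rule is established independently. This is why Christol--Mebkhout's actual proof goes another way: after the break decomposition they use Frobenius antecedents to drive the radius into the small-radius range $T(\M,\rho)<\omega$, where the generalized index of the \emph{whole} module, of arbitrary rank, is computed directly from the matrix of the connection by Robba's and Young's estimates (the same mechanism exploited in Lemma \ref{Small radius} of this paper); no reduction to rank one Dwork modules occurs. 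To repair your route you would need either to prove the rank one decomposition after wild covers together with the index formula for wildly ramified pushforwards, or to replace that step by the Frobenius-antecedent/small-radius argument.
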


\begin{remark}
In  practice the explicit computation of the $p$-adic slope is
possible only if it is equal to the formal slope, i.e. if the the
log-graphic of the function $\rho\mapsto Ray(\M,\rho)$ has no
breaks and if, for $\rho\in ]1-\varepsilon,1[$ sufficiently close
to $1-\varepsilon$, the $Ray(\M,\rho)$ is \emph{sufficiently
small} to be  computed explicitly  via \cite[6.2]{Ch-Me} as in
the proof of Proposition \ref{Formal slope equals the slope of the
radius at 0+}. This is often done by considering Frobenius
antecedents of $\M$.
\end{remark}

\subsubsection{Radius of convergence and spectral norm of the connection.}
\label{Radius/spectral}
The generic radius of convergence admits the following
description. For $\rho\in]1-\varepsilon,1[$ set
$\M_\rho:=\M\otimes_{\a_L(]1-\varepsilon,1[)}\mathcal{F}_{L,\rho}$.
Let $|.|_{\M_\rho}$ be a norm on $\M_\rho$ compatible with
$|.|_\rho$. Denote by $|\nabla_{T}^{\M_\rho}|_{\M_\rho}$ and
$|\nabla_{T}^{\M_\rho}|_{\M_\rho,\mathrm{Sp}}$ the norm and the
spectral norm of $\nabla_T^{\M_\rho}$ respectively as operator on
$\M_{\rho}$ (cf. def. \ref{spectral norm of connect}). Making
these definitions explicit  with respect to a basis of $\M$ for
which the matrix of the connection is $G$, one finds that
$|\nabla_{T}^{\M_\rho}|_{\M_\rho,\mathrm{Sp}}=\max(\;\omega\cdot\rho^{-1}\;,
\;\limsup_{n\to\infty}|G_n|^{1/n}_\rho\;)$, where
$\omega=|p|^{1/(p-1)}$ and where $G_n$ is the matrix of
$(\nabla_T^{\M})^n$ (cf. Lemma \ref{expliciting the definition of
the Swan conductor}). Hence by Lemma \ref{False lemma} one has
\begin{equation}
Ray(\M,\rho) =\rho\cdot\frac{|d/dT|_{\mathcal{F}_{L,\rho},\mathrm{Sp}}}{|\nabla_{T}^{\M_\rho}|_{\M_\rho,\mathrm{Sp}}}\;.
\end{equation}
The quantity
$\frac{|d/dT|_{\mathcal{F}_{L,\rho},\mathrm{Sp}}}{|\nabla_{T}^{\M_\rho}|_{\M_\rho,\mathrm{Sp}}}$
is more intrinsic, and actually the $p$-adic slope is defined as
the slope, for $\rho$ close to $1^{-}$, of the function
$\rho\mapsto
\frac{|d/dT|_{\mathcal{F}_{L,\rho},\mathrm{Sp}}}{|\nabla_{T}^{\M_\rho}|_{\M_\rho,\mathrm{Sp}}}$.

\section{Differential Swan conductor in
the non perfect residue field case}
\label{Kedlaya's differential Swan conductor}
In this section we recall some basic definitions given in
\cite{Ked}, generalizing to the non perfect residual case the more
classical analogous notions in   \cite{Fo} and \cite{Ts}.

\subsection{$(\phi,\nabla)$-modules and differential modules}

\label{phi-Nabla-mod and diff mod}

\begin{definition}
Let $\B_L$ be one of the rings $\mathcal{E}_L$, $\Ed_L$, $\R_L$,
$\a_L(I)$ (resp. $\O_{\mathcal{E}_L}$, $\O_{\Ed_L}$). A
\emph{differential module} $\M$ over $\B_L$ is a finite free
$\B_L$-module, together with an integrable connection
\begin{equation}
\M\xrightarrow[]{}\M\otimes \widehat{\Omega}^1_{\B_L/K}
\end{equation}
(resp. $\M\to\M\otimes\widehat{\Omega}^1_{\B_L/\O_K}$). Morphisms
between differential modules commute with the connections. The
category of differential modules will be called $\nabla \!-\!
\Mod(\B_L/K)$ (resp. $\nabla \!-\! \Mod(\B_L/\O_K)$).
\end{definition}

\if{
\begin{remark} The data of the integrable connection on $\M$
is equivalent to the data of the single action of
$d/dT,d/du_1,\ldots,d/du_r$ on $\M$. So $\M$ can be considered as
a finite free module over $\a_L(]\varepsilon,1[)$ (resp.
$\mathcal{E}_{L}$,$\Ed_L$,$\R_L$), together with $r+1$ connections
$\nabla_{T}^{\M},\nabla_{u_1}^{\M},\ldots,\nabla_{u_r}^{\M}:\M\to\M$,
satisfying
$\nabla_T^{\M}\circ\nabla_{u_i}^{\M}=\nabla_{u_i}^{\M}\circ\nabla_{T}^{\M}$,
and
$\nabla_{u_j}^{\M}\circ\nabla_{u_i}^{\M}=\nabla_{u_i}^{\M}\circ\nabla_{u_j}^{\M}$,
for all $i,j=1,\ldots,r$.
\end{remark}
}\fi

\begin{definition}
Let $\B_L$ be one of the rings $\mathcal{E}_L$, $\Ed_L$, $\R_L$,
$\O_{\mathcal{E}_L}$, $\O_{\Ed_L}$. A \emph{$\phi$-module} (resp.
a \emph{$(\varphi,\nabla)$-module}) over $\B_L$ is a finite free
$\B_L$-module (resp. $\nabla$-module) $\D$, together with an
isomorphism
\begin{equation}
\phi^{\D} :\varphi^*(\D) \simto \D
\end{equation}
of $\B_L$-modules (resp. of $\nabla$-modules). We interpret
$\phi^{\D}$ as a semi-linear action of $\varphi$ on $\D$.
Morphisms between $\phi$-modules (resp.
$(\varphi,\nabla)$-modules) commute with the Frobenius (resp. with
the Frobenius and the connection). The category of $\phi$-modules
(resp. $(\varphi,\nabla)$-modules) over $\B_L$ will be denoted by
$\varphi\!-\!\Mod(\B_L)$ (resp.
$(\varphi,\nabla)\!-\!\Mod(\B_L/K)$ or
$(\varphi,\nabla)\!-\!\Mod(\B_L/\O_K)$ if
$\B_L=\O_{\mathcal{E}_L},\O_{\Ed_L}$).
\end{definition}

\begin{notation}
We denote  $(\varphi,\nabla)$-modules with the letter $\D$, and
$\nabla$-modules with the letter $\M$.
\end{notation}

\subsection{Representations with finite local monodromy
and $(\varphi,\nabla)$-modules} \label{definition of the functor--}
We set
\begin{eqnarray}
\G_{\E}&:=&\mathrm{Gal}(\E^{\mathrm{sep}}/\E)\;,\\
\mathcal{I}_{\G_{\E}}&:=&\textrm{Inertia of }\G_{\E}\;,\\
\mathcal{P}_{\G_{\E}}&:=&\textrm{Wild inertia of }\G_{\E}\;.
\end{eqnarray}
\begin{definition}
We say that a representation $\alpha:\G_{\E}\to GL_n(\O_K)$ has
\emph{finite local monodromy} if the image of
$\mathcal{I}_{\G_{\E}}$ under $\alpha$ is finite. We denote by
$\V(\alpha)$ the representation defined by $\alpha$, and by
$\mathrm{Rep}^{\mathrm{fin}}_{\O_K}(\G_{\E})$ the category of
representations with finite local monodromy.
\end{definition}

Following \cite{Ked-Swan} (and \cite{Ts}) we denote by
$\widetilde{\Ed_L}$ be the quotient field of the ring
\begin{equation}\label{def. of tilde Ed_L}
\O_{\widetilde{\Ed_L}}:=\widehat{\O_{L}^{\mathrm{unr}}}
\otimes_{\O_{L}^{\mathrm{unr}}}\O_{\Ed_L}^{\mathrm{unr}} \; .
\end{equation}
We denote again by $\varphi$, $d/dT$, $d/du_1,\ldots,d/du_r$ the
unique extension to $\widetilde{\Ed_L}$ of $\varphi$, $d/dT$,
$d/du_1,\dotsc,$ $d/du_r$ on $\Ed_L$. If
$\V\in\mathrm{Rep}_{\O_K}^{\mathrm{fin}}(\G_{\E})$, we consider
$\V\otimes_{\O_K}\O_{\widetilde{\Ed_L}}$ together with the action
of $\G_{\E}$ given by $\gamma(v\otimes x):=\gamma(v)\otimes
\gamma(x)$, where $\gamma\in\G_{\E}$, $v\in\V$,
$x\in\widetilde{\Ed_L}$. We define
\begin{equation}
\D^{\dag}(\V)=(\V\otimes_{\O_K}\O_{\widetilde{\Ed_L}}
)^{\G_{\E}}\;,
\end{equation}
and we consider it as an object of
$(\varphi,\nabla)\!-\!\Mod(\O_{\Ed_L}/\O_K)$ with the action of
$\phi^{\D^{\dag}(\V)}:=1\otimes\varphi$,
$\nabla_T^{\D^{\dag}(\V)}:=1\otimes \frac{d}{dT}$,
$\nabla_{u_1}^{\D^{\dag}(\V)}:=1\otimes \frac{d}{du_1}$, \ldots,
$\nabla_{u_r}^{\D^{\dag}(\V)}:=1\otimes \frac{d}{du_r}$.
Reciprocally, for every
$(\D,\phi^{\D},\nabla^{\D})\in(\varphi,\nabla)\!-\!\Mod(\O_{\Ed_L}/\O_K)$
we consider $\D \otimes_{\O_{\Ed_L}} \O_{\widetilde{\Ed_L}}$
together with the action of $\phi^{\D} \otimes \varphi$,
$(\nabla_{T}^{\D}\otimes 1+1\otimes d/dT)$,
$(\nabla_{u_i}^{\D}\otimes 1+1\otimes d/du_i)$, $i=1,\ldots, r$.
We set then
\begin{equation}
\V^{\dag}(\D) :=
(\D\otimes_{\O_{\Ed_L}}\O_{\widetilde{\Ed_{L}}})^{(\phi^{\D}\otimes\varphi)=1}
\;,
\end{equation}
and we consider it as an object of $\mathrm{Rep}_{\O_K}(\G_{\E})$,
with the action of $\G_{\E}$ given by $\gamma(x\otimes
y):=x\otimes \gamma(y)$, with $\gamma\in\G_{\E}$, $x\in\D$,
$y\in\O_{\widetilde{\Ed}_L}$.

\begin{proposition}[(\protect{\cite[3.3.6]{Ked-Swan}} and
\cite{Ts})]\label{Kedlaya's equivalence} The above representation
$\V^{\dag}(\D)$ has finite local monodromy. Moreover the functor
\begin{equation}
\V\mapsto
\D^{\dag}(\V):\mathrm{Rep}_{\O_K}^{\mathrm{fin}}(\G_{\E})\longrightarrow
(\varphi,\nabla)\!-\!\Mod(\O_{\Ed_L}/\O_K)\;,
\end{equation}
is an equivalence of categories with quasi inverse
$\D\mapsto\V^{\dag}(\D)$.\hfill\CVD
\end{proposition}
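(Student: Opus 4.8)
The plan is to reconstruct Tsuzuki's and Kedlaya's equivalence of categories by the usual "decompletion + descent" strategy, adapted to the non-perfect residue setting. Since the proposition asserts two things — that $\V^{\dag}(\D)$ has finite local monodromy, and that $\D^{\dag}$ and $\V^{\dag}$ are quasi-inverse equivalences — I would organize the argument around one technical cornerstone: a faithfully-flat / Galois-descent statement for the ring extension $\O_{\Ed_L}\hookrightarrow\O_{\widetilde{\Ed_L}}$, together with the computation of its $\G_{\E}$-invariants and of $H^1$.

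\medskip

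\textbf{Step 1: Galois descent for $\O_{\widetilde{\Ed_L}}/\O_{\Ed_L}$.} First I would establish that $\O_{\widetilde{\Ed_L}}$ is a faithfully flat $\O_{\Ed_L}$-algebra carrying a continuous $\G_{\E}$-action with $(\O_{\widetilde{\Ed_L}})^{\G_{\E}}=\O_{\Ed_L}$, and — this is the key input — that $H^1_{\mathrm{cont}}(\G_{\E},GL_n(\O_{\widetilde{\Ed_L}}))$ is trivial, or more precisely that every continuous cocycle is a coboundary after a finite extension killing the inertia image. Concretely one reduces, via the finite-local-monodromy hypothesis, to descent along the maximal unramified extension $\O_L^{\mathrm{unr}}\to\widehat{\O_L^{\mathrm{unr}}}$ (where vanishing of $H^1$ with $GL_n$-coefficients in a complete unramified extension is a completed version of Hilbert 90 / Lang's theorem) composed with descent along a finite Galois extension of $\O_{\Ed_L}$ corresponding to the finite monodromy. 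This is exactly the place where one needs the \emph{bounded} Robba ring rather than the full Robba ring: elements of $\O_{\Ed_L}$ descend because they are bounded. I would cite \cite{Ts} and \cite{Ked-Swan} for the non-perfect refinements of these facts.

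\medskip

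\textbf{Step 2: the two functors are well-defined and mutually inverse.} Granting Step 1, the map $\D^{\dag}(\V)\otimes_{\O_{\Ed_L}}\O_{\widetilde{\Ed_L}}\to\V\otimes_{\O_K}\O_{\widetilde{\Ed_L}}$ is an isomorphism (faithfully flat descent of the $\G_{\E}$-module structure, using $H^1$-vanishing to produce a basis of invariants), and symmetrically $\V^{\dag}(\D)\otimes_{\O_K}\O_{\widetilde{\Ed_L}}\xrightarrow{\sim}\D\otimes_{\O_{\Ed_L}}\O_{\widetilde{\Ed_L}}$ is an isomorphism of $(\varphi,\nabla)$-modules (here one uses that the Frobenius-fixed points of $\O_{\widetilde{\Ed_L}}$ are $\O_K$, a Dieudonné–Manin / unit-root argument over the henselian field $\Ed_L$). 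Taking $\G_{\E}$-invariants of the first isomorphism and $\varphi$-invariants of the second then gives natural isomorphisms $\D^{\dag}(\V^{\dag}(\D))\cong\D$ and $\V^{\dag}(\D^{\dag}(\V))\cong\V$; one checks these are compatible with all the structures ($\varphi$, $\nabla_T$, $\nabla_{u_i}$) because they are induced from the identity on the big ring. One must also verify that $\D^{\dag}(\V)$ is finite free of the right rank and that $\nabla^{\D^{\dag}(\V)}$ is integrable — both follow by transporting along the isomorphism, since the connection on $\V\otimes\O_{\widetilde{\Ed_L}}$ is the obvious (integrable) one through the second factor.

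\medskip

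\textbf{Step 3: finiteness of local monodromy of $\V^{\dag}(\D)$.} Finally, to see $\V^{\dag}(\D)$ has finite local monodromy: the inertia $\mathcal{I}_{\G_{\E}}$ acts on $\V^{\dag}(\D)\subset\D\otimes\O_{\widetilde{\Ed_L}}$, and one shows this action factors through a finite quotient by exhibiting, over a finite extension, a full set of horizontal $\varphi$-fixed sections already defined over $\O_{\Ed_{L'}}^{\mathrm{unr}}$ for a suitable finite $L'$; equivalently, the connection $\nabla^{\D}$ being "overconvergent" forces the solution field to be a finite extension of $\Ed_L$ inside $\widetilde{\Ed_L}$. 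Concretely one invokes the non-perfect local monodromy theorem of \cite{Ked-Swan} (the analogue of $p$-adic local monodromy), or in the unit-root case the simpler fact that a unit-root $(\varphi,\nabla)$-module over $\O_{\Ed_L}$ becomes trivial over a finite unramified-over-$\Ed_L$ extension inside $\O_{\widetilde{\Ed_L}}$.

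\medskip

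\emph{The main obstacle} is Step 1 in the non-perfect residue field case: the vanishing of the relevant continuous $H^1$ and the identification of invariants require care because $\widehat{\O_L^{\mathrm{unr}}}$ is no longer a completed \emph{perfect} residue situation, and one has to handle the derivations $d/du_i$ consistently through the completion. In the interest of brevity I would not reprove these ring-theoretic facts from scratch but cite \cite[3.3.6]{Ked-Swan} and the corresponding results in \cite{Ts}, and confine the new content to checking that the descent isomorphisms are compatible with the full connection $\nabla$ (all $r+1$ derivations), which is what makes the statement an equivalence of $(\varphi,\nabla)$-module categories rather than merely of $\varphi$-module categories.
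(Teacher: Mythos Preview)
The paper gives no proof of this proposition: it is stated with a citation to \cite[3.3.6]{Ked-Swan} and \cite{Ts} and closed immediately with a \CVD. Your sketch is a faithful outline of the argument one finds in those references (Galois descent for $\O_{\widetilde{\Ed_L}}/\O_{\Ed_L}$, the comparison isomorphisms, and the finiteness of local monodromy via the unit-root trivialization over a finite extension), so there is nothing to compare and nothing wrong with what you wrote; you have simply supplied more than the paper does.
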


\subsection{The differential Swan conductor.}

Let $\M\in \nabla\!-\!\Mod(\a_L(I))$. For all $\rho\in I$ we set
(cf. \eqref{F_rho})
\begin{equation}
\M_{\rho}:=\M\otimes_{\a_L(I)}\mathcal{F}_{L,\rho}\;.
\end{equation}

\begin{definition}\label{spectral norm of connect}
Let $\M_\rho$ be a $\nabla$-module over $\mathcal{F}_{L,\rho}$.
Let $|\cdot|_{\M_\rho}$ be a norm on $\M_\rho$ compatible with the
norm $|\cdot|_{\rho}$ of $\mathcal{F}_{L,\rho}$. For all
$\nabla^{\M_\rho}\in\{\nabla_T^{\M_\rho},\nabla_{u_1}^{\M_\rho},\ldots,\nabla_{u_r}^{\M_\rho}\}$,
we define
\begin{equation}
\begin{array}{lcl}
|\nabla^{\M_\rho}|_{\M_\rho}&:=&
\sup_{m\in\M_\rho}\frac{|\nabla^{\M_\rho}(m)|_{\M_\rho}}{|m|_{\M_\rho}}\;,\bigskip\\
|\nabla^{\M_\rho}|_{\M_\rho,\mathrm{Sp}}&:=&
\limsup_{n\to\infty}|(\nabla^{\M_\rho})^n|_{\M_\rho}^{1/n}\;.
\end{array}
\end{equation}
The definition of $|\nabla^{\M_\rho}|_{\M_\rho,\mathrm{Sp}}$ does
not depend on the chosen norm $|\cdot|_{\M_\rho}$, but only on the
norm $|\cdot|_{\rho}$ of $\mathcal{F}_{L,\rho}$ (cf. \cite[
1.1.7]{Ked-Swan}).
\end{definition}

\begin{definition}[\protect{(\cite[
2.4.6]{Ked-Swan})}]\label{definition of T(M,rho)} Let $\M$ be a
$\nabla$-module over $\a_L(I)$. We define the \emph{(toric)
generic radius of convergence} of $\M$ at $\rho$ as (cf. Lemma
\ref{False lemma}):
\begin{equation}\label{S(S_i,rho)}
T(\M,\rho) :=
\min\Bigl(\;\frac{|d/dT|_{\mathcal{F}_{L,\rho},\mathrm{Sp}}}{
|\nabla_T^{\M_\rho}|_{\M_\rho,\mathrm{Sp}}}\;,\;
\frac{|d/du_1|_{\mathcal{F}_{L,\rho},\mathrm{Sp}}}{|\nabla_{u_1}^{\M_\rho}|_{\M_\rho,\mathrm{Sp}}}\;,\;\ldots\;,\;
\frac{|d/du_r|_{\mathcal{F}_{L,\rho},\mathrm{Sp}}}{|\nabla_{u_r}^{\M_\rho}|_{\M_\rho,\mathrm{Sp}}}\;\Bigr)\;.
\end{equation}
\end{definition}

This definition is a generalization to the case of non perfect residual field of the
notion of radius of convergence as given at  the end of section  \ref{Radius of convergence and irregularities: the perfect residue field case}.

\begin{lemma}\label{expliciting the definition of the Swan conductor}
Let $\M_\rho$ be a $\nabla$-module over $\mathcal{F}_{L,\rho}$.
For $i=0,\ldots,r$ let $G_n^{i} \in M_d(\mathcal{F}_{L,\rho})$ be
the matrix of $(\nabla_{u_i}^{\M_\rho})^n$ (resp. if $i=0$,
$G^0_n$ is the matrix of $(\nabla_T^{\M_\rho})^n$). Then
\begin{equation}
|\nabla_{u_i}^{\M_{\rho}}|_{\M_\rho,\mathrm{Sp}}=\max(\;\omega\;,
\;\limsup_{n\to\infty}|G_n^i|^{1/n}_\rho\;)\;,
\end{equation}
where $\omega:=|p|^{\frac{1}{p-1}}$ (cf. Lemma \ref{False lemma}),
and
\begin{equation}
|\nabla_{T}^{\M_\rho}|_{\M_\rho,\mathrm{Sp}}=\max(\;\omega\cdot\rho^{-1}\;,
\;\limsup_{n\to\infty}|G_n^0|^{1/n}_\rho\;)\;.
\end{equation}
Hence, by Definition \ref{definition of T(M,rho)}, one has
\begin{equation}\label{S(S_i, rho) simpler basis}
T(\M_\rho,\rho)=\min_{i=1,\ldots,r}\Bigl(\;1\;,\;\omega\cdot\rho^{-1}\cdot
\Bigl[\liminf_{n\to\infty}|G_n^0|^{-1/n}_\rho\Bigr]\;,\;
\omega\cdot
\Bigl[\liminf_{n\to\infty}|G_n^i|^{-1/n}_\rho\Bigr]\;\Bigr)\;.
\end{equation}
\end{lemma}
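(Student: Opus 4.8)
The plan is to reduce the statement, via Definitions \ref{spectral norm of connect} and \ref{definition of T(M,rho)}, to two ingredients: the Leibniz rule for iterated connections (which relates $(\nabla^{\M_\rho})^n$ to the matrices $G_n^i$) and the standard inequality comparing the spectral norm of a connection with that of the underlying derivation. Since the spectral norm does not depend on the compatible norm chosen on $\M_\rho$, I would fix the basis $\e=(\e_1,\dots,\e_d)$ of $\M_\rho$ in which the $G_n^i$ are computed and equip $\M_\rho$ with the associated sup-norm, so that $|\partial^k|_{\M_\rho}=|\partial^k|_{\mathcal{F}_{L,\rho}}$ for every $\partial\in\{d/dT,d/du_1,\dots,d/du_r\}$. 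Writing $\partial_i=d/du_i$ for $i\geq 1$ and $\partial_0=d/dT$, one has $\nabla_{u_i}^{\M_\rho}\leftrightarrow\partial_i(\cdot)+G_1^i\cdot$ in coordinates; a one-line induction from $\nabla^n(fm)=\sum_{k=0}^n\binom nk\partial^k(f)\,\nabla^{n-k}(m)$ then shows that $(\nabla_{u_i}^{\M_\rho})^n$ sends a section with coordinate vector $v$ to the one with coordinate vector $\sum_{k=0}^n\binom nk\,G_{n-k}^i\,\partial_i^k(v)$ (entrywise, with $G_0^i$ the identity), and likewise for $\partial_0$.

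Both inequalities follow from this identity. Since the binomial coefficients are $p$-adic integers, $|(\nabla_{u_i}^{\M_\rho})^n|_{\M_\rho}\leq\max_{0\leq j\leq n}|G_j^i|_\rho\,|\partial_i^{\,n-j}|_{\mathcal{F}_{L,\rho}}$; as $|G_m^i|_\rho$ and $|\partial_i^m|_{\mathcal{F}_{L,\rho}}$ both grow at most geometrically and $\lim_m|\partial_i^m|_{\mathcal{F}_{L,\rho}}^{1/m}=|\partial_i|_{\mathcal{F}_{L,\rho},\mathrm{Sp}}$, taking $n$-th roots and $\limsup$ gives, by a routine estimate on the growth of the convolution $\max_{j}|G_j^i|_\rho|\partial_i^{n-j}|$, the upper bound $|\nabla_{u_i}^{\M_\rho}|_{\M_\rho,\mathrm{Sp}}\leq\max(|\partial_i|_{\mathcal{F}_{L,\rho},\mathrm{Sp}},\limsup_n|G_n^i|_\rho^{1/n})$. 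For the reverse I would evaluate $(\nabla_{u_i}^{\M_\rho})^n$ on the constant sections $\e_j$: the terms with $k\geq 1$ vanish, so the coordinate vector of $(\nabla_{u_i}^{\M_\rho})^n(\e_j)$ is the $j$-th column of $G_n^i$; maximizing over $j$ gives $|(\nabla_{u_i}^{\M_\rho})^n|_{\M_\rho}\geq|G_n^i|_\rho$, hence $|\nabla_{u_i}^{\M_\rho}|_{\M_\rho,\mathrm{Sp}}\geq\limsup_n|G_n^i|_\rho^{1/n}$. The other half of the lower bound, $|\nabla_{u_i}^{\M_\rho}|_{\M_\rho,\mathrm{Sp}}\geq|\partial_i|_{\mathcal{F}_{L,\rho},\mathrm{Sp}}$, is the general fact that a $\nabla$-module has toric generic radius of convergence at most $1$, which I would quote from \cite[\S1]{Ked-Swan}. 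Combining the three bounds gives the two displayed equalities; substituting the values $|d/dT|_{\mathcal{F}_{L,\rho},\mathrm{Sp}}=\omega\rho^{-1}$ and $|d/du_i|_{\mathcal{F}_{L,\rho},\mathrm{Sp}}=\omega$ of Lemma \ref{False lemma} puts them into the stated shape.

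For the last formula I would simply substitute into Definition \ref{definition of T(M,rho)} and use $a/\max(a,b)=\min(1,a/b)$ (with $a/0:=+\infty$): the $i$-th differential factor becomes $\min(1,\omega\liminf_n|G_n^i|_\rho^{-1/n})$ and the $T$-factor becomes $\min(1,\omega\rho^{-1}\liminf_n|G_n^0|_\rho^{-1/n})$, where $\liminf_n|G_n^i|_\rho^{-1/n}=(\limsup_n|G_n^i|_\rho^{1/n})^{-1}$. Taking the outer minimum merges the various ``$1$''s into one and produces exactly \eqref{S(S_i, rho) simpler basis}.

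The hard part is the lower bound $|\nabla_{\partial}^{\M_\rho}|_{\M_\rho,\mathrm{Sp}}\geq|\partial|_{\mathcal{F}_{L,\rho},\mathrm{Sp}}$: dominance by $\limsup_n|G_n^i|^{1/n}$ drops out of the constant sections, but the comparison with the derivation cannot be detected by applying $(\nabla_\partial^{\M_\rho})^n$ to a single element — the lower-order terms $\binom nk G_{n-k}^i\partial_i^k(v)$, $k<n$, may cancel the leading term $\partial_i^n(v)$ for infinitely many $n$ — so this is the one place where the argument leans on the existing $p$-adic theory rather than on a direct computation.
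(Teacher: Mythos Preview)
Your argument is correct and is essentially an unpacking of the reference the paper cites: the paper's own proof consists of a single line, ``This follows directly from the definition \ref{definition of T(M,rho)} (cf.\ \cite[(1.1.7.1)]{Ked-Swan})'', so your Leibniz-rule computation, the evaluation on basis vectors, and the appeal to the general inequality $|\nabla_\partial^{\M_\rho}|_{\M_\rho,\mathrm{Sp}}\geq|\partial|_{\mathcal{F}_{L,\rho},\mathrm{Sp}}$ are exactly what one finds when following that citation. Your identification of the last inequality as the only non-elementary input is also accurate.
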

\dem This follows directly from the definition \ref{definition of
T(M,rho)} (cf. \cite[(1.1.7.1)]{Ked-Swan}). \CVD
\medskip

The following definition generalizes that one given in section \ref{Radius/spectral} to the case of general residue field with finite $p$-basis.

\begin{definition}[\protect{(\cite[2.5.1]{Ked-Swan}})]
Let $\M$ be a $\nabla$-module over $\a_L(]\varepsilon,1[)$, with
$0\leq\varepsilon<1$. We will say that $\M$ is \emph{solvable} if
\begin{equation}
\lim_{\rho\to 1^{-}}T(\M,\rho)=1\;.
\end{equation}
\end{definition}
This is the case if the module has a Frobenius structure.

\subsubsection{}\label{log-definition} We recall that if $f :
\mathbb{R}_{> 0}\to\mathbb{R}_{> 0}$ is a function, the
$\log$-function $\widetilde{f} : \mathbb{R}\to \mathbb{R} $
attached to $f$ is defined as $\widetilde{f}(r) :=
\log(f(\exp(r)))$. Note that if $f(\rho)=a\rho^b$ (for
$\rho\in]\rho_1,\rho_2[$), then $\widetilde{f}(r)=\log(a)+b r$
(for $r\in]\log(\rho_1),\log(\rho_2)[$).

\begin{notation}
We will say that $f$ has  a given property \emph{logarithmically}
if $\widetilde{f}$ has that property.
\end{notation}

\begin{definition}[(\protect{\cite[ 2.5.6]{Ked-Swan}})]\label{Definition of Diff-Swan}
Let $\M\in \nabla-\Mod(\R_L/K)$ be a solvable $\nabla$-module. The
\emph{slope} of $\M$ (at $1^-$) is the log-slope of the function
$\rho\mapsto T(\M,\rho)$, for $\rho<1$ sufficiently close to
$1^{-}$. We denoted it by
\begin{equation}
\mathrm{slope}(\M,1^-)\;.
\end{equation}
\end{definition}

Let $\M\in\nabla-\Mod(\a_L(I)/K)$. In \cite{Ked-Swan} K.Kedlaya
proves that the function $\rho\mapsto T(\M,\rho)$ is
\emph{continuous}, \emph{$\log$-concave}, \emph{piecewise
$\log$-affine}\footnote{i.e. the function $\rho\mapsto T(\M,\rho)$
is locally of the type $a\rho^b$ in a partition of $I$.} and 
moreover, if $M$ is solvable,   that the Definition \ref{Definition of Diff-Swan} has a
meaning, since there exist a $\varepsilon<1$, and a
number $\beta$ such that $T(\M,\rho)=\rho^{\beta}$, for all
$\rho\in]\varepsilon,1[$. In this case the slope of $\M$ is (cf.
\cite[2.5.5]{Ked-Swan}):
\begin{equation}
\mathrm{slope}(\M,1^{-})=\beta.
\end{equation}
The $\log$-function $\log(\rho)\mapsto \log(T(\M,\rho))$ draws a
graphic as the following:
\begin{equation}
\begin{picture}(200,90) %
\put(100,0){\vector(0,1){90}} %
\put(0,65){\vector(1,0){200}} %

\put(40,25){\line(-2,-5){10}} %
\put(40,25){\line(1,1){30}} %
\put(70,55){\line(3,1){30}} %

\put(85,60){\circle{10}} %
\put(85,55){\line(2,-1){30}} %
\put(120,35){$\mathrm{slope}(\M,1^{-})$}

\put(180,70){$\log(\rho)$} %
\put(105,80){$\log(T(M,\rho))$}
\end{picture}
\end{equation}

Following \cite[Definitions 2.4.6, 1.2.3,
2.7.1]{Ked-Swan}\label{S(M,rho)} let $\M$ be a \emph{solvable}
$\nabla$-module over $\O_{\mathcal{E}_L^{\dag}}$. Assume that $\M$
is defined over $]\epsilon,1[$. Let
$\M_{\rho,1},\M_{\rho,2},\ldots,\M_{\rho,n}$ be the Jordan-H\"older factors
of $\M_{\rho}$, for $\rho\in ]\epsilon,1[$. We define $S(\M,\rho)$
as the multi-set whose elements are
$T(\M_{\rho,1},\rho),\ldots,T(\M_{\rho,n},\rho)$ with multiplicity
$\mathrm{dim}_{\mathcal{F}_\rho}\M_{\rho,1},\ldots,\mathrm{dim}_{\mathcal{F}_\rho}\M_{\rho,n}$
respectively. We will say that $\M$ has \emph{uniform break}
$\beta$ if there exists $\epsilon\leq\epsilon'<1$ such that, for
all $\rho\in ]\epsilon',1[$, $S(\M,\rho)$ consists in a single
element $\rho^\beta$ with multiplicity
$\mathrm{rank}_{\O_{\mathcal{E}^{\dag}_L}}\M$.

\begin{theorem}[(\protect{\cite[2.7.2]{Ked-Swan}})]\label{deco} We maintain the notations
of  section \ref{S(M,rho)}. Every indecomposable solvable  $\nabla$-module
over $\O_{\Ed_L}$ has a uniform break. In general, for $M$ a solvable $\nabla$-module  we have a
decomposition
\begin{equation}
\M=\bigoplus_{\beta \in \mathbb{Q}_{\geq 0}} \M_{\beta}\;,
\end{equation}
where $\M_{\beta}$ is a solvable  $\nabla$-module over
$\O_{\mathcal{E}^{\dag}_L}$-module, with \emph{uniform break}
$\beta$.\hfill\CVD
\end{theorem}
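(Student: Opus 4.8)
Since the statement is \cite[2.7.2]{Ked-Swan}, the plan is essentially to reproduce Kedlaya's argument, which I would organize as follows. A solvable $\nabla$-module over $\O_{\Ed_L}$ extends, after inverting $p$ and using $\Ed_L\subset\R_L=\bigcup_{\varepsilon}\a_L(]\varepsilon,1[)$, to a $\nabla$-module over some $\a_L(]\varepsilon,1[)$; I would produce the decomposition there and only afterwards check that the relevant idempotents are bounded and overconvergent, hence defined over $\O_{\Ed_L}$. For each $\rho\in\,]\varepsilon,1[$ the fibre $\M_\rho$ is a differential module over the complete valued field $\mathcal{F}_{L,\rho}$ carrying the pairwise commuting derivations $d/dT,d/du_1,\dots,d/du_r$ (integrability of the connection), and the first step is to decompose $\M_\rho$ according to the spectral radii of these derivations.

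For a single derivation $\partial\in\{d/dT,d/du_1,\dots,d/du_r\}$, the one-variable theory of Robba and Christol--Mebkhout (\cite{Ch-Me-3}), in the refinement due to Kedlaya that handles an imperfect residue field via the auxiliary fields $\mathcal{E}_{F_0,(u_1,\dots,u_r)}$ of Section \ref{Cohen ring of k_0((u_1,...u_r))}, gives a canonical direct sum decomposition of $\M_\rho$ into sub-$\nabla$-modules on each of which the spectral norm of $\nabla_\partial$ --- equivalently the $\partial$-radius of convergence of the Taylor solutions --- is pure, i.e.\ the same for the summand and for all its Jordan-H\"older factors. Applying this successively to $d/dT$, then $d/du_1,\dots,$ then $d/du_r$ (each decomposition being automatically stable under the remaining derivations, which commute with $\partial$) and taking a common refinement, I obtain $\M_\rho=\bigoplus_j\M_\rho^{(j)}$ on which every individual spectral radius, hence also $T(\cdot,\rho)$, is constant and equal to $\rho^{\beta_j}$. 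Regrouping the summands with the same $\beta_j$ and invoking that a module has uniform break $\beta$ precisely when all its Jordan-H\"older factors satisfy $T(\cdot,\rho)=\rho^{\beta}$ yields a decomposition $\M_\rho=\bigoplus_\beta(\M_\rho)_\beta$ by uniform breaks; only finitely many $\beta$ occur, and they lie in $\mathbb{Q}_{\geq 0}$.

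Next I would let $\rho$ vary. The idempotent projectors $e_\beta(\rho)\in\mathrm{End}(\M_\rho)$ cutting out $(\M_\rho)_\beta$ are horizontal (they commute with $\nabla_T^{\M_\rho}$ and every $\nabla_{u_i}^{\M_\rho}$) and, being characterized intrinsically through the spectral radii, are unique; hence the decompositions at different $\rho$ are automatically compatible. To promote them to a decomposition of $\M$ over $\a_L(]\varepsilon',1[)$ for $\varepsilon'$ close to $1$, I would argue as in the remark of Section \ref{Radius of convergence and irregularities: the perfect residue field case} on Frobenius antecedents: using that $\rho\mapsto T(\M,\rho)$ is continuous, log-concave and piecewise log-affine, and that it tends to $1$ at $1^-$ by solvability, the break multiset $S(\M,\rho)$ is locally constant near $1$, the entries of $e_\beta(\rho)$ converge, and the limiting projectors have entries in $\a_L(]\varepsilon',1[)$. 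This gives $\M=\bigoplus_\beta\M_\beta$ over $\a_L(]\varepsilon',1[)$ with $T(\M_\beta,\rho)=\rho^\beta$, so each $\M_\beta$ has uniform break $\beta$ and is itself solvable (since $\rho^\beta\to1$). Finally, because the $e_\beta$ are horizontal and $\M$ is already defined over the lattice $\O_{\Ed_L}$, their entries are simultaneously overconvergent and bounded, so they lie in $\R_L\cap\mathcal{E}_L=\Ed_L$, and a lattice-theoretic descent over the discrete valuation ring $\O_{\Ed_L}$ (intersecting the $\Ed_L$-submodules with $\M$) returns the decomposition over $\O_{\Ed_L}$. The assertion on indecomposable modules is then immediate: if $\M$ is indecomposable over $\O_{\Ed_L}$ the decomposition has a single term, so $\M=\M_\beta$ has uniform break $\beta$.

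The main obstacle is the fibrewise input together with the multivariate bookkeeping: one must know that an irreducible differential module over $\mathcal{F}_{L,\rho}$ has, for each of the $r+1$ derivations, a single spectral radius, and that the decompositions obtained one derivation at a time are mutually compatible so that their common refinement respects the full integrable connection --- this is exactly the point where the imperfect-residue-field refinement of the Christol--Mebkhout machinery is needed, and it rests on the explicit description of the relevant Cohen rings in Section \ref{Cohen ring of k_0((u_1,...u_r))} and on Lemma \ref{False lemma}. The second delicate point is the descent: passing from the pointwise projectors $\{e_\beta(\rho)\}_\rho$ to a single decomposition over the annulus, and thence to $\O_{\Ed_L}$, which relies essentially on the regularity of $\rho\mapsto T(\M,\rho)$ near $1^-$ and on the Frobenius-antecedent technique to control the convergence of the projectors.
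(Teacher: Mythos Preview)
The paper does not give a proof of this theorem: the statement is quoted verbatim from \cite[2.7.2]{Ked-Swan}, and the $\Box$ appended to the statement indicates that it is taken for granted without argument. There is therefore nothing in the paper against which to compare your proposal.

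That said, your outline is a faithful high-level sketch of Kedlaya's proof. A couple of places where the sketch is thin and would need expansion if you were actually writing out the argument: the passage from the pointwise decomposition of $\M_\rho$ to a decomposition over an annulus $\a_L(]\varepsilon',1[)$ is not simply a matter of the projectors ``converging'' --- Kedlaya handles this by a separate spreading argument (his Lemma 2.7.10/2.7.11 in \cite{Ked-Swan}) rather than by tracking $e_\beta(\rho)$ as $\rho$ varies; and the descent from $\Ed_L$ to $\O_{\Ed_L}$ requires more than horizontality of the idempotents, since horizontal sections of $\mathrm{End}(\M)$ over $\R_L$ are not a priori bounded --- one uses that the decomposition already exists integrally on the residue side. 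Finally, the rationality of the breaks (that $\beta\in\mathbb{Q}_{\geq 0}$) is a nontrivial extra input in \cite{Ked-Swan}, not a byproduct of the decomposition.
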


\begin{definition}[(differential Swan conductor \protect{\cite[2.8.1]{Ked-Swan}})]\label{definition of diff Swan conductor}
Let $\M$ be a \emph{solvable} $\nabla$-module over
$\O_{\mathcal{E}_L^{\dag}}$. Let $\M=\oplus_{\beta\in
\mathbb{Q}_{\geq 0}} \M_{\beta}$, be the decomposition of Theorem
\ref{deco}. We define the \emph{differential Swan conductor} of
$\M$ as
\begin{equation}
\mathrm{sw}^{\nabla}(\M)\;:=\;\sum_{\beta \in \mathbb{Q}_{\geq
0}}\beta
\cdot\mathrm{rank}_{\O_{\mathcal{E}^{\dag}_L}}\M_{\beta}\;.
\end{equation}
Moreover, for all
$\V\in\mathrm{Rep}^{\mathrm{fin}}_{\O_K}(\G_{\E})$, we set:
\begin{equation}
\mathrm{sw}^{\nabla}(\V)\;:=\;\sw^{\nabla}(\D^{\dag}(\V))\;.
\end{equation}
\end{definition}

\begin{remark}\label{RK1}
If $\mathrm{rank}_{\O_{\mathcal{E}^{\dag}_L}}\M=1$, then
$\mathrm{sw}^{\nabla}(\M)=\mathrm{slope}(\M,1^{-})$.
\end{remark}

\subsubsection{}\label{M_1 otimes M_2} The Toric Generic Radius of
convergence $T(\M,\rho)$ can be seen as a \emph{concrete} Radius
of convergence of  certain Taylor solutions (cf. \cite[Section
2.2]{Ked-Swan}). Hence, by the usual properties of the Radius of
convergence of  solutions of a differential equation one has
$\mathrm{sw}^{\nabla}(\M_1\otimes\M_2)\leq\max(\mathrm{sw}^{\nabla}(\M_1),\mathrm{sw}^{\nabla}(\M_2))$
(resp. $T(\M_1\otimes\M_2,\rho)\geq
\min(T(\M_1,\rho),T(\M_2,\rho))$), and moreover equality holds if
$\mathrm{sw}^{\nabla}(\M_1)\neq \mathrm{sw}^{\nabla}(\M_2)$ (resp.
$T(\M_1,\rho)\neq T(\M_2,\rho)$).

\subsubsection{Ramification filtration.}
The group $\G_{\E}$ is canonically imbedded into the Tannakian
group of the category
$\mathrm{Rep}^{\mathrm{fin}}_{\O_K}(\G_{\E})$. The definition of
the differential Swan conductor, together with Theorem \ref{deco},
define a \emph{ramification filtration} on $\G_{\E}$. Indeed, by
Theorem \ref{deco}, we apply the formalism introduced in \cite{An}
to define a filtration on the Tannakian group of
$\mathrm{Rep}^{\mathrm{fin}}_{\O_K}(\G_{\E})$. Hence $\G_{\E}$
inherits the filtration.

\section{Arithmetic Swan conductor for rank one representations}
\label{Arithmetic Swan conductor for rank one representations with
finite local monodromy}

In rank one case the arithmetic Swan conductor as defined by
K.Kato (cf. \cite{Kato-Swan} and Def. \ref{Kato definition of Swan
conductor}) coincides with that one of  A.Abbes and T.Saito
(cf. \cite{Ab-Sa-Ramification-groups}, \cite[9.10]{Ab-Sa}). In
this section we recall Kato's definition of the Swan conductor of
a rank one representation $\alpha : \G_{\E} \rightarrow
\mathbb{Q}/\mathbb{Z}$ and we describe completely Kato's
filtration on $\mathrm{H}^1(\G_{\E},\mathbb{Q}/\mathbb{Z})$: we
will use this later in our study of  rank one $p$-adic
representations with finite local monodromy. Indeed, in section
\ref{decomposition of G_E^ab}, we obtain the decomposition
$\G_{\E}^{\mathrm{ab}}=\mathcal{I}_{\G_{\E}^{\mathrm{ab}}}\oplus\G_{k}^{\mathrm{ab}}$.
This fact will allow us to \emph{define} the arithmetic Swan
conductor for rank one representations with finite local monodromy
(cf. Def. \ref{Definition of Swan for finite char}).

\if{ Let $\alpha:\G_{\E}\to\mathbb{Q}/\mathbb{Z}$ be a continuous
character. In the rank one case, the \emph{arithmetic Swan
conductor} of $\alpha$ defined by A.Abbes and T.Saito in
\cite{Ab-Sa-Ramification-groups} coincides with the Kato's Swan
conductor (cf. \cite{Kato-Swan}, Def. \ref{Kato definition of Swan
conductor}, cf. \cite[Th.9.10]{Ab-Sa}). In this section we recall
the definition of K.Kato, and describe completely the 's
filtration on $\mathrm{H}^1(\G_{\E},\mathbb{Q}/\mathbb{Z})$. In
section \ref{decomposition of G_E^ab} we obtain the decomposition
$\G_{\E}^{\mathrm{ab}}=\mathcal{I}_{\G_{\E}^{\mathrm{ab}}}\oplus\G_k$.
This fact will allow us to \emph{define} the arithmetic Swan
conductor for rank one representations with finite local monodromy
(cf. Def. \ref{Definition of Swan for finite char}). }\fi 

\begin{remark}
In all the section \ref{Arithmetic Swan conductor for rank one
representations with finite local monodromy}, $k$ is an arbitrary
field of characteristic $p$ (we do not assume that $k$ has a
finite $p$-basis).
\end{remark}

\subsection{Kato's arithmetic Swan conductor
for rank one representations}

In \cite{Kato-Swan} K.Kato defined the Swan conductor of a
character in
$\mathrm{H}^1(\G_{\E},\mathbb{Q}/\mathbb{Z})=\mathrm{Hom}^{\mathrm{cont}}(\G_{\E},\mathbb{Q}/\mathbb{Z})$.
He gave this definition by introducing a filtration on
$\mathrm{H}^1(\G_{\E},\mathbb{Q}_p/\mathbb{Z}_p)$ (cf. section
\ref{meaning of continuous characters}). This filtration is
defined as the image (via the Artin-Schreier-Witt sequence) of a
filtration of $\CW(\E)$. The filtration on $\CW(\E)$ is nothing
but that introduced in Section \ref{Witt co-vectors of a filtered
ring...li}, corresponding to the canonical filtration of $\E$
(actually a graduation) induced by the valuation (cf. Section
\ref{filtration of a valuation}). In this section we describe
completely these filtrations, without any assumption on $k$ (no
finiteness of the $p$-base is required),  and without using Kato's
logarithmic differentials and differential Swan conductors. Almost
all material of this section comes from \cite{Rk1}, we reproduce it
here for the convenience of the reader. What is new here is the
relations of the results of \cite{Rk1} with Kato's definitions. In
section \ref{Kato's refined Swan conductor} one finds a link with
the constructions of K.Kato.

\subsubsection{Kato's Filtration on $\W_m(\E)$ and $\CW(\E)$.}
\label{valuation on W_m} Let $R$ be a ring, and let $v:R \to
\mathbb{R}\cup\{\infty\}$ be a valuation on $R$ (i.e. satisfying
$v(0)=\infty$, $v(\lambda_1+\lambda_2)\geq \min(
v(\lambda_1),v(\lambda_2))$, and $v(\lambda_1\cdot\lambda_2)\geq
v(\lambda_1)+v(\lambda_2)$\;). The valuation $v$ of $R$ extends to
a valuation, denoted again by $v$, on the ring of Witt vectors
$\W_s(R)$ as follows:
\begin{equation}\label{definition of v of KAto}
v(\lambda_0, \ldots,\lambda_s)\;:=\; \min(\;p^s
v(\lambda_0)\;,\;p^{s-1}
v(\lambda_1)\;,\;\ldots\;,\;v(\lambda_s)\;)\;.
\end{equation}
\label{vV(lb)=v(lb)} The function $v:\W_s(R)\to \mathbb{R}$
verifies $v(\bs{0}) = +\infty$, $v(\lb_1+\lb_2)  \geq
\min(v(\lb_1),v(\lb_2))$, and $v(\lb_1\cdot\lb_2) \geq
v(\lb_1)+v(\lb_2)$, for all $\lb_{1},\lb_2\in\W_s(R)$. One has
also $v(\V(\lb))=v(\lb)$, and, if $R$ is an $\mathbb{F}_p$-ring,
then $v(\Fb(\lb))=p\cdot v(\lb)$. Hence $v$ extends to a valuation
\begin{equation}
v:\bs{\mathrm{CW}}(R)\to \mathbb{R}\cup\{+\infty\}\;.
\end{equation}

\begin{definition}[(Kato's filtration on $\W_s(\E)$)] Let
as usual $\E:=k(\!(t)\!)$, and let $v=v_t$ be the $t$-adic
valuation. The \emph{Kato's filtration on $\W_s(\E)$} is defined
as $\mathrm{Fil}_{-1}(\W_s(\E)):=0$, and, for all $d\geq 0$, as:
\begin{equation}
\mathrm{Fil}_d(\W_s(\E)):= \{\lb\in\W_s(\E)\;|\;v(\lb)\geq -d\}\;.
\end{equation}
\end{definition}

One has
\begin{equation}
\V(\;\mathrm{Fil}_d(\W_s(\E))\;)\;\subset\;\mathrm{Fil}_d(\W_{s+1}(\E))\;,
\qquad\Fb(\;\mathrm{Fil}_d(\W_s(\E))\;)\;\subseteq\;\mathrm{Fil}_{pd}(\W_s(\E))\;.
\end{equation}
Hence  Kato's filtration on $\W_s(\E)$ passes to the limit and
defines a filtration on $\bs{\mathrm{CW}}(\E)$. One has
$\mathrm{Fil}_{-1}(\CW(\E)):=0$ and, for all $d\geq 0$, one has
\begin{equation}
\mathrm{Fil}_d(\bs{\mathrm{CW}}(\E))\;\;:=\;\;
\{\lb\in\bs{\mathrm{CW}}(\E)\;|\;v(\lb)\geq
-d\}\;\;=\;\;\bigcup_{s\geq 0}\;\mathrm{Fil}_d(\W_s(\E))\;.
\end{equation}

\subsubsection{Kato's Filtration on
$\mathrm{H}^1(\G_{\E},\mathbb{Q}_p/\mathbb{Z}_p)$ and
$\mathrm{H}^1(\G_{\E},\mathbb{Q}/\mathbb{Z})$.}\label{meaning of continuous
characters} Let $\mathbb{T}:=\mathbb{R}/\mathbb{Z}$. The
\emph{Pontriagyn dual} of $\G_{\E}$ is by definition the discrete
abelian group $\Hom^{\mathrm{cont}}(\G_{\E},\mathbb{T})$. Every
proper closed subgroup of $\mathbb{T}$ is finite (cf. \cite[
2.9.1]{Ribes-Zalesskii}). The image of a continuous morphism
$\alpha:\G_{\E}\to \mathbb{T}$ is then finite, and hence
\begin{equation}
\Hom^{\mathrm{cont}}(\G_{\E},\mathbb{T})=
\Hom^{\mathrm{cont}}(\G_{\E},\mathbb{Q}/\mathbb{Z})=\bigcup_{n\geq
1}\Hom(\G_{\E},(n^{-1}\mathbb{Z})/\mathbb{Z})\;,
\end{equation}
where $\mathbb{Q}/\mathbb{Z}\subseteq \mathbb{T}$ is considered
with the discrete topology, so that every continuous
character $\alpha \in
\Hom^{\mathrm{cont}}(\G_{\E},\mathbb{Q}/\mathbb{Z})$ has finite
image in $\mathbb{Q}/\mathbb{Z}$. On the other hand, we recall
that if $\mathrm{A}$ is a finite abelian group with trivial
action of $\G_{\E}$, then
$\mathrm{H}^1(\G_{\E},\mathrm{A})=\Hom(\G_{\E},\mathrm{A})$ (cf.
\cite[VII, $\S 3$]{Se}), so we have
$\mathrm{H}^1(\G_{\E},\mathbb{Q}/\mathbb{Z})=\Hom^{\mathrm{cont}}(\G_{\E},\mathbb{Q}/\mathbb{Z})$.

\begin{definition}\label{definition of Fil_d(H^1(G_E,Q/Z))}
The \emph{Kato's filtration on
$\mathrm{H}^1(\G_{\E},\mathbb{Q}_p/\mathbb{Z}_p)=
\mathrm{Hom}^{\mathrm{cont}}(\G_{\E},\mathbb{Q}_p/\mathbb{Z}_p)$}
is defined as the image, under the morphism $\delta$ of the
Sequence \eqref{artin-schreier-diagram-covectors}, of the
filtration on $\bs{\mathrm{CW}}(\E)$:
\begin{equation}\label{Fil_d(H^1)}
\mathrm{Fil}_d(\mathrm{H}^1(\G_{\E},\mathbb{Q}_p/\mathbb{Z}_p))\;\;
:= \;\; \delta(\mathrm{Fil}_d(\bs{\mathrm{CW}}(\E)))\;.
\end{equation}
\end{definition}
\subsubsection{Arithmetic Swan conductor.}

\if{Here we define the arithmetic Swan conductor of a character of
$\G_{\E}$ with values in $\mathbb{Q}/\mathbb{Z}$. We will define
in section \ref{Swan cond aritm for rk1 repre with loc mon} the
arithmetic Swan conductor of a character with values in
$\O_K^{\times}$ with finite local monodromy (not necessarily with
finite image).}\fi

\begin{definition}[(Arithmetic Swan conductor)]\label{Kato definition of Swan conductor}
Let
$\alpha\in\mathrm{Hom}^{\mathrm{cont}}(\G_{\E},\mathbb{Q}/\mathbb{Z})$,
the \emph{arithmetic Swan conductor} of $\alpha$ (with respect to
$v=v_t$) is defined as
\begin{equation}
\mathrm{sw}(\alpha)= \min\{\; d\geq 0 \;\;|\;\; \alpha_p
\in\mathrm{Fil}_{d}(\mathrm{H}^{1}(\mathrm{G}_\E,\mathbb{Q}_p/\mathbb{Z}_p))\;\}\;,
\end{equation}
where $\alpha_p$ is the image of $\alpha$ under the projection
\begin{equation}\label{Q/Z--->Q_p/Z_p}
\mathrm{Hom}^{\mathrm{cont}}(\G_{\E},\mathbb{Q}/\mathbb{Z})=
\bigoplus_{\ell=\textrm{prime}}\mathrm{Hom}^{\mathrm{cont}}(\G_{\E},\mathbb{Q}_\ell/\mathbb{Z}_\ell)
\longrightarrow\mathrm{Hom}^{\mathrm{cont}}(\G_{\E},\mathbb{Q}_p/\mathbb{Z}_p)\;,
\end{equation}
where the word ``$\mathrm{cont}$'' means, as usual, that the images
of the homomorphisms are finite (cf. Section \ref{meaning of
continuous characters}). We then define  \emph{Kato's
filtration on
$\mathrm{H}^1(\G_{\E},\mathbb{Q}/\mathbb{Z})=\mathrm{Hom}^{\mathrm{cont}}(\G_{\E},\mathbb{Q}/\mathbb{Z})$},
by taking
$\mathrm{Fil}_d(\mathrm{H}^1(\G_{\E},\mathbb{Q}/\mathbb{Z}))$ to be 
the inverse image of
$\mathrm{Fil}_d(\mathrm{H}^1(\G_{\E},\mathbb{Q}_p/\mathbb{Z}_p))$,
via the morphism \eqref{Q/Z--->Q_p/Z_p}.
\end{definition}

\begin{remark}
In section \ref{decomposition of G_E^ab} we will generalize this
definition to all rank one representations with \emph{finite local
monodromy} of $\mathrm{G}_{\E}$, i.e. characters
$\alpha:\G_{\E}\to \O_K^{\times}$ such that
$\alpha(\mathcal{I}_{\G_{\E}})$ is finite.
\end{remark}

\subsection{Description of \protect{$\mathrm{H}^1(\G_{\E},\mathbb{Q}_p/\mathbb{Z}_p)$}
and computation of
$\mathrm{Fil}_d(\mathrm{H}^1(\G_{\E},\mathbb{Q}/\mathbb{Z}))$ }
\label{Description of H^1 and Fil_d}

In this section we recall an explicit description of
$\mathrm{H}^1(\G_{\E},\mathbb{Q}_p/\mathbb{Z}_p)$ that was
obtained in \cite[Section 3.2, and Lemma 4.1]{Rk1}. The proofs of
the statements are in \cite{Rk1}. In order to study Swan
conductor, we are interested in
$\mathrm{Hom}^{\mathrm{cont}}(\G_{\E},\mathbb{Q}_p/\mathbb{Z}_p)$.
The Artin-Schreier-Witt sequence
\eqref{artin-schreier-diagram-covectors} describes the latter as
\begin{equation}
\mathrm{Hom}^{\mathrm{cont}}(\G_{\E},\mathbb{Q}_p/\mathbb{Z}_p)=
\frac{\bs{\mathrm{CW}}(\E)}{(\Fb-1)\bs{\mathrm{CW}}(\E)}\;.
\end{equation}
Now the $\Fb$-module $\bs{\mathrm{CW}}(\E)$ admits the following
decomposition in $\Fb$-sub-$\mathbb{Z}$-modules (cf. \cite[Lemma
3.4]{Rk1}, cf. Section \ref{CW(I)}):
\begin{equation}\label{decomposition of CW(E) as - 0 +}
\bs{\mathrm{CW}}(k(\!(t)\!)) = \bs{\mathrm{CW}}(t^{-1}k[t^{-1}])
\oplus \bs{\mathrm{CW}}(k) \oplus \bs{\mathrm{CW}}(tk[[t]])\;.
\end{equation}
Moreover, one proves that
$\bs{\mathrm{CW}}(tk[[t]])/(\Fb-1)(\bs{\mathrm{CW}}(tk[[t]]))=0$
(cf. \cite[Prop.3.1]{Rk1}). Hence
\begin{equation}
\mathrm{H}^1(\G_{\E},\mathbb{Q}_p/\mathbb{Z}_p)=
\frac{\bs{\mathrm{CW}}(t^{-1}k[t^{-1}])}{(\Fb-1)(\bs{\mathrm{CW}}(t^{-1}k[t^{-1}]))}
\oplus \frac{\bs{\mathrm{CW}}(k)}{(\Fb-1)(\bs{\mathrm{CW}}(k))}\;.
\end{equation}
We will see that this decomposition corresponds (via the
Pontriagyn duality) to a decomposition of the $p$-primary part of
$\G_{\E}^{\mathrm{ab}}$ into the wild inertia
$\mathcal{P}_{\mathrm{G}^{\mathrm{ab}}_{\E}}$ and the $p$-primary
part of
$\G_k^{\mathrm{ab}}=\mathrm{Gal}(k^{\mathrm{sep}}/k)^{\mathrm{ab}}$
(cf. Proof of Proposition \ref{Decomposition of G_E^ab prop}).

Since the Swan conductor of an element of
$\frac{\bs{\mathrm{CW}}(k)}{(\Fb-1)(\bs{\mathrm{CW}}(k))}$ is $0$,
 we are led to study
$\bs{\mathrm{CW}}(t^{-1}k[t^{-1}])$ (corresponding to the wild
inertia of $\mathrm{G}^{\mathrm{ab}}_{\E}$, cf. Formula
\eqref{splitting p-primary}).

\subsubsection{Description of $\CW(t^{-1}R[t^{-1}])$.}
In this sub-section  $R$ denotes an arbitrary ring, $v_p(d)$
denotes the $p$-adic valuation of $d$, and $v=v_t$ denotes the
$t$-adic valuation  on $R[[t]][t^{-1}]$. The decomposition
\eqref{decomposition of CW(E) as - 0 +} holds also for
$R[[t]][t^{-1}]$.

\begin{definition}[(cf. \protect{\cite[Def.3.3]{Rk1}})]
Let $d,n,m\in\mathbb{N}$ be such that $d=np^m>0$, with $(n,p)=1$.
Let $\lb:=(\lambda_0,\ldots,\lambda_m)\in \W_m(R)$. We set
\begin{equation}
\lb\cdot t^{-d} :=
(\cdots,0,0,0,\lambda_0t^{-n},\lambda_1t^{-np},\ldots,\lambda_mt^{-d})\;\in\;
\bs{\mathrm{CW}}(t^{-1}R[t^{-1}])\;.
\end{equation}
We call $\lb t^{-d}$ the \emph{co-monomial of degree $-d$ relative
to $\lb$}. We denote by $\bs{\mathrm{CW}}^{(-d)}(R)$ the subgroup
of $\bs{\mathrm{CW}}(t^{-1}R[t^{-1}])$ formed by co-monomials of
degree $-d$.
\end{definition}

With the notation of Sections \ref{Witt (co-)vectors of a
graded ring} and \ref{filtration of a valuation}, if
$A=R[T^{-1}]$, then one has for all $d>0$
\begin{equation}
\CW^{(-d)}(R)\;=\;\CW^{(d)}(A)\;.
\end{equation}

\begin{proposition}[(\protect{\cite[Remark 3.3, Lemma 3.4]{Rk1}})]
Let $d=np^m>0$, $(n,p)=1$. The map
\begin{equation}\label{isomorphism between W_m and CW^(-d)}
\W_m(R)\xrightarrow[]{\;\;\sim\;\;}\bs{\mathrm{CW}}^{(-d)}(R)
\end{equation}
sending $\lb\in\W_m(R)$ into $\lb t^{-d}\in\CW(R)^{(-d)}$ is an
isomorphism of groups. Moreover, one has
\begin{equation}
\CW(t^{-1}R[t^{-1}])=\oplus_{d>0}\CW^{(-d)}(R)\;\;\cong\;\;
\oplus_{d>0}\W_{v_p(d)}(R)\;,
\end{equation}
where $v_p(d)$ is the $p$-adic valuation of $d$. In other words,
every co-vector $\bs{f}^{-}(t)\in\CW(t^{-1}R[t^{-1}])$ can be
uniquely written as a finite sum
\begin{equation}
\bs{f}^{-}(t)=\sum_{d>0}\lb_{-d}t^{-d}\;,
\end{equation}
with  $\lb_{-d}\in\W_{v_p(d)}(R)$, for all $d\geq 0$.\hfill\CVD
\end{proposition}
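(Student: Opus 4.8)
The plan is to deduce the statement from Lemma \ref{witt vector of a graduete lemma} applied to a suitable graded ring, after making the relevant graded pieces completely explicit.

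First I would regard $A := t^{-1}R[t^{-1}]=\bigoplus_{d>0}R\,t^{-d}$ as a graded (non-unital) ring, with $\mathrm{Gr}_0(A)=0$ and $\mathrm{Gr}_d(A)=R\,t^{-d}$ for $d>0$. Lemma \ref{witt vector of a graduete lemma} then immediately gives $\CW(t^{-1}R[t^{-1}])=\bigoplus_{d\geq 0}\CW^{(d)}(A)$, and the summand $\CW^{(0)}(A)$ vanishes because $\mathrm{Gr}_0(A)=0$, so the sum runs over $d>0$. Next I would unwind the definition of $\CW^{(d)}(A)$ from Section \ref{Witt (co-)vectors of a graded ring}: since $\{d:p^i\}=d/p^i$ when $p^i\mid d$ and $=-1$ otherwise, and $\mathrm{Gr}_{-1}(A)=0$, a co-vector lies in $\CW^{(d)}(A)$ exactly when, writing $d=np^m$ with $(n,p)=1$ and $m=v_p(d)$, it has the shape $(\cdots,0,0,\lambda_0 t^{-n},\lambda_1 t^{-np},\ldots,\lambda_m t^{-d})$ with $\lambda_0,\ldots,\lambda_m\in R$, i.e. it is a co-monomial of degree $-d$. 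Thus $\CW^{(d)}(A)=\CW^{(-d)}(R)$ — the displayed identity $\CW^{(-d)}(R)=\CW^{(d)}(A)$ recorded just before the statement (there for $A=R[T^{-1}]$, which has the same graded pieces in positive degree).

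Then I would establish the isomorphism \eqref{isomorphism between W_m and CW^(-d)}. By \eqref{CW^(d)(A)= varinjlim_m W_m^(d)(A)}, and since $\V\colon\W_m^{(d)}(A)\to\W_{m+1}^{(d)}(A)$ is injective and, for $m\geq v_p(d)$, surjective (the observation following \eqref{CW^(d)(A)= varinjlim_m W_m^(d)(A)}), one gets $\CW^{(-d)}(R)=\CW^{(d)}(A)\cong\W_m^{(d)}(A)$ for $m=v_p(d)$. For that $m$ one reads off from the definition that $\W_m^{(d)}(A)$ is exactly the set of vectors $(\beta_0 t^{-n},\beta_1 t^{-np},\ldots,\beta_m t^{-d})$ with $\beta_j\in R$, so $(\beta_0,\ldots,\beta_m)\mapsto(\beta_0 t^{-n},\ldots,\beta_m t^{-d})$ is a set-theoretic bijection $\W_m(R)\to\W_m^{(d)}(A)$. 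The remaining point is additivity: if $\bs{\nu}=\lb+\bs{\mu}$ in $\W_m(R)$, then by \eqref{sum of witt vectors} the $j$-th component $\nu_j$ is an isobaric polynomial of weight $p^j$ in $\lambda_0,\ldots,\lambda_j,\mu_0,\ldots,\mu_j$, each $\lambda_a$ and $\mu_a$ carrying weight $p^a$; substituting $\lambda_a t^{-np^a}$ for $\lambda_a$ and $\mu_a t^{-np^a}$ for $\mu_a$ multiplies every monomial of this polynomial by the single factor $t^{-np^j}$, so the $j$-th component of $(\lambda_0 t^{-n},\ldots,\lambda_m t^{-d})+(\mu_0 t^{-n},\ldots,\mu_m t^{-d})$, computed by the same universal polynomial over $R[[t]][t^{-1}]$, equals $\nu_j\,t^{-np^j}$. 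Hence the co-monomial map $\lb\mapsto\lb t^{-d}$ is a group isomorphism $\W_{v_p(d)}(R)\simto\CW^{(-d)}(R)$.

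Putting the two steps together yields $\CW(t^{-1}R[t^{-1}])=\bigoplus_{d>0}\CW^{(-d)}(R)\cong\bigoplus_{d>0}\W_{v_p(d)}(R)$; and since any Witt co-vector has only finitely many nonzero entries, only finitely many summands are hit, which is precisely the asserted unique finite expansion $\bs{f}^{-}(t)=\sum_{d>0}\lb_{-d}t^{-d}$ with $\lb_{-d}\in\W_{v_p(d)}(R)$. The only step that is not bookkeeping on top of Lemma \ref{witt vector of a graduete lemma} is the additivity of the co-monomial map, and I expect that to be where the care is needed: keeping the indexing of the Witt components straight, and using the homogeneity of the Witt addition polynomials recorded in \eqref{sum of witt vectors}.
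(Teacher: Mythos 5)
Your proof is correct and follows exactly the route the paper has prepared: the paper proves Lemma \ref{witt vector of a graduete lemma} and records the identity $\CW^{(-d)}(R)=\CW^{(d)}(A)$ immediately before this proposition (whose proof it otherwise delegates to \cite{Rk1}), so specializing that lemma to the graded non-unital ring $t^{-1}R[t^{-1}]$ and checking, via the isobaric form \eqref{sum of witt vectors} of the Witt addition polynomials, that $\lb\mapsto\lb t^{-d}$ is additive is precisely the intended argument. No gaps.
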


This decomposition extends to $\CW(t^{-1}R[t^{-1}])$ the trivial
decomposition $t^{-1}R[t^{-1}]=\oplus_{d>0} R \cdot t^{-d}$.
Moreover a co-vector $\bs{f}(t)\in\CW(R[[t]][t^{-1}])$ can be
uniquely written as a finite sum
\begin{equation}
\bs{f}(t) = \sum_{d>0}\lb_{-d}t^{-d} + \bs{f}_0 + \bs{f}^+(t) \;,
\end{equation}
with $\bs{f}^+(t)\in\CW(t R[[t]])$, $\bs{f}_0\in\CW(R)$,
$\lb_{-d}\in\W_{v_p(d)}(R)$, for all $d>0$. We denote by
$\bs{f}^{-}(t)$ the co-vector $\sum_{d>0}\lb_{-d}t^{-d}
\in\CW(t^{-1}R[t^{-1}])$.

\begin{corollary}\label{ Fil CW}
The module $\CW(R[[t]][t^{-1}])$ is graded. Moreover for all
$d\geq 0$, one has:
\begin{eqnarray*}
\mathrm{Fil}_{d}(\CW(R[[t]][t^{-1}]))\quad &=&\quad \oplus_{1\leq
d'\leq
d} \CW^{(-d')}(R) \oplus \CW(R) \oplus \CW(tR[[t]])\;,\\
&&\\
\mathrm{Gr}_{d}(\CW(R[[t]][t^{-1}]))\quad&=&\quad\mathrm{Fil}_{d}/\mathrm{Fil}_{d-1}=
\left\{\begin{array}{lcl} \CW^{(-d)}(R)&\textrm{if}&d>0\;,\\&&\\
\CW(R[[t]])&\textrm{if}& d = 0\;.
\end{array}\right.
\end{eqnarray*}
\end{corollary}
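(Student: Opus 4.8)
The plan is to deduce the statement from the unique co-monomial decomposition recorded just before it, together with a direct computation of the valuation $v=v_t$ on co-vectors (cf.\ \S\ref{valuation on W_m}).

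First I would recall that, by the preceding Proposition and the decomposition \eqref{decomposition of CW(E) as - 0 +} (which holds for $R[[t]][t^{-1}]$, cf.\ \cite[Prop.~3.1, Lemma~3.4]{Rk1}), every $\bs{f}(t)\in\CW(R[[t]][t^{-1}])$ is uniquely a finite sum $\bs{f}(t)=\sum_{d>0}\lb_{-d}t^{-d}+\bs{f}_0+\bs{f}^+(t)$ with $\lb_{-d}\in\W_{v_p(d)}(R)$, $\bs{f}_0\in\CW(R)$, $\bs{f}^+(t)\in\CW(tR[[t]])$, so that
\begin{equation*}
\CW(R[[t]][t^{-1}])\;=\;\Bigl(\bigoplus_{d>0}\CW^{(-d)}(R)\Bigr)\;\oplus\;\CW(R)\;\oplus\;\CW(tR[[t]])\;,
\end{equation*}
where $\CW^{(-d)}(R)\cong\W_{v_p(d)}(R)$ is the group of co-monomials of degree $-d$. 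Setting $\mathrm{Gr}_0:=\CW(R)\oplus\CW(tR[[t]])$ and $\mathrm{Gr}_d:=\CW^{(-d)}(R)$ for $d>0$ already exhibits $\CW(R[[t]][t^{-1}])$ as a graded module: the pieces $\CW^{(-d)}(R)$ are the positive graded pieces of $\CW(t^{-1}R[t^{-1}])$ attached by Lemma~\ref{witt vector of a graduete lemma} to the graded ring $t^{-1}R[t^{-1}]=\bigoplus_{d\geq 1}Rt^{-d}$ (via the identification $\CW^{(-d)}(R)=\CW^{(d)}(R[T^{-1}])$ noted above). It then remains only to identify the associated filtration $\mathrm{Fil}_d=\bigoplus_{0\leq d'\leq d}\mathrm{Gr}_{d'}$ with Kato's filtration $\{\bs{f}\mid v(\bs{f})\geq -d\}$, and, for the $d=0$ piece, to note $\mathrm{Gr}_0=\CW(R[[t]])$.

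For this I would compute $v$ on each summand. If $\bs{g}=\bs{f}_0+\bs{f}^+(t)$ then all its entries lie in $R[[t]]$, so \eqref{definition of v of KAto} (extended to $\CW$) gives $v(\bs{g})\geq 0$; conversely, since $v(\bs{f})=\min_j p^{m-j}v_t(\mu_j)\geq 0$ forces $v_t(\mu_j)\geq 0$ for every entry $\mu_j$, one gets $\mathrm{Fil}_0(\CW(R[[t]][t^{-1}]))=\CW(R[[t]])$, which therefore equals $\CW(R)\oplus\CW(tR[[t]])$. On the other hand, writing $d=np^{m}$ with $(n,p)=1$, $m=v_p(d)$, $\lb_{-d}=(\lambda_0,\dots,\lambda_m)\in\W_m(R)$, the co-monomial $\lb_{-d}t^{-d}$ has entries $\lambda_j t^{-np^j}$, and each nonzero entry (with $\lambda_j\in R$, hence $v_t(\lambda_j)=0$) contributes $p^{m-j}(0-np^{j})=-np^{m}=-d$ to \eqref{definition of v of KAto}; hence $v(\lb_{-d}t^{-d})=-d$ whenever $\lb_{-d}\neq 0$. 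Since the valuations $-d$ of the various co-monomials are pairwise distinct and all strictly below $0\leq v(\bs{g})$, the ultrametric inequality for $v$ on $\CW(R[[t]][t^{-1}])$ becomes an equality and $v(\bs{f}(t))=-\max\{\,d>0\mid\lb_{-d}\neq 0\,\}$ (with $\max\emptyset:=0$). Consequently $v(\bs{f}(t))\geq -d$ iff $\lb_{-d'}=0$ for all $d'>d$, i.e.\ iff $\bs{f}(t)\in\bigoplus_{1\leq d'\leq d}\CW^{(-d')}(R)\oplus\CW(R)\oplus\CW(tR[[t]])$, which is the asserted formula for $\mathrm{Fil}_d$. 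Since $\mathrm{Fil}_{-1}=0$, the formula for $\mathrm{Gr}_d=\mathrm{Fil}_d/\mathrm{Fil}_{d-1}$ drops out: it is $\CW^{(-d)}(R)$ for $d>0$ and $\CW(R)\oplus\CW(tR[[t]])=\CW(R[[t]])$ for $d=0$.

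Almost everything here is bookkeeping against \cite{Rk1}; the single computation that matters is $v(\lb_{-d}t^{-d})=-d$, and the observation that makes the argument go through is that, these valuations being pairwise distinct, the ultrametric inequality for $v$ on $\CW(R[[t]][t^{-1}])$ is in fact an equality — this is exactly what forces Kato's $v_t$-filtration to split along the co-monomial decomposition. If any obstacle is to be pointed out, it is simply to be careful that the relevant valuation on co-vectors is the one of \eqref{definition of v of KAto} (i.e.\ Kato's, extended from $\W_m$ to $\CW$ using $v\circ\V=v$), applied to the $t$-adic valuation of $R[[t]][t^{-1}]$.
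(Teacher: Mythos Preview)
Your proof is correct and follows essentially the same route as the paper: invoke Lemma~\ref{witt vector of a graduete lemma} for the grading on the negative part, compute $v_t(\lb_{-d}t^{-d})=-d$ for $\lb_{-d}\neq 0$, and then read off which co-monomials survive in $\mathrm{Fil}_d$. The paper's proof is more terse (it simply states the valuation computation and concludes), whereas you spell out the ultrametric-equality step and the $d=0$ case explicitly; these are the same argument with different amounts of detail.
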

\begin{proof} The proof results from Lemma \ref{witt vector of a graduete lemma}.
For all $\lb\in\W_{v_p(d)}(R)$, $\lb\neq \bs{0}$, one has (cf.
Def. \eqref{definition of v of KAto}) $v_t(\lb t^{-d})=-d$. Hence
a co-vector
$\bs{f}(t)=\sum_{d>0}\lb_{-d}t^{-d}+\bs{f}_0+\bs{f}^+(t) \in
\CW(R[[t]][t^{-1}])$ lies in
$\mathrm{Fil}_{d}(\CW(R[[t]][t^{-1}]))$, $d\geq 0$, if and only if
$\lb_{-d'}=\bs{0}$, for all $d' < -d$.
\end{proof}

\subsubsection{Action of Frobenius, and description of $\mathrm{H}^1(\G_{\E},\mathbb{Q}_p/\mathbb{Z}_p)$.}
\label{action on CW^(-d)} We want now to study the action of $\Fb$
on $\CW(\E)$, in order to describe $\CW(\E)/(\Fb-1)(\CW(\E))$.

One sees that $\Fb(\lb\cdot t^{-d}) = \V\Fb(\lb)\cdot t^{-pd}$,
then one has the following commutative diagram, where the
horizontal arrows are the isomorphisms \eqref{isomorphism between
W_m and CW^(-d)}:
\begin{equation}
\xymatrix{\W_{v_p(d)}(k)\ar[r]^-{\sim}\ar[d]_{p=\V\Fb}\ar@{}[dr]|{\odot}&\ar@{}[dr]|{\odot}
\CW^{(-d)}(k)\ar[d]^{\Fb}\ar@{}[r]|-{\subset}& \CW(t^{-1}k[t^{-1}])\ar[d]^{\Fb}\phantom{\;.}\\
\W_{v_p(d)+1}(k)\ar[r]^-{\sim}&\CW^{(-pd)}(k)\ar@{}[r]|-{\subset}&\CW(t^{-1}k[t^{-1}])\;.\\}
\end{equation}
The Frobenius $\Fb$ acts then on the family
$\{\CW^{(-d)}(k)\}_{d\geq 1}$, as indicated in the following
picture:
\begin{equation}
\begin{scriptsize}
\begin{picture}(120,60)
\put(0,5){\vector(0,1){45}} \put(0,5){\vector(1,0){120}}
\put(-10,45){$m$}\put(120,0){$n$}
\put(125,43){$d=np^m$}

\put(17.5,3.25){$\bullet$} \put(37.5,3.25){$\bullet$}
\put(57.5,3.25){$\bullet$} \put(77.5,3.25){$\bullet$}
\put(97.5,3.25){$\bullet$}
\put(17.5,22.5){$\bullet$} \put(37.5,22.5){$\bullet$}
\put(57.5,22.5){$\bullet$} \put(77.5,22.5){$\bullet$}
\put(97.5,22.5){$\bullet$}
\put(17.5,42.5){$\bullet$} \put(37.5,42.5){$\bullet$}
\put(57.5,42.5){$\bullet$} \put(77.5,42.5){$\bullet$}
\put(97.5,42.5){$\bullet$}
%

\put(17.5,12.5){$\uparrow$}\put(37.5,12.5){$\uparrow$}
\put(57.5,12.5){$\uparrow$}\put(77.5,12.5){$\uparrow$}
\put(97.5,12.5){$\uparrow$}
\put(17.5,32.5){$\uparrow$}\put(37.5,32.5){$\uparrow$}
\put(57.5,32.5){$\uparrow$}\put(77.5,32.5){$\uparrow$}
\put(97.5,32.5){$\uparrow$}
\put(17.5,52.5){$\uparrow$}\put(37.5,52.5){$\uparrow$}
\put(57.5,52.5){$\uparrow$}\put(77.5,52.5){$\uparrow$}
\put(97.5,52.5){$\uparrow$}
\put(102,31){\begin{tiny}$\Fb\V$\end{tiny}}
\put(102,11){\begin{tiny}$\Fb\V$\end{tiny}}
\put(102,51){\begin{tiny}$\Fb\V$\end{tiny}}
\end{picture}
\end{scriptsize}
\end{equation}
where $d=np^m\geq 1$, with $(n,p)=1$, and $m=v_p(d)$. Hence for
all $n\geq 1$, $(n,p)=1$,  the subgroup
\begin{equation}
\bs{\C}_n(k):=\oplus_{m\geq 0}\CW^{(-np^m)}(k)
\end{equation}
is an sub-$\Fb$-module of $\CW(\E)$, and
\begin{equation}
\frac{\CW(t^{-1}k[t^{-1}])}{(\Fb-1)(\CW(t^{-1}k[t^{-1}]))}=
\bigoplus_{n\in\J}\frac{\bs{\C}_n(k)}{(\Fb-1)(\bs{\C}_n(k))}\;,
\end{equation}
where
\begin{equation}
\J:=\{n\in\mathbb{N}\;|\; (n,p)=1, n\geq 1\}\;.
\end{equation}
One sees that
\begin{eqnarray}
\qquad\frac{\bs{\C}_n(k)}{(\Fb-1)(\bs{\C}_n(k))}&=&\varinjlim_{m\geq
0}(\CW^{(-np^m)}(k)\xrightarrow[]{\Fb}\CW^{(-np^{m+1})}(k)\xrightarrow[]{\Fb}\cdots)\\
&\stackrel{(*)}{\cong}&\varinjlim_{m\geq
0}(\W_m(k)\xrightarrow[]{p}\W_{m+1}(k)\xrightarrow[]{p}\cdots)=\widetilde{\CW}(k)\;,
\end{eqnarray}
where the isomorphism $(*)$ is deduced by the isomorphism
\eqref{isomorphism between W_m and CW^(-d)}.

\begin{theorem}\label{description of H^1} The following statements hold:
\begin{enumerate}\item One has:
\begin{equation}
\mathrm{H}^1(\G_{\E},\mathbb{Q}_p/\mathbb{Z}_p)\cong
\frac{\CW(\E)}{(\Fb-1)(\CW(\E))}\;\;\cong\;\;
\widetilde{\CW}(k)^{(\J)}\oplus \frac{\CW(k)}{(\Fb-1)(\CW(k))}\;,
\end{equation}
where $\widetilde{\CW}(k)^{(\J)}$ is the direct sum of copies
of $\widetilde{\CW}(k)$, indexed by $\J$.%
\item  For $d=0$ one has
$\mathrm{Fil}_0(\mathrm{H}^1(\mathrm{G}_{\mathrm{E}},\mathbb{Q}_p/\mathbb{Z}_p))=\boldsymbol{\mathrm{CW}}(k)/(\bar{\mathrm{F}}-1)\boldsymbol{\mathrm{CW}}(k)$,
and for $d\geq 1$ one has  (cf. \eqref{Natural filtration of
CW and CW tilde})
\begin{eqnarray*}
\mathrm{Fil}_{d}(\mathrm{H}^{1}(\G_{\E},\mathbb{Q}_p/\mathbb{Z}_p))&=&
\oplus_{n\in\J}\Bigl(\mathrm{Fil}_{m_{n,d}}(\widetilde{\CW}(k))
\Bigr) \oplus \frac{\CW(k)}{(\Fb-1)(\CW(k))}\;,\\
&&\\
\mathrm{Gr}_{d}(\mathrm{H}^{1}(\G_{\E},\mathbb{Q}_p/\mathbb{Z}_p))&=&\left\{
\begin{array}{lcl}
\W_{v_p(d)}(k)/p\W_{v_p(d)}(k)&\textrm{if}&d>0\;,\\
&&\\
\frac{\CW(k)}{(\Fb-1)(\CW(k))}&\textrm{if}&d=0\;,
\end{array}\right.
\end{eqnarray*}
where $m_{n,d}:=\max\{m\geq 0\;|\; np^m\leq d\}$, and  $v_p(d)$ is
the $p$-adic valuation of $d$.\footnote{We recall that
$\mathrm{Fil}_m(\widetilde{\CW}(k))\cong\W_m(k)$ (cf. section
\ref{CAN}).}

\item The epimorphism
$\mathrm{Proj}_d:\mathrm{Gr}_{d}(\CW(\E))\to\mathrm{Gr}_{d}(\mathrm{H}^1(\G_{\E},\mathbb{Q}_p/\mathbb{Z}_p))$
corresponds via the isomorphism \eqref{isomorphism between W_m and
CW^(-d)} and Corollary \ref{ Fil CW} to the following:
\begin{equation}\mathrm{Proj}_d=\left\{
\begin{array}{lcl}
\W_{v_p(d)}(k)\;\longrightarrow\;
\W_{v_p(d)}(k)/p\W_{v_p(d)}(k)&\textrm{if}&d>0\;,\\
&&\\
\CW(k[[t]])\;\xrightarrow[t\mapsto
0]{}\;\frac{\CW(k)}{(\Fb-1)(\CW(k))}&\textrm{if}&d=0\;.
\end{array}\right.
\end{equation}
\item One has
\begin{equation}
\mathrm{Fil}_d(\mathrm{H}^1(\G_{\E},\mathbb{Q}/\mathbb{Z}))=
\mathrm{Fil}_d(\mathrm{H}^1(\G_{\E},\mathbb{Q}_p/\mathbb{Z}_p))\oplus
\Bigl(\oplus_{\ell\neq
p}\;\mathrm{H}^{1}(\G_{\E},\mathbb{Q}_\ell/\mathbb{Z}_\ell)\Bigr)\;,
\end{equation}
and
\begin{equation}
\mathrm{Gr}_{d}(\mathrm{H}^{1}(\G_{\E},\mathbb{Q}/\mathbb{Z}))=\left\{
\begin{array}{lcl}
\mathrm{Gr}_{d}(\mathrm{H}^{1}(\G_{\E},\mathbb{Q}_p/\mathbb{Z}_p))& \mathrm{if}&d>0\;,\\&&\\
\mathrm{Fil}_0(\mathrm{H}^1(\G_{\E},\mathbb{Q}_p/\mathbb{Z}_p))\oplus
\Bigl(\oplus_{\ell\neq
p}\mathrm{H}^{1}(\G_{\E},\mathbb{Q}_\ell/\mathbb{Z}_\ell)\Bigr)&\mathrm{if}&d=0\;.\\
\end{array}
\right.
\end{equation}
\end{enumerate}
\end{theorem}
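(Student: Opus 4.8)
The plan is to assemble the four assertions from the structural facts already established in this section, together with Section~\ref{Witt co-vectors of a filtered ring...li}; no new hard input is required. For (1), the first isomorphism is just the Artin--Schreier--Witt description $\mathrm{H}^1(\G_{\E},\mathbb{Q}_p/\mathbb{Z}_p)=\CW(\E)/(\Fb-1)\CW(\E)$ coming from the sequence \eqref{artin-schreier-diagram-covectors}. For the second isomorphism I would quotient the $\Fb$-stable decomposition \eqref{decomposition of CW(E) as - 0 +} by $\Fb-1$: the summand $\CW(tk[[t]])$ disappears by \cite[Prop.~3.1]{Rk1}, the summand $\CW(k)/(\Fb-1)\CW(k)$ stays untouched, and for $\CW(t^{-1}k[t^{-1}])=\oplus_{n\in\J}\bs{\C}_n(k)$, with $\bs{\C}_n(k)=\oplus_{m\geq 0}\CW^{(-np^m)}(k)$ the $\Fb$-submodules of Section~\ref{action on CW^(-d)}, the telescope identity $\bs{\C}_n(k)/(\Fb-1)\bs{\C}_n(k)=\varinjlim_m(\CW^{(-np^m)}(k)\xrightarrow{\Fb}\cdots)$ combined with \eqref{isomorphism between W_m and CW^(-d)} and the commutative square of Section~\ref{action on CW^(-d)} identifies this with $\varinjlim_m(\W_m(k)\xrightarrow{p}\cdots)=\widetilde{\CW}(k)$, whence $\widetilde{\CW}(k)^{(\J)}$.

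For (2), by Definition~\ref{definition of Fil_d(H^1(G_E,Q/Z))} and Corollary~\ref{ Fil CW} (with $R=k$) one has $\mathrm{Fil}_d(\mathrm{H}^1)=\delta(\mathrm{Fil}_d(\CW(\E)))$ with $\mathrm{Fil}_d(\CW(\E))=\bigl(\oplus_{1\leq d'\leq d}\CW^{(-d')}(k)\bigr)\oplus\CW(k)\oplus\CW(tk[[t]])$. Applying $\delta$ and (1): the $\CW(tk[[t]])$-part dies, the $\CW(k)$-part surjects onto $\mathrm{Fil}_0(\mathrm{H}^1)=\CW(k)/(\Fb-1)\CW(k)$, and, regrouping $\oplus_{1\leq d'\leq d}\CW^{(-d')}(k)=\oplus_{n\in\J}\oplus_{0\leq m\leq m_{n,d}}\CW^{(-np^m)}(k)$ via $np^m\leq d\Leftrightarrow m\leq m_{n,d}$, it remains to see that the image of $\oplus_{0\leq m\leq m_{n,d}}\CW^{(-np^m)}(k)$ in the $n$-th copy of $\widetilde{\CW}(k)$ equals $\mathrm{Fil}_{m_{n,d}}(\widetilde{\CW}(k))$. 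This holds because under \eqref{isomorphism between W_m and CW^(-d)} the summand $\CW^{(-np^m)}(k)$ is the $m$-th stage $\W_m(k)$ of the telescope, whose image in $\widetilde{\CW}(k)$ is $\mathrm{Fil}_m(\widetilde{\CW}(k))$ (Section~\ref{CAN}; the transition maps $p:\W_m(k)\to\W_{m+1}(k)$ are injective over a field), and these filtration steps increase with $m$. This gives the formula for $\mathrm{Fil}_d(\mathrm{H}^1)$; the graded pieces then follow because, writing $d=n_0p^{v_p(d)}$ with $(n_0,p)=1$, only $m_{n_0,\bullet}$ jumps in passing from $d-1$ to $d$, and then by one, so $\mathrm{Gr}_d(\mathrm{H}^1)=\mathrm{Gr}_{v_p(d)}(\widetilde{\CW}(k))=\W_{v_p(d)}(k)/p\W_{v_p(d)}(k)$ by \eqref{Natural filtration of CW and CW tilde} for $d>0$, while $\mathrm{Gr}_0(\mathrm{H}^1)=\mathrm{Fil}_0(\mathrm{H}^1)=\CW(k)/(\Fb-1)\CW(k)$.

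Assertion (3) is read off the same computation: for $d>0$, $\mathrm{Gr}_d(\CW(\E))=\CW^{(-d)}(k)\cong\W_{v_p(d)}(k)$ (Corollary~\ref{ Fil CW}, \eqref{isomorphism between W_m and CW^(-d)}) maps into the $n_0$-th copy of $\widetilde{\CW}(k)$ as the quotient $\W_{v_p(d)}(k)=\mathrm{Fil}_{v_p(d)}(\widetilde{\CW}(k))\to\mathrm{Gr}_{v_p(d)}(\widetilde{\CW}(k))=\W_{v_p(d)}(k)/p\W_{v_p(d)}(k)$, i.e. the natural projection; for $d=0$, $\mathrm{Gr}_0(\CW(\E))=\CW(k[[t]])=\CW(k)\oplus\CW(tk[[t]])$ and $\delta$ kills $\CW(tk[[t]])$ and projects $\CW(k)$, which is exactly $t\mapsto 0$ followed by $\CW(k)\to\CW(k)/(\Fb-1)\CW(k)$. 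Assertion (4) is formal: by Definition~\ref{Kato definition of Swan conductor}, $\mathrm{Fil}_d(\mathrm{H}^1(\G_{\E},\mathbb{Q}/\mathbb{Z}))$ is the preimage of $\mathrm{Fil}_d(\mathrm{H}^1(\G_{\E},\mathbb{Q}_p/\mathbb{Z}_p))$ under the $p$-primary projection of $\mathrm{H}^1(\G_{\E},\mathbb{Q}/\mathbb{Z})=\oplus_\ell\mathrm{H}^1(\G_{\E},\mathbb{Q}_\ell/\mathbb{Z}_\ell)$, so it equals $\mathrm{Fil}_d(\mathrm{H}^1(\G_{\E},\mathbb{Q}_p/\mathbb{Z}_p))\oplus\bigl(\oplus_{\ell\neq p}\mathrm{H}^1(\G_{\E},\mathbb{Q}_\ell/\mathbb{Z}_\ell)\bigr)$, and passing to $\mathrm{Gr}_d$ removes the $\ell\neq p$ summands for $d\geq 1$ and keeps them for $d=0$.

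The main obstacle is the bookkeeping in steps (2)--(3): making precise that $\delta$ carries the truncated sum $\oplus_{0\leq m\leq m_{n,d}}\CW^{(-np^m)}(k)$ exactly onto $\mathrm{Fil}_{m_{n,d}}(\widetilde{\CW}(k))$, i.e. that the canonical filtration of $\widetilde{\CW}(k)$ is visible through the telescope $\varinjlim_m\W_m(k)$ in precisely this way, and that this is compatible with the grading of $\CW(\E)$ induced by the $t$-adic valuation. Everything else is quoted from \cite{Rk1} or from Section~\ref{Witt co-vectors of a filtered ring...li}.
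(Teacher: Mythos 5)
Your proposal is correct and follows essentially the same route as the paper: the paper's own proof simply declares that the theorem "follows immediately" from Corollary \ref{ Fil CW}, Expression \eqref{Natural filtration of CW and CW tilde}, Section \ref{action on CW^(-d)}, Equation \eqref{Fil_d(H^1)} and Definition \ref{definition of Fil_d(H^1(G_E,Q/Z))}, together with the observation that $m_{n,d}=m_{n,d-1}+1$ exactly when $d=np^{m_{n,d}}$, which is precisely the bookkeeping you carry out in detail. Your added remarks (injectivity of the transition maps $p:\W_m(k)\to\W_{m+1}(k)$ over a field, and the identification of the image of the truncated telescope with $\mathrm{Fil}_{m_{n,d}}(\widetilde{\CW}(k))$) are exactly the details the paper leaves implicit.
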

\begin{proof} The theorem follows immediately from the above computations. In particular from Corollary \ref{ Fil CW},
 Expression \eqref{Natural filtration of CW and CW tilde}, Section \ref{action
on CW^(-d)}, Equation \eqref{Fil_d(H^1)}, and Definition
\ref{definition of Fil_d(H^1(G_E,Q/Z))}. We observe that
$m_{n,d}=m_{n,d-1}+1$ if and only if $d=np^{m_{n,d}}$, $n \in \J$.
\end{proof}

\begin{corollary}\label{swan conductor of a comonomial}
Let $\lb\in\W_m(k)$, and let $n\in\J$. Let
$\alpha^{-}:=\delta(\lb\cdot t^{-np^m})$, where $\delta$ is the
Artin-Schreier-Witt morphism (cf.
\eqref{artin-schreier-diagram-covectors}). If $\lb\in
p^{k}\W_m(k)-p^{k+1}\W_m(k)$, then
$\mathrm{sw}(\alpha^-)=np^{m-k}$.\hfill$\Box$
\end{corollary}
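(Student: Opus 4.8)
The strategy is to replace $\alpha^{-}=\delta(\lb\cdot t^{-np^m})$ by an optimal representative of its class in $\CW(\E)/(\Fb-1)\CW(\E)\cong\mathrm{H}^1(\G_\E,\mathbb{Q}_p/\mathbb{Z}_p)$, and then to read off its Swan conductor from Theorem \ref{description of H^1}.

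First I would perform a descent using the identity $\Fb(\bs{\eta}\cdot t^{-e})=\V\Fb(\bs{\eta})\cdot t^{-pe}$ recalled in Section \ref{action on CW^(-d)}. Since $x\mapsto x^p$ is injective on the field $k$, the map $\V\Fb$ is injective on Witt vectors, so $p^k\W_m(k)=(\V\Fb)^k\W_{m-k}(k)$ (note that the hypothesis forces $0\le k\le m$, because $p^{m+1}\W_m(k)=0$), and one can write $\lb=(\V\Fb)^k(\bs{\mu})$ for a unique $\bs{\mu}\in\W_{m-k}(k)$. Moreover $\bs{\mu}\notin p\W_{m-k}(k)$, since otherwise $\lb=(\V\Fb)^{k+1}(\,\cdot\,)\in p^{k+1}\W_m(k)$. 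Iterating the identity $k$ times gives $\lb\cdot t^{-np^m}=\Fb^{k}(\bs{\mu}\cdot t^{-np^{m-k}})$, hence
\[
\lb\cdot t^{-np^m}-\bs{\mu}\cdot t^{-np^{m-k}}=(\Fb^{k}-1)(\bs{\mu}\cdot t^{-np^{m-k}})\in(\Fb-1)\CW(\E),
\]
so that $\alpha^{-}=\delta(\bs{\mu}\cdot t^{-np^{m-k}})$.

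Next comes the upper bound. As $\bs{\mu}\neq\bs{0}$, the valuation computation from the proof of Corollary \ref{ Fil CW} gives $v_t(\bs{\mu}\cdot t^{-np^{m-k}})=-np^{m-k}$, so $\bs{\mu}\cdot t^{-np^{m-k}}\in\mathrm{Fil}_{np^{m-k}}(\CW(\E))$ and therefore $\alpha^{-}\in\mathrm{Fil}_{np^{m-k}}(\mathrm{H}^1(\G_\E,\mathbb{Q}_p/\mathbb{Z}_p))$; this gives $\mathrm{sw}(\alpha^-)\le np^{m-k}$. For the matching lower bound, set $d:=np^{m-k}$, so $v_p(d)=m-k$. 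By part (3) of Theorem \ref{description of H^1}, the map $\mathrm{Gr}_d(\CW(\E))\cong\W_{m-k}(k)\to\mathrm{Gr}_d(\mathrm{H}^1(\G_\E,\mathbb{Q}_p/\mathbb{Z}_p))=\W_{m-k}(k)/p\W_{m-k}(k)$ induced by $\delta$ is the canonical projection; since the class of $\bs{\mu}\cdot t^{-d}$ in $\mathrm{Gr}_d(\CW(\E))$ is $\bs{\mu}$, and $\bs{\mu}\notin p\W_{m-k}(k)$, the image of $\alpha^{-}$ in $\mathrm{Gr}_d(\mathrm{H}^1(\G_\E,\mathbb{Q}_p/\mathbb{Z}_p))$ is nonzero. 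Hence $\alpha^{-}\notin\mathrm{Fil}_{d-1}(\mathrm{H}^1(\G_\E,\mathbb{Q}_p/\mathbb{Z}_p))$, which gives $\mathrm{sw}(\alpha^{-})\ge d=np^{m-k}$; combining the two bounds yields $\mathrm{sw}(\alpha^{-})=np^{m-k}$.

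There is no genuine obstacle here: once Theorem \ref{description of H^1} is available this is essentially bookkeeping. The only points that require a little care are the injectivity of $\V\Fb$, which is what makes the representative $\bs{\mu}$ well defined and lets one deduce $\bs{\mu}\notin p\W_{m-k}(k)$ from $\lb\notin p^{k+1}\W_m(k)$, and keeping track of the degrees $np^{m-j}$ correctly through the $k$ successive applications of $\Fb$.
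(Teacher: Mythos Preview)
Your proof is correct and follows precisely the route the paper intends: the corollary is stated without proof because it is immediate from Theorem \ref{description of H^1}, and your argument simply spells out the reduction $\lb\cdot t^{-np^m}\equiv\bs{\mu}\cdot t^{-np^{m-k}}\pmod{(\Fb-1)\CW(\E)}$ followed by reading off the Swan conductor from part (3) of that theorem. The only comments are cosmetic: the injectivity of $\V\Fb$ on Witt vectors over a field and the factorization $\Fb^k-1=(\Fb-1)(\Fb^{k-1}+\cdots+1)$ are exactly the right ingredients, and your bookkeeping of the length $m-k$ and degree $np^{m-k}$ is accurate.
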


\subsubsection{Minimal Lifting.}\label{minimal lifting} Let
$\alpha\in \mathrm{Hom}^{\mathrm{cont}}(\G_{\E},
\mathbb{Q}_p/\mathbb{Z}_p)$. Let $\alpha:=\alpha^{-}\cdot\alpha_0$
be the decomposition given by Theorem \ref{description of H^1}.
Then one can choose a lifting $\bs{f}(t)$ of $\alpha$ in
$\CW(k[t^{-1}])$ (i.e. $\delta(\bs{f})=\alpha$, cf. Sequence
\eqref{artin-schreier-diagram-covectors}) of the form (\cite[Def.
3.6]{Rk1}):
\begin{equation}\label{jjj}
\bs{f}(t) = \sum_{n\in\J}\lb_{-n}t^{-np^{m(n)}} + \bs{f}_0\;,
\end{equation}
with $\bs{f}_0\in\CW(k)$, and with $\lb_{-n} \in
\W_{m(n)}(k)-p\W_{m(n)}(k)$, for every $n\in\J$ such that
$\lb_{-n}\neq 0$. Such a lifting of $\alpha$ will be called a
\emph{minimal lifting}, and a Witt co-vector of the form
\eqref{jjj} will be called \emph{pure}. In this case one has
\begin{equation}
\mathrm{sw}(\alpha)=\max(\;0\;,\;\max\{\;np^{m(n)}>0\;|\;\lb_{-n}\neq
0\;\}\;)\;.
\end{equation}

\subsubsection{Representation of  $\W_m(k)/p\W_m(k)$.}\label{representation of W_m/p..}
Let $\{\bar{u}_\gamma\}_{\gamma\in \Gamma}$ be a (not necessarily
finite) $p$-basis of $k$ (over $k^p$). We can write every element
$\lambda\in k$ as $\lambda = \sum_{\underline{s}\in I_{\Gamma}}
\lambda_{\underline{s}}\bar{u}^{\underline{s}}$, where
$I_{\Gamma}:=\{\underline{s}=(s_{\gamma})_{\gamma}\in
[0,p-1]^{\Gamma}\textrm{ such that }s_\gamma\neq 0\textrm{ for
finitely many values of }\gamma\}$. Let
$I_{\Gamma}':=I_{\Gamma}-\{(0,\ldots,0)\}$. We denote by $k'$ the
sub-$k^p$-vector space of $k$ with basis
$\{\bar{u}^{\underline{s}}\}_{\underline{s}\in I_{\Gamma}'}$, that
is the set of elements of $k$ satisfying
$\lambda_{\underline{0}}=0$. Then an element of $\W_m(k)/p\W_m(k)$
admits a unique lifting in $\W_m(k)$ of the form
$(\lambda_0,\lambda_1,\ldots,\lambda_m)$, satisfying $\lambda_0\in
k$, $\lambda_1,\ldots,\lambda_m\in k'$. Hence an element of
$\W_m(k)/p\W_m(k)$ can be uniquely represented by a Witt vector in
$\W_m(k)$ of this form.

\subsection{Decomposition of \protect{$\G_{\E}^{\mathrm{ab}}$}}
\label{decomposition of G_E^ab}

The object of this section is  the proof of Proposition \ref{Decomposition of
G_E^ab prop} below. This fact was proved  in  
\cite{Mel-Sha}. The result is stated also in \cite[Remark
4.18]{Rk1} without any proof: here we will give a new proof using the framework we have just introduced.  The reader must be cautious in reading \cite{Rk1}
to the fact that there is sometime  a confusion between
$\mathcal{I}^{\mathrm{ab}}_{\G_{E}}$ and
$\mathcal{I}_{\G^{\mathrm{ab}}_{\E}}$. Every statement of
\cite{Rk1} is correct, if one consider
$\mathcal{I}_{\G^{\mathrm{ab}}_{\E}}$ and
$\mathcal{P}_{\G^{\mathrm{ab}}_{\E}}$, instead of
$\mathcal{I}_{\G_{\E}}^{\mathrm{ab}}$ and
$\mathcal{P}^{\mathrm{ab}}_{\G_{\E}}$(cf. Remark \ref{inertia ab
different from ab inertia}). The main tool to prove Proposition
\ref{Decomposition of G_E^ab prop} is Theorem \ref{description of
H^1} and its proof.

\begin{proposition}\label{Decomposition of G_E^ab prop}
Let as usual $\E\cong k(\!(t)\!)$. One has:
\begin{equation}
\G_{\E}^{\mathrm{ab}}=\mathcal{I}_{\G_{\E}^{\mathrm{ab}}}\oplus
\G_{k}^{\mathrm{ab}}\;.
\end{equation}
\end{proposition}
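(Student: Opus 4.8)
The plan is to prove the statement by Pontryagin duality, reducing it to a splitting on the level of the character groups and then invoking Theorem \ref{description of H^1}. By the discussion in \S\ref{meaning of continuous characters}, the profinite abelian group $\G_\E^{\mathrm{ab}}$ is the Pontryagin dual of the discrete torsion group $\mathrm{H}^1(\G_\E,\mathbb{Q}/\mathbb{Z})=\mathrm{Hom}^{\mathrm{cont}}(\G_\E,\mathbb{Q}/\mathbb{Z})$, and likewise $\G_k^{\mathrm{ab}}$ is dual to $\mathrm{H}^1(\G_k,\mathbb{Q}/\mathbb{Z})$; an internal direct sum decomposition of $\G_\E^{\mathrm{ab}}$ into closed subgroups corresponds to a direct sum decomposition of $\mathrm{H}^1(\G_\E,\mathbb{Q}/\mathbb{Z})$. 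The canonical surjection $\G_\E\twoheadrightarrow\G_k$ (quotient by the inertia) dualizes to the inflation $\iota\colon\mathrm{H}^1(\G_k,\mathbb{Q}/\mathbb{Z})\hookrightarrow\mathrm{H}^1(\G_\E,\mathbb{Q}/\mathbb{Z})$, which is injective with image the \emph{unramified} characters (those trivial on $\mathcal{I}_{\G_\E}$). So it suffices to produce a retraction $r\colon\mathrm{H}^1(\G_\E,\mathbb{Q}/\mathbb{Z})\twoheadrightarrow\mathrm{H}^1(\G_k,\mathbb{Q}/\mathbb{Z})$ with $r\circ\iota=\mathrm{id}$: dualizing $r$ gives a section $s$ of $\pi\colon\G_\E^{\mathrm{ab}}\to\G_k^{\mathrm{ab}}$, and since right-exactness of abelianization applied to $1\to\mathcal{I}_{\G_\E}\to\G_\E\to\G_k\to 1$ identifies $\ker(\pi)$ with the (closed) image of $\mathcal{I}_{\G_\E}$ in $\G_\E^{\mathrm{ab}}$, i.e. with $\mathcal{I}_{\G_\E^{\mathrm{ab}}}$, the section yields $\G_\E^{\mathrm{ab}}=\mathcal{I}_{\G_\E^{\mathrm{ab}}}\oplus s(\G_k^{\mathrm{ab}})$.

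I would build $r$ prime by prime, using $\mathrm{H}^1(\G_\E,\mathbb{Q}/\mathbb{Z})=\bigoplus_{\ell}\mathrm{H}^1(\G_\E,\mathbb{Q}_\ell/\mathbb{Z}_\ell)$. For the $p$-part, the point is to apply Theorem \ref{description of H^1}: functoriality of the Artin--Schreier--Witt map $\delta$ of \eqref{artin-schreier-diagram-covectors} with respect to the field inclusion $k\hookrightarrow\E$ determined by the chosen coefficient field identifies $\iota_p$ with the map on $\CW(\E)/(\Fb-1)\CW(\E)$ induced by the inclusion of covectors $\CW(k)\hookrightarrow\CW(\E)$. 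Combined with the decomposition \eqref{decomposition of CW(E) as - 0 +}, the vanishing of $\CW(tk[[t]])/(\Fb-1)$, and the identification $\CW(t^{-1}k[t^{-1}])/(\Fb-1)\cong\widetilde{\CW}(k)^{(\J)}$, this shows that $\iota_p$ is precisely the inclusion of the direct summand $\CW(k)/(\Fb-1)\CW(k)$ into $\mathrm{H}^1(\G_\E,\mathbb{Q}_p/\mathbb{Z}_p)=\widetilde{\CW}(k)^{(\J)}\oplus\CW(k)/(\Fb-1)\CW(k)$. (Equivalently, the image of $\iota_p$ equals $\mathrm{Fil}_0$, because a character of $\G_\E$ of $p$-power order has vanishing Swan conductor (Definition \ref{Kato definition of Swan conductor}) if and only if it is unramified, the tame inertia having no $p$-part.) Hence one may take $r_p$ to be the projection onto the second summand, and then $r_p\circ\iota_p=\mathrm{id}$.

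For a prime $\ell\neq p$, any continuous character of $\G_\E$ (resp. of $\G_k$) has pro-$\ell$ image, hence is trivial on the pro-$p$ wild inertia and factors through the tame quotient $\G_\E^{\mathrm{tame}}=\mathcal{I}^{\mathrm{tame}}_{\G_\E}\rtimes\G_k$ (resp. through $\G_k$). Since this is a semidirect product, the projection $\G_\E^{\mathrm{tame}}\to\G_k$ has an obvious section, and composition with it defines a retraction $r_\ell$ of $\iota_\ell\colon\mathrm{H}^1(\G_k,\mathbb{Q}_\ell/\mathbb{Z}_\ell)\hookrightarrow\mathrm{H}^1(\G_\E,\mathbb{Q}_\ell/\mathbb{Z}_\ell)$. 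Setting $r:=r_p\oplus\bigoplus_{\ell\neq p}r_\ell$ gives the desired retraction, and dualizing completes the proof. The only step that is not formal is the identification in the second paragraph of the inflation from $\G_k$ with the split summand $\CW(k)/(\Fb-1)\CW(k)$ of $\mathrm{H}^1(\G_\E,\mathbb{Q}_p/\mathbb{Z}_p)$; everything else is Pontryagin duality and the elementary structure of tame ramification. I would also be careful, in the final identification of $\ker\pi$, with the distinction between $\mathcal{I}_{\G_\E^{\mathrm{ab}}}$ and $\mathcal{I}_{\G_\E}^{\mathrm{ab}}$ pointed out in Remark \ref{inertia ab different from ab inertia}.
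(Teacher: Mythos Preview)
Your argument is correct and uses the same two key ingredients as the paper: the splitting of $\mathrm{H}^1(\G_\E,\mathbb{Q}_p/\mathbb{Z}_p)$ coming from the decomposition $\CW(\E)=\CW(t^{-1}k[t^{-1}])\oplus\CW(k)\oplus\CW(tk[[t]])$ for the $p$-part, and the semidirect product structure of the tame quotient for the prime-to-$p$ part, assembled via Pontryagin duality.

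The organization differs slightly. The paper first replaces $k$ by $k^{\mathrm{perf}}$ via the canonical isomorphism $\G_{k^{\mathrm{perf}}(\!(t)\!)}\simto\G_{k(\!(t)\!)}$, which lets it invoke classical ramification theory (Remark~\ref{inertia ab different from ab inertia}, Lemma~\ref{semi-direct product over E}) without worrying about inseparable residue extensions; it then splits the $p$-primary part of $\G_\E^{\mathrm{ab}}$ directly by dualizing the exact sequence \eqref{splitting Artin-Schreier ...}, and handles the complementary factor $\G_\E^{\mathrm{ab}}/\mathrm{P}$ through Corollary~\ref{decomposition of G_E}. You instead stay on the character side throughout and work prime by prime without the reduction to perfect $k$. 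This is fine, but note that both Lemma~\ref{semi-direct product over E} (your tame semidirect product) and the identification $\ker\pi=\mathcal{I}_{\G_\E^{\mathrm{ab}}}$ via Remark~\ref{inertia ab different from ab inertia} are stated in the paper under the perfectness hypothesis; the reduction to $k^{\mathrm{perf}}$ is what makes those steps clean, so you would either want to include that reduction or check directly that those statements go through for general $k$ (they do, but it is the sort of thing one should say explicitly).
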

\begin{proof}

We can replace $k$ with $k^{\mathrm{perf}}$. Indeed we have a
canonical isomorphism
\begin{equation}\label{G_E simto G_E^perf}
\Gal(k^{\mathrm{perf}}(\!(t)\!)^{\mathrm{sep}}/k^{\mathrm{perf}}(\!(t)\!)) \;
\xrightarrow[]{\;\;\sim\;\;} \;
\Gal(k(\!(t)\!)^{\mathrm{sep}}/k(\!(t)\!)),
\end{equation}
identifying $\mathcal{I}_{\G_{k(\!(t)\!)}}$ and
$\mathcal{P}_{\G_{k(\!(t)\!)}}$ with
$\mathcal{I}_{\G_{k^{\mathrm{perf}}(\!(t)\!)}}$ and
$\mathcal{P}_{\G_{k^{\mathrm{perf}}(\!(t)\!)}}$ respectively. This
is proved by including both $k(\!(t)\!)^{\mathrm{sep}}$ and
$k^{\mathrm{perf}}(\!(t)\!)^{\mathrm{sep}}$ in the $t$-adic
completion $\widehat{k(\!(t)\!)^{\mathrm{alg}}}$ of
$k(\!(t)\!)^{\mathrm{alg}}$. On the right hand side of \eqref{G_E
simto G_E^perf} there is the logarithmic Abbes-Sato's filtration,
while on the left hand side we have the classical filtration, as
presented in \cite{Se}. \emph{These two filtrations are not
preserved by the above isomorphism,} as proved by Theorem
\ref{description of H^1}. We need now some lemmas (and we make a remark).

\begin{remark}\label{inertia ab different from ab inertia}
We recall that if $K_1\subset K_2\subset K_3$ are Galois
extensions of ultrametric complete valued fields with
\emph{perfect} residue fields $k_1\subset k_2\subset k_3$, then
the map $\mathcal{I}_{\Gal(K_3/K_1)}\to
\mathcal{I}_{\Gal(K_2/K_1)}$ is always surjective. We recall also
that if $K_1^{\mathrm{ab}}$ (resp $k_1^{\mathrm{ab}}$) is the
maximal abelian extension of $K_1$ in $K_2$ (resp. of $k_1$ in
$k_2$), then the residual field of $K_1^{\mathrm{ab}}$ is
$k_1^{\mathrm{ab}}$. Hence one has a surjective map
$\mathcal{I}_{\G_{\E}}^{\mathrm{ab}} \longrightarrow
\mathcal{I}_{\G_{\E}^{\mathrm{ab}}}$, which is usually not an
isomorphism.
\end{remark}

\begin{lemma}\label{passage to abelian in the semidirect product}
Let $\mathcal{I},H$ be two subgroups of a given group $\G$. Assume
that $\mathcal{I}$ is normal in $\G$, and that $\G$ is a
semidirect product of $\mathcal{I}$ and $H$. Then
$\G^{\mathrm{ab}}$ is a direct product of a quotient
$\mathcal{I}^{\mathrm{ab}}/N$ of $\mathcal{I}^{\mathrm{ab}}$ with
$H^{\mathrm{ab}}$.
\end{lemma}
\begin{proof}
Straightforward.
\end{proof}

\begin{lemma}\label{semi-direct product over E}
Assume that $k$ is perfect. Let $\E^{\mathrm{tame}}$ be the
maximal tamely ramified extension of $\E$ in $\E^{\mathrm{sep}}$.
Then $\Gal(\E^{\mathrm{tame}}/\E)$ is a semi-direct product of
$\mathcal{I}_{\Gal(\E^{\mathrm{tame}}/\E)}$, with
$\G_k=\Gal(k^{\mathrm{sep}}/k)$.
\end{lemma}
\begin{proof}
Let $\E^{\mathrm{unr}}\subset\E^{\mathrm{tame}}$ be the maximal
unramified extension of $\E$ in $\E^{\mathrm{sep}}$. It is known
that $\E^{\mathrm{tame}} = \cup_{(N,p)=1}
\E^{\mathrm{unr}}(t^{1/N})$. In other words, every element $x\in
\E^{\mathrm{tame}}$ can be written as $x=\sum_{i=0}^{N-1} a_it^{i/N}$,
for some $N$-th root $t^{1/N}$ of $t$, with $a_1,\ldots,a_{N-1}\in
k'$, where $k'/k$ is some finite Galois extension of $k$. Then via the isomorphism 
$\Gal(\E^{\mathrm{unr}}/\E) \simto\G_{k}$,  $\G_{k}$ acts on
$\E^{\mathrm{tame}}$ by $\sigma(x):=\sum_{i=0}^n
\sigma(a_i)t^{i/N}$. Hence the sequence
\begin{equation}
1\to\mathcal{I}_{\Gal(\E^{\mathrm{tame}}/\E)}\to
\Gal(\E^{\mathrm{tame}}/\E) \to\G_k\to 1
\end{equation}
is splitting.
\end{proof}

\begin{corollary}\label{decomposition of G_E}
Assume that $k$ is perfect. Let
$\E^{\mathrm{tame,ab}}:=\E^{\mathrm{tame}}\cap\E^{\mathrm{ab}}$.
Then
\begin{equation}
\Gal({\E}^{\mathrm{tame,ab}}/\E)=
\mathcal{I}_{\Gal({\E}^{\mathrm{tame,ab}}/\E)} \times
\G_{k}^{\mathrm{ab}}\;.
\end{equation}
\end{corollary}
\begin{proof}
Apply Lemmas \ref{passage to abelian in the semidirect product}
and \ref{semi-direct product over E}.
\end{proof}

\emph{Continuation of the proof of Proposition \ref{Decomposition
of G_E^ab prop}:} Let $\mathrm{P}$ (resp. $\mathrm{P}_k$) be the
$p$-primary subgroup of $\G_{\E}^{\mathrm{ab}}$ (resp.
$\G_k^{\mathrm{ab}}$). By the classical properties of $p$-primary
subgroups, $\mathrm{P}$ and $\mathrm{P}_k$ are direct factors of
$\G_{\E}^{\mathrm{ab}}$ and $\mathrm{G}_k^{\mathrm{ab}}$. One has
the exact sequence:
\begin{equation}\label{splitting p-primary}
0\to \mathcal{P}_{\G_{\E}^{\mathrm{ab}}} \to \mathrm{P} \to
\mathrm{P}_k \to 0\;.
\end{equation}
By the Artin-Schreier-Witt theory, one has canonical
identifications of
$\Hom^{\mathrm{cont}}(\mathrm{P},\mathbb{Q}_p/\mathbb{Z}_p)$ with $\CW(\E)/(\Fb-1)(\CW(\E))$,
and
$\Hom^{\mathrm{cont}}(\mathrm{P}_k,\mathbb{Q}_p/\mathbb{Z}_p)=\CW(k)/(\Fb-1)(\CW(k))$.
On the other hand, one has the exact sequence
\begin{equation}\label{splitting Artin-Schreier ...}
0\leftarrow
\frac{\CW(t^{-1}k[t^{-1}])}{(\Fb-1)(\CW(t^{-1}k[t^{-1}]))}
\leftarrow\frac{\CW(\E)}{(\Fb-1)(\CW(\E))}\leftarrow
\frac{\CW(k)}{(\Fb-1)(\CW(k))}\leftarrow 0\;.
\end{equation}
Hence, by Pontriagyn duality,
$\Hom^{\mathrm{cont}}(\mathcal{P}_{\G_{\E}^{\mathrm{ab}}},\mathbb{Q}_p/\mathbb{Z}_p)$
is canonically identified to
$\frac{\CW(t^{-1}k[t^{-1}])}{(\Fb-1)(\CW(t^{-1}k[t^{-1}]))}$.
Since the sequence \eqref{splitting Artin-Schreier ...} splits,
then the sequence \eqref{splitting p-primary} splits too.

Since $\mathrm{P}$ is the $p$-primary part of
$\G_\E^{\mathrm{ab}}$, then $\G_{\E}^{\mathrm{ab}}\cong
\mathrm{P}\oplus (\G_{\E}^{\mathrm{ab}}/\mathrm{P})$. By Corollary
\ref{decomposition of G_E} one finds
$\G_{\E}^{\mathrm{ab}}/\mathrm{P}=\mathrm{Gal}(\E^{\mathrm{tame,ab}}/\E)/
(\mathrm{P}/\mathcal{P}_{\mathrm{G}_{\E}^{\mathrm{ab}}}) \! \cong \! 
(\mathcal{I}_{\Gal(\E^{\mathrm{tame,ab}}/\E)}\times\G_k^{\mathrm{ab}})/
(\mathrm{P}/\mathcal{P}_{\mathrm{G}_{\E}^{\mathrm{ab}}}) \! \cong \!
\mathcal{I}_{\Gal(\E^{\mathrm{tame,ab}}/\E)}\times(\G_k^{\mathrm{ab}}/\mathrm{P}_k)$.
Indeed
$\mathcal{I}_{\Gal(\E^{\mathrm{tame,ab}}/\E)}=(\mathcal{I}_{\G_{\E}^{\mathrm{ab}}}/\mathcal{P}_{\G_{\E}^{\mathrm{ab}}})$.
This shows that $\G_{\E}^{\mathrm{ab}}\cong \mathrm{P}_k \oplus
\mathcal{P}_{\mathrm{G}^{\mathrm{ab}}_{\E}} \oplus
(\mathcal{I}_{\G_{\E}^{\mathrm{ab}}}/\mathcal{P}_{\G_{\E}^{\mathrm{ab}}})\oplus
(\G_k^{\mathrm{ab}} / \mathrm{P}_k)$.\!
\end{proof}

\subsection{Definition of arithmetic Swan conductor for rank one
representations with finite local monodromy} \label{Swan cond
aritm for rk1 repre with loc mon}

Since
\begin{equation}\label{decomposition of G_E = P + I/P + G_k}
\G_{\E}^{\mathrm{ab}} \;\; \cong \;\;
\mathcal{P}_{\G_{\E}^{\mathrm{ab}}}\oplus
(\mathcal{I}_{\G_{\E}^{\mathrm{ab}}}/\mathcal{P}_{\G_{\E}^{\mathrm{ab}}})
\oplus \G_k^{\mathrm{ab}}\;,
\end{equation}
then every rank one representation with finite local monodromy
$\alpha:\G_{\E}\to \O_K^{\times}$,
$\V(\alpha)\in\mathrm{Rep}^{\mathrm{fin}}_{\O_K}(\G_{\E})$ is
a product of three characters:
\begin{equation}\label{alpha_k,tame,wild}
\alpha=\alpha_{\mathrm{wild}}\cdot \alpha_{\mathrm{tame}}\cdot
\alpha_{k}\;,
\end{equation}
where $\alpha_{k}$ (resp. $\alpha_{\mathrm{tame}}$,
$\alpha_{\mathrm{wild}}$) is equal to $1$ on
$\mathcal{I}_{\G_{\E}^{\mathrm{ab}}}$ (resp.
$\mathcal{P}_{\G_{\E}^{\mathrm{ab}}} \oplus \G_k^{\mathrm{ab}}$,
$(\mathcal{I}_{\G_{\E}^{\mathrm{ab}}}/\mathcal{P}_{\G_{\E}^{\mathrm{ab}}})
\oplus \G_k^{\mathrm{ab}}$). In term of representations, this is
equivalent to the expression of  $\V(\alpha)$ as a tensor product:
\begin{equation}
\V(\alpha)=\V(\alpha_{\mathrm{wild}})\otimes\V(\alpha_{\mathrm{tame}})\otimes\V(\alpha_k)\;.
\end{equation}

\begin{definition}[(Swan conductor of a rank one representation
with finite local monodromy)]\label{Definition of Swan for finite
char} Let
$\V(\alpha)\in\mathrm{Rep}_{\O_K}^{\mathrm{fin}}(\G_{\E})$ be a
rank one representation with finite local monodromy. Let $n$ be
the greatest number such that
$\bs{\mu}_{n}(\O_K)=\bs{\mu}_{n}(K^{\mathrm{alg}})$, and let
\begin{equation}
\psi:\mathbb{Z}/n\mathbb{Z}\xrightarrow[]{\;\;\sim\;\;}
\bs{\mu}_{n}(\O_K)
\end{equation}
be a fixed identification. We define the \emph{Swan conductor},  $\mathrm{sw}(\V(\alpha))$,   of $\alpha$ 
 as
\begin{equation}
\mathrm{sw}(\V(\alpha))=\mathrm{sw}(\psi^{-1}\circ(\alpha_{\mathrm{wild}}\cdot\alpha_{\mathrm{tame}}))\;,
\end{equation}
where, in the right hand side, we mean the Kato's definition of
the character
$\psi^{-1}\circ(\alpha_{\mathrm{wild}}\cdot\alpha_{\mathrm{tame}}):\G_{\E}\longrightarrow
\mathbb{Z}/n\mathbb{Z}\subset\mathbb{Q}/\mathbb{Z}$  (cf. Def.
\ref{Kato definition of Swan conductor}).
\end{definition}

The above definition does not depend on the choice of $\psi$.
Indeed if $\psi':\mathbb{Z}/n\mathbb{Z}\simto \bs{\mu}_{n}(\O_K)$
is another choice, there exists $N\in\mathbb{Z}$, with $(N,n)=1$,
such that $\psi'=(N\cdot)\circ\psi$, where
$(N\cdot):\mathbb{Z}/n\mathbb{Z}\simto\mathbb{Z}/n\mathbb{Z}$ is
the multiplication by $N$. Let
$\beta:=\psi^{-1}\circ(\alpha_{\mathrm{wild}}\cdot\alpha_{\mathrm{tame}})$,
and $\beta':=(N\cdot)\circ\beta
=\psi'^{-1}\circ(\alpha_{\mathrm{wild}}\cdot\alpha_{\mathrm{tame}})$.
If $n$ is prime to $p$, then
$\beta(\mathcal{P}_{\G_{\E}^{\mathrm{ab}}})=
\beta'(\mathcal{P}_{\G_{\E}^{\mathrm{ab}}})=\{0\}$ in
$\mathbb{Z}/n\mathbb{Z}$. Hence the Swan conductor is equal to $0$
in both cases (cf. Section \ref{vanishing of tame and residual
katos conductors} below). If $n$ is not prime to $p$, let
$\mathbb{Z}/p^{m+1}\mathbb{Z}$ be the $p$-primary part of
$\mathbb{Z}/n\mathbb{Z}$.  In this case  $N$ is prime to
$p$: then  the multiplication by $N$ preserves Kato's filtration, it implies that    the Swan conductors of $\beta$ and
$\beta'=(N\cdot)\circ\beta$ are equal.

\subsubsection{Vanishing of residual and tame arithmetic Swan
conductors.}\label{vanishing of tame and residual katos
conductors} Definition \ref{Definition of Swan for finite char}
agrees with Definition \ref{Kato definition of Swan conductor},
and more precisely the Swan conductors of $\alpha_k$ and
$\alpha_{\mathrm{tame}}$ are always equal to $0$. Indeed Def.
\ref{Definition of Swan for finite char} and Def. \ref{Kato
definition of Swan conductor} coincide for
$\alpha_{\mathrm{tame}}$, and, by Theorem \ref{description of
H^1}, $\mathrm{sw}(\alpha_{\mathrm{tame}})=0$. Moreover, if the
image of $\alpha_k$ is finite in $\O_K^{\times}$, then, by Theorem
\ref{description of H^1}, one has $\mathrm{sw}(\alpha_k)=0$, and
this agrees with Definition \ref{Definition of Swan for finite
char}. For all characters $\alpha:\G_{\E}\to\O_K^{\times}$, with
$\V(\alpha)\in\mathrm{Rep}_{\O_K}^{\mathrm{fin}}(\G_{\E})$, one
has
\begin{equation}
\mathrm{sw}(\alpha)=\mathrm{sw}(\alpha_{\mathrm{wild}})\;.
\end{equation}

\begin{remark}\label{V_1 otimes V_2}
One sees that $\mathrm{sw}(\V_1\otimes\V_2)\leq
\max(\mathrm{sw}(\V_1),\mathrm{sw}(\V_2))$, for all
$\V_1,\V_2\in\mathrm{Rep}^{\mathrm{fin}}_{\O_K}(\G_{\E})$.
Moreover equality holds if
$\mathrm{sw}(\V_1)\neq\mathrm{sw}(\V_2)$
\end{remark}

\section{Kato's refined Swan conductor} \label{Kato's refined
Swan conductor}

 Let
$\E$ be, as usual, a complete discrete valued field of
characteristic $p$, with residue field $k$. We fix a uniformizer
$t\in\O_{\E}$ and an isomorphism $\O_{\E}\cong k[\![t]\!]$. We
identify $k$ with its image in $\O_{\E}$ via that isomorphism. As
usual for all $\mathbb{F}_p$-ring $R$ we set
$\Omega^1_R:=\Omega^1_{R/R^p}$.

In \cite{Kato-Swan} K.Kato was able to introduce a filtration on
$\Omega^1_{\E}$ and then a family of submodules of the $d$-th
graded $\B \mathrm{Gr}_d\Omega^{1}_{\E}\subset
\mathrm{Gr}_d\Omega^{1}_{\E}$, $d\geq 0$. In such a way he was
able to define an isomorphism, for all $d\geq 0$:
\begin{equation}
\psi_d\; :
\;\mathrm{Gr}_d\mathrm{H}^1(\G_{\E},\mathbb{Q}/\mathbb{Z})
\xrightarrow[]{\;\;\sim\;\;} \B\mathrm{Gr}_d\Omega^1_{\E}\;
\subset \;\mathrm{Gr}_d\Omega^1_{\E}\;,
\end{equation}
associating in this way to a character a $1$-differential class.
Whereas the arithmetic Swan conductor $\mathrm{sw}(\alpha)$ of a
character $\alpha\in \mathrm{H}^1(\G_{\E},\mathbb{Q}/\mathbb{Z})$
is the smallest integer $d\geq 0$, such that
$\alpha\in\mathrm{Fil}_d\mathrm{H}^1(\G_{\E},\mathbb{Q}/\mathbb{Z})$
(cf. Def. \ref{Kato definition of Swan conductor}), the so called
``\emph{refined Swan conductor}'' of $\alpha$ is the image of the
class of $\alpha$ by the morphism:
\begin{equation}
\psi_{\mathrm{sw}(\alpha)} :
\mathrm{Gr}_{\mathrm{sw}(\alpha)}\mathrm{H}^1(\G_{\E},\mathbb{Q}/\mathbb{Z})
\xrightarrow[]{\;\;\;\;\;}
\mathrm{Gr}_{\mathrm{sw}(\alpha)}\Omega^1_{\E}\;.
\end{equation}
The refined Swan conductor of $\alpha$ is defined only if
$\mathrm{sw}(\alpha)
> 0$, we denote it by
$\mathrm{rsw}(\alpha)$.

In this section we interpret the refined Swan conductor, and
the isomorphisms $\psi_d$'s, in term of our isomorphism
$\mathrm{Gr}_d\mathrm{H}^1(\G_{\E},\mathbb{Q}/\mathbb{Z}) \cong
\W_{v_p(d)}(k) / p\cdot \W_{v_p(d)}(k)$, if $d>0$, (cf. Theorem
\ref{description of H^1}): hence we explicitly associate a differential  to a Witt
vector on $\W_{v_p(d)}(k)$.

We improve the explicit description recently obtained by A.Abbes
and T.Saito. We first recall the definition of $\psi_d$ and the
work of Abbes-Saito for the convenience of the reader (cf. Section
\ref{sectionhhhf jhl k}). Then we apply our description to that
context (cf. Section \ref{Explicit description of psi_d in terms of Witt
co-vectors}). Notations and settings come from
\cite{Ab-Sa}. In this section, according to \cite{Ab-Sa}, we
assume that $k$ has a finite $p$-basis
$\{\bar{u}_1,\ldots,\bar{u}_r\}$.

\subsection{Definition of Kato's refined Swan conductor and Abbes-Saito's computations.}
\label{sectionhhhf jhl k}
\subsubsection{Kato's Filtration of $\Omega^1_{\E}$.}
We refer to \cite[5.4]{Ab-Sa} for the formal definition of
$\Omega^1_{k[\![t]\!]}(\log)$.  Considering the trivialization
$\E\cong k(\!(t)\!)$, this is nothing but
\begin{equation}
\Omega^1_{k[\![t]\!]}(\log)\; \cong \; \Bigl(\oplus_{i=1}^r
k[\![t]\!]\cdot d\bar{u}_i\Bigr)\oplus k[\![t]\!]\cdot d\log(t)
\;\; \subset \;\; \Bigl(\oplus_{i=1}^r \E \cdot
d\bar{u}_i\Bigr)\oplus \E\cdot d\log(t) \; \cong \; \Omega^1_{\E}
\;,
\end{equation}
where $d\log(t):= dt/t \in\Omega^1_{\E}$. For all $d\geq 0$, one
sets
\begin{equation}
\mathrm{Fil}_d\Omega^1_{\E}:=
t^{-d}\cdot\Omega^1_{k[\![t]\!]}(\log) \quad,\qquad
\Omega^1_k(\log):=\Omega^1_{k[\![t]\!]}(\log)\otimes_{k[\![t]\!]}
k \;.
\end{equation}
For $d>0$, the graded admits then the following trivialization
\begin{equation}\label{isom Gr_d Omega^1_E}
\mathrm{Gr}_d \Omega^1_{\E}  \cong  \Bigl(\oplus_{i=1}^r k\cdot
t^{-d}\cdot d\bar{u}_i \Bigr)\oplus k\cdot t^{-d}\cdot
d\log(t)\;=\; t^{-d}\cdot \Omega^1_k(\log)\;.
\end{equation}
In particular $\Omega^1_{\E}$ is graded:
$\Omega^1_{\E}=\oplus_{d\geq 0}\mathrm{Gr}_d(\Omega^1_{\E})$.

\subsubsection{Kato's isomorphism $\psi_d$.}
We recall that, by \cite[10.7]{Ab-Sa}, for all $s\geq 0$, $d>0$,
there exists a unique group morphism $\psi_{s,d}$ making the following 
  diagram commutative
\begin{equation}\label{diagram BGr_d, psi_d}
\xymatrix{\mathrm{Gr}_d\W_s(\E)\ar[rr]^-{-\mathrm{gr}_d(\F^sd)}\ar[d]_-{\mathrm{gr}_d(\delta)}&&
\mathrm{Gr}_d(\Omega^1_{\E})\\
\mathrm{Gr}_d(\mathrm{H}^1(\mathrm{G}_{\E},\mathbb{Z}/p^{s+1}\mathbb{Z}))\ar[rru]_-{\psi_{s,d}}&&}
\end{equation}
where
$\delta:\W_s(\E)\to\mathrm{H}^1(\mathrm{G}_{\E},\mathbb{Z}/p^{s+1}\mathbb{Z})$
is the Artin-Schreier-Witt morphism (cf.
\eqref{artin-screier-diagram}), and
$\F^sd:\W_s(\E)\to\Omega^{1}_{\E}$ is given by
\begin{equation}\label{definition of F^sd...lj....}
\F^sd(\bar{f}_0,\ldots,\bar{f}_s)=\sum_{i=0}^{s}\bar{f}_i^{p^{s-i}}d\log(f_i)\;.
\end{equation}
By \cite[10.8]{Ab-Sa}, the family of maps $\{\psi_{s,d}\}_{s\geq
0}$ is compatible with the inclusions
$\jmath:\mathrm{H}^1(\mathrm{G}_{\E},\mathbb{Z}/p^{s}\mathbb{Z})\to
\mathrm{H}^1(\mathrm{G}_{\E},\mathbb{Z}/p^{s+1}\mathbb{Z})$ (cf.
\eqref{artin-screier-diagram}). We have hence a map:
\begin{equation}
\psi_d:\mathrm{Gr}_d(\mathrm{H}^1(\mathrm{G}_{\E},\mathbb{Q}_p/\mathbb{Z}_p))\to\mathrm{Gr}_d(\Omega^1_{\E})\;.
\end{equation}

\subsubsection{The groups $\B_m\Omega^1_{k}$ and $\B\mathrm{Gr}_d(\Omega^1_{\E})$.}
For each $q\geq 0$ we denote by
\begin{equation}
\mathrm{Z}^q\Omega^{\bullet}_{k}=\mathrm{Ker}(d:\Omega^q_k\to\Omega^{q+1}_k)
\quad,\quad
\B^q\Omega^{\bullet}_{k}=\mathrm{Im}(d:\Omega^{q-1}_k\to\Omega^{q}_k)\quad,\quad
\mathrm{H}^q(\Omega^{\bullet}_k)=\mathrm{Z}^q\Omega^{\bullet}_{k}/\mathrm{B}^q\Omega^{\bullet}_{k}
\end{equation}
We denote the inverse Cartier isomorphism (cf. \cite[Ch2, Section
6]{Cart}) by
\begin{equation}
\C^{-1}\;\;:\;\;\Omega^q_k\;\;\xrightarrow[]{\;\;\sim\;\;}\;\;\mathrm{H}^q(\Omega^{\bullet}_k)
\end{equation}
where $\C:\mathrm{Z}^q\Omega^{\bullet}_k\to\Omega^q_k$ is the
Cartier operation. For $r\geq 0$ we introduce subgroups (cf.
\cite[2.2.2]{Ill})
\begin{equation}
\B_m\Omega^q_k\;\;\subset\;\;
\mathrm{Z}_m\Omega^q_k\;\;\subset\;\; \Omega^q_k
\end{equation}
where $\B_0\Omega^q_k=0$, $\B_1\Omega^q_k=\B^q\Omega^{\bullet}_k$
(resp. $\mathrm{Z}_0\Omega^q_k=\Omega^q_k$,
$\mathrm{Z}_1\Omega^q_k=\mathrm{Z}^q\Omega^{\bullet}_k$), and
inductively $\B_{m+1}\Omega^q_k$ (resp.
$\mathrm{Z}_{m+1}\Omega^q_k$) is the inverse image in
$\mathrm{Z}^q\Omega^{\bullet}_k$, under the canonical projection
$\mathrm{Z}^q\Omega^{\bullet}_k\to\mathrm{H}^q(\Omega^{\bullet}_k)$,
of
$\C^{-1}(\B_m\Omega^q_k)\subset\mathrm{H}^q(\Omega^{\bullet}_k)$
(resp.
$\C^{-1}(\mathrm{Z}_m\Omega^q_k)\subset\mathrm{H}^q(\Omega^{\bullet}_k)$).
They respect the following inclusions
\begin{equation}
0=\B_0\Omega^q_k\subset\cdots\subset\B_{m}\Omega_k^q\subset\B_{m+1}\Omega^q_k\subset\cdots
\subset\mathrm{Z}_{m+1}\Omega^q_k\subset\mathrm{Z}_{m}\Omega^q_k\subset\cdots\subset\mathrm{Z}_{0}\Omega^q_k=\Omega^q_k\;.
\end{equation}
By definition
$\C(\mathrm{Z}_{m+1}\Omega^{q}_k)=\mathrm{Z}_{m}\Omega^{q}_k$ and
$\C(\mathrm{B}_{m+1}\Omega^{q}_k)=\mathrm{B}_{m}\Omega^{q}_k$, we
denote by $\C^m:\mathrm{Z}_{m}\Omega^{q}_k\to\Omega^{q}_k$ the
$m$-th iteration of the Cartier operation
($\C^0:=\mathrm{Id}_{\Omega^q_k}$).

\begin{definition}[\protect{(\cite[10.11]{Ab-Sa})}]
Let as usual $d=np^m>0$, $(n,p)=1$, $m=v_p(d)$. We denote by
$\B\mathrm{Gr}_d\Omega^1_{\E}\subset \mathrm{Gr}_d\Omega^1_{\E}$
the subgroup formed by elements of the form $t^{-d}(\alpha+\beta
\cdot d\log(t))$ (cf. Formula  \eqref{isom Gr_d Omega^1_E}), with
$\alpha\in \B_{m+1}\Omega^1_k$,
$\beta\in\mathrm{Z}_m\Omega^0_k(=\mathrm{Z}_mk=k^{p^m})$
satisfying $n\C^m(\alpha)+d\circ\C^m(\beta)=0$.
\end{definition}

\begin{remark}\label{BGrd for m=0} Notice that if $\beta=a^{p^m}$, $a\in k$, then
$\C^m(\beta)=\beta^{p^{-m}}=a$. In particular, for $m=0$, and
$(n,p)=1$, one finds $\B\mathrm{Gr}_n\Omega^1_{\E} = \{
t^{-n}(d(x)-n\cdot x\cdot d\log(t))\;|\; x\in k\}\subset
t^{-n}\cdot\Omega^1_k(\log)\simto \mathrm{Gr}_n\Omega^1_{\E}$.
\end{remark}

\subsubsection{Abbes-Saito's explicit description of $\B\mathrm{Gr}_d\Omega_{\E}^1$.}
Let $\{\bar{u}_1,\ldots,\bar{u}_r\}$ be a $p$-basis of $k$ (over
$\mathbb{F}_p$). Let $I_r:=[0,p-1]^r\subset\mathbb{N}^r$, and
$I_r':=I_r-\{(0,\ldots,0)\}$. For
$\underline{s}:=(s_1,\ldots,s_r)\in I_r$, we set
$\bar{u}^{\underline{s}}:=\bar{u}_1^{s_1}\cdots \bar{u}_r^{s_r}$.
Every $\lambda$ of $k$ can be uniquely written as
$\lambda=\sum_{\underline{s}\in I_r}\lambda_{\underline{s}}\cdot
\bar{u}^{\underline{s}}$, with $\lambda_{\underline{s}}\in k^p$.
Hence $d(\lambda)=\sum_{\underline{s}\in
I_r'}\lambda_{\underline{s}}\cdot \bar{u}^{\underline{s}} \cdot
d\log(\bar{u}^{\underline{s}})\in\Omega^1_k $, where, as usual,
$d\log(\bar{u}^{\underline{s}})=\sum_{i=1}^rs_i\cdot
d\bar{u}_i/\bar{u}_i$. For $j\geq 0$, we set
\begin{equation}
d_{\F^j}(\lambda) :=\sum_{\underline{s}\in
I_r'}\lambda_{\underline{s}}^{p^j}\cdot
(\bar{u}^{\underline{s}})^{p^j} \cdot
d\log(\bar{u}^{\underline{s}})\;.
\end{equation}
Since $\Omega^1_k$ is freely generated by
$d\bar{u}_1/\bar{u}_1,\ldots,d\bar{u}_r/\bar{u}_r$, then, by assuming
$\lambda_{\underline{0}}=0$, $d_{\F^j}(\lambda)$ determines
$\lambda$. We denote by $k'\subset k$, the  sub-$k^p$-vector space
of $k$ with basis $\{\bar{u}^{\underline{s}}\}_{\underline{s}\in
I'_r}$, i.e. whose elements $\lambda=\sum_{\underline{s}\in
I_r}\lambda_{\underline{s}}\bar{u}^{\underline{s}}$,
$\lambda_{\underline{s}}\in k^p$, satisfy
$\lambda_{\underline{0}}=0$.

\begin{proposition}{\protect{(\cite[10.12]{Ab-Sa})}}
Let $d=np^m>0$, $(n,p)=1$, $m=v_p(d)$. Every element $y_d$ of
$\B\mathrm{Gr}_d(\Omega^1_{\E})$ can be uniquely written as
\begin{equation}\label{sum htydfghjkiii}
y_d\;=\;t^{-d}\cdot\Bigl(\lambda_0^{p^{m-1}}d(\lambda_0) - n
\lambda_0^{p^m}d\log(t)+\sum_{1\leq j\leq
m}d_{\F^{m-j}}(\lambda_{j}) \Bigr)\;\in\;
t^{-d}\cdot\Omega^1_k(\log)\simto\B\mathrm{Gr}_d(\Omega^1_{\E})\;,
\end{equation}
with unique choice of $\lambda_1,\ldots,\lambda_{m}\in k'$, and
$\lambda_0\in k$. We will write
$y_d=y_d(\lambda_0,\lambda_1,\ldots,\lambda_{m})$. It is
understood that, if $m=0$, this sum reduces to $y_d =
t^{-d}\cdot(d(\lambda_0) - n \lambda_0 d\log(t))$ (cf. Remark
\ref{BGrd for m=0}).
 \hfill\CVD
\end{proposition}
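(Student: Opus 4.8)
The plan is to unwind the definition of $\B\mathrm{Gr}_d(\Omega^1_{\E})$ together with the commutative diagram \eqref{diagram BGr_d, psi_d} and the explicit formula \eqref{definition of F^sd...lj....}, tracking everything through the trivialization $\W_s(\E)/p\W_s(\E)$ by pure co-monomials. First I would use Theorem \ref{description of H^1}(3): the graded piece $\mathrm{Gr}_d\mathrm{H}^1(\G_\E,\mathbb{Q}_p/\mathbb{Z}_p)$ is $\W_m(k)/p\W_m(k)$, and by Section \ref{representation of W_m/p..} every class has a canonical lift $(\lambda_0,\lambda_1,\ldots,\lambda_m)$ with $\lambda_0\in k$, $\lambda_1,\ldots,\lambda_m\in k'$. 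Composing with the isomorphism \eqref{isomorphism between W_m and CW^(-d)} this corresponds to the pure co-monomial $(\lambda_0,\ldots,\lambda_m)\,t^{-d}$, i.e. to the Witt vector $(\lambda_0 t^{-n},\lambda_1 t^{-np},\ldots,\lambda_m t^{-d})\in\W_m(\E)$ representing a class in $\mathrm{Gr}_d\W_m(\E)$.

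Next I would compute $-\mathrm{gr}_d(\F^m d)$ applied to this representative using \eqref{definition of F^sd...lj....} with $s=m$ and $\bar f_i = \lambda_{i}t^{-np^{i}}$ (setting $\bar f_i:=\lambda_{m-?}$ in whatever indexing convention matches the paper's $\F^sd(\bar f_0,\ldots,\bar f_s)=\sum \bar f_i^{p^{s-i}}d\log(f_i)$). The term coming from $d\log(t^{-np^{i}}) = -np^{i}\,d\log(t)$ contributes the $-n\lambda_0^{p^m}d\log(t)$ part after collecting (the higher $p^i$-powers of $t$-logarithmic derivatives combine, modulo $\B_*\Omega$ and the Cartier relation, into that single top term, exactly as in Abbes–Saito's $n\C^m(\alpha)+d\circ\C^m(\beta)=0$ constraint). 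The term $\bar f_i^{p^{m-i}}d\log(\lambda_i)$ with $\lambda_i\in k'$ (resp. $\lambda_0\in k$) becomes, after raising to the appropriate power of $p$ and remembering $t^{-np^{i}}$ raised to $p^{m-i}$ equals $t^{-d}$, precisely $t^{-d}\,d_{\F^{m-i}}(\lambda_i)$ for $i\geq 1$ and $t^{-d}\lambda_0^{p^{m-1}}d(\lambda_0)$ for $i=0$ (here one uses $d\log(\lambda_0)=d(\lambda_0)/\lambda_0$ and absorbs the extra $\lambda_0$). Matching this against Kato's isomorphism $\psi_{m,d}=\psi_d$ via \eqref{diagram BGr_d, psi_d} shows the image is exactly $y_d(\lambda_0,\ldots,\lambda_m)$ as in \eqref{sum htydfghjkiii}. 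Since $\psi_d$ is an isomorphism onto $\B\mathrm{Gr}_d\Omega^1_\E$ and the $(\lambda_0,\ldots,\lambda_m)$ with $\lambda_0\in k$, $\lambda_1,\ldots,\lambda_m\in k'$ form a system of unique representatives for $\W_m(k)/p\W_m(k)$, existence and uniqueness of the expression follow.

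The main obstacle I anticipate is the bookkeeping in the $d\log(t)$-component: a priori $\F^m d$ of the co-monomial produces a sum $\sum_{i}(-np^{i})\lambda_i^{p^{m-i}}(\bar u^{\cdots})^{\cdots}\,t^{-d}\,d\log(t)$-type terms, and one must show all of these except the $i=0$ term either vanish (divisibility by $p$ kills $p^i$ for $i\geq 1$ in characteristic $p$!) or, more carefully, that they lie in the subgroup $\B_{m+1}\Omega^1_k$-part rather than the $\mathrm{Z}_m\Omega^0_k\,d\log(t)$-part, so that after the identification \eqref{isom Gr_d Omega^1_E} only $-n\lambda_0^{p^m}d\log(t)$ survives as the log-coefficient. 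In fact since $p\cdot p^{i}=0$ only matters for $i\geq 1$ contributions where the coefficient is literally $np^{i}$ with $i\geq 1$, these vanish outright in $\Omega^1_\E$ (char $p$), which is the clean reason the formula has exactly one $d\log(t)$ term. The rest is then the (routine but fiddly) identification of the $du_j$-components with the $d_{\F^{m-j}}$ expressions, which is forced by comparing with the free generation of $\Omega^1_k$ by $d\bar u_1/\bar u_1,\ldots,d\bar u_r/\bar u_r$ and the uniqueness built into Section \ref{representation of W_m/p..}.
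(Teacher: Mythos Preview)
The paper does not give a proof of this proposition at all: it is stated with a terminal \CVD\ and attributed to \cite[10.12]{Ab-Sa}. So there is no ``paper's own proof'' to compare against; the authors are quoting Abbes--Saito's result as input.

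Your sketch is not so much a proof of this proposition as it is an anticipation of the paper's own Theorem in Section \ref{Explicit description of psi_d in terms of Witt co-vectors}: you parametrize $\W_m(k)/p\W_m(k)$ by the canonical representatives of Section \ref{representation of W_m/p..}, push them through $\psi_d$ via the explicit formula \eqref{definition of F^sd...lj....}, and read off the shape \eqref{sum htydfghjkiii}. That is exactly what the paper does \emph{after} this proposition, and it is a perfectly good way to obtain the description --- but it depends on knowing that $\psi_d$ is an isomorphism onto $\B\mathrm{Gr}_d(\Omega^1_\E)$, which is the \emph{next} cited result (Proposition~\ref{Abbes-Saito psi iso}, i.e.\ \cite[10.14]{Ab-Sa}). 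In the logical order of \cite{Ab-Sa}, 10.12 is a direct structural description of $\B\mathrm{Gr}_d$ from its definition in terms of $\B_{m+1}\Omega^1_k$, $\mathrm{Z}_m\Omega^0_k$ and the Cartier condition $n\C^m(\alpha)+d\C^m(\beta)=0$, proved by unwinding iterated Cartier operators; 10.14 then identifies this with the image of $\psi_d$. Your argument reverses that order, so as a standalone proof of the proposition it is circular unless you supply an independent reason why $\psi_d$ surjects onto $\B\mathrm{Gr}_d$.

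Your computation of the $d\log(t)$-component is correct (the $np^i$ coefficients with $i\geq 1$ vanish in characteristic $p$, leaving only $-n\lambda_0^{p^m}d\log(t)$), and the identification of the $d\bar u_j$-components with $d_{\F^{m-j}}(\lambda_j)$ is right. So the content is fine; the issue is purely the logical placement.
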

\begin{remark}
The reader should notice the analogy with Section
\ref{representation of W_m/p..}.
\end{remark}

In \cite{Ab-Sa} one proves the following description of $\psi_d$
(or better its inverse $\rho_d$) by means of the group
$\B\mathrm{Gr}_d(\Omega^1_{\E})$ (cf. \cite[10.13]{Ab-Sa}).

\begin{proposition}[\protect{(\cite[10.14]{Ab-Sa})}]\label{Abbes-Saito psi iso}
For all $d>0$, $d=np^{v_p(d)}$, the image of $\psi_{d}$ is
$\B\mathrm{Gr}_d\Omega^1_{\E}$. Let
\begin{equation}
\rho_d\;:\;\B\mathrm{Gr}_d(\Omega^1_{\E})\xrightarrow[]{\;\;\sim\;\;}
\mathrm{Gr}_d(\mathrm{H}^1(\mathrm{G}_{\E},\mathbb{Q}_p/\mathbb{Z}_p))
\end{equation}
be the inverse of $\psi_d$. Let $\lambda_0\in k$,
$\lambda_{1},\ldots,\lambda_{v_p(d)-1}\in k'$ and let
$y_d(\lambda_0,\lambda_1,\ldots,\lambda_{v_p(d)})$ be the element
\eqref{sum htydfghjkiii} of $\B\mathrm{Gr}_d(\Omega^1_{\E})$. For
$j=1,\ldots,v_p(d)$ write $\lambda_j=\sum_{\underline{s}\in
I_r'}\lambda_{j,\underline{s}}\bar{u}^{\underline{s}}$. Then
$y_d(\lambda_0,\lambda_1,\ldots,\lambda_{v_p(d)})$ is sent, by
$\rho_d$ into
\begin{equation}
\rho_d(y_d(\lambda_0,\lambda_1,\ldots,\lambda_{v_p(d)})) \;
=\mathrm{pr}_d\Bigl( \theta_{v_p(d)}(\lambda_0t^{-n})+\sum_{1\leq
j\leq v_p(d)}\sum_{\underline{s}\in
I'_r}\theta_{v_p(d)-j}(\lambda_{j,\underline{s}}t^{-np^j})\Bigr)\;,
\end{equation}
where
$\theta_j:\E\to\mathrm{H}^1(\G_{\E},\mathbb{Q}_p/\mathbb{Z}_p)$ is
the composite of the $j$-th Teichm\"uller map
$f\mapsto(f,0,\ldots,0)\in\W_j(\E)$ with the Artin-Schreier-Witt
morphism
$\delta:\W_{j}(\E)\to\mathrm{H}^1(\G_{\E},\mathbb{Z}/p^{j+1}\mathbb{Z})\subset
\mathrm{H}^1(\G_{\E},\mathbb{Q}_p/\mathbb{Z}_p)$ (cf.
\eqref{artin-screier-diagram}), and $\mathrm{pr}_d$ is the
canonical projection of $\mathrm{H}^1$ into
$\mathrm{Gr}_d(\mathrm{H}^1)$.\hfill\CVD
\end{proposition}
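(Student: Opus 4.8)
This is \cite[10.14]{Ab-Sa}; we only indicate the shape of the argument. Recall that $\psi_d$ is, by construction, the unique map on $\mathrm{Gr}_d(\mathrm{H}^1(\G_{\E},\mathbb{Q}_p/\mathbb{Z}_p))$ compatible --- through the transition maps $\jmath$ of \eqref{artin-screier-diagram}, which on the Witt-vector side are induced by the Verschiebung $\V$ --- with all the maps $\psi_{s,d}$ of diagram \eqref{diagram BGr_d, psi_d}. Hence $\psi_d$ is completely determined by the single relation $\psi_{s,d}\circ\mathrm{gr}_d(\delta)=-\mathrm{gr}_d(\F^sd)$ on $\mathrm{Gr}_d\W_s(\E)$, valid for all $s\ge 0$. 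The plan is to make $-\mathrm{gr}_d(\F^sd)$ explicit and then read off both its image and its set-theoretic inverse: for each $y_d\in\B\mathrm{Gr}_d\Omega^1_{\E}$ one exhibits a single Witt vector $\bs{\mu}\in\mathrm{Fil}_d\W_s(\E)$ (with $s=v_p(d)$) such that $-\mathrm{gr}_d(\F^sd)(\bs{\mu})=y_d$ in $\mathrm{Gr}_d\Omega^1_{\E}$, and then necessarily $\rho_d(y_d)=\mathrm{pr}_d(\delta(\bs{\mu}))$, where $\bs{\mu}$ is irrelevant modulo $\mathrm{Fil}_{d-1}\W_s(\E)$ and modulo $(\Fb-1)\W_s(\E)$.

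First I would pin down the image. Expanding $\F^sd$ from \eqref{definition of F^sd...lj....} on an arbitrary element of $\mathrm{Fil}_d\W_s(\E)$ and reducing into $\mathrm{Gr}_d\Omega^1_{\E}\cong t^{-d}\Omega^1_k(\log)$ (cf. \eqref{isom Gr_d Omega^1_E}), one finds that each of the $s+1$ summands of \eqref{definition of F^sd...lj....} produces a term whose $\Omega^1_k(\log)$-components become constrained after iterated application of the Cartier operator; matching this with the normal form \eqref{sum htydfghjkiii} of \cite[10.12]{Ab-Sa}, and using that $\C$ inverts the relevant $p$-power twist, reproduces exactly the defining conditions $\alpha\in\B_{m+1}\Omega^1_k$, $\beta\in\mathrm{Z}_m\Omega^0_k$, $n\C^m(\alpha)+d\,\C^m(\beta)=0$ of $\B\mathrm{Gr}_d\Omega^1_{\E}$. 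This gives $\mathrm{Im}(\psi_d)=\B\mathrm{Gr}_d\Omega^1_{\E}$; injectivity of $\psi_d$, hence well-definedness of $\rho_d$, is inherited from the Artin-Schreier-Witt isomorphism of Theorem \ref{description of H^1}.

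For the formula I would use that both $\delta$ and $\mathrm{gr}_d(\F^sd)$ are additive and that the $\psi_{s,d}$ are $\jmath$-compatible (\cite[10.8]{Ab-Sa}). Given $y_d=y_d(\lambda_0,\dots,\lambda_s)$ in the form \eqref{sum htydfghjkiii}, one decomposes a preimage $\bs{\mu}$ as a Witt-vector sum of a ``depth-$0$'' piece accounting for $\lambda_0^{p^{m-1}}d\lambda_0-n\lambda_0^{p^m}d\log t$ and, for $1\le j\le s$ and $\underline{s}\in I_r'$ (via the expansion $\lambda_j=\sum_{\underline{s}}\lambda_{j,\underline{s}}\bar{u}^{\underline{s}}$), of ``depth-$j$'' pieces accounting for the individual summands of $d_{\F^{s-j}}(\lambda_j)$. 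Since $\jmath$ is the inclusion on $\mathrm{H}^1$ and is induced by $\V$ on Witt vectors, the $\mathrm{pr}_d\delta$-class of the depth-$j$ piece indexed by $\underline{s}$ is $\mathrm{pr}_d\,\theta_{s-j}(\lambda_{j,\underline{s}}t^{-np^j})$, while that of the depth-$0$ piece is $\mathrm{pr}_d\,\theta_s(\lambda_0 t^{-n})$. Summing these and invoking $\rho_d(y_d)=\mathrm{pr}_d\,\delta(\bs{\mu})$ then yields the asserted identity; the signs are harmless since $\delta$ is $\mathbb{Z}$-linear and everything is considered modulo $(\Fb-1)$ and $\mathrm{Fil}_{d-1}$.

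The main obstacle is the bookkeeping hidden in the two previous paragraphs. For the formula one must verify that the Witt vector actually chosen maps under $-\mathrm{gr}_d(\F^sd)$ onto \eqref{sum htydfghjkiii} \emph{exactly}, which forces one to expand the Witt addition and the various $d\log$'s in $\Omega^1_{\E}$, to control the correction terms arising from the non-additivity of the Teichm\"uller lift, and to check that these all vanish modulo $\mathrm{Fil}_{d-1}$. For the image statement one must show that $\mathrm{Im}(\mathrm{gr}_d\F^sd)$ is \emph{precisely} $\B\mathrm{Gr}_d\Omega^1_{\E}$ and nothing larger, which is exactly where the iterated Cartier operators $\C^m$ and the filtration $\B_m\Omega^1_k\subset\mathrm{Z}_m\Omega^1_k$ of \cite[2.2.2]{Ill} enter essentially.
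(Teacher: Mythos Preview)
The paper does not give its own proof of this proposition: it is stated with attribution to \cite[10.14]{Ab-Sa} and closed with a bare \CVD, with no argument in between. So there is nothing in the paper to compare your proposal against; your sketch already goes further than the paper does.

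That said, your outline is consistent with how the paper uses this result and with the surrounding machinery. The only substantive remark is that your claim ``injectivity of $\psi_d$ \ldots\ is inherited from the Artin--Schreier--Witt isomorphism of Theorem~\ref{description of H^1}'' is not quite right as stated: Theorem~\ref{description of H^1} identifies $\mathrm{Gr}_d(\mathrm{H}^1)$ with $\W_{v_p(d)}(k)/p\W_{v_p(d)}(k)$, but this alone does not show that $-\mathrm{gr}_d(\F^sd)$ factors injectively through $\mathrm{gr}_d(\delta)$. Injectivity of $\psi_d$ is the content of \cite[10.7]{Ab-Sa} (existence and uniqueness of $\psi_{s,d}$) together with the explicit comparison of sources and targets, and is what the paper later re-derives in its own language in the Theorem of Section~\ref{Explicit description of psi_d in terms of Witt co-vectors}.
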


\subsection{Explicit description of $\psi_d$ in terms of Witt
co-vectors.}\label{Explicit description of psi_d in terms of Witt
co-vectors}

The following Theorem improves the description given by
proposition \ref{Abbes-Saito psi iso}.
\begin{theorem}
Let $d>0$, $d=np^{m}$, $m=v_p(d)$. Let
\begin{equation}
\overline{\delta}_d:\W_{v_p(d)}(k)/p\W_{v_p(d)}(k)
\xrightarrow[]{\;\;\sim\;\;}
\mathrm{Gr}_d(\mathrm{H}^1(\G_{\E},\mathbb{Q}/\mathbb{Z}))
\end{equation}
be the isomorphism of Theorem \ref{description of H^1}. Then the 
composite map
\begin{equation}
\psi_d\circ\overline{\delta}_d:\W_{v_p(d)}(k)/p\W_{v_p(d)}(k)
\xrightarrow[]{\;\;\sim\;\;}\B\mathrm{Gr}_d(\Omega^1_{\E})
\end{equation}
is given by
\begin{equation}
\psi_d\circ\bar{\delta}_d(\lambda_0,\ldots,\lambda_{v_p(d)})=
t^{-d}\cdot \Bigl(-n \lambda_0^{p^{v_p(d)}}d\log(t)+\sum_{0\leq
j\leq v_p(d)} \lambda_j^{p^{v_p(d)-j}}\cdot d\log(\lambda_j)
\Bigr)\;,
\end{equation}
where we represent every  element of
$\W_{v_p(d)}(k)/p\W_{v_p(d)}(k)$ by a unique Witt vector
$(\lambda_0,\ldots,\lambda_{v_p(d)})\in\W_{v_p(d)}(k)$ satisfying
$\lambda_0\in k$, $\lambda_1,\ldots,\lambda_{v_p(d)}\in k'$, as in
Section \ref{representation of W_m/p..}. Moreover every element of
$\B\mathrm{Gr}_d(\Omega^1_{\E})$ can be uniquely written as
\begin{equation}
t^{-d}\cdot\Bigl(-n\lambda_0^{p^{v_p(d)}}d\log(t) +\sum_{0\leq
j\leq v_p(d)}\lambda_j^{p^{v_p(d)-j}}d\log(\lambda_j)\Bigr)\;,
\end{equation}
with unique $\lambda_0\in k$ and
$\lambda_1,\ldots,\lambda_{v_p(d)}\in k'$ (cf. Section
\ref{representation of W_m/p..}).
\end{theorem}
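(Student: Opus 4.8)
The plan is to obtain the displayed formula by computing $\psi_d\circ\overline{\delta}_d$ directly from Kato's defining diagram \eqref{diagram BGr_d, psi_d}, together with the description of $\overline{\delta}_d$ furnished by Theorem \ref{description of H^1} and the co-monomial isomorphism \eqref{isomorphism between W_m and CW^(-d)}; the final ``moreover'' will then follow formally.

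First I would unwind $\overline{\delta}_d$. For $d=np^m>0$ with $m=v_p(d)$, Theorem \ref{description of H^1} (parts (3)--(4)) says that $\overline{\delta}_d$ carries the class of a Witt vector $\lb=(\lambda_0,\dots,\lambda_m)\in\W_m(k)$ to the image under $\mathrm{gr}_d(\delta)$ of the co-monomial $\lb\,t^{-d}\in\CW^{(-d)}(k)=\mathrm{Gr}_d(\CW(\E))$, inside $\mathrm{Gr}_d(\mathrm{H}^1(\G_{\E},\mathbb{Q}/\mathbb{Z}))=\mathrm{Gr}_d(\mathrm{H}^1(\G_{\E},\mathbb{Q}_p/\mathbb{Z}_p))$. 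Under the embedding $\W_m(\E)\hookrightarrow\CW(\E)$ this co-monomial is the image of the Witt vector $\mathbf{w}:=(\lambda_0 t^{-n},\lambda_1 t^{-np},\dots,\lambda_m t^{-np^m})$; a short check using \eqref{Filtration of W_m(A)} and \eqref{definition of v of KAto} shows $\mathbf{w}\in\mathrm{Fil}_d(\W_m(\E))$ and that its class in $\mathrm{Gr}_d(\W_m(\E))$ is sent to $\overline{\delta}_d(\overline{\lb})$ by $\mathrm{gr}_d(\delta)$. Since $\mathbf{w}$ defines a $\mathbb{Z}/p^{m+1}\mathbb{Z}$-valued character, $\psi_d(\overline{\delta}_d(\overline{\lb}))$ is then computed by the diagram \eqref{diagram BGr_d, psi_d} with $s=m$: it is the class in $\mathrm{Gr}_d(\Omega^1_{\E})$ of $\F^m d(\mathbf{w})$ (up to the sign prescribed there, following the conventions of \cite{Ab-Sa}).

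The core step is to evaluate $\F^m d$ on $\mathbf{w}$ using \eqref{definition of F^sd...lj....}:
\[
\F^m d(\mathbf{w})=\sum_{i=0}^{m}(\lambda_i t^{-np^i})^{p^{m-i}}\,d\log(\lambda_i t^{-np^i})=\sum_{i=0}^{m}\lambda_i^{p^{m-i}}\,t^{-d}\bigl(d\log(\lambda_i)-np^i\,d\log(t)\bigr).
\]
The decisive simplification is that $\Omega^1_{\E}$ is killed by $p$, so the integers $np^i$ act as $0$ on it for $i\geq 1$; hence $\sum_{i=0}^{m}np^i\lambda_i^{p^{m-i}}\,d\log(t)=n\lambda_0^{p^m}\,d\log(t)$ and
\[
\F^m d(\mathbf{w})=t^{-d}\Bigl(-n\lambda_0^{p^m}\,d\log(t)+\sum_{j=0}^{m}\lambda_j^{p^{m-j}}\,d\log(\lambda_j)\Bigr),
\]
which is exactly the asserted expression once $\lb$ is taken in the normal form of Section \ref{representation of W_m/p..} ($\lambda_0\in k$, $\lambda_1,\dots,\lambda_m\in k'$). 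That normal form makes the right-hand side well defined; one can also check directly that replacing $\lb$ by an element of $p\W_m(k)=\V\Fb(\W_{m-1}(k))$ changes $\F^m d(\mathbf{w})$ only inside $\mathrm{Fil}_{d-1}\Omega^1_{\E}$, in agreement with $\psi_d\circ\overline{\delta}_d$ being well defined.

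Finally, the uniqueness claim is formal: $\psi_d$ is an isomorphism onto $\B\mathrm{Gr}_d(\Omega^1_{\E})$ by Proposition \ref{Abbes-Saito psi iso} and $\overline{\delta}_d$ is an isomorphism by Theorem \ref{description of H^1}, so $\psi_d\circ\overline{\delta}_d$ is a bijection from $\W_{v_p(d)}(k)/p\W_{v_p(d)}(k)$ onto $\B\mathrm{Gr}_d(\Omega^1_{\E})$; combined with the uniqueness of the normal-form representative (Section \ref{representation of W_m/p..}) this yields the stated parametrization of $\B\mathrm{Gr}_d(\Omega^1_{\E})$. The main obstacle I expect is the bookkeeping behind the second paragraph --- matching the paper's co-vector and $\CW$-theoretic presentation of $\mathrm{Gr}_d(\mathrm{H}^1)$ and of the Artin--Schreier--Witt morphism with Kato's and Abbes--Saito's finite-length formalism (the rings $\W_s(\E)$, the diagram \eqref{diagram BGr_d, psi_d}, the map $\F^s d$), in particular fixing the sign and confirming that $\mathbf{w}$ indeed lies in $\mathrm{Fil}_d(\W_m(\E))$ and represents the correct graded class; once this is pinned down the differential computation above is immediate.
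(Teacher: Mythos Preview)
Your proposal is correct and follows essentially the same route as the paper: identify $\overline{\delta}_d(\overline{\lb})$ with the class of the co-monomial $\lb\,t^{-d}=(\lambda_0 t^{-n},\dots,\lambda_m t^{-np^m})$, apply $\F^m d$ (the paper phrases this as $\F^{\infty}d$ restricted to $\CW^{(-d)}(k)$, which is the same), and read off the formula using the commutativity of diagram \eqref{diagram BGr_d, psi_d}; the uniqueness is deduced exactly as you do from the bijectivity of $\psi_d$ and $\overline{\delta}_d$ together with the normal form of Section \ref{representation of W_m/p..}. The only cosmetic difference is that you make the simplification $np^j\,d\log(t)=0$ for $j\geq 1$ explicit, whereas the paper leaves the answer in the form $t^{-d}\sum_j \lambda_j^{p^{m-j}}d\log(\lambda_j t^{-np^j})$ and declares the first assertion proved.
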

\begin{proof}
Passing to the limit (with respect to $s$) of  the Diagram
\eqref{diagram BGr_d, psi_d}, since  $\F^{s+1}d\circ\V=\F^sd$, we
define
\begin{equation}
\F^{\infty}d=\lim_{s\to\infty}\F^sd\;:\;\CW(\E) \to \Omega^1_K\;.
\end{equation}
We obtain
$\psi_s=\mathrm{gr}_d(\delta)\circ\mathrm{gr}(\F^{\infty}d)$,
where $\mathrm{gr}_d(\delta):\mathrm{Gr}_d(\CW(\E))\to
\mathrm{Gr}_d(\mathrm{H}^1(\G_{\E},\mathbb{Q}_p/\mathbb{Z}_p))$.
Since $\CW(\E)$ is graded with the $d$-th graded $\CW^{(-d)}(k)$
(cf. Corollary \ref{ Fil CW}), the map $\mathrm{gr}(\F^{\infty}d)$
(resp. $\mathrm{gr}_d(\delta)$) is nothing but the restriction of
$\F^\infty d$ (resp. $\delta$) to the subgroup $\CW^{(-d)}(k)$.

The elements of $\CW^{(-d)}(k)$ have the form $\lb_dt^{-d}=
(\cdots,0,0,\lambda_0t^{-n},\ldots,\lambda_{m}t^{-np^m})$, with
$\lb_d:=(\lambda_0,\ldots,\lambda_m)\in\W_m(k)$. One  easily sees that
(cf. \eqref{definition of F^sd...lj....})
\begin{equation}
\F^{\infty}(\lb_d t^{-d} )= \sum_{0\leq j\leq m}(\lambda_j
t^{-np^j})^{p^{m-j}}
d\log(\lambda_jt^{-np^j})=t^{-d}\cdot\Bigl(\sum_{0\leq j\leq
m}\lambda_j^{p^{m-j}} d\log(\lambda_jt^{-np^j})\Bigr)\;.
\end{equation}
This proves the first assertion. To conclude we observe that the
diagram
\begin{equation}
\xymatrix{
\W_m(k)/p\W_m(k)\ar[d]^-{\bar{\delta}}_-{\wr}\ar[r]_-{\sim}^-{\F^md}&
\B\mathrm{Gr}_d(\Omega^1_{\E})\\
\mathrm{Gr}_d(\mathrm{H}^1(\G_{\E},\mathbb{Q}_p/\mathbb{Z}_p))\ar[ur]^{\sim}_{\psi_d}&
}
\end{equation}
implies that the map
$\psi_d\circ\bar{\delta}:\W_m(k)/p\W_m(k)\to\mathrm{Gr}_d(\Omega^1_{\E})$
is injective with image $\B\mathrm{Gr}_d(\Omega^1_{\E})$. By
section \ref{representation of W_m/p..} every element of
$\W_m(k)/p\W_m(k)$ can be uniquely represented by a Witt vector
$(\lambda_0,\ldots,\lambda_m)\in\W_m(k)$ satisfying $\lambda_0\in
k$, $\lambda_1,\ldots,\lambda_m\in k'$. This concludes the proof.
\end{proof}

\section{Explicit computation of Fontaine's functor in rank one
case} \label{computation of the functor-yt-}

In this section we compute the $(\phi,\nabla)$-module
$\mathrm{D}^{\dag}(\V(\alpha))$ (cf. Section \ref{definition of
the functor--}) for rank one representations with finite local
monodromy. Following  Section \ref{decomposition of G_E^ab} we
decompose a general character $\alpha:\G_{\E}\to\O_K^{\times}$ into
$\alpha=\alpha_{\mathrm{wild}}\cdot\alpha_{\mathrm{tame}}\cdot\alpha_{k}$.
We study then separately the \emph{residual case}, the \emph{tame
case}, and the \emph{wild case}.

\if{ Moreover since the images of $\alpha_{\mathrm{wild}}$ and
$\alpha_{\mathrm{tame}}$ are finite groups, we decompose such
finite abelian groups as product of cyclic sub-groups. This
implies that $\alpha_{\mathrm{tame}}$ and $\alpha_{\mathrm{wild}}$
are product of cyclic characters of order a power of a prime
number (i.e. characters $\alpha$ whose image $\alpha(\G_{\E})$ is
a cyclic group of order a power of a prime number).

\emph{In the sequel we will always assume that
$\alpha_{\mathrm{wild}}$ and $\alpha_{\mathrm{tame}}$ are cyclic
character of order a power of a prime number.} }\fi 
\if{Our description is not complete since we are not able to
describe completely the Frobenius and the connection arising from
representations of $\G_k$. The reason is that no specific
hypotheses are made on $k$.

Nevertheless our computation is sufficient to prove the equality
between the ``arithmetic Swan conductor'' of $\V(\alpha)$ (i.e.
the Kato's definition of Swan conductor) and the ``differential
Swan conductor'' of $\mathrm{D}^{\dag}(\V(\alpha))$ (i.e. the
Kedlaya's definition (cf. Def. \ref{Definition of Diff-Swan})).
Indeed the representations of $\G_k$ have arithmetic and
differential Swan conductors both equal to $0$ (cf. section
\ref{Residual and tame conductors =0}).}\fi

\subsection{The residual case}

Let $\alpha_k:\G_{\E}\to \O_K^{\times}$ be a rank one
representation with finite local monodromy such that
$\alpha_k|_{\mathcal{I}_{\mathrm{G}^{\mathrm{ab}}_{\E}}}=1$. One
sees, directly from the definition of the functor (cf. Section
\ref{definition of the functor--}), that the
$(\varphi,\nabla)$-module $\D:=\mathrm{D}^{\dag}(\V(\alpha_k))$ is
trivialized by some unramified extension of $\O_L$. In other
words,   $\D$ comes from a
$(\varphi,\nabla)$-module over $\O_L$ by scalar extension. Let $\e_{\D}$ be basis of
such a lattice, then
\begin{equation}\label{matrix in residual case}
\left\{\begin{array}{rcl} \phi^{\D}(\e_{\D})&=&\theta(\underline{u})\cdot\e_{\D}\;,\\
\nabla_{T}^{\D}(\e_{\D})&=&0\;,\\
\nabla_{u_i}^{\D}(\e_{\D})&=& g_k(\underline{u})\cdot \e_{\D}\;,
\end{array}\right.
\end{equation}
where $\underline{u}:=(u_1,\ldots,u_r)$, and
$\theta(\underline{u}),g_k(\underline{u})\in\O_L$.

\begin{remark}
If the image of $\alpha_k$ is finite, it is actually possible, by
using the theorem \cite[Th.2.8]{Rk1}, to express
$\theta(\underline{u})$ as value at $T=1$ of a certain
overconvergent function, but this is not necessary for our
purposes.
\end{remark}

\subsection{The tamely ramified case} \label{tame case}

Let $\alpha_{\mathrm{tame}}:\G_{\E}\to \O_K^{\times}$ be a 
character, with
$\V(\alpha_{\mathrm{tame}})\in\mathrm{Rep}_{\O_K}^{\mathrm{fin}}(\G_{\E})$, such
that
$\alpha_{\mathrm{tame}}|_{\mathcal{P}_{\G^\mathrm{ab}_{\E}}\oplus\G_{k}^{\mathrm{ab}}}=1$
(in the decomposition \eqref{decomposition of G_E = P + I/P +
G_k}). One has $\alpha_{\mathrm{tame}}(\G_{\E})=
\bs{\mu}_{N}\subset\O_K^{\times}$, for some $(N,p)=1$. In
particular $\bs{\mu}_{N}(\mathbb{F}_p^{\mathrm{alg}})\subset
\mathbb{F}_q\subseteq k$. By Kummer theory, the kernel of
$\alpha_{\mathrm{tame}}$ defines an extension of $k(\!(t)\!)$ of
the type $k(\!(t^{1/N})\!)/\E$ (i.e.
$\Ker(\alpha_{\mathrm{tame}})=\Gal(\E^{\mathrm{sep}}/k(\!(t^{1/N})\!))$).
This is the smallest extension trivializing
$\V(\alpha_{\mathrm{tame}})$, and the action of $\G_{\E}$ on
$t^{1/N}$ is given by
\begin{equation}
\gamma(t^{1/N})=\alpha_{\mathrm{tame}}(\gamma)\cdot
t^{1/N}\;,\quad\textrm{for all }\gamma\in\G_{\E}\;.
\end{equation}
Then the unramified extension of $\O_{\Ed_L}$ corresponding to
$k(\!(t^{1/N})\!)/k(\!(t)\!)$ is given by
\begin{equation}
\O_{\Ed_{L,T^{1/N}}}/\O_{\Ed_{L,T}}\;,
\end{equation}
where the notation indicates the variable of the rings as in
\cite{Tsu-swan}. The inclusion
$\O_{\Ed_{L,T}}\subset\O_{\Ed_{L,T^{1/N}}}$ is given by sending
$T$ into $(T^{1/N})^N$. Let $\e_{\V}\in\V(\alpha)$ be a basis in
which $\G_{\E}$ acts as
\begin{equation}
\gamma(\e_{\V})=\alpha_{\mathrm{tame}}(\gamma)\cdot\e_{\V}\;,\quad\textrm{for
all }\gamma\in\G_{\E}\;.
\end{equation}
This shows that a basis of $\D:=\mathrm{D}^{\dag}(\V(\alpha))=(
\V(\alpha) \otimes_{\O_K} \O_{\widetilde{\Ed_L}} )^{\G_{\E}}$ is
given by
\begin{equation}
\e_{\D}\;:=\;\e_{\V}\otimes T^{-1/N}\;.
\end{equation}
Indeed, for all $\gamma\in\G_{\E}$, one has
$\gamma(\e_{\D})=\gamma(\e_{\V})\otimes
\gamma(T^{-1/N})=\alpha_{\mathrm{tame}}(\gamma)\e_{\V}\otimes
\alpha_{\mathrm{tame}}(\gamma)^{-1}T^{-1/N}=\e_{\D}$. In this
basis the action of $\phi$ and $\nabla$ are given by
\begin{equation}\label{matrix in the tame case}
\left\{\begin{array}{rcl}
\phi^{\D}(\e_{\D})&=&T^{(1-q)/N}\cdot\e_{\D}\;,\\
\nabla_T^{\D}(\e_{\D})&=&-\frac{1}{N}T^{-1}\cdot\e_{\D}\;,\\
\nabla_{u_i}^{\D}(\e_{\D})&=&0\;.
\end{array}\right.
\end{equation}
In particular, the solution of this differential equation is
$T^{-1/N}$, which is simultaneously  the Kummer generator of
the smallest extension of $\Ed_L$ trivializing
$\V(\alpha_{\mathrm{tame}})$.

\subsection{The wild ramified case}

 To compute $\D^{\dag}(\V(\alpha_{\mathrm{wild}}))$ we need to know a Kummer
generator of the smallest unramified extension of $\Ed_L$
trivializing $\D^{\dag}(\V(\alpha_{\mathrm{wild}}))$. In the
\emph{tame} case (cf. section \ref{tame case}), the Kummer
generator was $T^{-1/N}$, and it was at the same time the solution
of the differential equation. In the \emph{wild} case, the good
Kummer generator will be a so called
\emph{$\bs{\pi}$-exponential}, and it will be at the same time the
solution of the differential equation defined by
$\D^{\dag}(\V(\alpha_{\mathrm{wild}}))$ too.

All results and proofs of this section come from \cite{Rk1}. We
outline briefly the contents of \cite[Sections 2.3.1, 3.1,
3.2]{Rk1}, and in Section \ref{computation of the functor in the
wild case} we adapt \cite[Section 4.6]{Rk1} to our context.

\subsubsection{$\bs{\pi}$-exponentials.}
Let $\mathfrak{G}(X,Y)\in\mathbb{Z}_p[[X,Y]]$ be a Lubin-Tate
formal group law isomorphic (but not necessarily equal) to
$\mathbb{G}_m$. Let $\bs{\pi}:=(\pi_m)_{m\geq 0}$ be a
\emph{fixed} generator of the Tate module of $\mathfrak{G}$. In
other words, $\{\pi_m\}_m\subset\mathbb{Q}_p^{\mathrm{alg}}$
verifies $[p]_{\mathfrak{G}}(\pi_{0})=0$, $\pi_0\neq 0$,
$|\pi_0|<1$, and $[p]_{\mathfrak{G}}(\pi_{m+1})=\pi_m$, for all
$m\geq 0$, where $[p]_{\mathfrak{G}}(X)\in\mathbb{Z}_p[[X]]$ is
the multiplication by $p$ in $\mathfrak{G}$.

\begin{definition}[(\protect{\cite[Def.3.1]{Rk1}})]
Let $\bs{f}^{-}(T)=(\cdots,0,0,0,f_0^-(T),\ldots,f_m^-(T))$ be an element of 
$\CW(T^{-1}\O_L[T^{-1}])$. We call $\bs{\pi}-$exponential relative
to $\bs{f}^-(T)$ the following power series in $T^{-1}$:
\begin{equation}\label{definition of et_p^infty(f,1)}
\mathrm{e}_{p^\infty}(\bs{f}^-(T),1):=\exp\Bigl(\pi_{m}\phi_{0}^{-}(T)+
\pi_{m-1}\frac{\phi_1^-(T)}{p}+\cdots+\pi_0
\frac{\phi_{m}^{-}(T)}{p^{m}}\Bigr)\;,
\end{equation}
where $\phi_j^-(T):=\sum_{k=0}^j p^k\cdot f^-_k(T)^{p^{j-k}}\in
T^{-1}\O_L[T^{-1}]$ is the $j$-th phantom component of the Witt
vector $(f_0^-(T),\ldots,f_m^-(T))\in\W_m(T^{-1}\O_L[T^{-1}])$.
\end{definition}

\begin{proposition}
Let $L_m:=L(\pi_m)$, and let $L_\infty:=\cup_{m\geq 0}L(\pi_m)$.
Then:
\begin{enumerate}
\item One has
\begin{equation}
\mathrm{e}_{p^{\infty}}((\cdots,0,0,0,f_0^-(T),\ldots,f_m^-(T)),1)
\;\;\in\;\; 1+\pi_mT^{-1}\O_{L_m}[[T^{-1}]]\;.
\end{equation}

\item The map $\bs{f}^-(T) \mapsto
\mathrm{e}_{p^\infty}(\bs{f}^-(T),1)$ is a group homomorphism
\begin{equation}
\CW(T^{-1}\O_L[T^{-1}])\xrightarrow[]{\qquad} \bigcup_{m\geq
0}1+T^{-1}\O_{L_m}[[T^{-1}]] \;\; \subset \;\;
1+T^{-1}\O_{L_\infty}[[T^{-1}]]\;.
\end{equation}
In particular, for every $\bs{f}^-(T)$, the power series
$\mathrm{e}_{p^\infty}(\bs{f}^-(T),1)$ converges at least for
$|T|>1$.

\item The power series $\mathrm{e}_{p^\infty}(\bs{f}^-(T),1)$ is
over-convergent (i.e. converges for $|T|>1-\varepsilon$, for some
$\varepsilon>0$) if and only if the reduction
$\overline{\bs{f}^-(T)}$ lies in $(\Fb-1)(\CW(t^{-1}k[t^{-1}]))$
(i.e. the Artin-Schreier-Witt character
$\delta(\overline{\bs{f}^-(T)})$ defined by
$\overline{\bs{f}^-(T)}$ via the Equation
\eqref{artin-schreier-diagram-covectors} is equal to $0$). %
\end{enumerate}
\end{proposition}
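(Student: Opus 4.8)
\emph{Proof strategy.} The plan is to reduce the three assertions to properties of the elementary one-variable $\bs{\pi}$-exponentials
\[
e_\ell(X)\;:=\;\exp\bigl(G_\ell(X)\bigr)\;,\qquad
G_\ell(X)\;:=\;\sum_{i=0}^{\ell}\frac{\pi_{\ell-i}}{p^i}\,X^{p^i}\;,
\]
and then to invoke the Lubin--Tate estimates for them, following \cite[Sections 2.3.1, 3.1, 3.2]{Rk1}. Expanding the phantom components $\phi_j^-(T)=\sum_{k=0}^jp^kf_k^-(T)^{p^{j-k}}$ in \eqref{definition of et_p^infty(f,1)} and interchanging the two summations (with $i=j-k$), one sees that the argument of $\exp$ equals $\sum_{k=0}^{m}G_{m-k}\bigl(f_k^-(T)\bigr)$, so that
\[
\mathrm{e}_{p^\infty}(\bs{f}^-(T),1)\;=\;\prod_{k=0}^{m}e_{m-k}\bigl(f_k^-(T)\bigr)\;.
\]

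I would first establish the one-variable integrality statement $e_\ell(X)\in 1+\pi_\ell X\cdot\O_{L_\ell}[[X]]$ --- the Artin--Hasse phenomenon: although $G_\ell$ has coefficients of unbounded negative valuation for $\ell\ge 1$, the exponential $e_\ell$ has coefficients in $\pi_\ell\O_{L_\ell}$. This is proved by induction on $\ell$, the case $\ell=0$ being the elementary estimate $v_p(\pi_0^n/n!)=s_p(n)/(p-1)\ge v_p(\pi_0)$ for $n\ge 1$ (where $s_p(n)$ is the sum of the base-$p$ digits of $n$), and the inductive step using the recursion $G_\ell(X)=\pi_\ell X+p^{-1}G_{\ell-1}(X^p)$ together with $[p]_{\mathfrak{G}}(\pi_\ell)=\pi_{\ell-1}$, $[p]_{\mathfrak{G}}(X)\equiv X^p\pmod p$ and a Dwork-lemma argument. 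Granting this, assertion (1) follows: substituting $X=f_k^-(T)\in T^{-1}\O_L[T^{-1}]$ gives $e_{m-k}(f_k^-(T))\in 1+\pi_{m-k}T^{-1}\O_{L_{m-k}}[[T^{-1}]]$, and since $v_p(\pi_{m-k})\ge v_p(\pi_m)$ for $0\le k\le m$ one has $\pi_{m-k}\O_{L_{m-k}}\subseteq\pi_m\O_{L_m}$, so the product over $k$ lies in $1+\pi_mT^{-1}\O_{L_m}[[T^{-1}]]$.

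Assertion (2) is then essentially formal: the assignment $\bs{f}^-\mapsto(\phi_0^-,\phi_1^-,\dots)$ is additive (addition of Witt co-vectors is additive on phantom components) and independent of the chosen representative of the co-vector, and $\exp$ sends sums to products, so $\mathrm{e}_{p^\infty}(-,1)$ is a homomorphism; by (1) its values have bounded coefficients in $\O_{L_\infty}$, hence converge for $|T|>1$. For assertion (3), the ``if'' direction goes by writing $\overline{\bs{f}^-(T)}=(\Fb-1)(\overline{\bs{g}})$, fixing a lift $\bs{g}$ and a Frobenius lift $\sigma$ of $\Fb$, so that $\bs{f}^-=(\sigma-1)(\bs{g})+\bs{h}$ with $\bs{h}$ reducing to $0$; by the homomorphism property it suffices that $\mathrm{e}_{p^\infty}((\sigma-1)(\bs{g}),1)$ and $\mathrm{e}_{p^\infty}(\bs{h},1)$ over-converge, which after the one-variable reduction amounts to the classical over-convergence of Dwork-type exponentials $\exp(\pi_\ell(h(T)^{\sigma}-h(T)))$ and of a $\bs{\pi}$-exponential of a co-vector reducing to $0$ (Dwork--Robba estimates, as worked out in \cite{Rk1}). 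For the ``only if'' direction, after subtracting a coboundary (minimal lifting, Section \ref{minimal lifting}) one is reduced by the homomorphism property to the case where $\bs{f}^-=\lb\,t^{-d}$ is a \emph{pure} co-monomial with $d>0$, and then a direct computation shows that the coefficient of $T^{-dn}$ in $\mathrm{e}_{p^\infty}(\lb\,T^{-d},1)$ has $p$-adic valuation $\ge s_p(n)/(p-1)$; since $s_p(n)/n\to 0$, the radius of convergence in $T^{-1}$ is exactly $1$ and the series does not over-converge.

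The Lubin--Tate structure of $\bs{\pi}$ enters in an essential way through the two quantitative inputs --- the Artin--Hasse integrality underlying (1) and the Dwork-type over-convergence underlying the ``if'' part of (3) --- and I expect these (together with the coefficient computation for the ``only if'' part) to be the real difficulty; the remaining steps are the bookkeeping reductions described above, and the whole argument is spelled out in \cite[Sections 3.1--3.2]{Rk1}.
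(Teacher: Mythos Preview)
The paper gives no independent proof here: its entire argument is the citation ``See \cite[Th.3.2 and Section 3.2]{Rk1}.'' Your sketch correctly reconstructs the skeleton of that argument. The factorisation
\[
\mathrm{e}_{p^\infty}(\bs{f}^-(T),1)\;=\;\prod_{k=0}^{m}e_{m-k}\bigl(f_k^-(T)\bigr),\qquad e_\ell(X)=\exp\Bigl(\sum_{i=0}^{\ell}\pi_{\ell-i}\,X^{p^i}/p^i\Bigr),
\]
obtained by expanding the phantom components and swapping sums, is exactly the reduction used in \cite{Rk1}, and your treatment of (1), (2) and the ``if'' half of (3) follows the same lines.

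There is, however, a genuine gap in your ``only if'' argument for (3). The estimate $v_p(a_{dn})\ge s_p(n)/(p-1)$ that you quote is the \emph{lower} bound on coefficient valuations underlying convergence for $|T|>1$; this is already part (1) and cannot show that the radius in $T^{-1}$ is \emph{at most} $1$. To exclude over-convergence you need an \emph{upper} bound on the valuation of infinitely many coefficients, i.e.\ you must produce a subsequence along which $v_p(a_{dn})$ is comparable to $s_p(n)/(p-1)$ rather than growing linearly in $n$. It is precisely at this point that the hypothesis $\overline{\lb}\notin p\W_{v_p(d)}(k)$ is used in an essential way (if $\overline{\lb}\in p\W_{v_p(d)}(k)$ the series \emph{does} over-converge, so the inequality alone cannot distinguish the two cases). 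In \cite{Rk1} this is handled by an explicit computation of the exact radius; in the present paper the same conclusion is reached later, in Lemma \ref{fe-monomial}, by computing $T(\M,\rho)=\rho^{np^m}$ for all $\rho\in]0,1[$ via the Small Radius Lemma. Either route closes the gap, but the bare inequality you wrote does not.
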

\begin{proof}
See \cite[Th.3.2 and Section 3.2]{Rk1}.
\end{proof}

\subsubsection{The function $\mathrm{e}_{p^\infty}(\bs{f}^-(T),1)$ as explicit Kummer generator.}
\label{computation of the functor in the wild case}

Let $\alpha_{\mathrm{wild}}:\G_{\E}\to \O_K^{\times}$,
$\V(\alpha_{\mathrm{wild}})\in\mathrm{Rep}^{\mathrm{fin}}_{\O_K}(\G_{\E})$, be a
character such that
$\alpha_{\mathrm{wild}}|_{(\mathcal{I}_{\G^{\mathrm{ab}}_{\E}}/
\mathcal{P}_{\G^{\mathrm{ab}}_{\E}})\oplus\G_k^{\mathrm{ab}}}=1$
in the decomposition \eqref{decomposition of G_E = P + I/P + G_k}.
In particular, observe that
\begin{equation}
\alpha_{\mathrm{wild}}(\mathcal{P}_{\G^{\mathrm{ab}}_{\E}})=\bs{\mu}_{p^{m+1}}\subset
\O_K^{\times}\;,
\end{equation}
for some $m\geq 0$.

\begin{remark}
Observe that,  since  we assume that
$\mathfrak{G}\simto\mathbb{G}_m$, it follows  by Lubin-Tate theory that 
$\bs{\mu}_{p^{m+1}}\subset\O_K^{\times}$ if and only if
$\pi_0,\ldots,\pi_m\in\O_K$.
\end{remark}

\begin{definition}\label{definition of psi_m}
Let $\psi_m:\mathbb{Z}/p^{m+1}\mathbb{Z}\simto\bs{\mu}_{p^{m+1}}$
be the isomorphism sending $\overline{1}$ into the unique
primitive $p^{m+1}$-th root of unity $\xi_{p^m}$ satisfying:
\begin{equation}\label{condition of psi_m}
|\pi_m-(\xi_{p^m}-1)|<|\pi_m|=|p|^{\frac{1}{p^m(p-1)}}\;.
\end{equation}
\end{definition}

Let $\mathrm{F}/\E$ be the cyclic extension of degree
$p^{m+1}$ defined by $(\psi_m^{-1}\circ\alpha_{\mathrm{wild}})$
(i.e.
$\mathrm{Ker}(\psi_m^{-1}\circ\alpha_{\mathrm{wild}})=\Gal(\E^{\mathrm{sep}}/\F)$,
cf. Section \ref{Notation in IAS theory}), and let
$\mathcal{F}^{\dag}/\Ed_L$ be the cyclic unramified extension
whose residue field is $\F/\E$.

\begin{proposition}\label{e(f(T),1) as a Kummer generator}
Let
$\overline{\bs{f}^-}(t)=(\cdots,0,0,0,\overline{f_0^-}(t),\ldots,\overline{f_m^-}(t))
\in \CW(t^{-1}k[t^{-1}])$ be a co-vector of length $m$
\footnote{For example one can take $\overline{\bs{f}^-}(t)$ as a
\emph{minimal lifting} of $\alpha_{\mathrm{wild}}$ (cf. Section
\ref{minimal lifting}).} defining
$\psi_m^{-1}\circ\alpha_{\mathrm{wild}}$ (i.e.
$\delta(\overline{\bs{f}^-}(t))=(\psi_m^{-1}\circ\alpha_{\mathrm{wild}})$
in the sequence \eqref{artin-schreier-diagram-covectors}). Let
$\bs{f}^-(T)=(\cdots,0,0,0,f_0^-(T),\ldots,f_m^-(T))\in\CW(T^{-1}\O_L[T^{-1}])$
be a lifting of $\overline{\bs{f}^-}(t)$ of length $m$. Then:
\begin{enumerate}
\item $\mathrm{e}_{p^\infty}
(\bs{f}^-(T),1)^{p^{m+1}}\in\O_{\Ed_L}^{\times}$; %
\item $\mathrm{e}_{p^\infty} (\bs{f}^-(T),1)$ is a Kummer
generator of $\mathcal{F}^{\dag}/\Ed_L$:
\begin{equation}
\mathcal{F}^{\dag}\;=\;\Ed_L(\;\mathrm{e}_{p^\infty}(\bs{f}^-(T),1)\;)\;;
\end{equation}
\item For all $\gamma\in\G_{\E}\simto
\Gal(\widetilde{\Ed_L}/\Ed_L)$ (cf. Equation \eqref{def. of tilde
Ed_L}), one has
\begin{equation}
\gamma(\mathrm{e}_{p^\infty}
(\bs{f}^-(T),1))=\alpha_{\mathrm{wild}}(\gamma)^{-1}\cdot\mathrm{e}_{p^\infty}
(\bs{f}^-(T),1)\;.
\end{equation}
\end{enumerate}
\end{proposition}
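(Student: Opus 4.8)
The plan is to prove the three assertions together, relying on the properties of the $\bs{\pi}$-exponential established in the preceding proposition (its values, the homomorphism property, and the overconvergence criterion), together with the Artin--Schreier--Witt description of $\mathrm{H}^1(\G_{\E},\mathbb{Q}_p/\mathbb{Z}_p)$ via $\CW(\E)/(\Fb-1)\CW(\E)$. The heart of the matter is a computation of how $\Fb-1$ interacts with the $\bs{\pi}$-exponential: one checks, at the level of phantom components, the \emph{key identity}
\begin{equation}
\mathrm{e}_{p^\infty}((\Fb-1)(\bs{g}),1)\;=\;\frac{\varphi(\mathrm{e}_{p^\infty}(\bs{g},1))}{\mathrm{e}_{p^\infty}(\bs{g},1)^{?}}
\end{equation}
(more precisely, the relation between $\mathrm{e}_{p^\infty}(\Fb(\bs{g}),1)$ and $\varphi$ applied to $\mathrm{e}_{p^\infty}(\bs{g},1)$, using $[p]_{\mathfrak{G}}(\pi_{j+1})=\pi_j$ and that the $j$-th phantom component of $\Fb(\bs{g})$ is the $(j{+}1)$-st of $\bs{g}$). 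This is the lemma that converts an \emph{additive} relation on Witt co-vectors into a \emph{multiplicative} Kummer-type relation, and it is the main obstacle — getting the powers of $p$, the indices $m$, and the normalisation of $\pi_\bullet$ to match requires care, but it is exactly the content of \cite[Section 3.2, 4.6]{Rk1}, which we are adapting.

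First I would prove (1). Since $\delta(\overline{\bs{f}^-}(t))=\psi_m^{-1}\circ\alpha_{\mathrm{wild}}$ has order dividing $p^{m+1}$, the co-vector $p^{m+1}\cdot\overline{\bs{f}^-}(t)$ maps to $0$ under $\delta$, hence $p^{m+1}\cdot\overline{\bs{f}^-}(t)\in(\Fb-1)(\CW(t^{-1}k[t^{-1}]))$. By the homomorphism property (part (2) of the previous proposition), $\mathrm{e}_{p^\infty}(\bs{f}^-(T),1)^{p^{m+1}}=\mathrm{e}_{p^\infty}(p^{m+1}\bs{f}^-(T),1)$, and by the overconvergence criterion (part (3) of the previous proposition) this series lies in $1+T^{-1}\O_{L_\infty}[[T^{-1}]]$ and is overconvergent; being also a unit (its constant term is $1$) it lies in $\O_{\Ed_L}^{\times}$ — one must also note that the coefficients, a priori in $\O_{L_\infty}$, actually lie in $\O_L$, which follows because $\pi_0,\ldots,\pi_m\in\O_K$ under the hypothesis $\mathfrak{G}\simto\mathbb{G}_m$ and $\bs{\mu}_{p^{m+1}}\subset\O_K^\times$ (the remark just before Definition \ref{definition of psi_m}).

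Next, for (3), I would compute the Galois action directly. For $\gamma\in\G_{\E}$ acting on $\widetilde{\Ed_L}$, let $\bs{\nu}\in\W_m((\E^{\mathrm{sep}}))$ be a solution of $\Fb(\bs{\nu})-\bs{\nu}=\overline{\bs{f}^-}(t)$, so that $\alpha_{\mathrm{wild}}(\gamma)$ corresponds via $\psi_m$ to $\gamma(\bs{\nu})-\bs{\nu}\in\mathbb{Z}/p^{m+1}\mathbb{Z}$. Applying the key identity above, $\mathrm{e}_{p^\infty}(\bs{f}^-(T),1)$ differs from $\mathrm{e}_{p^\infty}(\widetilde{\bs{\nu}},1)^{\text{(suitable power)}}$ — where $\widetilde{\bs{\nu}}$ is a lift of $\bs{\nu}$ — by a Frobenius-type factor, and evaluating $\mathrm{e}_{p^\infty}$ on $\gamma(\widetilde{\bs{\nu}})-\widetilde{\bs{\nu}}$ produces, thanks to the chosen normalisation \eqref{condition of psi_m} of $\xi_{p^m}$ relative to $\pi_m$, exactly the root of unity $\psi_m(\gamma(\bs{\nu})-\bs{\nu})=\alpha_{\mathrm{wild}}(\gamma)$ raised to $-1$. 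This is the step where Definition \ref{definition of psi_m} is used essentially: the condition $|\pi_m-(\xi_{p^m}-1)|<|\pi_m|$ is precisely what pins down which primitive root appears, so that the abstract isomorphism $\psi_m$ matches the analytically-defined $\bs{\pi}$-exponential.

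Finally (2) follows formally from (1) and (3): by (3) the element $\mathrm{e}_{p^\infty}(\bs{f}^-(T),1)\in\O_{\widetilde{\Ed_L}}$ is fixed exactly by $\Gal(\widetilde{\Ed_L}/\mathcal{F}^{\dag}) = \Ker(\psi_m^{-1}\circ\alpha_{\mathrm{wild}})$, so it generates $\mathcal{F}^{\dag}$ over $\Ed_L$ as soon as it generates a field of degree $p^{m+1}$; and (1) shows its $p^{m+1}$-st power lies in $\Ed_L$ while (3) shows its $\G_{\E}$-orbit has exactly $p^{m+1}$ elements (the character $\alpha_{\mathrm{wild}}$ has order $p^{m+1}$), so $[\Ed_L(\mathrm{e}_{p^\infty}(\bs{f}^-(T),1)):\Ed_L]=p^{m+1}$ and equals $\mathcal{F}^{\dag}$. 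The remaining routine points — that the series is genuinely a Kummer generator (i.e. $\Ed_L$ contains $\bs{\mu}_{p^{m+1}}$, which again is the hypothesis $\mathfrak{G}\simto\mathbb{G}_m$ with $\pi_0,\dots,\pi_m\in\O_K$) and that $\mathcal{F}^{\dag}$ is unramified over $\Ed_L$ with residue field $\F$ — are built into the set-up and the fact that $\alpha_{\mathrm{wild}}$ kills $\mathcal{I}_{\G_{\E}^{\mathrm{ab}}}/\mathcal{P}_{\G_{\E}^{\mathrm{ab}}}$.
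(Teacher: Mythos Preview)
The paper does not give a self-contained proof here; it simply cites \cite[Section 2.3]{Rk1}, adding only the remark that the function $\theta_{p^s}(\bs{\nu},1)$ used there coincides with $\mathrm{e}_{p^\infty}(\bs{f}^-(T),1)$ modulo a $p^{m+1}$-th root of unity. Your outline is a faithful reconstruction of what that reference does: the homomorphism property together with the overconvergence criterion (parts (2) and (3) of the preceding proposition) yield (1); the normalisation of $\psi_m$ in Definition~\ref{definition of psi_m} is precisely what makes the Galois action come out as $\alpha_{\mathrm{wild}}(\gamma)^{-1}$ in (3); and (2) follows formally from (1) and (3) by counting degrees. The ``key identity'' you leave with a literal ``?'' exponent and the vague ``suitable power'' in your treatment of (3) are exactly where the careful phantom-component bookkeeping of \cite{Rk1} is required, but the overall strategy is correct and matches the cited source.
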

\begin{proof}
See \cite[Section 2.3]{Rk1}. Observe that in \cite[Section
2.3]{Rk1} the author was working in a more general context, and
for this reason he used a function called
``$\theta_{p^s}(\bs{\nu},1)$'', which actually coincides with
$\mathrm{e}_{p^\infty} (\bs{f}^-(T),1)$ modulo a $p^{m+1}$-th root
of unity (cf. \cite[Equation (3.1)]{Rk1}).
\end{proof}

We can now proceed as in the tame case (cf. Section \ref{tame
case}). We preserve the notation of proposition \ref{e(f(T),1) as
a Kummer generator}. Let $\e_{\V}\in\V(\alpha_{\mathrm{wild}})$ be
a basis in which $\G_{\E}$ acts as
\begin{equation}
\gamma(\e_{\V})=
\alpha_{\mathrm{wild}}(\gamma)\cdot\e_{\V}\;,\quad \textrm{for all
}\gamma\in\G_{\E}\;.
\end{equation}
Then a basis of
$\D^{\dag}(\V(\alpha_{\mathrm{wild}}))=(\V(\alpha_{\mathrm{wild}})\otimes_{\O_K}\O_{\widetilde{\Ed_L}})^{\G_{\E}}$
is given by
\begin{equation}\label{Basis wild -gt}
\e_{\mathrm{D}}\; := \;\e_{\V}\otimes
\et_{p^{\infty}}(\bs{f}^-(T),1)\;.
\end{equation}
Indeed, for all $\gamma\in\G_{\E}$, one has
\begin{equation}
\gamma(\e_{\mathrm{D}})= \gamma(\e_{\V})\otimes
\gamma(\et_{p^{\infty}}(\bs{f}^-(T),1))=\alpha_{\mathrm{wild}}(\gamma)\e_{\V}
\otimes\alpha_{\mathrm{wild}}(\gamma)^{-1}\et_{p^m}(\bs{f}^-(T),1)=\e_{\mathrm{D}}\;.
\end{equation}

\subsubsection{} We compute now the action of $\phi^{\D}$ and $\nabla^{\D}$.
\label{e(f(T),1) is a solution of D(V)} In the basis
$\bs{\e}_{\D}$ (cf. \eqref{Basis wild -gt}), the function
$\mathrm{e}_{p^{\infty}}(\bs{f}^-(T),1)$ is the Taylor solution at
$\infty$ of the $\nabla$-module underlying
$\D^{\dag}(\V(\alpha_{\mathrm{wild}}))$. We recall that
$\bs{f}^-(T)=\bs{f}^-(\underline{u},T)=(f_0^-(\underline{u},T),\ldots,f_m^-(\underline{u},T))$
has coefficients which depend also on
$\underline{u}=(u_1,\ldots,u_r)$. In the basis $\e_{\mathrm{D}}$
the action of $\phi$ and $\nabla$ are given by
\begin{equation}
\left\{\begin{array}{rcl}
\phi^{\D}(\e_{\mathrm{D}}) &=& \theta_{p^m}(\bs{f}^-(\underline{u},T),1)\cdot\e_{\mathrm{D}}\;,\\
\nabla_T^{\D}(\e_{\mathrm{D}}) &=& g_{\bs{f}^-}^0(u_1,\ldots,u_r,T)\cdot\e_{\mathrm{D}}\;,\\
\nabla_{u_i}^{\D}(\e_{\mathrm{D}}) &=&
g_{\bs{f}^-}^i(u_1,\ldots,u_r,T)\cdot\e_{\mathrm{D}}\;, %

\end{array}\right.
\end{equation}
where:
\begin{equation}
\label{matrix in the wild case}
\theta_{p^m}(\bs{f}^-(\underline{u},T),1):=
\et_{p^m}(\varphi(\bs{f}^-(\underline{u},T))-\bs{f}^-(\underline{u},T),1)
\end{equation}
\begin{equation}
\label{eq. g_0,f} 
\begin{split}
g_{\bs{f}^-}^0(u_1,\ldots,u_r,T)&:=\sum_{j=0}^m\pi_{m-j}\frac{\frac{d}{dT}(\phi^-_j(T))}{p^j}\\
&=
\sum_{j=0}^m\pi_{m-j}\sum_{k=0}^jf_k^{-}(\underline{u},T)^{p^{j-k}}
\frac{\frac{d}{dT}(f^-_k(\underline{u},T))}{f^-_k(\underline{u},T)},
\end{split}
\end{equation}
\begin{equation}
\label{eq. g_i,f}
\begin{split}
g_{\bs{f}^-}^i(u_1,\ldots,u_r,T)&:=\sum_{j=0}^m\pi_{m-j}\frac{\frac{d}{du_i}(\phi^-_j(T))}{p^j} \\
&=\sum_{j=0}^m\pi_{m-j}\sum_{k=0}^jf_k^{-}(\underline{u},T)^{p^{j-k}}
\frac{\frac{d}{du_i}(f^-_k(\underline{u},T))}{f^-_k(\underline{u},T)},
\end{split}
\end{equation}
where $\varphi$ acts on $\CW(T^{-1}\O_L[T^{-1}])$ coefficient by
coefficient. Observe that we have chosen
$\varphi(T)=T^p\in\O_L[T]$ (cf. Def. \ref{Definition of
Frobenius}). Observe also that Equations \eqref{eq. g_0,f} and
\eqref{eq. g_i,f} are obtained by taking the logarithmic
derivative of the Definition \eqref{definition of et_p^infty(f,1)}
as
$g_{\bs{f}^-}^0=\frac{\frac{d}{dT}(\mathrm{e}_{p^{\infty}}(\bs{f}^-(T),1))}{\mathrm{e}_{p^{\infty}}(\bs{f}^-(T),1)}$,
and
$g_{\bs{f}^-}^i=\frac{\frac{d}{du_i}(\mathrm{e}_{p^{\infty}}(\bs{f}^-(T),1))}{
\mathrm{e}_{p^{\infty}}(\bs{f}^-(T),1)}$. In other words the
knowledge of the solution leads to recover the matrix of the
connection.

Since we have chosen $\psi_m$ satisfying Equation \eqref{condition
of psi_m}, hence this construction does not depend on the choice
of $\bs{\pi}:=(\pi_m)_{m\geq 0}$.

\section{Comparison between arithmetic and differential Swan conductors}
\label{Comparison between arithmetic and differential Swan
conductors}

The object of this section is to proving the following
\begin{theorem}\label{comparison-theorem in rank one}
Let $\V\in\mathrm{Rep}^{\mathrm{fin}}_{\O_K}(\G_{\E})$, be a rank
one representation. Then:
\begin{equation}
\mathrm{sw}(\V)=\mathrm{sw}^{\nabla}(\D^{\dag}(\V))\;,
\end{equation}
where $\mathrm{sw}(\V)$ is the arithmetic Swan conductor (cf. Def.
\ref{Kato definition of Swan conductor} and Def. \ref{Definition
of Swan for finite char}), and
$\mathrm{sw}^{\nabla}(\D^{\dag}(\V))$ is the differential Swan
conductor of $\D^{\dag}(\V)\in
(\phi,\nabla)-\Mod(\O_{\Ed_L}/\O_K)$, considered as an object of
$\nabla-\Mod(\R_L/K)$ (cf. Def. \ref{Definition of Diff-Swan}).
\end{theorem}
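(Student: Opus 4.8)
## Proof strategy

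The plan is to reduce the theorem to the case of a single wild character and then compute the differential Swan conductor directly from the explicit formula for the connection obtained in Section \ref{computation of the functor-yt-}. Write $\V=\V(\alpha)$ with $\alpha=\alpha_{\mathrm{wild}}\cdot\alpha_{\mathrm{tame}}\cdot\alpha_k$ as in \eqref{alpha_k,tame,wild}. By multiplicativity of both conductors under tensor product (Remark \ref{V_1 otimes V_2} and Section \ref{M_1 otimes M_2}, using that $\D^{\dag}$ is a tensor functor) it suffices to treat each factor separately. For $\alpha_k$ the module $\D^{\dag}(\V(\alpha_k))$ is trivialized by an unramified extension, so its matrices \eqref{matrix in residual case} have entries in $\O_L$; hence $T(\M,\rho)=1$ for $\rho$ near $1$ and $\sw^{\nabla}=0=\sw(\alpha_k)$ (Section \ref{vanishing of tame and residual katos conductors}). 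For $\alpha_{\mathrm{tame}}$ the matrix of $\nabla_T$ is $-\tfrac1N T^{-1}$ and the $\nabla_{u_i}$ vanish \eqref{matrix in the tame case}; since $N$ is prime to $p$ the spectral norm of $\nabla_T^{\M_\rho}$ is $\omega\rho^{-1}$, so $T(\M,\rho)=1$ and $\sw^{\nabla}=0=\sw(\alpha_{\mathrm{tame}})$. So everything reduces to $\alpha=\alpha_{\mathrm{wild}}$, and by Section \ref{minimal lifting} we may assume the defining co-vector $\bar{\bs{f}}^-(t)$ is \emph{pure}, i.e. $\bar{\bs{f}}^-(t)=\lb\, t^{-d}$ with $d=np^{m}$, $(n,p)=1$, $\lb\in\W_m(k)\setminus p\W_m(k)$; then Corollary \ref{swan conductor of a comonomial} gives $\sw(\alpha_{\mathrm{wild}})=d$.

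Next I would identify the $\nabla$-module. Following Section \ref{computation of the functor in the wild case}, the rank-one module $\M=\D^{\dag}(\V(\alpha_{\mathrm{wild}}))$ over $\Ed_L$ is the one whose Taylor solution at $\infty$ is the $\bs{\pi}$-exponential $\et_{p^\infty}(\bs{f}^-(T),1)$, with connection matrices $g^0_{\bs{f}^-},g^i_{\bs{f}^-}$ given by \eqref{eq. g_0,f}, \eqref{eq. g_i,f}. Because $\M$ has rank one, Remark \ref{RK1} says $\sw^{\nabla}(\M)=\mathrm{slope}(\M,1^-)$, the log-slope of $\rho\mapsto T(\M,\rho)$ near $\rho=1^-$. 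The key point, as announced in the Introduction and the Plan, is that for these pure co-monomial modules the function $\rho\mapsto T(\M,\rho)$ has \emph{no break}: the log-graph is a single line on all of $]0,1[$. Granting this, the slope at $1^-$ equals the slope at $0^+$, and the latter is computable. Indeed, for $\rho$ close to $0$, the $T$-adic valuations of the entries of the matrices $G^i_n$ of $(\nabla_{u_i}^{\M})^n$ control $|G^i_n|_\rho$, and a direct computation from \eqref{eq. g_0,f}--\eqref{eq. g_i,f} — the dominant term being $\pi_0 f_m^-(T)^{?}\cdot(\text{log-derivative})$, with $f_m^-$ of $T$-valuation $-d$ and $u_i$-part nonzero precisely when $\lb\notin p\W_m(k)$ — yields $Ray(\M,\rho)=\rho^{\,d+1}$ for small $\rho$, i.e. the slope at $0^+$ is $d$. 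This is the content of the cited Lemma \ref{radius at zero -yhnct}; I would carry it out by computing $\liminf_n|G^i_n|_\rho^{-1/n}$ via the recursion $G_{n+1}=\partial(G_n)+G_nG_1$ and the fact that $v_T$ and $v_{u_i}$ are multiplicative valuations on $L(\!(T)\!)$, being careful that it is the \emph{$d/du_i$}-direction (when $m\ge 1$, so that $f_m^-$ genuinely involves the $u_i$) rather than the $d/dT$-direction that realizes the minimum in $T(\M,\rho)$, but both give the same slope $d$.

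The main obstacle is establishing that $\rho\mapsto T(\M,\rho)$ has a single slope on the whole interval $]0,1[$ — that is, the absence of a break point. The module $\M$ is defined only over the Robba ring (the $\bs{\pi}$-exponential overconverges but is not in $\mathcal{E}_L$), yet the pure co-monomial structure should be rigid enough to force this. My plan is to use the Frobenius structure together with solvability: $T(\M,\rho)$ is continuous, log-concave and piecewise log-affine (Kedlaya, cited after Definition \ref{Definition of Diff-Swan}), and $\lim_{\rho\to1^-}T(\M,\rho)=1$; log-concavity then forces the graph to lie below the line through $(\log 1,0)$ of slope equal to the slope at $1^-$. To get equality on all of $]0,1[$ I would exploit that $\M$ descends, in fact, from the analytic picture of Section \ref{computation of the functor in the wild case} where $\et_{p^\infty}(\bs{f}^-(T),1)$ is an \emph{explicit} analytic function on $|T|>1$ whose radius of convergence as a function of $\rho$ can be estimated directly for all $\rho>1$ (equivalently all $\rho<1$ after the usual inversion $T\mapsto T^{-1}$), using that $\exp$ converges on the open disk of radius $\omega$ and the valuations of the phantom components $\phi_j^-$. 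Pinning down this analytic estimate uniformly in $\rho$, and checking it matches $\rho^{d+1}$ throughout, is the crux; once it is in place the equality $\sw^{\nabla}(\D^{\dag}(\V))=d=\sw(\V)$ follows, and the general case is assembled by the tensor-product reductions above.
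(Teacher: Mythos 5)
Your architecture is the paper's: split $\alpha$ into wild, tame and residual parts, kill the tame and residual conductors, reduce the wild part to a single co-monomial $\overline{\lb}\,t^{-d}$, $d=np^m$, $\overline{\lb}\in\W_m(k)\setminus p\W_m(k)$, and get the slope at $1^-$ from the slope at $0^+$ by showing the graph of $\rho\mapsto T(\M,\rho)$ has no break. The reductions are fine (modulo the small point that ``pure'' means a sum of co-monomials with pairwise distinct breaks $np^{m(n)}$, so one more tensor decomposition over $n\in\J$ is needed before you reach a single co-monomial). But the two steps you yourself flag as delicate are exactly where the proof lives, and neither is closed. The first is the leading coefficient: the coefficient of the lowest power of $T$ in the matrix of $\nabla_{u_i}$ is $\pi_0\sum_{k=0}^m\lambda_k^{p^{m-k}-1}\tfrac{d}{du_i}(\lambda_k)$, a sum over \emph{all} Witt components, not just $f_m^-$, and cancellation between the terms is entirely possible. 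What is needed is not merely that it is nonzero but that its norm is exactly $\omega=|\pi_0|$ for some $i$, i.e.\ that the residual differential $\sum_k\overline{\lambda}_k^{p^{m-k}-1}d(\overline{\lambda}_k)\in\Omega^1_k$ does not vanish. The paper's Lemma \ref{tretretyi--} proves, by induction using the inverse Cartier isomorphism, that this vanishes iff $\overline{\lb}\in\W_m(k^p)$, which purity excludes when $\overline{\lambda}_0=0$ (when $\overline{\lambda}_0\neq 0$ the $d/dT$-direction already has leading norm $|n\pi_0\phi_m|=\omega$; the dichotomy is governed by $\overline{\lambda}_0$, not by $m\geq 1$). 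Your assertion ``$u_i$-part nonzero precisely when $\lb\notin p\W_m(k)$'' is the statement of that lemma, not a computation, and the mechanism you offer for it (nonvanishing of the $f_m^-$ term alone) is not correct.

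The second gap is the no-break claim itself. You propose to estimate the radius of convergence of the $\bs{\pi}$-exponential uniformly in $\rho$; that is neither carried out nor necessary. Once one knows $|a^{(i)}_{-d}|=\omega$ exactly for some $i$, the small-radius formula valid near $\rho=0^+$ is a log-line of slope $d$ whose extrapolation to $\rho=1$ takes the value $\min_i\omega/|a^{(i)}|=1$; log-concavity of $T(\M,\cdot)$ together with solvability ($T(\M,\rho)\to 1$ as $\rho\to 1^-$) then forces $T(\M,\rho)=\rho^{d}$ on all of $]0,1[$ --- this is point (3) of Lemma \ref{radius at zero -yhnct}. Without the exact equality $=\omega$, your concavity remark yields only the inequality $|a^{(i)}|\leq\omega$, and the ``uniform analytic estimate'' you defer remains, as you say, the unproved crux. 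So the missing ingredient in both places is the same: the Cartier-operator lemma pinning the leading coefficient to norm exactly $\omega$.
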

\if{The proof is structured as follows. Firstly we reduce ourself
to the case of a wild character (cf. Lemma \ref{diff swan mod
equal to 0} and Remark \ref{remark on the strategy}). We can
assume that the wild character is defined by a co-vector of the
form given in Remark \ref{minimal lifting}. I particular we are
able to reduce the study to a co-monomial of the type $\lb
t^{-d}$, with $\lb\in\W_{v_p(d)}(k)-p\W_{v_p(d)}(k)$. In this case
we will prove that $T(\D^{\dag}(\V),\rho)=\rho^{v_p(d)}$, for all
$\rho\in]0,1[$. So the differential Swan conductor is $v_p(d)$ and
coincides with the arithmetic Swan conductor (cf. Corollary
\ref{swan conductor of a comonomial}). }\fi

\subsection{A Small Radius Lemma}

\label{explicit computation -small radius}

In this section we prove that if  $T(\M,\rho)$   is
``small''    (see  Definition  \ref{definition of T(M,rho)}), then we are able to link $T(\M,\rho)$ to the valuation of the coefficients of the equation. The following ``Small Radius
Lemma'' generalizes the analogous result \cite[6.2 and 
6.4]{Ch-Me}, \cite{Young}.

\begin{lemma}[(Small Radius)]\label{Small radius}
Let $\M_\rho$ be a \emph{rank one} $\nabla$-module over
$\mathcal{F}_{L,\rho}$. For $i=0,\ldots,r$ let $ g_n^{i}\in
\mathcal{F}_{L,\rho}$ be the matrix of $(\nabla_{u_i}^{\M})^n$
(resp. if $i=0$, $g^0_n$ is the matrix of $(\nabla^{\M}_T)^n$).
Write (cf. Lemma \ref{False lemma}):
\begin{eqnarray}
\omega &\stackrel{Lemma \ref{False lemma}}{=}&
\frac{|d/dT|_{\mathcal{F}_{L,\rho},\mathrm{Sp}}}{|d/dT|_{\mathcal{F}_{L,\rho}}}\;=\;
\frac{|d/du_1|_{\mathcal{F}_{L,\rho},\mathrm{Sp}}}{|d/du_1|_{\mathcal{F}_{L,\rho}}}\;=\;\ldots\;=\;
\frac{|d/du_r|_{\mathcal{F}_{L,\rho},\mathrm{Sp}}}{|d/du_r|_{\mathcal{F}_{L,\rho}}}
\;,\nonumber\\
\omega_{\M}(\rho)&\stackrel{\mathrm{def}}{:=}& \min\Bigl(\;
\frac{|d/dT|_{\mathcal{F}_{L,\rho},\mathrm{Sp}}}{|g_{1}^0|_\rho}\;,
\;\frac{|d/du_1|_{\mathcal{F}_{L,\rho},\mathrm{Sp}}}{|g_{1}^1|_\rho}\;,\;\ldots\;,
\;\frac{|d/du_r|_{\mathcal{F}_{L,\rho},\mathrm{Sp}}}{|g_{1}^r|_\rho}\;\Bigr)\;.
\nonumber
\end{eqnarray}
Then:
\begin{itemize}
\item[$\bs{(1)}$] $T(\M,\rho)\; \geq\;
\min(\;\omega\;,\;\omega_{\M}(\rho)\;)$.%
\item[$\bs{(2)}$] The following conditions are equivalent:
\begin{itemize}
\item[$\bs{(a)}$] $T(\M,\rho) \;<\; \omega$;%

\item[$\bs{(b)}$] $\omega_{\M}(\rho)\;<\;\omega$;%

\item[$\bs{(c)}$] %
$|g_1^{0}|_{\rho} > \rho^{-1}$, or $|g_1^{i}|_{\rho} > 1$  for
some $i\in\{1,\ldots,r\}$.
\end{itemize}

\item[$\bs{(3)}$] If one of the equivalent conditions of point
$\bs{(2)}$ is verified, then one has
\begin{equation}
T(\M,\rho)= \min\Bigl(\;
\frac{\omega\cdot\rho^{-1}}{|g_{1}^0|_\rho}\;,
\;\frac{\omega}{|g_{1}^1|_\rho}\;,\;\ldots\;,
\;\frac{\omega}{|g_{1}^r|_\rho}\;\Bigr)\;.
\end{equation}
\end{itemize}
\end{lemma}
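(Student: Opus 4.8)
The whole statement is a concrete reworking, for a rank one module, of the formula \eqref{S(S_i, rho) simpler basis} of Lemma \ref{expliciting the definition of the Swan conductor} together with a ``Small Radius'' dichotomy in the style of \cite[6.2]{Ch-Me}. The plan is to work with the explicit spectral-norm expressions
$|\nabla_{u_i}^{\M_\rho}|_{\M_\rho,\mathrm{Sp}}=\max(\omega,\limsup_n|g_n^i|_\rho^{1/n})$ and $|\nabla_T^{\M_\rho}|_{\M_\rho,\mathrm{Sp}}=\max(\omega\rho^{-1},\limsup_n|g_n^0|_\rho^{1/n})$ provided by Lemma \ref{expliciting the definition of the Swan conductor} (here all the $g_n^i$ are scalars, since $\M_\rho$ has rank one). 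For a rank one module over $\mathcal{F}_{L,\rho}$ the matrices satisfy the Leibniz-type recursion $g_{n+1}^i = (d/du_i)(g_n^i) + g_n^i g_1^i$ (and similarly with $d/dT$ for $i=0$). The first key step is the elementary estimate: since $|(d/du_i)|_{\mathcal{F}_{L,\rho}}=1$ and $|(d/dT)|_{\mathcal{F}_{L,\rho}}=\rho^{-1}$ (Lemma \ref{False lemma}), an induction on $n$ gives $|g_n^i|_\rho \le \max(|g_1^i|_\rho, 1)^n$ for $i\ge 1$ and $|g_n^0|_\rho\le\max(|g_1^0|_\rho,\rho^{-1})^n$. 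Hence $\limsup_n|g_n^i|_\rho^{1/n}\le\max(|g_1^i|_\rho,1)$ and $\limsup_n|g_n^0|_\rho^{1/n}\le\max(|g_1^0|_\rho,\rho^{-1})$. Conversely, when $|g_1^i|_\rho$ is large enough (strictly bigger than $1$, resp. $\rho^{-1}$) the dominant term in the recursion is $g_n^i g_1^i$, and one shows by induction that $|g_n^i|_\rho = |g_1^i|_\rho^n$ exactly, because the derivative term $|(d/du_i)(g_n^i)|_\rho \le |g_n^i|_\rho$ (resp. $\le \rho^{-1}|g_n^0|_\rho$) is then strictly smaller than $|g_n^ig_1^i|_\rho = |g_1^i|_\rho^{n+1}$, so no cancellation occurs and the ultrametric inequality is an equality.

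From these two observations the three assertions follow. For $\bs{(1)}$: by the upper bound, $|\nabla_{u_i}^{\M_\rho}|_{\M_\rho,\mathrm{Sp}}\le\max(\omega,|g_1^i|_\rho)$, so
$\dfrac{|d/du_i|_{\mathcal{F}_{L,\rho},\mathrm{Sp}}}{|\nabla_{u_i}^{\M_\rho}|_{\M_\rho,\mathrm{Sp}}}\ge \min\!\Big(\dfrac{|d/du_i|_{\mathcal{F}_{L,\rho},\mathrm{Sp}}}{\omega},\dfrac{|d/du_i|_{\mathcal{F}_{L,\rho},\mathrm{Sp}}}{|g_1^i|_\rho}\Big)=\min(1,\omega/|g_1^i|_\rho)$ using $|d/du_i|_{\mathcal{F}_{L,\rho},\mathrm{Sp}}=\omega$; similarly with $T$, getting $\min(1,\omega\rho^{-1}/|g_1^0|_\rho)$. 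Taking the minimum over $i=0,\dots,r$ in Definition \ref{definition of T(M,rho)} gives exactly $T(\M,\rho)\ge\min(\omega,\omega_\M(\rho))$ (note $\omega\le 1$, so the ``$1$'' is irrelevant). For $\bs{(2)}$: the equivalence $(a)\Leftrightarrow(b)$ is immediate from $\bs{(1)}$ and from the reverse inequality $T(\M,\rho)\le\omega$ automatically (since each factor is $\le 1$ and $\le|d/du_i|_{\mathrm{Sp}}/\omega\cdot\omega=\dots$), combined with the fact that if $\omega_\M(\rho)\ge\omega$ then all the estimates above force $T(\M,\rho)=\min(\omega,\text{stuff}\ge\omega)$; the equivalence $(b)\Leftrightarrow(c)$ is a direct unwinding of the definition of $\omega_\M(\rho)$ using $|d/du_i|_{\mathcal{F}_{L,\rho},\mathrm{Sp}}=\omega$ and $|d/dT|_{\mathcal{F}_{L,\rho},\mathrm{Sp}}=\omega\rho^{-1}$, so that $\omega_\M(\rho)<\omega$ iff $\omega/|g_1^i|_\rho<\omega$ for some $i\ge 1$ (i.e. $|g_1^i|_\rho>1$) or $\omega\rho^{-1}/|g_1^0|_\rho<\omega$ (i.e. $|g_1^0|_\rho>\rho^{-1}$). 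For $\bs{(3)}$: when $(c)$ holds, for each index $i$ with $|g_1^i|_\rho>1$ (resp. $|g_1^0|_\rho>\rho^{-1}$) the exact computation $|g_n^i|_\rho=|g_1^i|_\rho^n$ gives $|\nabla_{u_i}^{\M_\rho}|_{\M_\rho,\mathrm{Sp}}=|g_1^i|_\rho>\omega$, hence the corresponding factor in $T(\M,\rho)$ equals $\omega/|g_1^i|_\rho$ (resp. $\omega\rho^{-1}/|g_1^0|_\rho$); for the remaining indices the factor is $\ge\omega/|g_1^i|_\rho$ and also $\ge$ the minimum over the ``bad'' indices since those are $<\omega\le 1\le\ldots$ — one checks the minimum is attained at a bad index — yielding the displayed formula.

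The main obstacle is the ``conversely'' direction in step one: establishing that $|g_n^i|_\rho = |g_1^i|_\rho^n$ exactly (not just $\le$) when $|g_1^i|_\rho$ exceeds the threshold, since this requires controlling the derivative term $|(d/du_i)(g_n^i)|_\rho$. Here one uses that $d/du_i$ is a bounded operator of norm $1$ on $\mathcal{F}_{L,\rho}$ (Lemma \ref{False lemma}), so $|(d/du_i)(g_n^i)|_\rho\le|g_n^i|_\rho$, and inductively $|g_n^i|_\rho=|g_1^i|_\rho^n$; then $|g_{n+1}^i|_\rho=|(d/du_i)(g_n^i)+g_n^ig_1^i|_\rho$ with the second summand of norm $|g_1^i|_\rho^{n+1}$ strictly dominating the first (which is $\le|g_1^i|_\rho^n<|g_1^i|_\rho^{n+1}$), so the ultrametric identity $|a+b|=\max(|a|,|b|)$ when $|a|\neq|b|$ closes the induction. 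The subtlety is purely bookkeeping: one must be careful that $|g_1^i|_\rho>1$ really is the right threshold for $i\ge 1$ and $|g_1^0|_\rho>\rho^{-1}$ for $i=0$, matching exactly the condition that the $u_i$-radius (resp. $T$-radius) factor drops below $\omega$, which is where the asymmetry between $|d/dT|_{\mathcal{F}_{L,\rho}}=\rho^{-1}$ and $|d/du_i|_{\mathcal{F}_{L,\rho}}=1$ enters. Once this is in place, assertions $\bs{(1)}$–$\bs{(3)}$ are formal consequences as sketched, and the result is the exact analogue of the classical one-variable statement.
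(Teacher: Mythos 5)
Your proposal is correct and follows essentially the same route as the paper: the same recursion $g_{n+1}^i=(d/du_i)(g_n^i)+g_n^i g_1^i$, the same inductive upper bound giving $\bs{(1)}$, and the same ultrametric domination argument showing $|g_n^i|_\rho=|g_1^i|_\rho^n$ above the threshold, which is precisely the paper's sub-lemma and yields $\bs{(3)}$ together with the remaining implications of $\bs{(2)}$. One small slip: the parenthetical claim that $T(\M,\rho)\le\omega$ holds automatically is false (for the trivial connection $T(\M,\rho)=1$), but it is not needed, since $(a)\Rightarrow(b)$ follows from $\bs{(1)}$ alone and $(b)\Rightarrow(a)$ from your exact computation in $\bs{(3)}$.
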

\dem By Lemma \ref{False lemma}, one has
$\omega=\frac{|d/dT|_{\mathcal{F}_{L,\rho},\mathrm{Sp}}}{|d/dT|_{\mathcal{F}_{L,\rho}}}=
\frac{|d/du_i|_{\mathcal{F}_{L,\rho},\mathrm{Sp}}}{|d/du_i|_{\mathcal{F}_{L,\rho}}}$,
for all $i=1,\ldots,r$. This proves that
$\bs{(b)}\Leftrightarrow\bs{(c)}$. We prove now $\bs{(1)}$. The
matrices $g_n^i$ verify the inductive relations
$g_{n+1}^0=d/dT(g_{n}^0)+g_{n}^0\cdot g_1^0$, and
$g_{n+1}^i=d/du_i(g_{n}^i)+g_{n}^i\cdot g_1^i$. By induction, one
has
$|g_{n}^0|_\rho\;\leq\;\max(|d/dT|_{\mathcal{F}_{L,\rho}},|g_1^0|_\rho)^n$,
and
$|g_{n}^i|_\rho\;\leq\;\max(|d/du_i|_{\mathcal{F}_{L,\rho}},|g_1^i|_\rho)^n$.
This proves that
\begin{eqnarray}
\protect{[\liminf_{n}|g_{n}^0|_{\rho}^{-1/n}]}&\geq&
\min\Bigl(\frac{1}{|d/dT|_{\mathcal{F}_{L,\rho}}},\frac{1}{|g_1^0|_\rho}\Bigr)=
\min\Bigl(\;\rho\;,\;|g_1^0|_\rho^{-1}\;\Bigr)\;,\label{eq-1-g_n=g^n}\\
\protect{[\liminf_{n}|g_{n}^i|_{\rho}^{-1/n}]}&\geq&
\min\Bigl(\frac{1}{|d/du_i|_{\mathcal{F}_{L,\rho}}},\frac{1}{|g_1^i|_\rho}\Bigr)=
\min\Bigl(\;1\;,\;|g_1^i|_\rho^{-1}\;\Bigr)\;.\label{eq-2-g_n=g^n}
\end{eqnarray}

Hence, by formula \eqref{S(S_i, rho) simpler basis}, the point
$\bs{(1)}$ holds. Moreover, the same computation proves the
following sub-lemma:
\begin{lemma}\label{sub-lemma-radius}
The following conditions are equivalent :
\begin{itemize}
\item[$\bs{(c')}$] $|g_1^0|_\rho>\rho^{-1}$ \;\;(resp.
$|g_1^i|_\rho>1$)\;,%
\item[$\bs{(a')}$]
$\protect{[\liminf_{n}|g_{n}^0|_{\rho}^{-1/n}]}< \rho$ (resp.
$\protect{[\liminf_{n}|g_{n}^i|_{\rho}^{-1/n}]}< 1$)\;.
\end{itemize}
\end{lemma}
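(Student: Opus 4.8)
The plan is to prove the equivalence $\bs{(c')}\Leftrightarrow\bs{(a')}$ by establishing the two implications separately, treating the case $i=0$ in detail; the cases $i=1,\dots,r$ are obtained verbatim after replacing the operator norm $|d/dT|_{\mathcal{F}_{L,\rho}}=\rho^{-1}$ by $|d/du_i|_{\mathcal{F}_{L,\rho}}=1$, both values being furnished by Lemma \ref{False lemma}. For the direction $\bs{(a')}\Rightarrow\bs{(c')}$ I would argue by contraposition, reusing the estimate already obtained in the body of Lemma \ref{Small radius}: from the recursion $g_{n+1}^0=d/dT(g_n^0)+g_n^0 g_1^0$ and the ultrametric inequality one gets, by induction on $n$, that $|g_n^0|_\rho\le\max(\rho^{-1},|g_1^0|_\rho)^n$, hence
\[
\liminf_n|g_n^0|_\rho^{-1/n}\;\ge\;\min\bigl(\rho,\,|g_1^0|_\rho^{-1}\bigr).
\]
If $\bs{(c')}$ fails, i.e. $|g_1^0|_\rho\le\rho^{-1}$, then $|g_1^0|_\rho^{-1}\ge\rho$, so the right-hand side is $\rho$ and $\bs{(a')}$ fails.

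For the converse $\bs{(c')}\Rightarrow\bs{(a')}$ the plan is to use the standard ``strict dominance'' phenomenon in an ultrametric field. Assume $|g_1^0|_\rho>\rho^{-1}$; in particular $g_1^0\neq0$. I claim that $|g_n^0|_\rho=|g_1^0|_\rho^{\,n}$ for every $n\ge0$, proved by induction starting from $g_0^0=1$: granting $|g_n^0|_\rho=|g_1^0|_\rho^{\,n}$, one has
\[
|d/dT(g_n^0)|_\rho\;\le\;\rho^{-1}|g_n^0|_\rho\;<\;|g_1^0|_\rho\,|g_n^0|_\rho\;=\;|g_n^0 g_1^0|_\rho,
\]
so the term $g_n^0 g_1^0$ strictly dominates $d/dT(g_n^0)$ and therefore $|g_{n+1}^0|_\rho=|g_n^0 g_1^0|_\rho=|g_1^0|_\rho^{\,n+1}$. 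Consequently $\liminf_n|g_n^0|_\rho^{-1/n}=|g_1^0|_\rho^{-1}<\rho$, which is precisely $\bs{(a')}$.

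The $u_i$-version transposes word for word: one only uses the recursion $g_{n+1}^i=d/du_i(g_n^i)+g_n^i g_1^i$ with $|d/du_i|_{\mathcal{F}_{L,\rho}}=1$, so $\bs{(c')}$ reads $|g_1^i|_\rho>1$ and the conclusion is $\liminf_n|g_n^i|_\rho^{-1/n}=|g_1^i|_\rho^{-1}<1$. There is no serious obstacle here; the one point requiring a little attention — and the closest thing to a subtlety — is ensuring at each step the non-vanishing of $g_n^0$ (resp. $g_n^i$) needed to apply the strict ultrametric inequality, but this is delivered automatically by the inductive hypothesis $|g_n^0|_\rho=|g_1^0|_\rho^{\,n}>0$.
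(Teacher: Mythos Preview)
Your proof is correct and follows essentially the same approach as the paper. The paper merely asserts that ``the same computation'' (the induction $|g_n^0|_\rho\le\max(\rho^{-1},|g_1^0|_\rho)^n$) proves the sub-lemma; you have spelled out precisely that computation, including the strict-dominance step showing the inequality becomes an equality when $|g_1^0|_\rho>\rho^{-1}$, which is exactly what the authors intend.
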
 \hfill $\Box$
\if{
\begin{proof} We prove the sub-lemma only for $d/dT$, the case $d/du_i$ is
similar. If $|g_1^0|_\rho
> \rho^{-1}=|d/dT|_{\mathcal{F}_{L,\rho}}$, then the above induction proves
that \eqref{eq-1-g_n=g^n} is an equality since
$\omega=\frac{|d/du_i|_{\mathcal{F}_{L,\rho},\mathrm{Sp}}}{|d/du_i|_{\mathcal{F}_{L,\rho}}}=
\frac{|d/dT|_{\mathcal{F}_{L,\rho},\mathrm{Sp}}}{|d/dT|_{\mathcal{F}_{L,\rho}}}$
(cf. Lemma \ref{False lemma}). Hence
$[\liminf_{n}|g_{n}^0|_{\rho}^{-1/n}]=|g^0_1|_{\rho}^{-1}<\rho$.
Conversely if
$\protect{[\liminf_{n}|g_{n}^0|_{\rho}^{-1/n}]}<\rho$, then, by
equation \eqref{eq-1-g_n=g^n}, one has
$\min(\rho,|g^0_1|_{\rho}^{-1})<\rho$.}\fi 

\emph{Continuation of the proof of Lemma \ref{Small radius}:}
By using again  Formula \eqref{S(S_i, rho) simpler basis}, Lemma
\ref{sub-lemma-radius} proves that $\bs{(c)}$ implies $\bs{(a)}$,
and that the point $\bs{(3)}$ holds. Clearly, by assertion $\bs{(1)}$,
condition $\bs{(a)}$ implies $\omega > T(\M,\rho)\geq
\min(\omega,\omega_{\M}(\rho))$, hence $\bs{(b)}$ holds. \CVD

%



\begin{lemma}\label{radius at zero -yhnct}
Let $\M$ be a \emph{solvable} rank one $\nabla$-module over
$\a_L(]0,1[)$. For $i=1,\ldots,r$, let $g_1^{i} \in \a_{L}(]0,1[)$
be the matrix of $\nabla_{u_i}^{\M}$ (resp. if $i=0$, $g^0_1$ is
the matrix of $\nabla_T^{\M}$). Assume that, for all
$i=0,1,\ldots,r$, the matrix $g_1^i$ is of the form
\begin{equation}
g_1^i=\sum_{j\geq -n_i}a_j^{(i)}T^j \in
\a_L(]0,1[)\;,\quad\textrm{ with }a_{-n_i}^{(i)}\neq 0.
\end{equation}
Assume moreover that $n_0,n_1,\ldots,n_r<+\infty$ satisfy $n_0\geq
2$, or $n_i\geq 1$, for some $i=1,\ldots,r$. Then:
\begin{enumerate}
\item $|a_{-n_i}^{(i)}|\leq \omega$, for all $i=0,\ldots,r$; %
\item For $\rho$ sufficiently close to $0$, one has
\begin{equation}\label{radius near Zero}
 T(\M,\rho)\;\;=\;\; \omega\cdot\min_{i=1,\ldots,r}
\Bigl(\; |a_{-n_0}^{(0)}|^{-1}\cdot\rho^{n_0-1}\;,\;
|a_{-n_i}^{(i)}|^{-1}\cdot\rho^{n_i}\;\Bigr)\;;
\end{equation}
\item If $|a_{-n_i}^{(i)}|=\omega$ for some $i=0,\ldots,r$, then
equation \eqref{radius near Zero} holds for all $\rho\in ]0,1[$,
and
\begin{equation}
\mathrm{sw}^{\nabla}(\M) = \max\{\; \epsilon_0  (n_0-1)\;,\;
\epsilon_1  n_1\;,\;\ldots\;,\;\epsilon_r  n_r \;\}
\end{equation}
where $\epsilon_i=0$ if $|a_{-n_i}^{(i)}|<\omega$, and
$\epsilon_i= 1$ if $|a_{-n_i}^{(i)}|=\omega$.
\end{enumerate}
\end{lemma}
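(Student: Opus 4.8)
The plan is to prove the three assertions by reducing everything, via Lemma \ref{False lemma} and the Small Radius Lemma \ref{Small radius}, to the behaviour of the norms $|g_1^i|_\rho$ as $\rho \to 0^+$, and then to invoke log-concavity together with solvability to promote "near $0$" to "on all of $]0,1[$". Throughout I keep the notation $\omega = |p|^{1/(p-1)}$.

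\textbf{Step 1: the behaviour of $|g_1^i|_\rho$ near zero.} Since $g_1^i = \sum_{j \geq -n_i} a_j^{(i)} T^j$ with $a_{-n_i}^{(i)} \neq 0$, for $\rho$ sufficiently close to $0$ the term of lowest degree dominates, so $|g_1^0|_\rho = |a_{-n_0}^{(0)}|\,\rho^{-n_0}$ and $|g_1^i|_\rho = |a_{-n_i}^{(i)}|\,\rho^{-n_i}$. Because $n_0 \geq 2$ (so $n_0 - 1 \geq 1 > 0$) or some $n_i \geq 1$, at least one of the quantities $|a_{-n_0}^{(0)}|^{-1}\rho^{n_0-1}$, $|a_{-n_i}^{(i)}|^{-1}\rho^{n_i}$ tends to $0$ as $\rho \to 0^+$; hence for $\rho$ small enough, $|g_1^0|_\rho > \rho^{-1}$ or $|g_1^i|_\rho > 1$ for some $i$, i.e. condition $\bs{(c)}$ of Lemma \ref{Small radius}$\bs{(2)}$ holds. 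Then Lemma \ref{Small radius}$\bs{(3)}$ gives
\begin{equation}
T(\M,\rho) = \min\Bigl(\frac{\omega\rho^{-1}}{|g_1^0|_\rho}, \frac{\omega}{|g_1^1|_\rho}, \ldots, \frac{\omega}{|g_1^r|_\rho}\Bigr) = \omega\cdot\min_{i=1,\ldots,r}\Bigl(|a_{-n_0}^{(0)}|^{-1}\rho^{n_0-1}, |a_{-n_i}^{(i)}|^{-1}\rho^{n_i}\Bigr),
\end{equation}
which is exactly assertion $\bs{(2)}$.

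\textbf{Step 2: assertion $\bs{(1)}$, the bound $|a_{-n_i}^{(i)}| \leq \omega$.} By solvability, $\lim_{\rho \to 1^-} T(\M,\rho) = 1$; in particular $T(\M,\rho) \leq 1$ for all $\rho$ close to $1$, hence (log-concavity of $\rho \mapsto T(\M,\rho)$, cited from \cite{Ked-Swan}, and the fact that $T(\M,\rho)\le\min(\omega\rho^{-1}/|g_1^0|_\rho,\,\omega/|g_1^i|_\rho,1)$) we must have $T(\M,\rho) \leq 1$ for \emph{all} $\rho \in ]0,1[$, so the function $\widetilde{T}$ is $\leq 0$ everywhere. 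If some $|a_{-n_i}^{(i)}| > \omega$, the corresponding expression $\omega |a_{-n_i}^{(i)}|^{-1}\rho^{n_i}$ (or $\omega |a_{-n_0}^{(0)}|^{-1}\rho^{n_0-1}$) would, at $\rho$ close to $1$, have a strictly positive log-slope $n_i$ (resp. $n_0-1\ge 1$) starting below $\omega<1$ — one checks that log-concavity then forces $T(\M,\rho)>\rho$ for $\rho$ in some left-neighbourhood of $1$, contradicting $\lim_{\rho\to1^-}T(\M,\rho)=1$ combined with the constraint $T(\M,\rho)\le\rho\cdot\frac{|d/dT|_{\mathrm{Sp}}}{|\nabla_T|_{\mathrm{Sp}}}\le\rho$; more directly, $|\nabla_T^{\M}|_{\M_\rho,\mathrm{Sp}} \geq |g_1^0|_\rho$ and $|\nabla_{u_i}^{\M}|_{\M_\rho,\mathrm{Sp}}\ge|g_1^i|_\rho$, while the radius formula forbids $T(\M,\rho)>1$, so $|g_1^i|_\rho\le 1$ for some $\rho$ with $\rho^{-n_i}$ large, forcing $|a_{-n_i}^{(i)}|\le\omega$. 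I would write this out carefully using the explicit formula of Step 1 on the interval where it is valid, then extend by log-concavity.

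\textbf{Step 3: assertion $\bs{(3)}$.} Suppose $|a_{-n_{i_0}}^{(i_0)}| = \omega$ for some $i_0$. Then in the formula of Step 1, the $i_0$-th term equals $\rho^{n_{i_0}-1}$ (if $i_0=0$) or $\rho^{n_{i_0}}$ (if $i_0\ge1$) with coefficient exactly $1$, so its log-graph is a line through the origin of the $\log$-$\log$ picture with positive slope; every other term $\omega|a_{-n_i}^{(i)}|^{-1}\rho^{n_i}$ has $\omega|a_{-n_i}^{(i)}|^{-1} \geq 1$ (by assertion $\bs{(1)}$), i.e. its log-graph lies on or above a line through the origin. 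Hence the minimum of all these functions is, near $0$, a function whose $\log$-graph passes through $(\,-\infty$ direction$\,)$ and is $\log$-affine of slope $\max\{\epsilon_0(n_0-1), \epsilon_1 n_1, \ldots, \epsilon_r n_r\}$ where $\epsilon_i$ records whether $|a_{-n_i}^{(i)}|=\omega$. Since $T(\M,\rho)$ is $\log$-concave and equals this $\log$-affine function near $0$, and since $T(\M,1^-)=1$ which on the $\log$ scale is the point $(0,0)$ lying on the same line of slope $\max\{\epsilon_i n_i - \epsilon_i\cdot[i=0]\}$, $\log$-concavity forces $T(\M,\rho)$ to equal that single $\log$-affine function on all of $]0,1[$ — this is precisely where the hypothesis $|a_{-n_i}^{(i)}|=\omega$ "for some $i$" is used to pin the line down. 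Therefore equation \eqref{radius near Zero} holds for all $\rho\in]0,1[$, so $\M$ has uniform break $\beta = \max\{\epsilon_0(n_0-1),\epsilon_1 n_1,\ldots,\epsilon_r n_r\}$, and since $\mathrm{rank}\,\M=1$, Remark \ref{RK1} gives $\mathrm{sw}^{\nabla}(\M)=\mathrm{slope}(\M,1^-)=\beta$.

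\textbf{The main obstacle} I expect is Step 2 and the passage from "near $0$" to "on all of $]0,1[$" in Step 3: one must argue cleanly that a $\log$-concave function which is $\log$-affine near $0$ with positive slope, bounded above by the explicit $\min$-of-monomials expression, and satisfying the solvability boundary condition $T(\M,1^-)=1$, is forced to be globally that single $\log$-affine function — and in particular that its slope is not strictly smaller than $\max\{\epsilon_i n_i\}$ (which would happen if a break intervened). The key point is that a break can only \emph{decrease} the slope as $\rho$ increases past it, but then $T$ could not reach the value $1$ at $\rho\to1^-$ while staying $\le\min(\ldots,1)$; this rigidity, together with the already-known piecewise-$\log$-affine, $\log$-concave structure from \cite{Ked-Swan}, closes the argument. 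The inequality $|a_{-n_i}^{(i)}|\le\omega$ in $\bs{(1)}$ is the quantitative shadow of exactly this phenomenon and should be extracted first. \CVD
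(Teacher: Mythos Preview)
Your Step~1 is correct and matches the paper exactly: once $\rho$ is small enough that the lowest monomial dominates each $|g_1^i|_\rho$ and condition $\bs{(c)}$ of the Small Radius Lemma holds, part $\bs{(3)}$ of that lemma yields \eqref{radius near Zero} immediately.

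Your Step~2, however, is not an argument. The claimed bound $T(\M,\rho)\leq\rho$ is wrong (the definition gives only $T(\M,\rho)\leq 1$); the asserted inequality $|\nabla_{u_i}^{\M}|_{\M_\rho,\mathrm{Sp}}\geq|g_1^i|_\rho$ does not follow from anything available; and the sentence about log-concavity ``forcing $T(\M,\rho)>\rho$'' is backwards. You yourself concede that the argument is incomplete. The paper's route is short and direct: having established that the formula \eqref{radius near Zero} coincides with $T(\M,\rho)$ on some $]0,\rho_0]$, use that $\widetilde T(r):=\log T(\M,e^r)$ is concave with $\widetilde T(0^-)=0$. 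Since a concave function lies below every supporting line, the log-affine extrapolation of the formula from $]0,\rho_0]$ to $\rho=1$ dominates $T(\M,\rho)$ all the way to $1^-$; hence
\[
\min_{i=0,\ldots,r}\frac{\omega}{|a_{-n_i}^{(i)}|}\;=\;\lim_{\rho\to 1^-}\omega\cdot\min_{i=1,\ldots,r}\Bigl(\frac{\rho^{n_0-1}}{|a_{-n_0}^{(0)}|},\,\frac{\rho^{n_i}}{|a_{-n_i}^{(i)}|}\Bigr)\;\geq\;\lim_{\rho\to 1^-}T(\M,\rho)\;=\;1,
\]
and since the left-hand side is a minimum, this forces $|a_{-n_i}^{(i)}|\leq\omega$ for \emph{every} $i$. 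This is precisely the content of the picture in the paper's proof: the dashed extrapolation of the log-graph near $\rho=0$ must land at or above $0$ when $\log\rho\to 0^-$.

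Your Step~3 is on the right track and agrees with the paper's (one-sentence) argument: when some $|a_{-n_{i_0}}^{(i_0)}|=\omega$, the formula's value at $\rho=1$ is exactly $1$ by part (i), so its log-graph passes through the origin; since $\widetilde T$ is concave, $\leq 0$, equal to the formula near $-\infty$, and satisfies $\widetilde T(0^-)=0$, it is squeezed to coincide with the formula on all of $]0,1[$, and $\mathrm{sw}^\nabla(\M)$ is then read off via Remark~\ref{RK1}. Your closing paragraph correctly identifies that this rigidity is where log-concavity is used; you should simply execute the squeezing argument rather than describe it.
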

\begin{proof} By assumption, $\lim_{\rho\to 1^{^{-}}}T(\M,\rho)=1$. Moreover
we know that $T(\M,\rho)$ is continuous and $\log$-concave. We
have then only two possibilities: $T(\M,\rho)=1$ for all $\rho\in
]0,1[$, or there exists $\beta>0$ such that
$T(\M,\rho)\leq\rho^{\beta}$, for all $\rho\in]0,1[$. In the first
case, $T(\M,\rho)=1>\omega$, for all $\rho<1$. Hence by the Small
Radius Lemma \ref{Small radius}, we have $|g_1^0|_\rho \leq
\rho^{-1} $, and $|g_1^i|_\rho \leq 1$, for all $i = 1, \ldots,
r$, and for all $\rho\in]0,1[$. This contradicts our assumptions.
Indeed if $\rho$ is close to zero, one has $|g_1^{i}|_\rho =
|a_{-n_i}^{(i)}| \rho^{-n_i}$, for all $i = 0, \ldots, r$, and the
assumption $\max(\frac{n_0}{2},n_1,\ldots,n_r)\geq 1$ implies
that, for $\rho$ close to $0$, one has
$|g_1^{0}|_\rho>\rho^{-1}>1$, or that $|g_1^{i}|_\rho>1$, for some
$i=1,\ldots,r$.

Hence we are in the second case: $T(\M,\rho)\leq\rho^{\beta}$, for
all $\rho<1$, where $\beta=\sw^{\nabla}(\M)$ (This  follows from Definition  \ref{RK1} and the fact that $T(\M,\rho)$ is log-concave). Since $\beta>0$, if
$\rho$ is sufficiently close to $0$, one has
$T(\M,\rho)\leq\rho^{\beta}<\omega$. So Small Radius Lemma
\ref{Small radius} applies, and $T(\M,\rho)=\omega\cdot
\min_{i=1,\ldots,r} \Bigl(\frac{\rho^{n_0-1}}{|a_{-n_0}^{(0)}|},
\frac{\rho^{n_i}}{|a_{-n_i}^{(i)}|}\Bigr)$. Now, since
$\lim_{\rho\to 1^-}T(\M,\rho)=1$, and since the function
$\rho\mapsto T(\M,\rho)$ is $\log$-concave, its $\log$-slope at
$0^+$ is greater than its $\log$-slope at $1^{-}$ (i.e.
$\beta=\sw^{\nabla}(\M)$). Hence one has the inequality
\begin{equation}
\min_{i=0,\ldots,r} \frac{\omega}{|a_{-n_i}^{(i)}|}=\lim_{\rho\to
1^{-}}\omega\cdot \min_{i=1,\ldots,r} \Bigl(\;
\frac{\rho^{n_0-1}}{ |a_{-n_0}^{(0)}|}\;\;,\;\;\frac{
\rho^{n_i}}{|a_{-n_i}^{(i)}|}\;\Bigr)\geq \lim_{\rho\to
1^-}T(\M,\rho)=1\;,
\end{equation}
as in the following picture:
\begin{center}
\begin{scriptsize}
\begin{picture}(200,130)
\put(150,10){\vector(0,1){120}} \put(0,70){\vector(1,0){200}}
\put(195,75){$\log(\rho)$} \put(155,125){$\log(T(\M,\rho))$}
\put(0,72){\begin{tiny}$0\leftarrow\rho$\end{tiny}}
\put(95,10){\line(1,2){15}} 
\put(110,40){\line(1,1){20}} 
\put(130,60){\line(2,1){20}} 
\put(140,65){\circle{10}}     
\put(140,60){\line(4,-1){30}}
\put(100,20){\circle{10}}     
\put(105,20){\line(1,-1){15}}
\put(122,0){$T(\M,\rho)=\omega\cdot \min_{i=1,\ldots,r}
\Bigl(\frac{\rho^{n_0-1}}{|a_{-n_0}^{(0)}|},
\frac{\rho^{n_i}}{|a_{-n_i}^{(i)}|}\Bigr)$}
\put(173,50){$\mathrm{sw}^{\nabla}(\M)=\beta$}
\put(0,20){\begin{normalsize}$\downarrow$small
radius$\downarrow$\end{normalsize}}
\put(147.5,117.5){$\bullet$} 
\put(72,117.5){$\log(\min_{i}(\omega/|a_{-n_i}^{(i)}|))$}
\put(147.5,27.5){$\bullet$} 
\put(152.5,27.5){$\log(\omega)$}
\qbezier[40](110,40)(130,80)(150,120)
\qbezier[40](0,30)(75,30)(150,30)
\end{picture}
\end{scriptsize}
\end{center}
This implies $|a_{-n_i}^{(i)}|\leq\omega$, for all $i=0,\ldots,r$.
Moreover if $|a_{-n_i}^{(i)}|=\omega$, for some $i=0,\ldots,r$,
then this graphic is a line, and assertion iii) holds.
\end{proof}

\subsection{Proof of theorem \ref{comparison-theorem in rank one}}

Let $\V(\alpha)\in\mathrm{Rep}^{\mathrm{fin}}_{\O_K}(\G_{\E})$. As
usual, we decompose $\alpha = \alpha_{\mathrm{wild}} \cdot
\alpha_{\mathrm{tame}} \cdot \alpha_{k}$ (cf. Equation
\eqref{alpha_k,tame,wild}). By Section \ref{vanishing of tame and
residual katos conductors}, in both residual and tame cases the
arithmetic Swan conductor is equal to zero. On the other hand, we
have the  following:

\begin{lemma}\label{diff swan mod equal to 0}
The differential Swan conductors of
$\mathrm{D}^{\mathrm{\dag}}(\V(\alpha_k))$ and
$\mathrm{D}^{\mathrm{\dag}}(\V(\alpha_{\mathrm{tame}}))$ are equal
to $0$. More precisely there exist  bases of  $\mathrm{D}^{\mathrm{\dag}}(\V(\alpha_k))$ and
$\mathrm{D}^{\mathrm{\dag}}(\V(\alpha_{\mathrm{tame}}))$ in which the  connections are defined over 
$\mathcal{A}_L(]0,1[)$ and 
$T(\mathrm{D}^{\mathrm{\dag}}(\V(\alpha_k)),\rho)=T(\mathrm{D}^{\mathrm{\dag}}(\V(\alpha_{\mathrm{tame}})),\rho)=1$,
for all $\rho\in]0,1[$.
\end{lemma}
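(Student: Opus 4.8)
The plan is to handle the residual case and the tamely ramified case separately, in each case writing down an explicit basis in which the connection matrices visibly lie in $\a_L(]0,1[)$ and then computing the toric radius $T(\M,\rho)$ directly via Lemma \ref{expliciting the definition of the Swan conductor}. Both modules carry a Frobenius structure, hence are solvable, so by Remark \ref{RK1} it suffices to show that $T(\M,\rho)=1$ for \emph{every} $\rho\in]0,1[$: the log-slope at $1^-$ of such a function is $0$, giving $\swd=0$. It is worth noting up front that solvability plus log-concavity of $\rho\mapsto T(\M,\rho)$ do \emph{not} by themselves force $T\equiv 1$ (a concave function which is $\le 1$ and tends to $1$ at $1^-$ need not be constant), so the Frobenius structure must genuinely be used; concretely it will enter through the fact that both modules become trivial after the relevant (residually separable, resp. tamely ramified) base change.

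\emph{The tamely ramified case.} In the basis $\e_{\D}=\e_{\V}\otimes T^{-1/N}$ of \eqref{matrix in the tame case} the matrix of $\nabla_{u_i}^{\D}$ is $0$, so the $i$-th entry in the minimum defining $T(\D,\rho)$ (Definition \ref{definition of T(M,rho)}) equals $1$. For $\nabla_T^{\D}$ one has $\nabla_T^{\D}(\e_{\D})=-\tfrac1N T^{-1}\e_{\D}$ with Taylor solution $T^{-1/N}$, so the matrix $G_n^0$ of $(\nabla_T^{\D})^n$ is $\binom{-1/N}{n}\,n!\,T^{-n}$. Since $(N,p)=1$ we have $-1/N\in\mathbb{Z}_p$, hence $\binom{-1/N}{n}\in\mathbb{Z}_p$, so $|G_n^0|_\rho\le|n!|\,\rho^{-n}$ and $\limsup_n|G_n^0|_\rho^{1/n}\le\omega\rho^{-1}$. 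By Lemma \ref{expliciting the definition of the Swan conductor} this forces $|\nabla_T^{\M_\rho}|_{\M_\rho,\mathrm{Sp}}=\omega\rho^{-1}=|d/dT|_{\mathcal{F}_{L,\rho},\mathrm{Sp}}$, so the $0$-th entry of the minimum is $1$ as well. Thus $T(\mathrm{D}^{\dag}(\V(\alpha_{\mathrm{tame}})),\rho)=1$ for all $\rho\in]0,1[$, and the connection entries ($0$ and $-\tfrac1N T^{-1}$) lie in $\a_L(]0,1[)$.

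\emph{The residual case.} In the basis $\e_{\D}$ of \eqref{matrix in residual case} the matrix of $\nabla_T^{\D}$ is $0$ and that of $\nabla_{u_i}^{\D}$ is a constant $g_k(\underline u)\in\O_L\subset\a_L(]0,1[)$; in particular the $0$-th entry of the minimum defining $T(\D,\rho)$ equals $1$. For the $u_i$-directions I would use that, by definition of the functor, $\D$ becomes trivial after the residually separable base change attached to the (possibly infinite) separable extension $k'/k$ trivializing $\V(\alpha_k)$; there $\nabla_{u_i}$ becomes the standard derivation $d/du_i$ on the constant field $\mathcal{F}_{L',\rho}$. A separable extension of $k$ keeps the $p$-basis $\{\bar u_1,\ldots,\bar u_r\}$, so Lemma \ref{False lemma} applies over $L'$ and gives $|d/du_i|_{\mathcal{F}_{L',\rho},\mathrm{Sp}}=\omega$; since $\mathcal{F}_{L,\rho}\subseteq\mathcal{F}_{L',\rho}$, spectral norms can only increase under the extension, whence $|\nabla_{u_i}^{\M_\rho}|_{\M_\rho,\mathrm{Sp}}\le\omega$. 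The reverse inequality is automatic from Lemma \ref{expliciting the definition of the Swan conductor}, so equality holds and the $i$-th entry of the minimum is $1$. Hence $T(\mathrm{D}^{\dag}(\V(\alpha_k)),\rho)=1$ for all $\rho\in]0,1[$.

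In both cases $T(\M,\rho)\equiv 1$ on $]0,1[$, so the log-slope at $1^-$ vanishes and Remark \ref{RK1} yields $\swd(\mathrm{D}^{\dag}(\V(\alpha_k)))=\swd(\mathrm{D}^{\dag}(\V(\alpha_{\mathrm{tame}})))=0$. The main obstacle is precisely the exact identification of the spectral norm of the connection in the $u_i$-directions (residual case) and in the $T$-direction (tame case) with the ``trivial'' value $\omega$, resp. $\omega\rho^{-1}$: this is where the Frobenius structure is really needed — via triviality of the module over the appropriate cover — together with the stability of $|d/du_i|_{\cdot,\mathrm{Sp}}=\omega$ under separable residue extensions, which itself rests on the invariance of the $p$-basis under such extensions.
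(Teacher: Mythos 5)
Your proof is correct, but at the decisive step in each case it takes a different route from the paper's. The paper's device is uniform: the special shape of the connection matrices ($g_1^0=0$ and $g_1^i\in\O_L$ constant in $T$ for $\alpha_k$; $g_1^0\in\mathbb{Z}_p\cdot T^{-1}$ and $g_1^i=0$ for $\alpha_{\mathrm{tame}}$) makes each term of the minimum in \eqref{S(S_i, rho) simpler basis}, and hence the whole function $\rho\mapsto T(\M,\rho)$, \emph{constant in $\rho$}; a constant function has log-slope $0$, and solvability then pins the constant down to $1$. You correctly observed that solvability plus log-concavity alone cannot force $T\equiv 1$, but solvability plus \emph{constancy} does, and constancy is immediate here — this is the shortcut you passed over, and it disposes of the residual case without any base change. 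In its place, your tame computation ($G_n^0=\binom{-1/N}{n}\,n!\,T^{-n}$ with $\binom{-1/N}{n}\in\mathbb{Z}_p$) is more self-contained than the paper's appeal to an external lemma of \cite{Rk1}, and gives $T\equiv 1$ with no use of solvability at all; your residual argument via trivialization over the unramified cover attached to $\ker\alpha_k$ is heavier but sound — the inputs you need (stability of the $p$-basis under separable residue extensions, hence $|d/du_i|_{\mathcal{F}_{L',\rho},\mathrm{Sp}}=\omega$ by the proof of Lemma \ref{False lemma}, and invariance of the spectral norm under the extension, which is visible from Lemma \ref{expliciting the definition of the Swan conductor} since the matrices $G_n^i$ and their norms $|G_n^i|_\rho$ do not change) are all available. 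Both routes are valid; the paper's is shorter, while yours makes the tame case fully explicit and clarifies exactly where the unit-root structure enters in the residual case.
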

\begin{proof}
Let $\D_k:=\D^{\dag}(\V(\alpha_k))$ and
$\D_{\mathrm{tame}}:=\D^{\dag}(\V(\alpha_{\mathrm{tame}}))$. By
equations \eqref{matrix in residual case}, since the matrices of
$\nabla_T^{\D_k}$ and $\nabla_{u_i}^{\D_k}$ belongs to $\O_L$,
hence $T(\D_k,\rho)$ does not depend on $\rho$, and its log-slope
is equal to $0$ (one has actually $T(\D_k,\rho)=1$, for all
$\rho<1$). On the other hand, in the notation of Lemma
\ref{expliciting the definition of the Swan conductor}, since
$g_0^0\in\mathbb{Z}_p\cdot T^{-1}$ (cf. Equations \ref{matrix in
the tame case}),  the function $\rho\mapsto
\liminf_n|g^0_n|_\rho^{-1/n}$ is a constant function (cf.
\cite[Lemma 1.4]{Rk1}). Moreover, the matrix of
$\nabla_{u_i}^{\D_{\mathrm{tame}}}$ is equal to $0$, hence
$\rho\mapsto \liminf_n|g^i_n|_\rho^{-1/n}$ is the constant
function equal to $\infty$. So $T(\D_{\mathrm{tame}},\rho)=1$,
for all  $\rho<1$, and its log-slope is equal to $0$. \end{proof}

\subsubsection{}\label{remark on the strategy} Since in
both residual and tamely ramified cases the arithmetic and the differential
Swan conductors are  equal to zero, then Remark \ref{V_1
otimes V_2} and Section \ref{M_1 otimes M_2}, allow us to reduce to
prove the theorem for $\alpha_{\mathrm{wild}}$. The proof of
Theorem \ref{comparison-theorem in rank one} is then carried out  in
two steps: first we prove  the theorem for the case in which
$\alpha_{\mathrm{wild}} =\psi_m\circ
\delta(\overline{\lb}_{-d}t^{-d})$, with
$\lb_{-d}\in\W_{v_p(d)}(k)-p\W_{v_p(d)}(k)$, where
$\psi_m:\mathbb{Z}/p^{m+1}\mathbb{Z}\simto\bs{\mu}_{p^{m+1}}\subset\O_K$
is the identification of Definition \ref{definition of psi_m}, and
$\delta$ is the Artin-Schreier-Witt morphism of  Sequence
\eqref{artin-schreier-diagram-covectors} (cf. Lemma
\ref{fe-monomial} below). The second step consists of extending  the
theorem to every character using Section  \ref{minimal lifting}.

\begin{lemma}\label{fe-monomial}
Let $\overline{\lb}\cdot t^{-np^m}$ be a co-monomial satisfying
$\overline{\lb}\in\W_m(k)-p\W_m(k)$ (cf. Section \ref{minimal
lifting}), let
$\alpha_{\mathrm{wild}}:=\psi_m\circ\delta(\overline{\lb}
t^{-np^m})$. We denote by $\M$ the $\nabla$-module over $\R_L$
defined by $\mathrm{D}^{\dag}(\V(\alpha_{\mathrm{wild}}))$. Then
\begin{equation}
\mathrm{sw}^{\nabla}(\M)=np^{m}=
\mathrm{sw}(\alpha_{\mathrm{wild}})\;.
\end{equation}
More precisely, one has $T(\M,\rho)=\rho^{np^m}$, for all
$\rho\in]0,1[$.
\end{lemma}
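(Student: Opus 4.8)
The plan is to combine the explicit shape of $\D^{\dag}(\V(\alpha_{\mathrm{wild}}))$ obtained in Section \ref{e(f(T),1) is a solution of D(V)} with the Small Radius Lemma \ref{Small radius} and, above all, with Lemma \ref{radius at zero -yhnct}, and then to match the two sides through Corollary \ref{swan conductor of a comonomial}.

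First I would specialise the formulas \eqref{eq. g_0,f}--\eqref{eq. g_i,f} to $\bs{f}^-(T)=\overline{\lb}\cdot T^{-np^m}$ lifted to $\bs{f}^-(T)=\lb\cdot T^{-np^m}$ with $\lb=(\lambda_0,\dots,\lambda_m)\in\W_m(\O_L)$ a lift of $\overline{\lb}$, chosen to be the minimal representative of Section \ref{representation of W_m/p..} (so $\overline{\lambda}_1,\dots,\overline{\lambda}_m\in k'$ and $\overline{\lb}\neq 0$). Since $f^-_k(T)=\lambda_kT^{-np^k}$, the $j$-th phantom component is $\phi^-_j(T)=\phi_j(\lb)\,T^{-np^j}$, and one gets, in the basis $\e_{\D}=\e_{\V}\otimes\et_{p^{\infty}}(\bs{f}^-(T),1)$,
$$
g^0=-n\sum_{j=0}^m\pi_{m-j}\,\phi_j(\lb)\,T^{-np^j-1},\qquad
g^i=\sum_{j=0}^m\pi_{m-j}\Bigl(\sum_{k=0}^j\lambda_k^{p^{j-k}-1}\tfrac{d\lambda_k}{du_i}\Bigr)T^{-np^j}\quad(1\le i\le r).
$$
In particular all connection matrices lie in $\O_L[T^{-1}]\subset\a_L(]0,1[)$, so $\M\in\nabla-\Mod(\a_L(]0,1[))$, and $\M$ is solvable since $\D^{\dag}(\V)$ carries a Frobenius structure. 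I then read off the ``leading'' (most negative) coefficients in the sense of Lemma \ref{radius at zero -yhnct}: $g^0$ has its lowest term in degree $-np^m-1$ with coefficient $-n\pi_0\phi_m(\lb)$ (when $\phi_m(\lb)\ne 0$), while each $g^i$ has its lowest possible term in degree $-np^m$ with coefficient $\pi_0\sum_{k=0}^m\lambda_k^{p^{m-k}-1}\tfrac{d\lambda_k}{du_i}$. Using $|n|=1$, $|\pi_{m-j}|=|p|^{1/(p^{m-j}(p-1))}\le\omega$, $|\phi_j(\lb)|\le 1$ and $|\tfrac{d\lambda_k}{du_i}|\le 1$, all these leading coefficients have absolute value $\le\omega$, and the corresponding weights $\epsilon_0(n_0-1)$, $\epsilon_in_i$ of Lemma \ref{radius at zero -yhnct} are all $\le np^m$.

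The crux is to show that at least one leading coefficient has absolute value exactly $\omega$ at the weight $np^m$. Now $-n\pi_0\phi_m(\lb)$ has absolute value $\omega$ iff $\overline{\lambda}_0\ne 0$, and $\pi_0\sum_{k=0}^m\lambda_k^{p^{m-k}-1}\tfrac{d\lambda_k}{du_i}$ has absolute value $\omega$ iff $\sum_{k=0}^m\overline{\lambda}_k^{p^{m-k}-1}\partial_{\bar u_i}\overline{\lambda}_k\ne 0$ in $k$. Exactly this dichotomy is governed by the injectivity of the isomorphism $\psi_d\circ\overline{\delta}_d:\W_m(k)/p\W_m(k)\xrightarrow{\sim}\B\mathrm{Gr}_d(\Omega^1_{\E})$ of Section \ref{Explicit description of psi_d in terms of Witt co-vectors}: since $\overline{\lb}\ne 0$, the element $\psi_d\circ\overline{\delta}_d(\overline{\lb})=t^{-d}\bigl(-n\overline{\lambda}_0^{p^m}d\log(t)+\sum_{j=0}^m\overline{\lambda}_j^{p^{m-j}}d\log(\overline{\lambda}_j)\bigr)$ is nonzero in $\Omega^1_k(\log)=\Omega^1_k\oplus k\,d\log(t)$. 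Its $d\log(t)$-component is nonzero iff $\overline{\lambda}_0\ne 0$, and the coefficient of $d\bar u_i/\bar u_i$ in $\sum_j\overline{\lambda}_j^{p^{m-j}}d\log(\overline{\lambda}_j)$ is precisely $\sum_j\overline{\lambda}_j^{p^{m-j}-1}\partial_{\bar u_i}\overline{\lambda}_j$; hence either $\overline{\lambda}_0\ne 0$ or this sum is nonzero for some $i$, which is exactly what is needed. (When $\overline{\lambda}_0=0$ one necessarily has $m\ge 1$, so the relevant degree $-np^m\le -2$; in all cases $n_0\ge 2$ or some $n_i\ge 1$, as Lemma \ref{radius at zero -yhnct} requires.)

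Putting this together, Lemma \ref{radius at zero -yhnct}$(3)$ applies --- after dropping from the minima defining $T(\M,\rho)$ the indices $i$ with $g^i=0$, which contribute $+\infty$ and so do not affect the value, while the remaining proof of that lemma goes through unchanged --- and gives $T(\M,\rho)=\rho^{np^m}$ for all $\rho\in\,]0,1[$, hence $\mathrm{sw}^{\nabla}(\M)=\mathrm{slope}(\M,1^-)=np^m$ by Remark \ref{RK1}. On the arithmetic side, $\overline{\lb}\in\W_m(k)-p\W_m(k)$ means $\overline{\lb}\in p^0\W_m(k)-p^1\W_m(k)$, so Corollary \ref{swan conductor of a comonomial} gives $\mathrm{sw}(\delta(\overline{\lb}t^{-np^m}))=np^m$; and precomposition with $\psi_m$ (or any other identification $\mathbb{Z}/p^{m+1}\mathbb{Z}\xrightarrow{\sim}\bs{\mu}_{p^{m+1}}$, which differs by multiplication by an integer prime to $p$ and thus preserves Kato's filtration) does not change the arithmetic Swan conductor, so $\mathrm{sw}(\alpha_{\mathrm{wild}})=np^m$ as well. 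The main obstacle is the crux step above: recognising that the $p$-adic valuations of the leading coefficients of $g^0,g^1,\dots,g^r$ are encoded by Kato's refined Swan conductor class of $\overline{\lb}$, so that being above the threshold $\omega$ at weight $np^m$ is equivalent to $\overline{\lb}\notin p\W_m(k)$; the surrounding estimates are routine bookkeeping with phantom components and $\pi$-exponentials.
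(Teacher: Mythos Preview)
Your argument is correct and follows the paper's strategy closely: write down the explicit connection matrices from \eqref{eq. g_0,f}--\eqref{eq. g_i,f} for a pure co-monomial, identify the coefficients of lowest $T$-degree, and feed them into Lemma \ref{radius at zero -yhnct}(iii). The minor points (dropping indices $i$ with $g^i=0$; checking $n_0\ge 2$ or some $n_i\ge 1$; the irrelevance of the particular identification $\psi_m$) are handled correctly; the phrase ``minimal representative'' is unnecessary and slightly misleading, since your crux step uses only $\overline{\lb}\notin p\W_m(k)$ and works for any lift.

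The one genuine difference from the paper is in the crux step. The paper splits into the cases $\overline{\lambda}_0\neq 0$ (where $|\phi_m|=1$ gives $|a^{(0)}_{-np^m-1}|=\omega$ immediately) and $\overline{\lambda}_0=0$, and in the second case proves the self-contained Lemma \ref{tretretyi--}: using the inverse Cartier isomorphism one shows by induction on $m$ that $\sum_{k}\overline{\lambda}_k^{p^{m-k}-1}d\overline{\lambda}_k=0$ in $\Omega^1_k$ forces $\overline{\lb}\in\W_m(k^p)$, hence (with $\overline{\lambda}_0=0$) $\overline{\lb}\in p\W_m(k)$, a contradiction. You instead invoke the injectivity of $\psi_d\circ\overline{\delta}_d$ established in Section \ref{Explicit description of psi_d in terms of Witt co-vectors} (resting on Proposition \ref{Abbes-Saito psi iso} from \cite{Ab-Sa}): the class of $\overline{\lb}$ is nonzero in $\W_m(k)/p\W_m(k)$, so its image in $t^{-d}\Omega^1_k(\log)$ is nonzero, and reading off the $d\log t$- and $d\bar u_i$-components gives exactly the nonvanishing you need. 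Both routes encode the same fact; yours makes transparent that the ``leading coefficients'' of the connection are, up to the factor $\pi_0$, the components of Kato's \emph{refined} Swan conductor $\mathrm{rsw}(\alpha_{\mathrm{wild}})$, so their nonvanishing at weight $np^m$ is automatic. The paper's route is more self-contained (a short Cartier-operator argument, not relying on \cite{Ab-Sa}); yours is more conceptual and ties the computation directly to the refined Swan conductor.
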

\begin{proof} By Corollary \ref{swan conductor of a comonomial}
one has $\mathrm{sw}(\alpha_{\mathrm{wild}})=np^m$, hence it is
enough to prove the last assertion. To prove this, we now use the
assertion iii) of Lemma \ref{radius at zero -yhnct}. We verify
that all assumptions of Lemma \ref{radius at zero -yhnct} are
verified.

Let $\lb=(\lambda_0,\ldots,\lambda_m)\in \W_m(\O_{L_0})$ be an
arbitrary lifting of $\overline{\lb}= (\overline{\lambda}_0,
\ldots, \overline{\lambda}_m)$. Then the solution of $\M$ in this
basis is the $\bs{\pi}$-exponential (cf. Section \ref{e(f(T),1) is
a solution of D(V)})
\begin{equation}
\et_{p^\infty}(\lb
T^{-np^m},1)=\exp\Bigl(\;\pi_m\phi_0T^{-n}+\pi_{m-1}\phi_1\frac{T^{-np}}{p}+
\cdots+\pi_0\phi_m\frac{T^{-np^m}}{p^m}\;\Bigr)\;,
\end{equation}
where $\ph{\phi_0,\ldots,\phi_m}\in\O_{L_0}^{m+1}$ is the phantom
vector of $\lb\in\W_m(\O_{L_0})$. Indeed the phantom vector of
$\lb T^{-np^m}$ is
$\ph{\phi_0T^{-n},\phi_1T^{-np},\ldots,\phi_mT^{-np^m}}$. Writing
$\lb$ as function of $\underline{u}:=(u_1,\ldots,u_r)$, the
matrices of the connections are explicitly given by (cf. Equation
\eqref{matrix in the wild case})
\begin{eqnarray*}
 g^0_{\bs{f}^-}(u_1,\ldots,u_r,T)  &=&
 -n\;(\;\pi_m \phi_0 T^{-n-1}+ \pi_{m-1}\phi_1T^{-np-1}+\cdots+ \pi_0\phi_m T^{-np^m-1}\;) \;,\\
 g^i_{\bs{f}^-}(u_1,\ldots,u_r,T)  &=&
 \sum_{j=0}^m\pi_{m-j}T^{-np^{j}}\cdot
 \sum_{k=0}^j\lambda_{k}(\underline{u})^{p^{j-k}-1}\frac{d}{du_i}(\lambda_k(\underline{u})) \;.
\end{eqnarray*}
The coefficients of lowest degree (with respect to $T$) in the
matrix $g^i_{\bs{f}^-}(u_1,\ldots,u_r,T)$ are respectively
\begin{eqnarray}
\qquad a_{-np^m-1}^{(0)}&:=&-n\pi_0\phi_{m}\;,\\
a_{-np^m}^{(i)}&:=&\pi_0\cdot
p^{-m}\cdot\frac{d}{du_i}(\phi_m(u_1,\ldots,u_m))\;,\quad
\textrm{for all }i=1,\ldots,r\;.
\end{eqnarray}
More explicitly, for $i=1,\ldots,r$, one has
\begin{equation}
a_{-np^m}^{(i)}\; :=\;
\pi_{0}\cdot\sum_{k=0}^j\lambda_{k}(\underline{u})^{p^{j-k}-1}\frac{d}{du_i}(\lambda_k(\underline{u}))
\;\in\; \O_L\;.
\end{equation}
Since $\lb\in\W_m(\O_{L_0})$ has coefficients in $\O_{L_0}$ (which
is a Cohen ring), one sees that
\begin{equation}
|\phi_m|=|p|^{s(\overline{\lb})}\;,
\end{equation}
where
$s(\overline{\lambda}_0,\ldots,\overline{\lambda}_m):=\min\{s\geq
0\;|\;\overline{\lambda}_s\neq 0\}$. We study separately two
cases: $\overline{\lambda}_0\neq 0$, and $\overline{\lambda}_0=0$.
Assume first that $\overline{\lambda}_0\neq 0$ (i.e.
$s(\overline{\lb})=0$). Since $n>0$, then the $T$-adic valuations
of $ g^0_{\bs{f}^-}$ and $g^i_{\bs{f}^-}$ satisfy conditions of
Lemma \ref{radius at zero -yhnct}. Since $|\pi_0|=\omega$ and
$|\phi_m|=1$, then $|a_{-np^m-1}^{(0)}|=\omega$. Hence the point
iii) of Lemma \ref{radius at zero -yhnct} applies, and one has
$T(\M,\rho)=\rho^{np^m}$, for all $\rho\in ]0,1[$.

Assume now that $\overline{\lambda}_0 = 0$, i.e.
$s(\overline{\lb})\geq 1$. By assumption,
$\overline{\lb}\in\W_m(k)-p\W_m(k)$, but since
$\overline{\lambda}_0=0$, this is equivalent to (cf. Equation
\eqref{explicit def of pW(R)})
\begin{equation}
\overline{\lb}\notin\W_m(k^p)\;.
\end{equation}
The following Lemma \ref{tretretyi--} proves that
\begin{equation}
\max_{i=1,\ldots,r}|a_{-np^m}^{(i)}| < \omega
\quad\Longrightarrow\quad \overline{\lb}\in \W_m(k^p)\;,
\end{equation}
which contradicts our assumption. Hence
$|a_{-np^m}^{(i)}|=\omega$, for some $i=1,\ldots,r$. By applying
point iii) of Lemma \ref{radius at zero -yhnct}, as above, we find
$T(\M,\rho)=\rho^{np^m}$, for all $\rho\in]0,1[$.
\end{proof}

\begin{lemma}\label{tretretyi--}
Let $k$ be a field of characteristic $p>0$. Let
$\overline{\lb} = (\overline{\lambda}_0,\overline{\lambda}_1,\ldots,\overline{\lambda}_m)\in
\W_m(k)$ be a Witt vector. The following assertions are
equivalent:
\begin{enumerate}
\item[$\bs{(1)}$] For all $i=1,\ldots,r$ one has
\begin{equation}\label{rgyjil}
\overline{\lambda}_{0}^{p^{m}-1}\frac{d}{d\bar{u}_i}(\overline{\lambda}_0)+
\overline{\lambda}_{1}^{p^{m-1}-1}\frac{d}{d\bar{u}_i}(\overline{\lambda}_1)+\cdots+
\overline{\lambda}_{m-1}^{p-1}\frac{d}{d\bar{u}_i}(\overline{\lambda}_{m-1})
+\frac{d}{d\bar{u}_i}(\overline{\lambda}_m)=0\;.
\end{equation}

\item[$\bs{(2)}$] $\overline{\lb}\in\W_m(k^p)$.
\end{enumerate}
\end{lemma}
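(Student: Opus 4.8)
The plan is to recognize that the system of $r$ equations in $\bs{(1)}$ is nothing but the vanishing, in $\Omega^1_k=\Omega^1_{k/k^p}$, of a single differential form: reading off the coefficients of $d\bar u_1,\dots,d\bar u_r$, condition $\bs{(1)}$ says exactly that
\begin{equation}
\F^md(\overline{\lb})\;:=\;\sum_{j=0}^m\overline{\lambda}_j^{\,p^{m-j}-1}\,d\overline{\lambda}_j\;=\;0\qquad\text{in }\Omega^1_k\;,
\end{equation}
where $\F^md$ is (the reduction of) the operator $\overline{\bs f}\mapsto\sum_i\overline{f}_i^{\,p^{m-i}}d\log\overline{f}_i$ appearing in Section \ref{Kato's refined Swan conductor}, and where a summand with $\overline{\lambda}_j=0$ is understood to be $0$. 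With this reformulation, $\bs{(2)}\Rightarrow\bs{(1)}$ is immediate: if every $\overline{\lambda}_j$ lies in $k^p$ then $d\overline{\lambda}_j=0$ in $\Omega^1_{k/k^p}$, and the sum vanishes term by term.

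For $\bs{(1)}\Rightarrow\bs{(2)}$ I would induct on $m$. The case $m=0$ is precisely the identity $\Ker(d\colon k\to\Omega^1_{k/k^p})=k^p$, which holds because $\Omega^1_k$ is free on $d\bar u_1,\dots,d\bar u_r$. For the inductive step the essential ingredient is the behaviour of each summand under the Cartier operator $\C$ (cf. the discussion of $\C$ and $\C^{-1}$ in Section \ref{Kato's refined Swan conductor}): one first checks that $\overline{\lambda}^{\,p^s-1}d\overline{\lambda}$ is closed for every $\overline{\lambda}\in k$ and every $s\ge 1$, and that
\begin{equation}
\C\bigl(\overline{\lambda}^{\,p^s-1}\,d\overline{\lambda}\bigr)\;=\;\overline{\lambda}^{\,p^{s-1}-1}\,d\overline{\lambda}\qquad(s\ge 1)\;,
\end{equation}
the case $\overline{\lambda}\ne 0$ following from the identity $\overline{\lambda}^{\,p^s-1}d\overline{\lambda}=(\overline{\lambda}^{\,p^{s-1}})^p\,d\log\overline{\lambda}$ together with the facts that $\C$ is additive, satisfies $\C(g^p\eta)=g\,\C(\eta)$, fixes logarithmic forms, and annihilates exact forms, while the case $\overline{\lambda}=0$ is trivial since $p^s-1\ge 1$. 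Granting this, from $\F^md(\overline{\lb})=0$ I isolate the last summand, $d\overline{\lambda}_m=-\sum_{j=0}^{m-1}\overline{\lambda}_j^{\,p^{m-j}-1}d\overline{\lambda}_j$ (an identity in $\mathrm{Z}^1\Omega^\bullet_k$), and apply $\C$ to both sides: the left side becomes $0$ because $\C$ kills exact forms, while the right side becomes $-\sum_{j=0}^{m-1}\overline{\lambda}_j^{\,p^{m-1-j}-1}d\overline{\lambda}_j=-\F^{m-1}d(\overline{\lambda}_0,\dots,\overline{\lambda}_{m-1})$. Hence $(\overline{\lambda}_0,\dots,\overline{\lambda}_{m-1})$ satisfies $\bs{(1)}$ at level $m-1$, so by the inductive hypothesis $\overline{\lambda}_0,\dots,\overline{\lambda}_{m-1}\in k^p$; substituting $d\overline{\lambda}_0=\dots=d\overline{\lambda}_{m-1}=0$ back into $\F^md(\overline{\lb})=0$ forces $d\overline{\lambda}_m=0$, i.e.\ $\overline{\lambda}_m\in k^p$, completing the induction.

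The one point requiring care is the Cartier computation: one must verify that every form involved genuinely lies in $\mathrm{Z}^1\Omega^\bullet_k$ so that $\C$ applies, handle the degenerate cases $\overline{\lambda}_j=0$ (harmless, since the exponents $p^{m-j}-1$ are positive for $j<m$), and invoke the inverse Cartier isomorphism $\C^{-1}\colon\Omega^1_k\simto\mathrm{H}^1(\Omega^\bullet_k)$ in the form valid for $k/k^p$ with a $p$-basis. Everything else is a routine induction, and in fact no finiteness of the $p$-basis is needed.
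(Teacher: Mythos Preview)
Your proof is correct and follows essentially the same approach as the paper's: both reformulate $\bs{(1)}$ as the vanishing of the form $\sum_j \overline{\lambda}_j^{\,p^{m-j}-1}d\overline{\lambda}_j$ in $\Omega^1_k$, then induct on $m$ using the Cartier operator to pass from level $m$ to level $m-1$. The only cosmetic difference is that you apply $\C$ directly to both sides (using that $\C$ kills exact forms and sends $\overline{\lambda}^{p^s-1}d\overline{\lambda}$ to $\overline{\lambda}^{p^{s-1}-1}d\overline{\lambda}$), whereas the paper phrases the same step via the inverse Cartier isomorphism $\C^{-1}$ and its injectivity.
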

\begin{proof} Clearly $\bs{(2)}$ implies $\bs{(1)}$. Assume then that $\bs{(1)}$ holds. The condition $\bs{(1)}$ is equivalent to
\begin{equation}\label{rgyjil-2-2}
\overline{\lambda}_{0}^{p^{m}-1} d(\overline{\lambda}_0)+
\overline{\lambda}_{1}^{p^{m-1}-1} d(\overline{\lambda}_1)+\cdots+
\overline{\lambda}_{m-1}^{p-1} d(\overline{\lambda}_{m-1})
+d(\overline{\lambda}_m)=0\;,
\end{equation}
in $\Omega^1_{k/\mathbb{F}_p}$, where $d:k\to
\Omega^1_{k/\mathbb{F}_p}$ is the canonical derivation. We proceed
by induction on $m\geq 0$. The case $m=0$ is evident, since
$d(\overline{\lambda}_0)=0$ in $\Omega^1_{k/\mathbb{F}_p}$ implies
$\overline{\lambda}_0\in k^p$ (cf. \cite[Ch.0, 21.4.6]{EGAIV-1}). Let now $m\geq 1$. Assume  Equation
\eqref{rgyjil-2-2} holds. Following \cite[II.6]{Cart},
let
$Z^{1}(k):=\mathrm{Ker}(d:\Omega^1_{k/\mathbb{F}_p}\to\Omega^2_{k/\mathbb{F}_p})$,
and let
\begin{equation}
C^{-1} \;:\; \Omega^1_{k/\mathbb{F}_p}\;
\xrightarrow[]{\;\;\sim\;\;}\; Z^{1}(k)/d(k)
\end{equation}
be the Cartier isomorphism (cf. \cite[II.6]{Cart}). Here
$d(k)$ is the image of the map $d:k\to\Omega^1_{k/\mathbb{F}_p}$.
We recall that $C^{-1}(ad(b)) = \overline{a^{p}b^{p-1}d(b)}$,
where $\overline{\omega}$ is the class of $\omega\in Z(k)$ modulo
$d(k)$.

Let
$E_m(\overline{\lambda}_0,\ldots,\overline{\lambda}_m):=\overline{\lambda}_{0}^{p^{m}-1}
d(\overline{\lambda}_0)+ \overline{\lambda}_{1}^{p^{m-1}-1}
d(\overline{\lambda}_1)+\cdots+ \overline{\lambda}_{m-1}^{p-1}
d(\overline{\lambda}_{m-1})
+d(\overline{\lambda}_m)\in\Omega^{1}_{k/\mathbb{F}_p}$. Since
$E_m(\overline{\lambda}_0,\ldots,\overline{\lambda}_m)=0$, then
$E_m(\overline{\lambda}_0,\ldots,\overline{\lambda}_m)\in Z(k)$,
and the class of
$\overline{E_m(\overline{\lambda}_0,\ldots,\overline{\lambda}_m)}\in
Z(k)/d(k)$ is equal to
\begin{equation}
0=\overline{E_m(\overline{\lambda}_0,\ldots,\overline{\lambda}_m)}=\overline{\overline{\lambda}_{0}^{p^{m}-1}
d(\overline{\lambda}_0)+ \overline{\lambda}_{1}^{p^{m-1}-1}
d(\overline{\lambda}_1)+\cdots+ \overline{\lambda}_{m-1}^{p-1}
d(\overline{\lambda}_{m-1})}\;.
\end{equation}
By definition, one has
$C^{-1}\Bigl(\overline{\lambda}_i^{p^{m-1-i}-1}d(\overline{\lambda}_i)\Bigr)
=
\overline{\overline{\lambda}_i^{p^{m-i}-1}d(\overline{\lambda}_i)}$,
hence we find
\begin{equation}\label{tytytgtyhgt}
0=\overline{E_m(\overline{\lambda}_0,\ldots,\overline{\lambda}_m)}=
C^{-1}(E_{m-1}(\overline{\lambda}_0,\ldots,\overline{\lambda}_{m-1}))\;.
\end{equation}

Since $C^{-1}$ is an isomorphism (cf. \cite[II.6]{Cart}), then the equation \eqref{tytytgtyhgt} implies
$E_{m-1}(\overline{\lambda}_0,\ldots$ $ \dots ,\overline{\lambda}_{m-1})=0$.
By induction, one finds then
$\overline{\lambda}_0,\ldots,\overline{\lambda}_{m-1}\in k^p$, but
this implies
$E_m(\overline{\lambda}_0,\ldots,\overline{\lambda}_m)=d(\overline{\lambda}_m)=0$,
hence one has also $\overline{\lambda}_m\in k^p$ (cf.
\cite[Ch.0, 21.4.6]{EGAIV-1}).
\end{proof}

\begin{proof}[\protect{End of the Proof of Theorem
\ref{comparison-theorem in rank one} :}] Let
$\overline{\bs{f}^-}(t)=\sum_{n\in\J} \overline{\lb}_{-n}
t^{-np^{m(n)}}$ be a minimal lifting  of
$\psi_m^{-1}\circ\alpha_{\mathrm{wild}}$ in $\CW(\E)$ (cf. Section  
\ref{minimal lifting}). Again by  Section  \ref{minimal lifting}, one has
\begin{equation}
\mathrm{sw}(\delta(\overline{\bs{f}^-}(t)))=
\max_{n\in\J}\;\mathrm{sw}(\delta(\overline{\lb}_{-n}t^{-np^{m(n)}}))\;,
\end{equation}
where $\delta$ is the Artin-Schreier-Witt morphism
(cf. Equation \eqref{artin-schreier-diagram-covectors}). Now we recall that
$\V(\alpha_{\mathrm{wild}})= \otimes_{n\in\J}\V(\delta(\overline{\lb}_{-n}T^{-np^{m(n)}}))$,
and
\begin{equation}
\mathrm{D}^{\dag}(\V(\alpha_{\mathrm{wild}}))=\otimes_{n\in\J}
\mathrm{D}^{\dag}(\V(\delta(\overline{\lb}_{-n}T^{-np^{m(n)}})))\;.
\end{equation}
By Lemma \ref{fe-monomial}, one has
\begin{equation}
\mathrm{sw}^{\nabla}(\mathrm{D}^{\dag}(\V(\delta(\overline{\lb}_{-n}T^{-np^{m(n)}}))))=np^{m(n)}\;.
\end{equation}
Now if $n_1\neq n_2$ then $n_1\cdot p^{m(n_1)}\neq n_2\cdot
p^{m(n_2)}$. Hence, by Section \ref{M_1 otimes M_2}, one has
\begin{equation}
\mathrm{sw}^{\nabla}( \mathrm{D}^{\dag}(\V(\alpha_{\mathrm{wild}})) )\;=\;\max_{n\in\J}\;np^{m(n)}\;.
\end{equation}
This proves Theorem \ref{comparison-theorem in rank one}.
\end{proof}

\bibliographystyle{alpha}
\bibliography{Article6}

\end{document}